\newtheorem{thm}{Theorem}[section]
\newtheorem{prop}[thm]{Proposition}
\newtheorem{lem}[thm]{Lemma}
\newtheorem{lem-def}[thm]{Lemma-Definition}
\newtheorem{cor}[thm]{Corollary}
\theoremstyle{remark}
\newtheorem{ex}[thm]{Example}
\newtheorem{rmk}{Remark}[section]
\theoremstyle{definition}
\newtheorem{dfn}{Definition}[section]
\newtheorem{conj}[thm]{Conjecture}
\newtheorem{defn}{Definition}[section]
\newtheorem{rem}{Remark}[section]
\numberwithin{equation}{section}
\newcommand{\quash}[1]{}  
\newcommand{\red}[1]{}  
\newcommand{\green}[1]{}  
\newcommand{\blue}[1]{}
\newcommand{\nc}{\newcommand}
\nc{\on}{\operatorname}
\newcommand{\frakg}{{\mathfrak g}}
\newcommand{\frakh}{{\mathfrak h}}
\newcommand{\frakm}{{\mathfrak m}}
\newcommand{\frakt}{{\mathfrak t}}
\newcommand{\bA}{{\mathbb A}}
\newcommand{\bC}{{\mathbb C}}
\newcommand{\bD}{{\mathbb D}}
\newcommand{\bG}{{\mathbb G}}
\newcommand{\bK}{{\mathbb K}}
\newcommand{\bL}{{\mathbb L}}
\newcommand{\bP}{{\mathbb P}}
\newcommand{\bV}{{\mathbb V}}
\newcommand{\bW}{{\mathbb W}}
\newcommand{\bZ}{{\mathbb Z}}
\newcommand{\calF}{{\mathcal F}}
\newcommand{\calG}{{\mathcal G}}
\newcommand{\calI}{{\mathcal I}}
\newcommand{\calJ}{{\mathcal J}}
\newcommand{\calL}{{\mathcal L}}
\newcommand{\calN}{{\mathcal N}}
\newcommand{\calO}{{\mathcal O}}
\newcommand{\calR}{{\mathcal R}}
\nc{\al}{{\alpha}} \nc{\be}{{\beta}} \nc{\ga}{{\gamma}}
\nc{\ve}{{\varepsilon}} \nc{\Ga}{{\Gamma}} \nc{\la}{{\lambda}}
\nc{\La}{{\Lambda}}
\nc{\ad}{{\on{ad}}}
\nc{\aff}{{\on{aff}}}
\nc{\Aff}{{\mathbf{Aff}}}
\newcommand{\Aut}{{\on{Aut}}}
\nc{\Bun}{{\on{Bun}}}
\nc{\der}{{\on{der}}}
\newcommand{\Der}{{\on{Der}}}
\nc{\diag}{{\on{diag}}}
\newcommand{\End}{{\on{End}}}
\nc{\Fl}{{\calF\ell}}
\newcommand{\Hom}{{\on{Hom}}}
\nc{\Hol}{{\on{Hol}}}
\newcommand{\id}{{\on{id}}}
\nc{\Id}{{\on{Id}}}
\nc{\Ind}{{\on{Ind}}}
\newcommand{\Lie}{{\on{Lie}}}
\newcommand{\Pic}{{\on{Pic}}}
\nc{\res}{{\on{res}}}
\newcommand{\Spec}{{\on{Spec}}}
\nc{\tr}{{\on{tr}}}
\newcommand{\Mod}{{\mathrm{-Mod}}}
\newcommand{\GL}{{\on{GL}}}
\nc{\GSp}{{\on{GSp}}} \nc{\GU}{{\on{GU}}} \nc{\SL}{{\on{SL}}}
\nc{\SU}{{\on{SU}}} \nc{\SO}{{\on{SO}}}
\nc{\four}{{\calF our}}
\def\xcoch{\mathbb{X}_\bullet}
\newcommand{\fg}{\mathfrak{g}}
\newcommand{\HH}{\mathbb{H}}
\newcommand{\CC}{\mathbb{C}}
\newcommand{\C}{\CC}
\newcommand{\PP}{\mathbb{P}}
\renewcommand{\P}{\PP}
\newcommand{\G}{\mathbb{G}}
\newcommand{\ZZ}{\mathbb Z}
\newcommand{\Z}{\ZZ}
\newcommand{\into}{\hookrightarrow}
\newcommand{\onto}{\twoheadrightarrow}
\newcommand{\ra}{\rightarrow}
\newcommand{\bra}{{\langle}}
\newcommand{\ket}{{\rangle}}
\newcommand{\Sym}{{\mbox {Sym~}}}
\newcommand{\noi}{\noindent}
\newcommand{\rk}{{\mbox{rk~}}}
\newcommand{\cO}{\mathcal{O}}
\newcommand{\cY}{\mathcal{Y}}
\def\question#1{{}}
\author{An Huang, Bong H. Lian, and Xinwen Zhu}
\title{Period Integrals and the Riemann-Hilbert Correspondence}
\begin{document}

\maketitle
\begin{abstract}
A tautological system, introduced in \cite{LSY}\cite{LY}, arises as a regular holonomic system of partial differential equations that governs the period integrals of a family of complete intersections in a complex manifold $X$, equipped with a suitable Lie group action. A geometric formula for the holonomic rank of such a system was conjectured in \cite{BHLSY}, and was verified for the case of projective homogeneous space under an assumption. In this paper, we prove this conjecture in full generality. By means of the Riemann-Hilbert correspondence and Fourier transforms, we also generalize the rank formula to an arbitrary projective manifold with a group action.
\end{abstract}

\tableofcontents
\baselineskip=16pt plus 1pt minus 1pt
\parskip=\baselineskip

\pagenumbering{arabic}
\addtocounter{page}{0}
\markboth{\SMALL
An Huang, Bong H. Lian, and Xinwen Zhu}
{\SMALL Period Integrals and the Riemann-Hilbert Correspondence}


\section{Introduction}\label{Intro}

Let $G$ be a connected algebraic group over a field $k$ of characteristic zero.
Let $X$ be a projective $G$-variety and let $\calL$ be a very
ample $G$-linearized invertible sheaf over $X$ which gives rise to a
$G$-equivariant embedding
\[X\to \bP(V),\]
where $V=\Gamma(X,\calL)^\vee$.
Let $r=\dim V$. We assume that the action of $G$ on $X$ is locally effective, i.e. $\ker(G\to\Aut(X))$ is finite. Let $\bG_m$ be the multiplicative group acting on $V$ by homotheties. Let $\hat{G}=G\times\bG_m$, whose Lie algebra is $\hat{\frakg}=\frakg\oplus ke$, where $e$ acts on $V$ by identity.
 We denote by $Z:\hat{G}\to \GL(V)$ the corresponding group representation, and $Z:\hat{\frakg}\to \End(V)$ the corresponding Lie algebra representation. 
Note that under our assumptions, $Z:\hat{\frakg}\to \End(V)$ is injective.

Let $\hat{\imath}:\hat{X}\subset V$ be the cone of $X$, defined by
the ideal $I(\hat{X})$. Let $\beta:\hat{\frakg}\to k$ be a Lie algebra
homomorphism. Then a {\it tautological system} as defined in \cite{LSY}\cite{LY} is the cyclic $D$-module on $V^\vee$
\[\tau(G,X,\calL,\beta)=D_{V^\vee}/D_{V^\vee}J(\hat{X})+D_{V^\vee}(Z(\xi)+\beta(\xi), \xi\in \hat{\frakg}),\]
where
$$J(\hat{X})=\{\widehat{D}\mid D\in I(\hat{X})\}$$
is the ideal of the commutative subalgebra $\bC[\partial]\subset
D_{V^\vee}$ obtained by the Fourier transform of $I(\hat{X})$ (see \S \ref{Appendix} for the review of the Fourier transform and in particular \eqref{appen:ring hom} for the notation).

Given a basis $\{a_i\}$ of $V$, we have $Z(\xi)=\sum_{ij}\xi_{ij}a_i\partial_{a_j}$, where $(\xi_{ij})$ is the matrix representing $\xi$ in the basis. Since the $a_i$ are also linear coordinates on $V^\vee$, we can view $Z(\xi)\in\Der k[V^\vee]\subset D_{V^\vee}$. In particular, the identity operator $Z(e)\in\End V$ becomes the Euler vector field on $V^\vee$. 

We recall the main motivation for studying tautological systems. Let $X'$ be a compact complex manifold (not necessarily algebraic), such that the complete linear system of anticanonical divisors in $X'$ is base point free.
Let $\pi:\cY\ra B:=\Gamma(X',\omega_{X'}^{-1})_{sm}$ be the family of smooth CY hyperplane sections $Y_a\subset X'$, and let $\HH^{top}$ be the Hodge bundle over $B$ whose fiber at $a\in B$ is the line $\Gamma(Y_a,\omega_{Y_a})\subset H^{n-1}(Y_a)$, where $n=\dim X'$. In \cite{LY}, the period integrals of this family are constructed by giving a canonical trivialization of $\HH^{top}$. Let $\Pi=\Pi(X')$ be the period sheaf of this family, i.e. the locally constant sheaf generated by the period integrals (Definition 1.1 \cite{LY}.)

Let $V=\Gamma(X',\omega_{X'}^{-1})^\vee$, $X$ be the image of the natural map $X'\to\P(V)$, and $\calL=\calO_X(1)$. Let $G$  be a connected algebraic group acting on $X$.

\begin{thm}\label{LY-theorem}
The period integrals of the family $\pi:\cY\ra B$ are solutions to
$$\tau=\tau(G,X,\calL,\beta_0),$$
where $\beta_0$ is the Lie algebra
homomorphism which vanishes on $\frakg$ and $\beta_0(e)=1$.
\end{thm}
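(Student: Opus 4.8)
The plan is to represent the period integrals as tube integrals and then check the two families of defining operators of $\tau$ by differentiating under the integral sign. Recall from \cite{LY} that, since $\calO_{X'}(Y_a)\cong\omega_{X'}^{-1}$ for every anticanonical divisor $Y_a=\{f_a=0\}$ (with $f_a\in\Gamma(X',\omega_{X'}^{-1})=V^\vee$), the ratio $1/f_a$ is a meromorphic $n$-form on $X'$ with at most a first order pole along $Y_a$, holomorphic on $X'\setminus Y_a$, and its Poincar\'e residue $\omega_{Y_a}:=\mathrm{Res}_{Y_a}(1/f_a)$ is exactly the canonical trivializing section of $\HH^{top}$ that defines the period sheaf. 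Thus it suffices to treat the generators of $\Pi$: on the smooth locus $B$, a local branch of a period integral, for a locally constant family of cycles $\gamma\in H_{n-1}(Y_a)$, has the form
\[
\Pi_\gamma(a)=\int_\gamma\omega_{Y_a}=\frac{1}{2\pi i}\int_{T(\gamma)}\frac{1}{f_a},
\]
where $T(\gamma)\subset X'\setminus Y_a$ is the Leray tube (coboundary) over $\gamma$. Fix a basis $\sigma_1,\dots,\sigma_r$ of $\Gamma(X',\omega_{X'}^{-1})=V^\vee$ with dual linear coordinates $a_1,\dots,a_r$ on $V^\vee$, so that $f_a=\sum_i a_i\sigma_i$ and $\partial_{a_i}f_a=\sigma_i$. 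Since $T(\gamma)$ is compact and the integrand is jointly holomorphic in $(a,x)$ for $a\in B$, differentiation under the integral sign gives, for any multi-index $\alpha$,
\[
\partial_a^\alpha\Pi_\gamma(a)=\frac{(-1)^{|\alpha|}\,|\alpha|!}{2\pi i}\int_{T(\gamma)}\frac{\sigma^\alpha}{f_a^{\,|\alpha|+1}},
\]
where $\sigma^\alpha:=\prod_i\sigma_i^{\alpha_i}$ is the indicated product of sections (a section of $\omega_{X'}^{-|\alpha|}$), so $\sigma^\alpha/f_a^{|\alpha|+1}$ is again a meromorphic $n$-form, holomorphic on $T(\gamma)$.

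For the operators in $J(\hat X)$, it is enough to treat a homogeneous $p\in I(\hat X)$ of degree $d$, since the Fourier transform restricts to a ring isomorphism $\bC[V]\xrightarrow{\sim}\bC[\partial]$ and $I(\hat X)$ is a graded ideal. By definition $p$ is a degree-$d$ form on $V$ vanishing on the affine cone $\hat X$; as $X$ is the image of $X'\to\bP(V)$, evaluating $p$ at the functionals $\mathrm{ev}_{x'}$ shows that $p\in I(\hat X)_d$ if and only if the section $p(\sigma_1,\dots,\sigma_r)\in\Gamma(X',\omega_{X'}^{-d})$ obtained by substituting the sections $\sigma_i$ for the coordinates vanishes identically. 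Writing $\widehat p=\sum_{|\alpha|=d}c_\alpha\partial_a^\alpha$, the derivative formula above yields
\[
\widehat p\,\Pi_\gamma(a)=\frac{(-1)^d d!}{2\pi i}\int_{T(\gamma)}\frac{p(\sigma_1,\dots,\sigma_r)}{f_a^{\,d+1}}=0,
\]
and hence every element of the left ideal $D_{V^\vee}J(\hat X)$ annihilates $\Pi_\gamma$.

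For the operators $Z(\xi)+\beta_0(\xi)$, use that $G$ acts on $X'$ (inducing the given action on $X\subset\bP(V)$), so each $\xi\in\frakg$ yields a global holomorphic vector field on $X'$ and, via the canonical $G$-linearization of $\omega_{X'}$, a Lie derivative $L_\xi$ on sections of every tensor power of $\omega_{X'}$, compatible with products. Regarding $Z(\xi)\in\End(V)$ as a linear vector field on $V^\vee$ and applying it to $\Pi_\gamma$ via the first-order formula above, the $a$-derivatives reassemble into
\[
Z(\xi)\,\Pi_\gamma=\pm\frac{1}{2\pi i}\int_{T(\gamma)}\frac{L_\xi f_a}{f_a^{\,2}}=\mp\frac{1}{2\pi i}\int_{T(\gamma)}L_\xi\!\left(\frac{1}{f_a}\right),
\]
where the last equality is $L_\xi(f_a^{-1})=-f_a^{-2}L_\xi f_a$ (Leibniz for $L_\xi(f_a\cdot f_a^{-1})=0$). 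Since $1/f_a$ is a closed $n$-form on $X'\setminus Y_a$, Cartan's formula gives $L_\xi(1/f_a)=d\,\iota_\xi(1/f_a)$, so Stokes' theorem on the closed cycle $T(\gamma)\subset X'\setminus Y_a$ forces $Z(\xi)\Pi_\gamma=0=-\beta_0(\xi)\Pi_\gamma$ for $\xi\in\frakg$. For the Euler element $e$, the scaling $f_{\lambda a}=\lambda f_a$ gives $\Pi_\gamma(\lambda a)=\lambda^{-1}\Pi_\gamma(a)$; since $Z(e)$ is, up to sign, the Euler vector field and $\beta_0(e)=1$, this homogeneity of degree $-1$ is exactly the relation $(Z(e)+\beta_0(e))\Pi_\gamma=0$. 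Combining the three steps, every local branch of every period integral is killed by the defining left ideal of $\tau$, which is the assertion.

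\textbf{Main obstacle.} The delicate step is the $\frakg$-part: one must pin down, consistently with the $G$-linearization of $\omega_{X'}$ used to normalize the periods, the exact identification of $Z(\xi)$ with a linear vector field on $V^\vee$ (signs included), justify commuting $Z(\xi)$ past the integral, and verify that $L_\xi(f_a^{-1})=-f_a^{-2}L_\xi f_a$ together with the Cartan--Stokes argument is valid for the meromorphic form $1/f_a$ on the noncompact manifold $X'\setminus Y_a$ and for the tube $T(\gamma)$ (a cycle with empty boundary). A subsidiary point, for the $J(\hat X)$-part, is to record that $I(\hat X)$ equals the graded ideal $\bigoplus_d\ker\big(\on{Sym}^d\Gamma(X',\omega_{X'}^{-1})\to\Gamma(X',\omega_{X'}^{-d})\big)$ (Nullstellensatz for the affine cone $\hat X$), so that annihilating the $\widehat p$ for homogeneous $p$ genuinely kills all of $D_{V^\vee}J(\hat X)$, and to note that all of these identities hold for each local determination of a period integral, not merely for a single-valued primitive.
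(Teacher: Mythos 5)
The paper itself does not prove Theorem~\ref{LY-theorem}; immediately after stating it, the authors note that it was proved in \cite{LSY} for partial flag varieties and in full generality in \cite{LY}, and they take it as given. Your argument is a correct reconstruction of the proof from those references: you represent the periods as tube (residue) integrals of the meromorphic top form $1/f_a$, kill the Fourier transforms of $I(\hat X)$ via $\partial_a^\alpha(1/f_a)=(-1)^{|\alpha|}|\alpha|!\,\sigma^\alpha/f_a^{|\alpha|+1}$ together with the fact that $p(\sigma_1,\dots,\sigma_r)\equiv 0$ for $p\in I(\hat X)$, kill the $\frakg$-symmetry operators by the Leibniz rule $L_\xi(f_a^{-1})=-f_a^{-2}L_\xi f_a$ followed by Cartan's formula and Stokes on the compact boundaryless tube, and derive the Euler equation from the degree $-1$ homogeneity $\Pi_\gamma(\lambda a)=\lambda^{-1}\Pi_\gamma(a)$. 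This is essentially the approach of \cite{LY}, and the caveats you list under ``Main obstacle'' (fixing the sign and linearization conventions, the $L_\xi$ action on powers of $\omega_{X'}$, and the identification of $I(\hat X)$ with $\bigoplus_d\ker(\on{Sym}^d V^\vee\to\Gamma(X',\omega_{X'}^{-d}))$) are exactly the points those references address; none of them is a gap in the argument.
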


This was proved in \cite{LSY} for $X'$ a partial flag variety, and in full generality in \cite{LY}, where the result was also generalized to hyperplane sections of general type. 

We note that when $X'$ is a projective homogeneous manifold of a semisimple group $G$, in which case we have $X=X'$, $\tau$ is amenable to explicit descriptions. For example, one description says that the tautological system can be generated by the vector fields corresponding to the linear $G$ action on $V^\vee$, and a twisted Euler vector field, together with a set of quadratic differential operators corresponding to the defining relations of $X$ in $\P(V)$ under the Pl\"ucker embedding. The case where $X$ is a Grassmannian has been worked out in detail \cite{LSY}. Furthermore, when the middle primitive cohomology $H^n(X)_{prim}=0$, it is also known that the system $\tau$ is complete, i.e. the solution sheaf coincides with the period sheaf \cite{BHLSY}. 

We now return to a general tautological system $\tau$. Applying an argument of \cite{Kapranov1997}, we find that
 if $G$ acts on $X$ by finitely many orbits, and if the character D-module on $\hat{G}$
 \[\calL_{\beta}:= D_{\hat{G}}/D_{\hat{G}}(\xi+\beta(\xi), \ \xi\in\hat{\frakg})\]
on $\hat{G}$ is regular singular,
 then $\tau$ is regular holonomic. See \cite{LSY} Theorem 3.4(1).
 In this case, if $X=\sqcup_{l=1}^rX_l$ is the decomposition into $G$-orbits, then the singular locus of $\tau$ is contained in $\cup_{l=1}^rX_l^\vee$. Here $X_l^\vee\subset V^\vee$ is the conical variety whose projectivization $\PP(X_l^\vee)$ is the projective dual to the Zariski closure of $X_l$ in $X$.
\quash{
 The regular singularity of $\tau$ can also be seen as follows. Let
\[\hat{\tau}:=\four(\tau)\]
be the Fourier transform of $\tau$.
Explicitly,
\[\hat{\tau}=D_V/D_VI(\hat{X})+D_V(Z^\vee(\xi)+ \beta'(\xi), \xi\in \frakg),\]
where $Z^\vee:\frakg\to \End(V^\vee)$ is the dual of $Z$ and
$\beta'=\beta-r\beta_0$
 By Lemma \ref{mono:eq}, $\hat{\tau}=\four(\tau)$ is $(\hat{G},\beta')$-equivariant. Clearly, $\hat{\tau}$ is set theoretically supported on $\hat{X}$.
 Applying Lemma \ref{mono:rh} and Lemma \ref{rh}, we obtain
\begin{cor} If $G$ acts on $X$ by finitely many orbits, then
$\hat{\tau}$ is holonomic. If $\calL_{-\beta'}$ is regular singular, then
$\hat{\tau}$ is regular singular. The same is true for $\tau$.
\end{cor}
}
From now on we assume that $G$ acts on $X$ by finitely many orbits, and $\calL_{\beta}$ is regular singular. Note that the latter assumption is always satisfied when $G$ is reductive.

Let us now turn to the main problem studied in this paper.
In the well-known applications of variation of Hodge structures in mirror symmetry, it is important to decide which solutions of our differential system come from period integrals. By Theorem \ref{LY-theorem}, the period sheaf is a subsheaf of the solution sheaf of a tautological system. Thus an important problem is to decide when the two sheaves actually coincide, i.e. when $\tau$ is complete. If $\tau$ is not complete, how much larger is the solution sheaf relative to the period sheaf? From Hodge theory, we know that  (see Proposition 6.3 \cite{BHLSY}) the rank of the period sheaf is given by the dimension of the middle vanishing cohomology of the smooth hypersurface $Y_a$.
Therefore, to answer those questions, it is clearly desirable to know precisely the holonomic rank of $\tau$. For a brief overview of known results on these questions in a number of special cases, see Introduction in \cite{BHLSY}.

\begin{conj} (Holonomic rank conjecture) \label{holo-rank}
Let $X$ be an $n$-dimensional projective homogeneous space of a semisimple group $G$. The solution rank of $\tau=\tau(G,X,\omega_X^{-1},\beta_0)$ at the point $a\in V^\vee$  is given by $\dim H_n(X-Y_a)$.
\end{conj}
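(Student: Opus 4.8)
The plan is to compute both sides by relating them to the de Rham cohomology of the complement $U_a := V^\vee \setminus (\text{singular locus})$ lifted appropriately, and then matching the tautological $D$-module to a concrete Gauss–Manin–type system. More precisely, I would proceed as follows. First, reduce the statement to a computation of the characteristic cycle (equivalently, the generic rank plus the behavior along each stratum $X_l^\vee$), using the fact — recalled in the excerpt — that $\tau$ is regular holonomic with singular locus inside $\bigcup_l X_l^\vee$. Since $X$ is $G$-homogeneous there is exactly one open orbit, so $\hat X \setminus \{0\}$ is a single orbit of $\hat G$, and the combinatorics of the stratification is controlled by the orbit structure; the holonomic rank at a generic $a$ is what we must pin down, and the conjecture predicts it equals $\dim H_n(X - Y_a)$.

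The key step is to identify the solution complex of $\tau$ at $a$ with a relative cohomology group. One expects, via the Riemann–Hilbert correspondence, that $\mathrm{Sol}(\tau)_a \cong R\Gamma$ of a twisted de Rham complex on $X$ (or on $V$, via Fourier transform) whose hypercohomology is $H^\bullet(X - Y_a)$. Concretely, I would Fourier-transform to $\hat\tau$ on $V$, whose solutions are integrals $\int_\Gamma e^{\langle x, a\rangle}\,\omega$ over cycles $\Gamma$ in (a torus bundle over) $X$, with $\omega$ ranging over a suitable space of forms; the de Rham incarnation of this is the twisted complex $(\Omega^\bullet_{\hat X}, d + d(\text{linear function}_a)\wedge)$, and its cohomology computes, after passing back, the vanishing cohomology of $Y_a$, hence $\dim H_n(X-Y_a)$ by the long exact sequence of the pair $(X, Y_a)$ together with the hard Lefschetz/weak Lefschetz vanishing that makes $H_n(X-Y_a)$ concentrated in one degree. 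The homogeneity and the $\beta_0$-twist are exactly what make the Euler-field relations $Z(\xi) + \beta_0(\xi)$ cut the naive solution space down to the primitive/vanishing part.

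I would organize the matching in two halves. For the lower bound $\mathrm{rank}(\tau)_a \ge \dim H_n(X - Y_a)$: exhibit enough honest solutions, namely the period integrals (Theorem \ref{LY-theorem}) plus their "descendants" obtained by the $\bG_m$-action, and show these span a space of the predicted dimension by a Hodge-theoretic count (Proposition 6.3 of \cite{BHLSY}) combined with the fact that on a homogeneous $X$ the relevant cohomology of $X$ itself contributes the remaining classes. For the upper bound $\mathrm{rank}(\tau)_a \le \dim H_n(X - Y_a)$: present $\tau$ explicitly (generators from the linear $G$-action, the twisted Euler operator, and the quadratic Plücker operators, as recalled in the excerpt) and show that any local solution is determined by finitely many initial data whose count is $\dim H_n(X-Y_a)$ — this is where one uses that $\hat X$ is defined by quadrics and that $G$ acts with an open orbit to bound the holonomic rank via the dimension of a fiber of the associated graded (characteristic variety) computation.

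The main obstacle I anticipate is the upper bound, i.e. showing the solution space is not larger than the period-plus-Lefschetz count. Bounding holonomic rank from above requires controlling the characteristic variety of $\tau$ — showing it is no bigger than $\bigcup_l T^*_{X_l^\vee} V^\vee$ with the correct multiplicities — which in the homogeneous case should follow from a Koszul-resolution/associated-graded argument using that $I(\hat X)$ is generated by quadrics and that the $\hat G$-orbit stratification is "nice" (each $X_l$ is itself homogeneous, so its conormal is well understood). Making this multiplicity count match $\dim H_n(X - Y_a)$ exactly, rather than merely bounding it, is the delicate point: it should come from identifying the fiber of the de Rham complex above with the Jacobian-ring–type quotient attached to the section $a$, and invoking that for hyperplane sections of homogeneous spaces this quotient has dimension equal to the vanishing cohomology (a statement of Lefschetz/Konno type). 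I would expect to spend most of the work there, with the Riemann–Hilbert and Fourier-transform formalism of \S\ref{Appendix} doing the bookkeeping that lets the two cohomological descriptions be compared on the nose.
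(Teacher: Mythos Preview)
Your proposal outlines a two-sided inequality: a lower bound on the solution rank via period integrals and Hodge theory, and an upper bound via a characteristic-variety/multiplicity computation. The paper takes a completely different and much more direct route, and the genuine gap in your approach is exactly where you flag it yourself --- the upper bound.

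The paper does not compute characteristic cycles or multiplicities at all. Instead it proves a canonical isomorphism of $D_{V^\vee}$-modules
\[
\tau \;\simeq\; H^0\pi^\vee_+\calO_U,
\]
where $U = V^\vee\times X \setminus \bL^\perp$ and $\pi^\vee:U\to V^\vee$ is the projection; in other words, $\tau$ \emph{is} the Gauss--Manin system of the family of hyperplane-section complements. Once this is known, the rank formula is immediate from Riemann--Hilbert: $\on{Sol}(\tau)$ is the zeroth perverse cohomology of $R\pi^\vee_!\bC_U[n+r]$, and since this complex lives in nonnegative perverse degree, the underived solution sheaf is $R^n\pi^\vee_!\bC_U$, whose stalk at $a$ is $H^n_c(X-Y_a)\cong H_n(X-Y_a)$. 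No separate upper and lower bounds are needed.

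The heart of the argument is an explicit identification. Write $\calR = D_{V^\vee}/D_{V^\vee}J(\hat X)\simeq \calO_{V^\vee}\otimes S$ with $S$ the homogeneous coordinate ring of $\hat X$, so that $\tau = (\calR\otimes k_\beta)\otimes_{\hat\frakg}k$. On the other side, since $\calL=\omega_X^{-1}$ one has $\calO_Uf^{-1}\simeq \omega_{U/V^\vee}$, where $f=\sum a_i\otimes a_i^*$ is the universal section; this carries a natural $D_{V^\vee}\times\frakg$-module structure. The key lemma exhibits an explicit $D_{V^\vee}\times\hat\frakg$-module map
\[
\phi:\calR\otimes k_\beta \to \calO_Uf^{-1},\qquad a\otimes b\mapsto \frac{(-1)^kk!}{f^{k+1}}\,a\otimes b\quad(\deg b=k),
\]
checks directly that it is surjective with kernel generated precisely by the Euler operator, and concludes that it descends to the desired isomorphism after taking $\hat\frakg$-coinvariants. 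Homogeneity of $X$ enters only through the surjectivity of $\frakg\otimes\calO_X\to T_X$, which forces $D_{X,0}=\calO_X$ and hence identifies $H^0\pi^\vee_+(\calO_{V^\vee}\boxtimes D_{X,\beta})|_U$ with $H^0\pi^\vee_+\calO_U$.

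Your characteristic-variety strategy for the upper bound does not close. Matching the multiplicities along each conormal $T^*_{X_l^\vee}V^\vee$ to $\dim H_n(X-Y_a)$ would require an exact Jacobian-ring computation for hyperplane sections of arbitrary $G/P$; no such statement is available or invoked, and this is precisely the obstacle that had left Theorem \ref{geo.thm} conditional. The point of the paper is that this whole computation is unnecessary once $\tau$ is recognized as geometric. Your Fourier-transform intuition is not wrong in spirit --- the paper does use Fourier transform to handle general $\beta(e)\neq 1$ in later sections --- but even there the endgame is again an identification of $\tau$ with a pushforward $D$-module, never a cycle count.
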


In \cite{BHLSY}, the following is proved,
\begin{thm}\label{geo.thm}
Assume that the natural map
$$
\fg\otimes\Gamma(X,\omega_X^{-r})\ra\Gamma(X,T_X\otimes\omega_X^{-r})
$$
is surjective for each $r\geq0$. Then conjecture \ref{holo-rank} holds.
\end{thm}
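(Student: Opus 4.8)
The plan is to identify $\tau$, via its Fourier transform, with an explicit $\mathcal{O}$-coherent twisted $D$-module supported on the cone $\widehat{X}$, and then to read off its holonomic rank as the dimension of a de Rham cohomology group which, after a homogeneity reduction, equals $\dim H_n(X\setminus Y_a)$.

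First I would pass to $\hat{\tau}=\four(\tau)$ on $V$. As recalled in the excerpt, $\hat{\tau}=D_V/(D_VI(\widehat{X})+D_V(Z^\vee(\xi)+\beta'(\xi),\ \xi\in\hat{\frakg}))$ with $\beta'=\beta_0-r\beta_0$, and it is set-theoretically supported on $\widehat{X}$. I would restrict to the smooth locus $\widehat{X}^\circ=\widehat{X}\setminus\{0\}$, which is a $\bG_m$-torsor $\pi\colon\widehat{X}^\circ\to X$; here one uses that the anticanonical embedding of $X=G/P$ is projectively normal, so that $\widehat{X}=\Spec\bigoplus_{r\ge 0}\Gamma(X,\omega_X^{-r})$. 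The operators $Z^\vee(\xi)$ for $\xi\in\hat{\frakg}$ act as vector fields tangent to $\widehat{X}^\circ$: for $\xi\in\frakg$ they generate the $G$-action, and $Z^\vee(e)$ is the Euler field $E$. From the relative Euler sequence $0\to\mathcal{O}_{\widehat{X}^\circ}\!\cdot\!E\to T_{\widehat{X}^\circ}\to\pi^*T_X\to 0$ one gets, in each graded degree $r\ge 0$, a description of the global vector fields on $\widehat{X}$ as an extension of $\Gamma(X,T_X\otimes\omega_X^{-r})$ by $\Gamma(X,\omega_X^{-r})\cdot E$; the degree-$r$ part of $e\otimes\Gamma(X,\omega_X^{-r})$ surjects onto the $E$-summand, and the hypothesis of the theorem is precisely that $\frakg\otimes\Gamma(X,\omega_X^{-r})$ surjects onto $\Gamma(X,T_X\otimes\omega_X^{-r})$. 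Thus the tautological vector fields together with multiplication by functions generate \emph{all} vector fields on $\widehat{X}^\circ$, so $\hat{\tau}|_{\widehat{X}^\circ}$ is $\mathcal{O}$-coherent of rank one --- the structure sheaf of $\widehat{X}^\circ$ with the flat connection twisted by the character determined by $\beta'$.

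Next I would compute the solution rank of $\tau$ at a generic $a\in V^\vee$ (such $a$ lies outside the singular locus $\bigcup_l X_l^\vee$, so $Y_a$ is a smooth anticanonical divisor, by Bertini). By the Riemann--Hilbert correspondence this rank is the dimension of the stalk of the solution complex at $a$, which under Fourier transform becomes the exponential (twisted) de Rham cohomology of $(\widehat{X},\ell_a)$, where $\ell_a=\langle\,\cdot\,,a\rangle$ is the linear function on $\widehat{X}$ cutting out the affine cone $\widehat{Y}_a$ over $Y_a$. The $\bG_m$-homogeneity, which is imposed by the Euler operator $Z(e)+\beta_0(e)$ --- and which also rules out any contribution concentrated at the vertex, since $\beta_0(e)=1\neq 0$ forbids the constant solution --- allows one to integrate out the fibre of $\pi$: over $x\in X\setminus Y_a$ the fibre integral $\int e^{\lambda f_a(x)}\,d\lambda/\lambda$ replaces the exponential twist by a simple pole along $Y_a$, identifying the exponential de Rham cohomology of $\widehat{X}$ with the ordinary de Rham cohomology of the smooth affine variety $X\setminus Y_a$ in the relevant degree. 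Since $\dim(X\setminus Y_a)=n$ and $X$ is homogeneous, one obtains solution rank $\dim H^n_{dR}(X\setminus Y_a)=\dim H_n(X\setminus Y_a)$, which is Conjecture \ref{holo-rank}.

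The hard part will be the second step, namely showing that $\tau$ is \emph{no larger} than the geometric $D$-module above --- equivalently that the natural surjection from $\tau$ onto it is an isomorphism, i.e. that $\hat{\tau}$ has no further constituents (in particular none supported at the singular vertex). This is exactly the point at which the surjectivity hypothesis is indispensable: it is what forces the Koszul/Chevalley--Eilenberg complex built from the $\hat{\frakg}$-vector fields acting on $\widehat{X}$ to be a genuine resolution of the full (exponential) de Rham complex of $\widehat{X}$, rather than of a strictly larger complex; when the hypothesis fails the solution rank can exceed $\dim H_n(X\setminus Y_a)$, and this is the gap the rest of the present paper is designed to close by other means. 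The remaining ingredients are routine: projective normality of the anticanonical $G/P$, Bertini for the smoothness of $Y_a$, the Lefschetz/vanishing statements describing $H^\bullet(X\setminus Y_a)$ for homogeneous $X$, and bookkeeping of the character $\beta'$ together with the (trivial, for semisimple $G$) modular character on $\omega_X$.
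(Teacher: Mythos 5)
First, a contextual point: Theorem \ref{geo.thm} is cited here from \cite{BHLSY} rather than proved; the contribution of the present paper is to \emph{remove} the surjectivity hypothesis (Theorem \ref{holo-rank-thm}), and the argument doing so --- the explicit isomorphism of \S\ref{CY case} --- never invokes it. Your first step is essentially right: the hypothesis, together with the evident surjection $e\otimes\Gamma(X,\omega_X^{-r})\to\Gamma(X,\omega_X^{-r})\cdot E$ onto the Euler direction, implies that the $\calO_{\mathring{X}}$-span of the tautological vector fields $Z^\vee(\xi)$, $\xi\in\hat\frakg$, is all of $T_{\mathring{X}}$; hence the $\calO$-submodule of $\hat\tau|_{\mathring{X}}$ generated by the image of $1$ is $D$-stable, and $\hat\tau|_{\mathring{X}}$ is a rank-one twisted local system on the punctured cone. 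But the step you defer as ``the hard part'' --- pinning down $\hat\tau$ as a $D$-module on all of $V$, i.e.\ its extension behavior across $0$, so that $\tau=\four^{-1}(\hat\tau)$ and its solution rank can actually be computed --- is not a routine lacuna; it \emph{is} the theorem. Knowing $\hat\tau|_{\mathring{X}}$ determines $\hat\tau$ only up to this extension data, and $j_{0,!}$, $j_{0,+}$, intermediate extension, and further sub/quotients supported at $0$ all give different solution ranks for $\tau$. The paper develops exactly this control in \S\S\ref{!fiber}--\ref{exact seq} via the distinguished triangle \eqref{triangleII} and the $!$-fiber vanishing of $\tau_0^!$; it is where the condition $\beta(e)\notin\bZ_{\leq 0}$ enters, and it is not short.

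Even granting such control, the passage through ``exponential de Rham cohomology'' and the heuristic fiber integral $\int e^{\lambda f_a}\,d\lambda/\lambda$ is not an argument. To make it one you would need a precise statement relating $\four(\hat\tau)$ to a de Rham pushforward from $U$ --- in effect Laumon's homogeneous Fourier transform \cite{La} --- followed by a solution-complex identification as in Corollary \ref{special:rank formula}. The paper's proof of the CY case bypasses all of this: Lemma \ref{special:isom} constructs the explicit $D_{V^\vee}\times\hat\frakg$-equivariant map $\phi\colon\calR\otimes\beta\to\calO_Uf^{-1}$, $a\otimes b\mapsto\tfrac{(-1)^kk!}{f^{k+1}}\,a\otimes b$, yielding $\tau\simeq H^0\pi^\vee_+\calN$ directly, and then the rank formula via Riemann--Hilbert on $V^\vee$ --- no Fourier transform, and no surjectivity hypothesis. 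So your closing remark that the hypothesis is ``indispensable'' and that otherwise ``the solution rank can exceed $\dim H_n(X\setminus Y_a)$'' should be read with care: for homogeneous $X$ the formula holds unconditionally, so no such excess actually occurs; the hypothesis is only a crutch for one particular line of argument.
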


In this paper, we prove this in full generality.

\begin{thm}\label{holo-rank-thm}
Conjecture \ref{holo-rank} holds.
\end{thm}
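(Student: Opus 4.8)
\medskip
\noindent
The plan is to compute the solution complex of $\tau$ directly, by combining the Fourier transform with the Riemann--Hilbert correspondence, and thereby to reduce the holonomic rank to a topological invariant of the affine variety $X\setminus Y_a$. This route bypasses the cohomological hypothesis of Theorem~\ref{geo.thm}: where \cite{BHLSY} needed the surjectivity of $\fg\otimes\Gamma(X,\omega_X^{-\ell})\to\Gamma(X,T_X\otimes\omega_X^{-\ell})$ for all $\ell\ge 0$ in order to bound the size of $\tau$, here the rigidity we need is supplied by the homogeneity of $X$ alone.

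The first step is to pass to the Fourier transform $\hat\tau$ on $V$. As recalled in the introduction, $\hat\tau$ is regular holonomic, set-theoretically supported on the affine cone $\hat X\subset V$, and $(\hat G,\beta')$-equivariant with $\beta'=(1-r)\beta_0$; in particular its cyclic generator carries the integral $\bG_m$-weight $1-r$, so $\hat\tau$ is $\bG_m$-monodromic with tame monodromy along the scaling direction. Since $X=G/P$ is homogeneous, $\hat G=G\times\bG_m$ acts transitively on $\hat X\setminus\{0\}$; hence $\hat X\setminus\{0\}$ is smooth, and $\hat\tau$ restricts there to a rank-one $\hat G$-equivariant integrable connection $\mathcal N$ (a character connection determined by $\omega_X$ and $\beta_0$). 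Thus $\hat\tau$ is an extension of $\mathcal N$ across the single remaining orbit $\{0\}$.

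The crucial step is to \emph{rigidify} $\hat\tau$: to show that its defining presentation forces it to be the intermediate extension $j_{!*}\mathcal N$ of $\mathcal N$ along $j\colon\hat X\setminus\{0\}\hookrightarrow V$, equivalently that $\hat\tau$ admits no nonzero subobject or quotient supported at the cone point. I would deduce this from the structure of $\hat G$-equivariant $D$-modules on the cone over $G/P$: there are only the two orbits $\hat X\setminus\{0\}$ and $\{0\}$, so this is a small quiver-type category in which every object is rigid, and the explicit generators $I(\hat X)$ and $Z(\xi)+\beta(\xi)$ determine the gluing data at $\{0\}$ outright, with no vanishing input. Granting $\hat\tau=j_{!*}\mathcal N$, the Riemann--Hilbert correspondence turns the solution rank of $\tau$ at $a\in V^\vee$ into $\dim\mathcal H^0$ of the stalk at $a$ of the solution complex of $\tau$, which by the Fourier transform is the stalk at $a$ of the (monodromic) Fourier--Sato transform of $\on{Sol}(j_{!*}\mathcal N)$. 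Evaluating this transform by stationary phase along the scaling direction of $\hat X$ replaces the exponential kernel $e^{\langle x,a\rangle}$ by the homogeneous kernel $a(x)^{-1}$ and identifies the stalk with the cohomology of the affine hypersurface complement $X\setminus Y_a$ --- the same space in which the period integrals $\int_\gamma 1/a$ over cycles $\gamma\subset X\setminus Y_a$ and their $\partial_a$-derivatives live; equivalently, $\on{Sol}(\tau)$ coincides with the top cohomology sheaf of the Gauss--Manin complex $Rp_*\bC$ of the universal complement $p\colon\{(x,a)\in X\times V^\vee:\ a(x)\ne 0\}\to V^\vee$.

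Finally, since $\calL=\omega_X^{-1}$ is ample, $X\setminus Y_a$ is affine of dimension $n$. Combining Artin vanishing with the intermediate-extension property of $\hat\tau$ --- which in particular rules out a constant solution of $\tau$, in line with the weight $1-r\ne 0$ --- one identifies $\mathcal H^0$ of that stalk, via the period pairing, with the whole group $H_n(X\setminus Y_a)$. Hence the solution rank of $\tau$ at $a$ equals $\dim H_n(X\setminus Y_a)$, which is Conjecture~\ref{holo-rank}. The main obstacle is the rigidification step: proving $\hat\tau=j_{!*}\mathcal N$ with no spurious part at the cone point is precisely what the surjectivity hypothesis secured in \cite{BHLSY}, and carrying it out purely through equivariance --- together with checking that the Fourier--Sato comparison survives at the prescribed point $a$, including values for which $Y_a$ is singular, where one leans again on the affineness of $X\setminus Y_a$ --- is the core of the argument.
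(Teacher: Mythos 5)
Your outline takes the Fourier-transform route, which is genuinely different from how the paper proves this particular theorem. The paper's proof of Conjecture~\ref{holo-rank} lives entirely in \S\ref{CY case}: one writes down an explicit $D_{V^\vee}\times\hat\frakg$-module map $\phi:\calR\otimes\beta\to\calO_Uf^{-1}$, $\phi(a\otimes b)=\frac{(-1)^kk!}{f^{k+1}}a\otimes b$, checks it descends to an isomorphism $\tau\simeq(\calO_Uf^{-1})\otimes_\frakg k$, and identifies the right side with $H^0\pi^\vee_+\calO_U$ (Lemma~\ref{special:isom} and Theorem~\ref{special:main}). The Riemann--Hilbert correspondence then converts $\on{Sol}(\tau)_a$ into $H^n_c(U_a,\bC)\cong H_n(X-Y_a)$ (Corollary~\ref{special:rank formula}). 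No Fourier transform, no analysis of the cone point, no support conditions at the origin. The Fourier-transform machinery you sketch is what the paper deploys only later (\S\S\ref{!fiber}--\ref{exact seq}) to handle general $\beta(e)\notin\bZ_{\leq0}$ in Theorem~\ref{intro:general}.

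The serious gap is the rigidification step, and your proposed mechanism for it does not work. First, the assertion to aim for is off: the paper never shows, and does not need, $\hat\tau\simeq j_{!*}\calN$. What is actually established is the weaker statement $\hat\tau\simeq H^0j_{0,!}(\hat\tau|_{\mathring V})$, via the four-term sequence from \eqref{triangleII} and the two vanishings $H^0i_0^+\hat\tau=0$ and $H^{-1}i_0^+\hat\tau=0$ (Lemmas~\ref{vanishing l} and \ref{for d}). These are \emph{not} free: $H^0i_0^+\hat\tau=0$ is deduced from $H^r\tau_0^!\simeq H_0(\hat\frakg,S\otimes\beta)=0$, and $H^{-1}i_0^+\hat\tau=0$ ultimately rests on $H_1(\hat\frakg,S\otimes\beta)=0$ (Lemma~\ref{d vanishing}), a genuine Lie-algebra homology vanishing statement requiring $\beta(e)\notin\bZ_{\leq0}$. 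Your claim that the generators ``determine the gluing data at $\{0\}$ outright, with no vanishing input'' is therefore false: the vanishing input is exactly where the work is. Second, the heuristic ``two orbits, so a small quiver-type category in which every object is rigid'' is wrong as stated. Even for the model case $j:\bG_m\hookrightarrow\bA^1$ with the obvious $\bG_m$-action, the category of $\bG_m$-monodromic regular holonomic $D$-modules with trivial monodromy on the open stratum contains the non-isomorphic extensions $j_!\calO$, $j_+\calO$, $\calO_{\bA^1}\oplus\delta_0$, all restricting to $\calO_{\bG_m}$; objects are not rigid and the category is not semisimple. Picking out which extension $\hat\tau$ is requires exactly the fiber computations the paper performs. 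So the ``core of the argument,'' as you yourself flag it, is left unfilled, and the mechanism you gesture at would not fill it.

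If you want to carry out a Fourier-transform proof, the honest route is the one in \S\S\ref{!fiber}--\ref{exact seq}: compute $\tau_0^!$ via the Koszul complex \eqref{shrik stalk}, establish the vanishing of $H^r\tau_0^!$ and $H^{r-1}\tau_0^!$ using the weight decomposition under $e$, feed that into the four-term sequence to conclude $\hat\tau\simeq H^0j_{0,!}(\hat\tau|_{\mathring V})$, and then Fourier-transform back via Lemma~\ref{key} and \eqref{fcs}. But for Conjecture~\ref{holo-rank} specifically, the direct argument of \S\ref{CY case} is shorter and avoids all of this.
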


This will be proved in \S\ref{CY case}.
There are at least two immediate applications of this result. First we can now compute the solution rank for $\tau$ for {\it generic} $a\in V^\vee$.
\begin{cor}
The solution rank of $\tau$ at a smooth hyperplane section $a$ is
$$
\dim H^n(X)_{prim}+\dim H^{n-1}(Y_a)-\dim H^{n+1}(X).
$$
where the first term is the middle primitive cohomology of $X=G/P$ with $n=\dim X$.
\end{cor}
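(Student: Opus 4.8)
The plan is to deduce this from Theorem~\ref{holo-rank-thm} together with the classical Lefschetz package for the pair $(X,Y_a)$, so the corollary is essentially a bookkeeping exercise once Theorem~\ref{holo-rank-thm} is available. By Theorem~\ref{holo-rank-thm} the solution rank of $\tau$ at a smooth hyperplane section $a$ is $\dim H_n(X-Y_a)$, so it suffices to evaluate the latter. Write $U=X-Y_a$. Since $Y_a\in|\omega_X^{-1}|$ is a smooth ample divisor in the smooth projective variety $X=G/P$, $U$ is a smooth affine variety of dimension $n$ and $Y_a$ is smooth projective of dimension $n-1$. Working with $\C$-coefficients throughout, one has $\dim H_n(U)=\dim H^n(U)$, and Poincar\'e--Lefschetz duality for the oriented $2n$-manifold $U$ together with the compactness of $X$ gives $H^n(U)\cong H^n_c(U)\cong H^n(X,Y_a)$. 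Thus I am reduced to computing $\dim H^n(X,Y_a)$.

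First I would feed the long exact sequence of the pair $(X,Y_a)$ in degrees $n-1$ and $n$,
\[ H^{n-1}(X)\xrightarrow{\ i^*\ }H^{n-1}(Y_a)\xrightarrow{\ \delta\ }H^n(X,Y_a)\xrightarrow{\ q\ }H^n(X)\xrightarrow{\ i^*\ }H^n(Y_a), \]
into the bound $\dim H^n(X,Y_a)=\dim\operatorname{im}(\delta)+\dim\operatorname{im}(q)$. By the Lefschetz hyperplane theorem $i^*\colon H^{n-1}(X)\to H^{n-1}(Y_a)$ is injective, so $\dim\operatorname{im}(\delta)=\dim H^{n-1}(Y_a)-\dim H^{n-1}(X)$; and by exactness $\operatorname{im}(q)=\ker\bigl(i^*\colon H^n(X)\to H^n(Y_a)\bigr)$. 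Hence
\[ \dim H_n(U)=\dim H^{n-1}(Y_a)-\dim H^{n-1}(X)+\dim\ker\bigl(i^*\colon H^n(X)\to H^n(Y_a)\bigr). \]

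It remains to identify $\ker(i^*\colon H^n(X)\to H^n(Y_a))$. Let $L=c_1(\omega_X^{-1})\cup(-)$ be the Lefschetz operator on $H^\bullet(X)$. Using the adjunction $\langle i^*a,b\rangle_{Y_a}=\langle a,i_*b\rangle_X$ between $i^*$ and the Gysin map $i_*\colon H^{n-2}(Y_a)\to H^n(X)$, and the fact that $i^*\colon H^{n-2}(X)\to H^{n-2}(Y_a)$ is an isomorphism (Lefschetz hyperplane theorem, using $\calO_X(Y_a)=\omega_X^{-1}$), one gets $i_*b=L\tilde b$ whenever $b=i^*\tilde b$, so $\operatorname{im}(i_*)=L\,H^{n-2}(X)$. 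Therefore $\ker(i^*\colon H^n(X)\to H^n(Y_a))$ is the Poincar\'e-orthogonal complement of $L\,H^{n-2}(X)$ in $H^n(X)$; since $\langle a,L\tilde b\rangle_X=\langle La,\tilde b\rangle_X$ and the pairing $H^{n+2}(X)\times H^{n-2}(X)\to\C$ is nondegenerate, this complement equals $\ker(L\colon H^n(X)\to H^{n+2}(X))=H^n(X)_{prim}$. Finally, Hard Lefschetz on $X$ gives $\dim H^{n-1}(X)=\dim H^{n+1}(X)$. Substituting these two identities into the displayed formula yields
\[ \dim H_n(X-Y_a)=\dim H^n(X)_{prim}+\dim H^{n-1}(Y_a)-\dim H^{n+1}(X), \]
as claimed. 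I should remark that in the present case $X=G/P$ one can shortcut part of this: $H^{\mathrm{odd}}(X)=0$, so $H^{n-1}(X)$ vanishes and $H^{n+1}(X)$ vanishes as well when $n$ is even, while for $n$ even one still invokes the Lefschetz decomposition of $H^n(X)$ to get the $H^n(X)_{prim}$ term.

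The substantive input here is entirely Theorem~\ref{holo-rank-thm}; there is no new obstacle in the present deduction. The only points that require a little care are the identification of the $\ker i^*$ term with $H^n(X)_{prim}$ via Poincar\'e duality, and the use of Hard Lefschetz to rewrite $H^{n-1}(X)$ as $H^{n+1}(X)$ so that the answer is stated in terms of cohomology above the middle degree. One may equivalently run the homological computation through the Gysin sequence of the open embedding $U\hookrightarrow X$, which avoids Poincar\'e--Lefschetz duality at the cost of introducing residue maps; the two formulations are interchangeable.
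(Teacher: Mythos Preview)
Your argument is correct and is exactly the computation the paper has in mind: the corollary is stated in the paper without proof, as an immediate consequence of Theorem~\ref{holo-rank-thm} together with the standard Lefschetz package for the pair $(X,Y_a)$, and you have supplied precisely those details. One small expository quibble: the line ``$H^n(U)\cong H^n_c(U)$'' is not what Poincar\'e duality says directly---what you actually use (and what suffices) is $H_n(U)\cong H^n_c(U)\cong H^n(X,Y_a)$; and in your closing remark the clause ``while for $n$ even one still invokes\dots'' should read ``for $n$ odd''. Neither affects the validity of the proof.
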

The last two terms of the rank above can be computed readily in terms of the semisimple group $G$ and the parabolic subgroup $P$ by the Lefschetz hyperplane and the Riemann-Roch theorems. (See Example 2.4 \cite{LSY}). A second application of Theorem \ref{holo-rank-thm} is to find certain exceptional points $a$ in $V^\vee$ where the solution sheaf of $\tau$ degenerates ``maximally''.

\begin{defn}\label{intro:def}
A nonzero section $a\in V^\vee=\Gamma(X,\omega_X^{-1})$ is called a rank 1 point if the solution rank of $\tau$ at $a$
 is 1. In other words, $\Hom_{D_{V^\vee}}(\tau,\cO_{V^\vee,a})\simeq\C$.
\end{defn}

\begin{cor}\label{rank1point}
Any projective homogeneous variety admits a rank 1 point.
\end{cor}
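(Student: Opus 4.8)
The plan is to exhibit a rank $1$ point explicitly by taking $a\in V^\vee=\Gamma(X,\omega_X^{-1})$ to be a section whose zero locus $Y_a$ is maximally degenerate from the point of view of the cohomology computation. By Theorem~\ref{holo-rank-thm}, the solution rank of $\tau$ at $a$ equals $\dim H_n(X-Y_a)$, so it suffices to produce a nonzero $a$ for which $H_n(X-Y_a)$ is one-dimensional. The natural candidate is a section $a$ whose divisor of zeros $Y_a$ is a union of boundary divisors coming from a large open torus or open $G$-orbit; concretely, since $X=G/P$ is rational and $\omega_X^{-1}$ is very ample, one expects to find an anticanonical divisor $Y_a$ whose complement $X-Y_a$ is isomorphic (or at least homotopy equivalent) to an affine space, or more generally to a variety with trivial reduced homology except in degree $0$. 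In the homogeneous case there is a standard such divisor: take the complement of the big cell. For $X=G/P$ the open Bruhat cell $C\cong \bA^n$ is the complement of an anticanonical divisor $\partial X$ (the union of codimension-one Schubert varieties, which represents $-K_X$ up to the appropriate multiplicity), and $X-\partial X=C\cong\bA^n$ has $H_n(\bA^n)=H_0(\bA^n)=\C$ and vanishing homology in all other degrees.

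The key steps, in order, are as follows. First I would recall that for $X=G/P$ the anticanonical class is effective and in fact represented by the reduced boundary of the big cell: writing $X=\bigsqcup_{w}C_w$ for the Bruhat decomposition with respect to a Borel $B\subset P^-$, the union $D=\bigcup_{\ell(w)=n-1}\overline{C_w}$ of the codimension-one Schubert varieties is an anticanonical divisor (this is classical; see e.g. the description of $-K_{G/P}$ in terms of Schubert divisors). Second, I would choose $a\in\Gamma(X,\omega_X^{-1})$ to be a section cutting out $D$; such a section exists and is nonzero precisely because $D\in|-K_X|$. Third, I would identify $X-Y_a=X-D=C_{w_0}\cong\bA^n$, the open Bruhat cell, which is contractible, hence $H_k(X-Y_a)=0$ for $k>0$ and $H_0=\C$; in particular $\dim H_n(X-Y_a)=1$ when $n>0$, and the case $n=0$ is vacuous since $X$ would be a point and every tautological system has rank $1$ there. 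Fourth, I would invoke Theorem~\ref{holo-rank-thm} to conclude that the solution rank of $\tau=\tau(G,X,\omega_X^{-1},\beta_0)$ at this $a$ equals $\dim H_n(X-Y_a)=1$, so $\Hom_{D_{V^\vee}}(\tau,\cO_{V^\vee,a})\cong\C$ and $a$ is a rank $1$ point in the sense of Definition~\ref{intro:def}.

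The main obstacle is verifying that the chosen $a$ is a genuine honest global section of $\omega_X^{-1}$ with the prescribed reduced zero locus, rather than a section vanishing to higher order along some components, which would change the open complement. One must check that the boundary $D$ of the big cell, taken with its reduced structure, really lies in the linear system $|-K_X|$ and not merely in $|-mK_X|$ for some $m>1$; equivalently, that the multiplicities of the Schubert divisors in the anticanonical divisor are all $1$. This is where a little care with the combinatorics of $G/P$ is needed: for the full flag variety $G/B$ one has $-K_{G/B}=\sum_{\alpha>0}? $ — more precisely $-K_{G/B}$ is $2\rho$ in the weight lattice and equals $\sum_{i}\mathcal{L}_{\omega_i}$ summed over simple roots, whose associated effective divisor is the reduced sum of the codimension-one Schubert varieties — and the analogous statement for partial flag varieties follows by the same reasoning. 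An alternative route that sidesteps this multiplicity bookkeeping entirely: by Theorem~\ref{LY-theorem} and Theorem~\ref{holo-rank-thm} combined with the upper-semicontinuity of solution rank, it is enough to find \emph{any} nonzero $a$ with $X-Y_a$ having the homology of a point; one may instead degenerate a smooth anticanonical section to a union of hyperplane sections meeting the big cell in the expected way and control the homotopy type of the complement by a Mayer--Vietoris or affine-cover argument. Either way the crux is a transparent description of one maximally degenerate anticanonical divisor, and the homogeneous structure of $X$ supplies it.
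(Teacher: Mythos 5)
Your construction has two genuine gaps, and together they are fatal.

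First, the reduced boundary of the big Bruhat cell is \emph{not} an anticanonical divisor. Each codimension-one Schubert variety in $G/B$ has divisor class equal to a fundamental weight $\omega_i$, so their reduced union lies in $|\mathcal{L}(\rho)|$ with $\rho=\sum_i\omega_i$; but $-K_{G/B}$ corresponds to $2\rho$, not $\rho$. Your parenthetical "$-K_{G/B}$ is $2\rho$ \ldots\ and equals $\sum_i\mathcal{L}_{\omega_i}$" conflates $\rho$ and $2\rho$. Already for $X=\bP^1$ the single Schubert point is $\mathcal{O}(1)$ while $\omega_X^{-1}=\mathcal{O}(2)$, and for $X=\bP^n$ the unique Schubert hyperplane is $\mathcal{O}(1)$ while $\omega_X^{-1}=\mathcal{O}(n+1)$. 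There is therefore no section $a\in\Gamma(X,\omega_X^{-1})$ whose \emph{reduced} zero locus is the boundary of a single big Bruhat cell, except by taking a thickened section (e.g.\ $x^{n+1}$ on $\bP^n$).

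Second, even if you allow such a thickened section so that $X-Y_a$ is the big cell $\bA^n$, the asserted computation "$H_n(\bA^n)=H_0(\bA^n)=\C$" is simply false for $n>0$: $\bA^n$ is contractible, so $H_k(\bA^n)=0$ for all $k>0$. Theorem~\ref{holo-rank-thm} would then give solution rank $0$, not $1$. (Consistency check via Corollary~\ref{special:rank formula}: the rank equals $\dim H^n_c(U_a)$, and $H^n_c(\bA^n)=0$ for $n>0$ since only $H^{2n}_c(\bA^n)$ is nonzero.) For $\bP^1$ the rank-$1$ point is $a=xy$ with complement $\bG_m$ and $H^1_c(\bG_m)=\C$, not $a=x^2$ with complement $\bA^1$.

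The paper's own argument is close in spirit to yours but repairs exactly these two points. Instead of the Bruhat decomposition with respect to a single Borel, it uses the projected Richardson stratification of $G/P$ (after Lusztig, Rietsch, and Knutson--Lam--Speyer), whose open stratum is the isomorphic image of an open Richardson variety $X_u^w=(B^-uB/B)\cap(B^+wB/B)$, i.e.\ it uses \emph{both} a Borel and its opposite. The union of the closures of the codimension-one strata is then a \emph{reduced} anticanonical divisor by \cite[Lemma~5.4]{KLS10} -- morally, the two opposite boundaries contribute $\rho+\rho=2\rho$ -- which fixes the first gap. The open stratum is no longer contractible; instead $H^{N}_c(X_u^w)\cong\Hom(M_u,M_w)$ (Verma modules), which is one-dimensional by BGG or Kazhdan--Lusztig, fixing the second gap. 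Only then does Theorem~\ref{holo-rank-thm} yield a rank $1$ point. Your "alternative route" of degenerating a smooth section and controlling the homotopy type by Mayer--Vietoris is not developed enough to fill either gap.
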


We will construct these rank 1 points in two explicit but different ways. The first, which works for $G=SL_l$, is a recursive procedure that produces such a rank 1 point by assembling rank 1 points from lower step flag varieties, starting from Grassmannians, and by repeatedly applying Theorem \ref{holo-rank-thm}. The second way, which works for any semisimple group $G$, is by using a well-known stratification of the flag variety $G/B$ to produce an open stratum in $X=G/P$ with a one dimensional middle degree cohomology. The complement of this stratum is an anticanonical divisor, hence a rank 1 point of $X$ by Theorem \ref{holo-rank-thm}.

The geometric formula in Conjecture \ref{holo-rank} appears to go well beyond the context of homogeneous spaces.
Theorem \ref{holo-rank-thm} can be seen as a special case of the following much more general theorem.
Consider a smooth projective $G$-variety $X$ with $\calL=\omega_X^{-1}$ very ample. Set $\tau=\tau(G,X,\calL,\beta)$ and $V^\vee=\Gamma(X,\calL)$. We introduce some more notations. Let $\bL^\vee$ be the total space of $\calL$ and $\mathring{\bL}^\vee$ be the complement of the zero section. Let $$ev:V^\vee\times X\onto\bL^\vee, \quad (a,x)\mapsto a(x)$$ be the evaluation map, and $\bL^\perp:=\ker(ev)$. Finally let
$$\pi^\vee: U:=V^\vee\times X-\bL^\perp\to V^\vee.$$
Note that this is the complement of the universal family of hyperplane sections $\bL^\perp\onto V^\vee, \ (a,x)\mapsto a$. Put $$D_{X,\beta}=(D_X\otimes k_\beta)\otimes_{U\hat\frakg} k,$$ where $k_\beta$ is the 1-dimensional $\hat{\frakg}$-module given by the character $\beta$ (see \S \ref{Appendix} for the notations). 
We now state a main result of this paper.
\begin{thm}\label{intro:main}
For $\beta(e)=1$, there is a canonical isomorphism
$$\tau\simeq H^0\pi^\vee_+(\calO_{V^\vee}\boxtimes D_{X,\beta})|_U.$$
\end{thm}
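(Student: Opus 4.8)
The plan is to unwind both sides of the claimed isomorphism as D-modules on $V^\vee$ and match them directly, using the fact that $\pi^\vee$ is an affine open embedding's complement in a product so that $H^0\pi^\vee_+$ is computed by a relative de Rham / pushforward that one can make explicit. First I would observe that $\calO_{V^\vee}\boxtimes D_{X,\beta}$ is a $\hat G$-equivariant (relative to $\beta$) D-module on $V^\vee\times X$: the factor $D_{X,\beta}=(D_X\otimes k_\beta)\otimes_{U\hat\frakg}k$ is precisely the D-module on $X$ induced from the trivial connection twisted by the character $\beta$ of $\hat\frakg$ acting via the infinitesimal $\hat G$-action on $X$, so its sections carry the relations $Z(\xi)+\beta(\xi)$ once we push forward along a map that intertwines the $\hat G$-action on $X$ with the linear action on $V^\vee$. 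The key geometric input is that the evaluation map $ev:V^\vee\times X\to \bL^\vee$ and the resulting $U=V^\vee\times X-\bL^\perp\to V^\vee$ are $\hat G$-equivariant, and that, fiberwise over a point $a$, restricting $D_{X,\beta}$ to the complement of the hyperplane section $Y_a$ and taking global de Rham cohomology in degree $0$ recovers functions modulo the $\hat\frakg$-relations — this is where the twist $\beta(e)=1$ enters, matching the Euler operator normalization.

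Next I would set up the actual computation of $H^0\pi^\vee_+$. Since $\pi^\vee:U\to V^\vee$ is affine (it is the complement of the hypersurface $\bL^\perp$ inside the projective-over-$V^\vee$ scheme $V^\vee\times X$), the pushforward $\pi^\vee_+$ is represented by the relative de Rham complex, and $H^0$ is the relative $0$-th de Rham cohomology sheaf, i.e. the cokernel of the relative connection. Concretely I would present $\calO_U\otimes(\text{pullback of }D_{X,\beta})$ with its $D_{V^\vee\times X}$-module structure, localize along $\bL^\perp$ (introducing a section $\ell$ of $\calL$ with $\bL^\perp=\{\ell=0\}$, whose Fourier-dual incarnation is the operator producing $J(\hat X)$), and read off that the degree-$0$ relative de Rham cohomology is generated over $D_{V^\vee}$ by a single class — the image of $1$ — with exactly two families of relations: (i) the relations coming from vector fields on $X$ that annihilate the constant section, which under the pushforward and the identification $T^*V^\vee\leftrightarrow$ functions on $V$ become the operators in $J(\hat X)$ defining the cone $\hat X$; and (ii) the $\hat G$-invariance relations $Z(\xi)+\beta(\xi)$ coming from the equivariance of $D_{X,\beta}$ and of the whole diagram. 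Assembling these gives exactly the presentation $D_{V^\vee}/D_{V^\vee}J(\hat X)+D_{V^\vee}(Z(\xi)+\beta(\xi))$, which is $\tau$.

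The two halves of the argument that need care are: translating "vector fields on $X$ killing the trivial section of $D_{X,\beta}$, pushed forward along $ev$" into "the ideal $J(\hat X)$ of Fourier-transformed equations of the cone", and checking that the pushforward produces no higher relations and no correction terms (i.e. that the map from the tautological presentation is not merely a surjection but an isomorphism, and that $H^0$ is the whole story with no contribution sneaking in from $H^{<0}$ after the cohomology-sheaf truncation). For the first point I would use that $\hat X\subset V$ is, by definition, the affine cone over $X\subset \bP(V)$, i.e. the image of $\mathring{\bL}^\vee\to V$, so that equations of $\hat X$ pull back to $ev$ composed with $\calL$-homogeneity; the Fourier transform exchanges multiplication by these equations with the constant-coefficient operators $\widehat D$, and these are exactly the operators annihilating the $\delta$-type section that represents the pushforward of $\calO_X$. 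For the second point I would either invoke regularity/holonomicity (both sides are regular holonomic on $V^\vee$, singular along $\cup X_l^\vee$, by the discussion preceding the theorem) together with a generic-point rank comparison, or more robustly argue that the relative de Rham complex is concentrated in nonpositive degrees with $H^0$ as computed because $X$ is projective and $\bL^\perp$ is a relative hypersurface, so $H^0\pi^\vee_+$ is right-exact in the relevant sense and the surjection constructed from the generators-and-relations matching is forced to be an isomorphism.

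I expect the main obstacle to be precisely the identification of the relation ideal: making rigorous that, after the affine pushforward along $\pi^\vee$ and the Fourier-transform bookkeeping, the "geometric" relations (vanishing of $\calO_X$-coefficients along the cone, i.e. equations of $\hat X$) turn into the "analytic" ideal $J(\hat X)\subset\bC[\partial]$ with no defect — in other words, controlling the interaction between the D-module-theoretic pushforward and the Fourier transform reviewed in \S\ref{Appendix}, and verifying that the equivariance structure $D_{X,\beta}$ was set up so that the two sets of relations are exactly complementary (the $\hat\frakg$-relations cut down the "fiber" direction $X$, the $J(\hat X)$-relations cut down the "base" direction $V^\vee$). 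Once that dictionary is in place, the isomorphism should follow by comparing the explicit presentations.
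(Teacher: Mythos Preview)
Your outline is in the right spirit—compute $H^0\pi^\vee_+$ via the relative de Rham complex and compare with the cyclic presentation of $\tau$—but there is a genuine gap at the heart of it, and one step is actually mis-identified.

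First, the mis-identification. The ideal $J(\hat X)$ does \emph{not} arise from ``vector fields on $X$ that annihilate the constant section.'' The vector-field relations on the $X$ side are exactly the $\hat\frakg$-relations; what produces $J(\hat X)$ is the homogeneous coordinate ring $S=k[\partial_{a_i}]/J(\hat X)\simeq\calO_V/I(\hat X)$ appearing inside $\calR:=D_{V^\vee}/D_{V^\vee}J(\hat X)\simeq \calO_{V^\vee}\otimes S$. In other words, $J(\hat X)$ is already baked into $\calR$ as the Fourier transform of the defining ideal of the cone, and the task is to match $\calR$ (after taking $\hat\frakg$-coinvariants against $\beta$) with the right-hand side, not to rediscover $J(\hat X)$ from the $X$-direction.

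Second, the actual gap. What is missing is the explicit bridge between the two sides. The paper's argument writes $H^0\pi^\vee_+\calN=(\calO_{V^\vee}\boxtimes\omega_X)|_U\otimes_{U\frakg}k$ (using $\on{coker}(\Omega_X^{n-1}\otimes D_X\to\omega_X\otimes D_X)=\omega_X$), then uses the hypothesis $\calL=\omega_X^{-1}$ to identify $\omega_{U/V^\vee}=\calO_Uf^{-1}$, where $f=\sum a_i\otimes a_i^*$ is the universal section. The crux is then a concrete $D_{V^\vee}\times\hat\frakg$-module map
\[
\phi:\calR\otimes k_\beta\longrightarrow \calO_Uf^{-1},\qquad a\otimes b\longmapsto \frac{(-1)^k k!}{f^{k+1}}\,a\otimes b\quad(\deg b=k),
\]
verified by direct calculation to intertwine both $\partial_{a_i}$ and $\frakg$. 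This $\phi$ is surjective, and its kernel is generated precisely by $(k+1)(a\otimes b)+f\cdot(a\otimes b)=(a\otimes b)\alpha(e)$; so after taking $\hat\frakg$-coinvariants (with $\beta(e)=1$ accounting exactly for the shift $k\mapsto k+1$), one gets the isomorphism $\tau\simeq\calO_Uf^{-1}\otimes_\frakg k$. Your proposal has no analogue of $\phi$, no use of $\calL=\omega_X^{-1}$ to identify $\omega_{U/V^\vee}$, and no mechanism by which $\beta(e)=1$ enters—these are not peripheral details but the substance of the proof. Finally, your fallback of ``holonomicity plus generic rank comparison'' to upgrade a surjection to an isomorphism would be circular here: the rank formula is a \emph{consequence} of the theorem, not an input to it.
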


\begin{cor} \label{dim coho}
Suppose $G$ acts on $X$ by finitely many orbits,  and $k=\C$. Then the solution rank of $\tau$ at $a\in V^\vee$ is given by $\dim H_c^n(U_a, \on{Sol}(D_{X,\beta})|_{U_a})$, where $U_a=X-Y_a$.
\end{cor}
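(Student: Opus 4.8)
The plan is to deduce Corollary~\ref{dim coho} from Theorem~\ref{intro:main} by a sequence of standard but delicate manipulations with the solution functor, duality, and base change for $D$-modules. First I would recall that, for a holonomic $D$-module $M$ on $V^\vee$ and a point $a \in V^\vee$, the solution complex $\on{Sol}(M)_a = R\Hom_{D_{V^\vee}}(M, \calO_{V^\vee,a})$ computes the stalk cohomology of $\on{Sol}(M) = R\Hom_{D_{V^\vee}}(M,\calO_{V^\vee})$, and that the \emph{solution rank} at $a$ is $\dim_{\C} \Hom_{D_{V^\vee}}(\tau, \calO_{V^\vee,a}) = \dim_\C \calH^0(\on{Sol}(\tau)_a)$. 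So the task is to identify $\calH^0(\on{Sol}(\tau)_a)$ with $H^n_c(U_a, \on{Sol}(D_{X,\beta})|_{U_a})$.

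The key step is to feed the isomorphism $\tau \simeq H^0\pi^\vee_+(\calO_{V^\vee}\boxtimes D_{X,\beta})|_U$ of Theorem~\ref{intro:main} into the solution functor. First I would argue that, because $G$ acts on $X$ with finitely many orbits, the $D$-module $\calO_{V^\vee}\boxtimes D_{X,\beta}$ is holonomic on $V^\vee\times X$, hence its restriction to the open set $U$ is holonomic, and $\pi^\vee_+$ of it is a bounded complex of holonomic $D$-modules on $V^\vee$; moreover I expect that in the relevant range (or after the regularity hypotheses already in force in the excerpt) the higher cohomologies $H^i\pi^\vee_+$ for $i\neq 0$ either vanish or do not contribute to $\calH^0$ of the solution stalk, so that passing from the full pushforward complex to its $0$-th cohomology $\tau$ is harmless for computing the solution rank. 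Then I would apply the Riemann--Hilbert correspondence: $\on{Sol}$ intertwines $\pi^\vee_+$ with $R(\pi^\vee)_! $ on the constructible side (up to the usual shift by $\dim$), so that $\on{Sol}(\pi^\vee_+(\calO_{V^\vee}\boxtimes D_{X,\beta})|_U)$ is computed by the proper-with-compact-support pushforward along $\pi^\vee$ of $\on{Sol}((\calO_{V^\vee}\boxtimes D_{X,\beta})|_U)$. Taking the stalk at $a$ and using proper base change identifies this with the compactly supported cohomology of the fiber $\pi^{\vee,-1}(a) = \{a\}\times(X - Y_a) = U_a$ with coefficients in the restriction of $\on{Sol}(\calO_{V^\vee}\boxtimes D_{X,\beta}) = \C_{V^\vee}\boxtimes \on{Sol}(D_{X,\beta})$, which is exactly $\on{Sol}(D_{X,\beta})|_{U_a}$. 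Keeping track of the cohomological degree shifts — $\on{Sol}$ of a $D$-module on a smooth variety of dimension $d$ is concentrated appropriately, and $\dim U - \dim V^\vee = \dim X = n$ — pins the relevant term to $H^n_c(U_a, \on{Sol}(D_{X,\beta})|_{U_a})$.

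The main obstacle, and the step that requires the most care, is the bookkeeping of shifts and the passage from the pushforward \emph{complex} to its zeroth cohomology sheaf $\tau$. One has to verify that $\calH^0(\on{Sol}(\tau)_a)$ really equals the degree-$n$ piece of $\on{Sol}$ of the full pushforward complex, i.e. that the truncation $\tau = H^0\pi^\vee_+(\cdots)$ does not mix solution-sheaf cohomology across degrees in a way that corrupts the count in degree $0$. Here I would invoke that, under the finite-orbit and regular-singularity hypotheses, $\tau$ is regular holonomic (as recalled in the excerpt via the argument of \cite{Kapranov1997} and \cite{LSY} Theorem 3.4(1)), so that its solution complex is a perverse sheaf and its stalk in degree $0$ is what governs the rank; and I would either show the higher $H^i\pi^\vee_+$ contribute only in degrees that, after the $\on{Sol}$-shift, land strictly away from $0$, or restrict attention to the generic/relevant stratum where $\pi^\vee_+$ is concentrated in one degree. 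A secondary point to check is that the restriction $|_U$ commutes with everything as claimed: restricting to the open $U\subset V^\vee\times X$ before pushing forward is the same as removing the universal family $\bL^\perp$, and on the topological side this is the open inclusion producing the $H^*_c$ of the \emph{open} fiber $U_a = X - Y_a$ rather than of all of $X$, which is precisely the geometric content of the rank formula. Once these degree and base-change compatibilities are nailed down, Corollary~\ref{dim coho} follows formally.
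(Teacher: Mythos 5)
Your overall strategy is the same as the paper's: apply $\on{Sol}$ to the isomorphism of Theorem~\ref{intro:main}, use the compatibility of $\on{Sol}$ with direct image to pass to $R\pi^\vee_!$ of a constructible sheaf, and take the stalk at $a$ via base change. You also correctly identify the delicate point: $\tau$ is only the \emph{zeroth} cohomology $H^0\pi^\vee_+\calN$, so one must justify that truncating the pushforward complex to a single $D$-module does not corrupt the count of solutions in the relevant degree.

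However, you stop at identifying the obstacle without actually closing it, and this is exactly the one step where the paper's proof does real work. You propose two possible repairs. The second one, ``restrict attention to the generic/relevant stratum where $\pi^\vee_+$ is concentrated in one degree,'' does not prove the corollary: the statement is for every $a\in V^\vee$, not generic $a$, and the interesting applications (rank 1 points) occur precisely at non-generic $a$. The first repair --- show the higher $H^i\pi^\vee_+$ land in degrees that cannot contribute --- is the right one, but it needs an actual vanishing input, and you do not supply it. The paper's input is this: $\pi^\vee:U\to V^\vee$ is an \emph{affine} morphism (each fiber $U_a=X-Y_a$ is the complement of an ample divisor, hence affine), so by Artin's theorem $R\pi^\vee_!$ of the perverse sheaf $\on{Sol}(\calN)[\dim U]$ lives in nonnegative perverse degrees on $V^\vee$. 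Since on a smooth $r$-dimensional variety any object of ${}^pD^{\geq 1}$ has vanishing ordinary cohomology sheaves in degrees $\leq -r$, the degree-$(-r)$ ordinary cohomology sheaf is insensitive to passing from $R\pi^\vee_!(\cdots)$ to its $0$-th perverse truncation, which is $\on{Sol}(\tau)$. That single Artin-vanishing observation is what makes the whole bookkeeping argument go through for all $a$, and it is missing from your write-up. Also note that the reference to $\tau$ being regular holonomic, hence $\on{Sol}(\tau)$ perverse, while true, does not by itself resolve the truncation issue --- the question is the comparison of $\on{Sol}(\tau)$ with $\on{Sol}(\pi^\vee_+\calN)$, not a property of $\on{Sol}(\tau)$ alone.
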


More generally,  we have

\begin{thm}\label{intro:general beta CY case}
For $\beta(e)\notin\bZ_{\leq0}$, and $\calL=\omega_X^{-1}$, there is a canonical isomorphism
$$\tau\simeq H^0\pi^\vee_+ev^!(D_{\mathring{\bL}^\vee,\beta})[1-r].$$
\end{thm}

In addition to proving Conjecture \ref{holo-rank} as a special case, Theorem \ref{intro:main} can also be used to derive the well-known formula for the solution rank of a GKZ system \cite{GKZ1990} at generic point $a$. 
But since Corollary \ref{dim coho} holds for arbitrary $a\in V^\vee$, it holds in particular for $a$ corresponding the union of all $T$-invariant divisors in $X$ (which is anticanonical). In this case, Theorem \ref{intro:main} implies that $a$ is a rank 1 point -- a result of \cite{HLY1996} based on Gr\"obner basis theory but motivated by applications to mirror symmetry. Thus Theorem \ref{intro:main} and Corollary \ref{dim coho} interpolate a result of \cite{GKZ1990} and \cite{HLY1996} by unifying the rank formula at generic point and at those exceptional rank 1 points, and at the same time, generalize them to an arbitrary $G$-variety.

Theorems \ref{intro:general beta CY case} and  \ref{special:main} are clearly motivated by period integral problems in Calabi-Yau geometry. Equally important parallel problems for manifolds of general type have also been systematically studied \cite{LY}\cite{CL2014}. In this paper,
we develop the general type analogues of those two main theorems. Roughly speaking, $\omega_X^{-1}$ is replaced by an arbitrary very ample invertible sheaf $\calL$ on $X$, and $\tau$ by a larger differential system defined on $\Gamma(X,\calL)^\vee\times\Gamma(X,\calL\otimes\omega_X)^\vee$. This class of systems arise naturally from period integrals of general type hypersurfaces in $X$. The precise statements will be formulated and proved in \S\ref{general type hyperplane} and \S\ref{general beta general type case}.

We now outline the paper. In \S\ref{CY case}, we prove Theorem \ref{intro:main} and a number of its consequences, including
Theorem \ref{holo-rank-thm}. We also describe explicitly the ``cycle-to-period'' map $H_n(X-Y_a)\ra\Hom_{D_{V^\vee}}(\tau,\cO_{V^\vee,a})$
as a result of Theorem \ref{intro:main}, and use it to answer a question recently communicated to us by S. Bloch.
While \S\ref{CY case} deals only with the case $\beta(e)=1$, we remove this assumption in \S\S\ref{!fiber}-\ref{exact seq}.
In \S\ref{!fiber}, we study the $!$-fibers of $\tau$, and describe some vanishing results at the special point $a=0$. We describe the geometric set up in \S\ref{geometry} for proving Theorem \ref{intro:general beta CY case}. The key step of the proof, involving an exact sequence for $\tau$, is done in \S\ref{exact seq}. In \S\ref{general type hyperplane} and \S\ref{general beta general type case}, we prove the general type analogues of Theorems \ref{intro:general beta CY case} and  \ref{special:main}.
 Finally, we apply our results to construct rank 1 points for partial flag varieties in the case $G=SL_l$ in \S\S\ref{partial1}-\ref{partial2}, and for general semisimple groups in \S\ref{G/P}. The appendix \S\ref{Appendix} collects some standard facts on D-modules.

\blue{ Should we mention sections 3 and 7 in the above paragraph? Or maybe we should not, as it is already mentioned before this paragraph?}

{\it Acknowledgements.} S. Bloch has independently noticed the essential role of the Riemann-Hilbert correspondence in connecting the de Rham cohomology and solution sheaf of a tautological system. We thank him for kindly sharing his observation with us. We also thank T. Lam for helpful communications. A.H. would like to thank S.-T. Yau for advice and continuing support, especially for providing valuable resources to facilitate his research. B.H.L. is partially supported by NSF FRG grant DMS 1159049. X.Z. is supported by NSF grant DMS-1313894 and DMS-1303296 and the AMS Centennial Fellowship.

\section{CY hyperplane sections}\label{CY case}

We begin with Theorem \ref{intro:main} : $X$ is a $G$-variety with $\calL=\omega_X^{-1}$ very ample, and $\beta(e)=1$. This is in fact a special case of the more general Theorem \ref{intro:general beta CY case} and therefore can be also obtained by the methods introduced in later sections. However, we decide to deal with this case first for several reasons. On the one hand, the proof given here is different from the later method and is more direct. On the other hand, the subcase when $\beta(\frakg)=0$, i.e. $\beta=\beta_0$, which is important to mirror symmetry, is already covered by Theorem \ref{intro:main}.

Let $n=\dim X$. Let $U=V^\vee\times X-V(f)$, where $V(f)=\bL^\perp$ is the universal hyperplane section, so that $U_a=X-V(f_a)$ where $V(f_a)=Y_a$, the zero locus of the section $f_a\equiv a\in V^\vee$. Let $\pi^\vee:U\to V^\vee$ denote the projection. The restriction of $\beta$ to $\frakg$ is still denoted by $\beta$ when no confusion arises.
Put $D_{X,\beta}=(D_X\otimes k_\beta)\otimes_{U\frakg} k$. Note that if $G$ acts on $X$ by finitely many orbits, then $D_{X,\beta}$ is $(G,\beta)$-equivariant holonomic D-module on $X$ (see Lemma \ref{mono:rh} and \ref{mono:eq}), and therefore 
$$\calN:=(\calO_{V^\vee}\boxtimes D_{X,\beta})|_U$$ 
is a holonomic D-module on $U$.

\begin{thm}\label{special:main}
Assume that $\beta(e)=1$. Then there is a canonical isomorphism $\tau\simeq H^0\pi^\vee_+\calN$.
\end{thm}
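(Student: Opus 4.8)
The plan is to compute $H^0\pi^\vee_+\calN$ directly by pushing forward along the projection $\pi^\vee: U = V^\vee \times X - V(f) \to V^\vee$, and to identify the result with the cyclic presentation of $\tau$. The basic strategy is to realize $\calN$ as the restriction to $U$ of an explicit D-module on $V^\vee \times X$ — essentially $\calO_{V^\vee}\boxtimes D_{X,\beta}$ twisted appropriately — and then to use the relative de Rham complex for $\pi^\vee$, which here has relative dimension $n=\dim X$. Since $D_{X,\beta}$ is holonomic (when $G$ acts with finitely many orbits; in general one works with the defining quotient presentation directly), and the projection is affine on the complement of the divisor, the pushforward is concentrated in a single degree and $H^0\pi^\vee_+\calN$ is computed by the top relative de Rham cohomology. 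The key point is that sections of $\omega_X^{-1}$, restricted away from $V(f)$, let one trivialize the relative canonical bundle: multiplication by powers of $1/f$ converts relative differential forms into functions, and this is precisely the mechanism that produces the Fourier-transformed defining ideal $J(\hat X)$ and the twisted Euler operator appearing in the definition of $\tau$.

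First I would set up the relative de Rham resolution: $\pi^\vee_+\calN$ is represented by the complex $\pi^\vee_*(\calN \otimes_{\calO_U} \Omega^{\bullet}_{U/V^\vee})[n]$, using that $\pi^\vee$ restricted to $U$ is affine (complement of an ample divisor in each fiber), so $R^{>0}\pi^\vee_*$ vanishes on quasi-coherent sheaves and only the top cohomology of the relative de Rham complex survives in degree $0$. Second, I would make the identification $D_{X,\beta} = (D_X \otimes k_\beta)\otimes_{U\frakg} k$ concrete: as a D-module it is generated by a single section on which $\frakg$ acts via $-\beta$ (up to the twist coming from the $\calO(1)$-linearization), and its de Rham complex computes the $\frakg$-equivariant (twisted) de Rham cohomology of $X$. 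Third — the heart of the argument — I would produce the isomorphism with $\tau = D_{V^\vee}/(D_{V^\vee}J(\hat X) + D_{V^\vee}(Z(\xi)+\beta(\xi)))$ by exhibiting a $D_{V^\vee}$-linear surjection from $\tau$ onto $H^0\pi^\vee_+\calN$ sending the cyclic generator $1 \mapsto$ the class of the relative top form $\frac{1}{f}\,(\text{volume form on } X \text{ valued in } \omega_X^{-1})$, checking that: (a) the operators $\widehat{D}$ for $D \in I(\hat X)$ annihilate this class — this uses that $D$ vanishes on the cone $\hat X$, so after Fourier transform the corresponding differential operator kills $1/f$ modulo exact forms; and (b) the operators $Z(\xi)+\beta(\xi)$ annihilate it — for $\xi \in \frakg$ this is the $G$-equivariance built into $D_{X,\beta}$, and for $\xi = e$ the relation $Z(e) + 1$ reflects that $1/f$ has the correct homogeneity degree $-1$ (here $\beta(e)=1$ enters decisively). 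Finally, injectivity/surjectivity of this map is upgraded to an isomorphism by a dimension or filtration comparison, or by identifying both sides with the same explicit $D$-module via the Fourier-transform dictionary of \S\ref{Appendix}.

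The main obstacle I expect is step (a): showing that the Fourier transform $\widehat{D}$ of an element of the defining ideal $I(\hat X)$ acts as zero on the relative de Rham class of $1/f$. Concretely, $f = \sum a_i x_i$ in homogeneous coordinates on the cone, so $\partial_{a_i}$ acts as multiplication by $x_i/f^2$ (up to sign and constants), and a polynomial relation $D(x) = 0$ on $\hat X$ should translate, after clearing denominators and integrating by parts against the relative volume form, into an exact form on each fiber $U_a$. Making this precise requires care about: the twist by $\omega_X^{-1}$ (so that $1/f$ is genuinely a section of the relative dualizing sheaf, not just a rational function); the passage between the cone $\hat X \subset V$ and $X \subset \bP(V)$, i.e. keeping track of the $\bG_m$-action and why homogeneity of $D$ matters; and the compatibility of the Euler/homogeneity bookkeeping with the relative de Rham differential. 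Once this compatibility is nailed down, the equivariance relations for $\xi \in \frakg$ follow formally from the construction of $D_{X,\beta}$, and the identification is complete.
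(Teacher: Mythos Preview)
Your overall strategy matches the paper's, but you have misdiagnosed the main difficulty. Step (a) is actually easy and needs no integration by parts: since $\partial_{a_i}(1/f)=-(1\otimes a_i^*)/f^2$, iterating gives $\widehat D(1/f)=(-1)^k k!\,(1\otimes D(a^*))/f^{k+1}$ for $D$ homogeneous of degree $k$, and $D(a^*)$ vanishes identically as a section of $\calL^k$ on $X$ precisely because $D\in I(\hat X)$. So $\widehat D(1/f)=0$ on the nose, not merely in cohomology; the Euler and $\frakg$-relations are checked the same way. (Incidentally, affineness of $\pi^\vee$ does not force $\pi^\vee_+\calN$ to be concentrated in a single degree --- only that it is computed by the relative de Rham complex with no higher sheaf direct images; but this does not affect the argument for $H^0$.)

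The genuine gap is your last sentence: ``injectivity\ldots by a dimension or filtration comparison, or\ldots the Fourier-transform dictionary.'' Neither suggestion does the job without substantial further input (the Fourier route is essentially the content of \S\S\ref{!fiber}--\ref{exact seq} and requires separate vanishing results). The paper's direct argument works one level up: define $\phi:\calR\to \calO_U f^{-1}$, where $\calR=D_{V^\vee}/D_{V^\vee}J(\hat X)\simeq \calO_{V^\vee}\otimes S$ with $S$ the homogeneous coordinate ring of $\hat X$, by the explicit formula $\phi(a\otimes b)=\frac{(-1)^k k!}{f^{k+1}}\,a\otimes b$ for $b$ of degree $k$. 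One checks directly that $\phi$ is a surjective $D_{V^\vee}\times\hat\frakg$-module map, and --- this is the crux --- that its kernel is \emph{exactly} spanned by the elements $(k+1)(a\otimes b)+f\cdot(a\otimes b)$, i.e.\ the image of the Euler element acting on $\calR\otimes k_\beta$. Taking $\hat\frakg$-coinvariants therefore yields the isomorphism $\tau\simeq(\calO_U f^{-1})\otimes_\frakg k$, and identifying the right side with $H^0\pi^\vee_+\calN$ is a short de Rham computation using $\calO_Uf^{-1}\simeq\omega_{U/V^\vee}$. The factorial in $\phi$ is not decoration: it is what makes $\phi$ commute with $\partial_{a_i}$ and what pins the kernel down precisely to the Euler relation, so that injectivity follows for free after coinvariants.
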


\begin{cor}\label{surjective}
 If $\beta({\frakg})=0$. There is a canonical surjective map
\[\tau\to H^0\pi^\vee_+\calO_U.\]
\end{cor}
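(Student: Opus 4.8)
The plan is to produce, at the level of $D$-modules on $U$, a canonical surjection $\calN\twoheadrightarrow\calO_U$, apply $H^0\pi^\vee_+$, and then identify the source with $\tau$ via Theorem~\ref{special:main}. For the surjection: when $\beta|_\frakg=0$ the one-dimensional $\frakg$-module $k_\beta$ is trivial, so $D_{X,\beta}=(D_X\otimes k_\beta)\otimes_{U\frakg}k=D_X/D_X\frakg$, where $\frakg$ acts through the infinitesimal action $\frakg\to\Gamma(X,T_X)\subset D_X$. Since the image of $\frakg$ lies in the subsheaf $T_X\subset D_X$ of vector fields and $\calO_X=D_X/D_XT_X$ as a left $D_X$-module, one has $D_X\frakg\subset D_XT_X$, hence a canonical $D_X$-linear surjection $D_{X,\beta}\twoheadrightarrow\calO_X$. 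Applying the exact functor $\calO_{V^\vee}\boxtimes(-)$ and restricting to the open $U$ gives a canonical surjection of $D$-modules on $U$,
\[\calN=(\calO_{V^\vee}\boxtimes D_{X,\beta})|_U\ \twoheadrightarrow\ (\calO_{V^\vee}\boxtimes\calO_X)|_U=\calO_U,\]
whose kernel $\calK$ is an ordinary $D_U$-module placed in degree $0$.

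Next I would record that $\pi^\vee$ is an affine morphism. The divisor $\bL^\perp=V(f)$ in $V^\vee\times X$ is the zero locus of the tautological section $f=ev$ of $\pr_X^*\calL$; as $\calL$ is very ample, $\pr_X^*\calL$ is relatively very ample over $V^\vee$, so $\bL^\perp$ is a relatively ample effective Cartier divisor and its complement $U$ is affine over $V^\vee$ (equivalently, each fiber $U_a=X-Y_a$ is affine, $Y_a\in|\omega_X^{-1}|$ being ample). Consequently $\pi^\vee$ is affine of relative dimension $n$, so for any $D$-module $M$ on $U$ the complex $\pi^\vee_+M$ has cohomology concentrated in degrees $[-n,0]$; in particular $H^j\pi^\vee_+M=0$ for $j>0$, so $H^0\pi^\vee_+(-)$ is right exact on $D$-modules on $U$.

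Finally I would apply $\pi^\vee_+$ to $0\to\calK\to\calN\to\calO_U\to0$ and pass to the long exact cohomology sequence: the term $H^1\pi^\vee_+\calK$ vanishes by the previous paragraph, so the induced map $H^0\pi^\vee_+\calN\to H^0\pi^\vee_+\calO_U$ is surjective; composing with the canonical isomorphism $\tau\xrightarrow{\ \sim\ }H^0\pi^\vee_+\calN$ of Theorem~\ref{special:main} (which is $D_{V^\vee}$-linear) yields the desired canonical surjection $\tau\to H^0\pi^\vee_+\calO_U$. The parts involving only module theory are formal; the one genuine point is this last step, since $H^0$ of a pushforward along a proper (non-affine) map need not preserve surjections — it is the affineness of $\pi^\vee$, equivalently the affineness of the hyperplane-section complements $X-Y_a$, that forces $\pi^\vee_+$ to have cohomological amplitude $[-n,0]$ and hence makes $H^0\pi^\vee_+$ right exact.
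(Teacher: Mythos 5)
Your proof is correct and is essentially the paper's own argument, written out in full detail. The paper's proof merely notes the surjection $D_{X,0}=D_X/D_X\frakg\twoheadrightarrow D_X/D_XT_X=\calO_X$ and then says "the corollary follows"; the omitted content is exactly what you supplied (exactness of $\calO_{V^\vee}\boxtimes(-)$ and of open restriction, affineness of $\pi^\vee$ so that $\pi^\vee_+$ has cohomological amplitude $[-n,0]$ and hence $H^0\pi^\vee_+$ is right exact, and composition with Theorem~\ref{special:main}).
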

\begin{proof}
Note that there is always a surjective map $D_{X,0}=D_X/D_X\frakg\to D_X/D_XT_X=\calO_X$. The corollary follows from the fact that $\pi^\vee_+$ is right exact as
$\pi^\vee:  U\to V^\vee$ is affine.
\end{proof}

We turn to the solution sheaf of $\tau$ via the Riemann-Hilbert correspondence.
Assume $G$ acts on $X$ by finitely many orbits. Let us write $\calF=\on{Sol}(D_{X,\beta})$. This is a perverse sheaf on $X$.
\begin{cor}\label{special:rank formula}
Let $k=\C$ and $a\in V^\vee$. Then the solution rank of $\tau$ at $a$ is given by $\dim H_c^{0}(U_a, \calF|_{U_a})$.
\end{cor}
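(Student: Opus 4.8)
The plan is to deduce this from Theorem~\ref{special:main}, $\tau\simeq H^0\pi^\vee_+\calN$, by pushing the isomorphism through the Riemann--Hilbert correspondence and proper base change. Write $\on{Sol}(-)=R\mathcal{H}om_{D}(-,\calO)$; by definition (cf.\ Definition~\ref{intro:def}) the solution rank of $\tau$ at $a$ equals $\dim_{\C}\mathcal{H}^0(\on{Sol}(\tau))_a$, and since $k=\C$ we work analytically throughout. By the finite-orbit hypothesis, together with the regular singularity of $\calL_\beta$, the D-module $D_{X,\beta}$ is regular holonomic, hence so are $\calN$ and $K:=\pi^\vee_+\calN$, and $\on{Sol}$ carries all of these into bounded constructible complexes. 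Moreover $\pi^\vee$ is affine of relative dimension $n$, so $K$ lives in cohomological degrees $[-n,0]$ with $H^0K=\tau$.

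\emph{Step 1: reduce from $H^0K$ to $K$.} Let $K'$ denote $K$ truncated in cohomological degrees $\le-1$, so that there is a distinguished triangle $K'\to K\to\tau\xrightarrow{+1}$. The solution complex of a D-module placed in degree $0$ lies in $D^{\ge0}$, so the contravariant functor $\on{Sol}$ carries $D^{\le-1}$ into $D^{\ge1}$; in particular $\mathcal{H}^{-1}(\on{Sol}K')=\mathcal{H}^{0}(\on{Sol}K')=0$. Applying $\on{Sol}$ to the triangle, restricting to $a$, and reading off the long exact cohomology sequence then gives $\mathcal{H}^0(\on{Sol}\tau)_a\cong\mathcal{H}^0(\on{Sol}K)_a$. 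It is essential here that $\tau$ is precisely the \emph{zeroth} D-module cohomology of $\pi^\vee_+\calN$: this complex is in general not concentrated in degree $0$ (its cohomology in degrees $-n,\dots,-1$ records the de Rham cohomology of $D_{X,\beta}$ over the fibres $U_a=X-Y_a$ in lower degrees), so the truncation cannot simply be discarded.

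\emph{Step 2: Riemann--Hilbert and base change.} By compatibility of $\on{Sol}$ with D-module pushforward, $\on{Sol}(\pi^\vee_+\calN)\simeq R\pi^\vee_!\,\on{Sol}(\calN)$ up to the shift $[\dim U-\dim V^\vee]=[n]$; by compatibility of $\on{Sol}$ with external products and with restriction to the open subset $U$, $\on{Sol}(\calN)=\on{Sol}\big((\calO_{V^\vee}\boxtimes D_{X,\beta})|_U\big)=(\C_{V^\vee}\boxtimes\calF)|_U$, where $\calF=\on{Sol}(D_{X,\beta})$ as in the statement. Passing to the stalk at $a$ and using proper base change for $\pi^\vee_!$ along $\{a\}\hookrightarrow V^\vee$, together with the identification $U_a=\{a\}\times(X-Y_a)$ and $(\C_{V^\vee}\boxtimes\calF)|_{U_a}=\calF|_{U_a}$, we obtain $\on{Sol}(K)_a\simeq R\Gamma_c(U_a,\calF|_{U_a})[n]$. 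Taking $\mathcal{H}^0$ and combining with Step~1 gives $\mathcal{H}^0(\on{Sol}\tau)_a\cong H^n_c(U_a,\calF|_{U_a})$, which is the assertion.

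So, granting Theorem~\ref{special:main}, the corollary is essentially formal; the one step requiring any thought is Step~1, the reduction from $\tau=H^0\pi^\vee_+\calN$ to the full complex $\pi^\vee_+\calN$ inside the degree-$0$ solution cohomology at $a$. Everything else --- regular holonomicity from the finite-orbit hypothesis, the standard compatibilities of $\on{Sol}$ with $\pi^\vee_+$ and with $\boxtimes$, proper base change, and tracking the single degree shift so that the answer lands in $H^n_c$ --- is routine D-module formalism. As a consistency check: when $X$ is a homogeneous $G$-variety with $\beta|_\frakg=0$ one has $D_{X,\beta}=\calO_X$, and the formula specializes to $\dim H^n_c(U_a,\C_{U_a})=\dim H_n(X-Y_a)$, which is exactly the route by which Theorem~\ref{holo-rank-thm} is obtained from Theorem~\ref{special:main}.
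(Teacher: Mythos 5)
Your proof is correct and follows essentially the same route as the paper's: both invoke Theorem \ref{special:main}, push through the Riemann--Hilbert correspondence, use proper base change for $\pi^\vee_!$, and justify passing from $H^0\pi^\vee_+\calN$ to the full complex $\pi^\vee_+\calN$ by an amplitude argument (you argue on the D-module side via the truncation triangle and the fact that $\on{Sol}$ sends $D^{\le -1}$ into $D^{\ge 1}$, while the paper argues on the constructible side via the perverse $t$-structure, noting $R\pi^\vee_!\bC_U[n+r]\in {}^pD^{\ge 0}$ — these are the same mechanism). A minor plus: you carry out the argument for general $D_{X,\beta}$ rather than only in the simplified homogeneous case as the paper does.
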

\begin{proof}Denote $\on{Sol}(\calN)=\calG$.
According to the Riemann-Hilbert correspondence, $\on{Sol}(\tau)={^p}R^0\pi^\vee_!\calG$, where ${^p}R^0\pi^\vee_!$ denotes the $0$th perverse cohomology of $\pi^\vee_!$. Then the non-derived solution sheaf ${^{cl}}\on{Sol}(\tau)=\Hom_{D_{V^\vee}}(\tau,\calO_{V^\vee})$ is given by $H^{-r}({^p}R^0\pi^\vee_!\calG)$, the $(-r)$th (standard) sheaf cohomology of  ${^p}R^0\pi^\vee_!\calG$. However, as $R\pi^\vee_!\calG$ lives in positive perverse degrees, $H^{-r}({^p}R^0\pi^\vee_!\calG)=H^{-r}R\pi^\vee_!\calG= R^{-r}\pi^\vee_!\calG$. As $\calG=\bC[r]\boxtimes {\calF} |_U$, the claim follows.
\end{proof}

\begin{rmk}
We will give more explicit descriptions of the perverse sheaf $\calF$ in various situations later on. For example, in the case $X$ is a homogenous $G$-variety and $\beta({\frakg})=0$, then $\calF=\bC[n]$.
\end{rmk}

Now we prove Theorem \ref{special:main}. We will assume $\beta({\frakg})=0$ to simplify notations.

Let us write
\begin{equation}\label{special:R}
\calR:=D_{V^\vee}/D_{V^\vee}J(\hat{X}),
\end{equation}
which is a left $D_{V^\vee}$-module. Observe that
for any $\xi\in\hat{\frakg}$,
$D_VI(\hat{X})Z^\vee(\xi)\subset D_VI(\hat{X})$,
so
$D_{V^\vee}J(\hat{X})Z(\xi)\subset D_{V^\vee}J(\hat{X})$.
Therefore, $D_{V^\vee}J(\hat{X})$ can be regarded as a \emph{right}
$\hat{\frakg}$-module, on which $\xi\in\hat{\frakg}$ acts via the right
multiplication by $Z(\xi)$.
Accordingly, $\calR$ is also a right
$\hat{\frakg}$-module. In addition, by definition we have
\begin{equation}\label{special:exp of tau}
\tau=(\calR\otimes k_\beta)\otimes_{\hat{\frakg}}k,
\end{equation}
where $k_\beta$ is the 1-dimensional representation of $\hat{\frakg}$ defined by $\beta$.

We now convert $\calR$ to a left $\hat\frakg$-module (cf. \cite[\S 2]{BHLSY}.) Let $\{a_i\}$ be a basis of $V$ and $\{a_i^*\}$ the dual basis.
Observe that as $\calO_{V^\vee}$-modules, one can write
\[\calR\simeq \calO_{V^\vee}\otimes S,\]
where
\begin{equation}\label{speical:S}
S=k[\partial_{a_i}]/J(\hat{X})\simeq \calO_V/I(\hat{X})
\end{equation}
is
identified with the homogeneous coordinate ring of $\hat{X}$, and
$\calO_{V^\vee}$ acts on the first factor\footnote{The
$D_{V^\vee}$-module structure on $\calR$ is given as follows:
$\partial_{a_i}$ acts on $\calO_{V^\vee}\otimes S$ as $\partial_{a_i}\otimes
1+1\otimes a_i^*$.}. If we convert the right action of $\hat{\frakg}$ on
$\calR$ described above to a left action $\alpha$, then $\alpha$ will be the sum of the following two
actions: the first is the action of $\hat{\frakg}$ on the second factor
through the dual representation $Z^\vee:\hat{\frakg}\to \End V^\vee\to \End S$, which is denoted by
$\alpha_1$; to describe the second action $\alpha_2$, observe that
the natural multiplication map
$$(V\otimes V^\vee)\otimes (\calO_{V^\vee}\otimes S)\to
(\calO_{V^\vee}\otimes S),$$ induces $V\otimes V^\vee\to\End\calR$
and $\alpha_2$ is via $Z^\vee:\hat{\frakg}\to V\otimes V^\vee\to \End (\calR)$.
Explicitly, if we write $a\otimes b\in \calO_{V^\vee}\otimes S$,
then
\begin{equation}\label{special:al1}
\alpha_1(\xi)(a\otimes b)=a\otimes Z^\vee(\xi)(b).
\end{equation}
Let's write
$Z^\vee(\xi)=-\sum_{ij} \xi_{ij}a_i\otimes a_j^*$. Then
\begin{equation}\label{special:al2}
\alpha_2(\xi)(a\otimes b)=- \sum_{ij}\xi_{ij} aa_i\otimes ba_j^*.
\end{equation}

Let $f=\sum a_i\otimes a_i^*\in \calR$, which can be regarded as the universal section of the line bundle $\calO_{V^\vee}\boxtimes\calL$ over $V^\vee\times X$. Recall that $U=V^\vee\times X-\bL^\perp$. Then
\[\calO_U=(\calO_{V^\vee}\otimes S(\hat{X}))_{(f)}\]
is the homogeneous localization of $\calR$ with respect to $f$, where the degree of $a\otimes b\in\calO_{V^\vee}\otimes S(\hat{X})$ is the degree of $b$ in the graded ring $S(\hat{X})$. As $\calL^{-1}=\omega_X$, we can regard $f^{-1}$ as a rational section of $\calO_{V^\vee}\boxtimes\omega_X$, regular on $U$. Then $\calO_Uf^{-1}$ can be identified with the regular sections of $\calO_{V^\vee}\boxtimes\omega_X$ over $U$. In other words,
\begin{equation}\label{id}
\calO_Uf^{-1}\simeq\omega_{U/V^\vee}=(\calO_{V^\vee}\boxtimes\omega_X)|_U.
\end{equation}
Therefore, it is equipped with a $(D_{V^\vee}\boxtimes D_X^{op})|_U$ module structure (see \cite[VI, \S 3]{Borel} for the definition of right $D_X$ module structure on $\omega_X$). As $\frakg$ maps to the vector fields on $X$, $\calO_Uf^{-1}$ is a $D_{V^\vee}\times\frakg$-module.  We will describe this structure more explicitly. First, we describe the $D_{V^\vee}$-module structure. Let $\theta$ be a vector field on $V^\vee$, and $\xi\in\frakg$. It is enough to describe $\theta(f^{-1})$ and $(f^{-1})\xi$. Let us write $Z^\vee(\xi)=-\sum_{ij} \xi_{ij}a_i\otimes a_j^*$ as before. 
\begin{lem}\label{special:formula}
We have
\[\theta(f^{-1})=- \frac{\sum_i \theta(a_i)\otimes a_i^*}{f^2}\in \calO_Uf^{-1},\]
and
\[(f^{-1})\xi=-\frac{\sum_{ij} \xi_{ij}a_i\otimes a_j^*}{f^2}\in \calO_Uf^{-1}.\]
\end{lem}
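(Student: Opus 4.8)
The plan is to compute directly with the explicit realization of $\calO_U f^{-1}$ inside rational sections of $\calO_{V^\vee}\boxtimes\omega_X$, using the identifications set up just before the statement: $\calR\simeq\calO_{V^\vee}\otimes S(\hat X)$ as $\calO_{V^\vee}$-modules, with the twisted $D_{V^\vee}$-action $\partial_{a_i}\mapsto\partial_{a_i}\otimes 1+1\otimes a_i^*$, and $\calO_U f^{-1}$ the homogeneous localization at $f=\sum_i a_i\otimes a_i^*$, viewed as the degree-$(-1)$ part, i.e.\ regular sections of $\calO_{V^\vee}\boxtimes\omega_X$ over $U$. The key point is that once $f^{-1}$ is regarded as an honest rational section, the operators $\theta$ (a vector field on $V^\vee$) and the right multiplication by $\xi\in\frakg$ (acting through $Z(\xi)\in V\otimes V^\vee$, hence through a vector field on $X$) act by the ordinary Leibniz rule. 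Both formulas then reduce to the single identity $\delta(f^{-1})=-f^{-2}\,\delta(f)$ for $\delta$ a derivation, together with the computation of $\delta(f)$ in each case.

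First I would treat $\theta(f^{-1})$. It suffices to check the formula for $\theta=\partial_{a_j}$, a coordinate vector field on $V^\vee$. Under the twisted action, $\partial_{a_j}$ acts on $\calR$, hence on its localization, as $\partial_{a_j}\otimes 1 + 1\otimes a_j^*$; but on $\calO_U f^{-1}\simeq(\calO_{V^\vee}\boxtimes\omega_X)|_U$ this is precisely the flat connection along $V^\vee$ in the direction $\partial_{a_j}$ applied to the section $f^{-1}$. Since $f=\sum_i a_i\otimes a_i^*$ depends on the $V^\vee$-coordinates only through the pairing, one gets $\partial_{a_j}(f)=a_j^*$ (the section $1\otimes a_j^*$), whence $\partial_{a_j}(f^{-1})=-f^{-2}(1\otimes a_j^*)=-\bigl(\sum_i\partial_{a_j}(a_i)\otimes a_i^*\bigr)/f^2$. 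Comparing with the sign convention for the $D$-module structure used in the excerpt (the right $D_X$-module $\omega_X$, cf.\ \cite[VI,\S 3]{Borel}), this yields exactly $\theta(f^{-1})=\bigl(\sum_i\theta(a_i)\otimes a_i^*\bigr)/f^2$; the general $\theta$ follows by $\calO_{V^\vee}$-linearity of the statement in $\theta$, using $\theta(a_i)\in\calO_{V^\vee}$.

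Next I would treat $(f^{-1})\xi$. Here $\xi\in\frakg$ acts on $\calR$ via the right $\hat\frakg$-module structure, converted as in the excerpt to a left action $\alpha=\alpha_1+\alpha_2$, and on the localization it acts via the vector field on $X$ induced by $Z(\xi)\in V\otimes V^\vee\subset\End(V)$, acting on $S(\hat X)=\calO_V/I(\hat X)$. Writing $Z(\xi)=\sum_{ij}\xi_{ij}\,a_i\otimes a_j^*$, this vector field sends the coordinate function $a_j^*$ on $V$ to $\sum_i\xi_{ij}a_i$ — equivalently, as a derivation it is $\sum_{ij}\xi_{ij}a_i\partial_{a_j^*}$. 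Applying it to $f=\sum_k a_k\otimes a_k^*$, only the $S(\hat X)$-factor is differentiated and one gets $\delta_\xi(f)=\sum_{ij}\xi_{ij}\,a_i\otimes a_j^*$. Then $(f^{-1})\xi = -f^{-2}\delta_\xi(f)$ up to the same sign/convention bookkeeping as before, giving $(f^{-1})\xi=\bigl(\sum_{ij}\xi_{ij}a_i\otimes a_j^*\bigr)/f^2$ as claimed. (One should note that $\sum_{ij}\xi_{ij}a_i\otimes a_j^*$ is manifestly of degree $0$ in $S(\hat X)$, so the right-hand side indeed lies in $\calO_U f^{-1}$, consistent with the $\frakg$-action preserving degrees.)

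The only real subtlety — and the step I expect to demand the most care — is the bookkeeping of signs and of the left-versus-right $D_X$-module conventions: $\omega_X$ carries a \emph{right} $D_X$-structure, the $\frakg$-action on $\calO_U f^{-1}$ arises from the \emph{right} $\hat\frakg$-action on $\calR$ via right multiplication by $Z(\xi)$, and one must make sure that the Leibniz-rule sign from differentiating $f^{-1}$ and the sign in passing between these conventions combine to produce the stated (positive-sign) formulas rather than their negatives. Everything else is a routine one-line computation with the explicit formulas $\alpha_1,\alpha_2$ already recorded in the excerpt; the substance of the lemma is simply the observation that, after the identification \eqref{id}, both $\theta$ and right multiplication by $\xi$ are derivations acting on the rational section $f^{-1}$ of a line bundle, for which $\delta(f^{-1})=-f^{-2}\delta(f)$.
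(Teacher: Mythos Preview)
Your approach via the Leibniz rule $\delta(f^{-1})=-f^{-2}\delta(f)$ differs from the paper's. The paper fixes a nonzero $v\in V^\vee$, factors $f^{-1}=g\cdot\omega$ with $g=(1\otimes v)/f\in\calO_U$ an honest function and $\omega=1\otimes v^{-1}$ a rational section of $\calO_{V^\vee}\boxtimes\omega_X$ pulled back from $X$; the first formula then follows from $\theta(\omega)=0$ together with an explicit computation of $\theta(g)$, and the second from computing $\xi(g)$ together with an auxiliary result (stated as Lemma~\ref{Lie der}) giving $\Lie_\xi v^{-1}=-\frac{Z(\xi)(v)}{v}\,v^{-1}$.

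Your shortcut is morally the same computation but glosses over two points. First, you invoke the $D_{V^\vee}$-structure on $\calR$ (where $\partial_{a_j}$ acts as $\partial_{a_j}\otimes 1+1\otimes a_j^*$) as if it were the flat connection on $\omega_{U/V^\vee}$ along $V^\vee$; these are \emph{different} structures---the lemma is computing the latter, and it is only afterwards, in Lemma~\ref{special:isom}, that the two are related via the map $\phi$. Second, and more substantively, the identity $\delta(f^{-1})=-f^{-2}\delta(f)$ needs justification since $f$ and $f^{-1}$ are sections of mutually inverse line bundles, not invertible elements of a ring; making this rigorous amounts to trivializing locally, which is exactly what the paper's $g\omega$ factorization accomplishes. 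For the $\xi$-action in particular, your missing step is the identification of the right $D_X$-module action on $\omega_X$ with (the dual of) the $\frakg$-linearization of $\calL=\omega_X^{-1}$---precisely the content of Lemma~\ref{Lie der}, which you assert but do not prove. The sign discrepancy you flag (you compute $-a_j^*/f^2$ but the stated formula has $+a_j^*/f^2$) is real and is not cleanly resolved in the paper's own calculation either.
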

\begin{proof}
Let $v\in V^\vee$, regarded as a section of $\calL$. Then $v^{-1}$ is a rational section of $\omega_X$, and $\omega=1\otimes v^{-1}$ is a rational section of $\calO_{V^\vee}\boxtimes \omega_X$, obtained by pullback of a rational section of $\omega_X$. Note that $g=(1\otimes v)/f\in \calO_U$, and we can write $f^{-1}=g (1\otimes v^{-1})$. By definition, for a vector field $\theta$ on $V^\vee$, $\theta(\omega)=0$, and for $\xi\in\frakg$, $\omega\xi=-1\otimes \Lie_\xi v^{-1}$, where $\Lie_\xi:\omega_X\to \omega_X$ is the Lie derivative (see \cite[VI, \S 3]{Borel} for the definition of right D-module structures on $\omega_X$). Therefore
\[\theta(f^{-1})=\theta(g)\omega, \quad (f^{-1})\xi= (g\xi)\omega-g(1\otimes \Lie_{\xi}v^{-1}).\]
Note that
\[\theta(g)=\theta(\frac{1}{\sum a_i\otimes \frac{a_i^*}{v}})=-\frac{\sum \theta(a_i)\otimes\frac{a_i^*}{v}}{(\sum a_i\otimes \frac{a_i^*}{v})^2}=
-g^2\sum \theta(a_i)\otimes\frac{a_i^*}{v}=-g\frac{\sum\theta(a_i)\otimes a_i^*}{f}.\]
Therefore, the first equation holds. On the other hand
\[(g)\xi=(\frac{1\otimes v}{f})\xi=-\frac{1\otimes Z^\vee(\xi)(v)}{f}- \frac{(1\otimes v)\sum \xi_{ij}a_i\otimes a^*_j}{f^2}.\]
To prove the second, we need to understand $\Lie_\xi v^{-1}$. We consider a more general situation.

Let $X$ be a Fano variety. Assume that $\calL=\omega_X^{-1}$ is very ample, and $X\to \bP(V)$ be the closed embedding where $V=\Gamma(X,\calL)^\vee$. Then $\frakg=\Gamma(X, T_X)$ is a Lie algebra and $\calL$ is naturally $\frakg$-linearized. Therefore, $V^\vee=\Gamma(X,\calL)$ is a natural $\frakg$-module with action $Z^\vee:\frakg\to \End(V^\vee)$. As $Z^\vee(\xi)=-\sum_{ij} \xi_{ij} a_i\otimes a_j^*$, we have $Z^\vee(\xi)(v)=-\sum_{ij} \xi_{ij}a_i(v)a_j^*$ for $v\in V^\vee$. On the other hand, recall that $\frakg$ acts on $\omega_X$ by Lie derivatives. Note that for $v\in V^\vee$, $v^{-1}$ can be regarded as a rational section of $\omega_X$. 

\begin{lem}\label{Lie der}
Let $\xi\in\frakg, 0\neq v\in V^\vee$. Then
\[\Lie_\xi v^{-1}=- \frac{Z^\vee(\xi)(v)}{v} v^{-1}.\]
\end{lem}
\begin{proof}
Consider the 1-parameter subgroup $g_t=exp(t\xi)$. Then
\[
{d\over dt}g_t^*(v^{-1})=-(v\circ g_t)^{-2}{d\over dt}(v\circ g_t).
\]
Now set $t=0$.
\end{proof}
Now Lemma \ref{special:formula} follows.
\end{proof}

Note that explicitly, the $D_{V^\vee}\times \frakg$-module structure on $\calO_Uf^{-1}$ can be described as follows. Let $\theta=\partial_{a_i}$ be a vector field on $V^\vee$ and $\xi=-\sum \xi_{ij}a_i\otimes a_j^*\in\frakg$,
$m=\frac{1}{f^{l+1}}(a\otimes b)\in \calO_Uf^{-1}$, where
$a\in\calO_{V^\vee}$ and $b\in S$ is homogeneous of degree $k$, then
$$\partial_{a_i}(m)=\frac{\partial_{a_i}(a)\otimes b}{f^{l+1}}+(-1)^{l+1}(l+1)\frac{a\otimes ba^*_i}{f^{l+2}},$$
$$(m)\xi=\frac{1}{f^{l+1}}(a\otimes Z^\vee(\xi)(b))-\frac{l+1}{f^{l+2}}(\sum_{ij} \xi_{ij} aa_i\otimes ba_j^*).$$
We extend this to a $\hat{\frakg}$-module by requiring that $e$ acts by zero on $\calO_Uf^{-1}$.

Now, we have the following technical lemma. Recall that $\beta(e)=1$.
\begin{lem}\label{special:isom}
The map  $\phi:\calR\otimes k_\beta\to \calO_Uf^{-1}$ given by
\[\phi(a\otimes b)=\frac{(-1)^l l! }{f^{l+1}}a\otimes b\]
is a $D_{V^\vee}\times \hat{\frakg}$-module homomorphism. In addition, it induces an isomorphism
\[\tau= (\calR\otimes \beta)\otimes_{\hat{\frakg}}k\simeq (\calO_Uf^{-1})\otimes_{\hat{\frakg}}k=  (\calO_Uf^{-1})\otimes_{\frakg}k.\]
\end{lem}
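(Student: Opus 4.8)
The plan is to verify the two assertions of Lemma~\ref{special:isom} separately: first that $\phi$ is a morphism of $D_{V^\vee}\times\hat{\frakg}$-modules, and then that the induced map on coinvariants is an isomorphism.

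\textbf{Step 1: $\phi$ is $D_{V^\vee}$-linear.} Both $\calR$ and $\calO_Uf^{-1}$ carry explicit $D_{V^\vee}$-module structures recalled above: on $\calR=\calO_{V^\vee}\otimes S$ the operator $\partial_{a_i}$ acts by $\partial_{a_i}\otimes 1+1\otimes a_i^*$, and on $\calO_Uf^{-1}$ it acts by the formula displayed just before the lemma. It suffices to check compatibility with multiplication by $\calO_{V^\vee}$ (immediate, since $\phi$ is $\calO_{V^\vee}$-linear on the first factor up to the scalar $(-1)^kk!/f^{k+1}$ which lies in $\calO_U$) and with the vector fields $\partial_{a_i}$. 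For $m=a\otimes b$ with $b$ homogeneous of degree $k$, write out $\phi(\partial_{a_i}(a\otimes b))=\phi(\partial_{a_i}(a)\otimes b + a\otimes ba_i^*)$; note $ba_i^*$ has degree $k+1$, so this equals $\tfrac{(-1)^kk!}{f^{k+1}}\partial_{a_i}(a)\otimes b + \tfrac{(-1)^{k+1}(k+1)!}{f^{k+2}}a\otimes ba_i^*$. Comparing with $\partial_{a_i}(\phi(a\otimes b))$ computed from the explicit formula for the action on $\calO_Uf^{-1}$ (Leibniz rule applied to the $\calO_U$-scalar and to $a\otimes b$, using Lemma~\ref{special:formula}) gives exactly the same two terms. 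This is a short bookkeeping computation tracking the sign $(-1)^k$ and the factorial.

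\textbf{Step 2: $\phi$ is $\hat{\frakg}$-linear.} The action of $\xi\in\frakg$ on $\calR$ (converted to a left action) is $\alpha=\alpha_1+\alpha_2$ with $\alpha_1(\xi)(a\otimes b)=a\otimes Z(\xi)(b)$ and $\alpha_2(\xi)(a\otimes b)=\sum_{ij}\xi_{ij}\,aa_i\otimes ba_j^*$; here $Z(\xi)(b)$ still has degree $k$ while $ba_j^*$ has degree $k+1$. Twisting by $k_\beta$ with $\beta|_\frakg=0$ leaves this unchanged, and $e$ acts by $\beta(e)=1$, i.e. on $\calR\otimes\beta$ the element $e$ acts by $1$ minus the internal Euler action; but the internal grading action of $e$ on the degree-$k$ piece is multiplication by $k$ (this is where $\beta(e)=1$ enters), so $e$ acts on the degree-$k$ part of $\calR\otimes\beta$ by $1-k$... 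I will instead match directly against the stated $\hat\frakg$-action on $\calO_Uf^{-1}$, where $e$ acts by $0$. Applying $\phi$ to $\alpha(\xi)(a\otimes b)$ and to $m\cdot\xi$ respectively, and comparing with $(\phi(a\otimes b))\xi$ read off from the explicit right $\frakg$-action $\tfrac{1}{f^{k+1}}$-formula above, the two terms again match after accounting for the degree shift $k\rightsquigarrow k+1$ in the $\alpha_2$/second term and the factor $\tfrac{(-1)^{k+1}(k+1)!}{(-1)^k k!}=-(k+1)$. The $e$-compatibility reduces to checking that $e$ acts on $\calR\otimes\beta$ the same way $\phi$ transports the zero action on $\calO_Uf^{-1}$, which pins down $\beta(e)=1$ as exactly the needed normalization.

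\textbf{Step 3: the induced map on coinvariants is an isomorphism.} Once $\phi$ is a $D_{V^\vee}\times\hat\frakg$-morphism it descends to $\tau=(\calR\otimes\beta)\otimes_{\hat\frakg}k\to(\calO_Uf^{-1})\otimes_{\hat\frakg}k$, and since $e$ acts trivially on $\calO_Uf^{-1}$ the target equals $(\calO_Uf^{-1})\otimes_\frakg k$. For surjectivity, observe $\phi$ is already surjective before taking coinvariants: every homogeneous generator $\tfrac{1}{f^{k+1}}a\otimes b$ of $\calO_Uf^{-1}$ is $\tfrac{1}{(-1)^kk!}\phi(a\otimes b)$. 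For injectivity one uses that $\phi$ itself is injective (it is, up to the invertible-on-$U$ scalars $(-1)^kk!/f^{k+1}$, the localization map $\calR\to\calR_{(f)}=\calO_U\xrightarrow{\sim}\calO_Uf^{-1}$, and the relations $J(\hat X)$ defining $\calR$ are preserved), with cokernel-type discrepancy controlled by the kernel of $\calR\to\calR_{(f)}$, namely the $f$-torsion; then one shows that modulo the $\hat\frakg$-action this $f$-torsion is killed. Concretely: $\tau$ is cyclic, generated by the image of $1\otimes 1$, which maps to $f^{-1}$ up to scalar, a cyclic generator of $(\calO_Uf^{-1})\otimes_\frakg k$; and a dimension/support count — or the fact that both sides are holonomic with the same characteristic cycle — forces the surjection to be an isomorphism.

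\textbf{Main obstacle.} The genuinely delicate point is Step~3, injectivity: $\phi:\calR\to\calO_Uf^{-1}$ is \emph{not} injective in general (it is localization at $f$, which has $f$-torsion), so one cannot simply deduce injectivity on coinvariants from injectivity of $\phi$. The crux is to show that the kernel of $\calR\to\calO_U$ is annihilated after applying $(-)\otimes_{\hat\frakg}k$, i.e. that $f$-torsion in $\calR$ lies in $\calR\cdot\hat\frakg$ — equivalently, that the localization map becomes an isomorphism after imposing the twisted-homogeneity relations coming from $Z(\xi)+\beta(\xi)$. I expect this to follow from the same mechanism that makes $\tau$ a ``homogeneous localization'': the $\hat\frakg$-relations include (a twist of) the Euler field, which already inverts $f$ at the level of coinvariants, so the extra $f$-torsion is redundant. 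Making that precise — perhaps by an explicit filtration argument on degree $k$, or by comparing with the associated graded — is where the real work lies; the two linearity checks in Steps~1--2 are routine sign-and-factorial bookkeeping.
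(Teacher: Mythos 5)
Your Steps 1--2 (the linearity checks) are essentially what the paper does, and you correctly flag that the signs and factorials are the bookkeeping needed; that part is fine. The problem is Step 3: you correctly diagnose the crux --- that $\phi\colon\calR\to\calO_Uf^{-1}$ has a nonzero kernel (it is a homogeneous localization), so injectivity on $\hat\frakg$-coinvariants requires showing $\ker\phi\subset(\calR\otimes\beta)\hat\frakg$ --- but you never actually prove this. You write ``I expect this to follow from\ldots{} Making that precise\ldots{} is where the real work lies,'' which is an honest admission that the key step is missing. Your proposed fallback, that both sides are holonomic with the same characteristic cycle so the surjection must be an isomorphism, is circular: establishing equality of characteristic cycles is essentially equivalent to the injectivity you are trying to prove, and in the generality of the lemma neither side is even assumed holonomic.

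The paper closes the gap with one clean observation. For $b$ of degree $k$, the kernel of $\phi$ is spanned by the elements $(k+1)(a\otimes b)+f\cdot(a\otimes b)$ (these map to $\tfrac{(-1)^k(k+1)!}{f^{k+1}}(a\otimes b)-\tfrac{(-1)^{k}(k+1)!}{f^{k+1}}(a\otimes b)=0$, and conversely every relation in the homogeneous localization is generated by these). But $(k+1)(a\otimes b)+f\cdot(a\otimes b)$ is exactly $(a\otimes b)\alpha(e)$, the action of the Euler element $e\in\hat\frakg$. Hence $\ker\phi$ lies in the image of the $\hat\frakg$-action on $\calR\otimes\beta$, and right exactness of coinvariants gives the isomorphism. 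This is precisely where $\beta(e)=1$ is used: the normalization $(-1)^kk!$ in the definition of $\phi$ is chosen so that the kernel relation becomes the $e$-relation with the $\beta$-twist. Your Step~2 circles around this (``this pins down $\beta(e)=1$ as exactly the needed normalization'') but never converts it into the identification $\ker\phi=(\calR\otimes\beta)\cdot e$, which is the whole point of the lemma.
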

\begin{proof}
A direct calculation shows that $\phi$ is a $D_{V^\vee}\times\hat{\frakg}$-module homomorphism. Namely, we know that $\partial_{a_i}$ acts on $\calR$ by $\partial_{a_i}\otimes 1+1\otimes a^*_i$. Therefore,
\[\phi(\partial_{a_i}(a\otimes b))=\phi(\partial_{a_i}(a)\otimes b+a\otimes ba_i^*)=\frac{(-1)^l l!}{f^{l+1}}(\partial_{a_i}(a)\otimes b)+\frac{(-1)^{l+1}(l+1)!}{f^{l+2}}(a\otimes ba_i^*),\]
which is the same as $\partial_{a_i}\phi(a\otimes b)$. The $\frakg$-equivariance can be checked similarly.

Clearly $\phi$ is surjective, with the kernel spanned by
$(l+1)a\otimes b+ f(a\otimes b)$ for $b$ homogeneous of degree $l$.
But $(a\otimes b)\alpha(e)=(l+1)a\otimes b+ f(a\otimes b)$. The
lemma is proved.
\end{proof}

To apply this lemma, recall the definition of $\pi^\vee_+$ for $\pi^\vee: U\to V^\vee$ a smooth morphism of algebraic varieties. As $\pi^\vee$ is an affine morphism,
\[\pi^\vee_+\calN=\Omega_{U/V^\vee}^\bullet\otimes \calN[\dim X].\]
In particular,
\[H^0\pi^\vee_+\calN=\on{coker}((\calO_{V^\vee}\boxtimes\Omega_X^{\dim X-1}\otimes D_X\otimes_{\frakg} k)|_U\to  (\calO_{V^\vee}\boxtimes \omega_X\otimes D_X\otimes_{\frakg} k)|_U).\]
As $\on{coker}(\Omega_X^{\dim X-1}\otimes D_X\to \omega_X\otimes D_X)=\omega_X$ as right $D_X$-modules, $H^0\pi^\vee_+\calN$ is exactly $(\calO_{V^\vee}\boxtimes\omega_X)|_U\otimes_{\frakg} k\simeq\tau$. This completes the proof of Theorem \ref{special:main}.

We continue to let $X$ be a general smooth projective $G$-variety, and let $\beta(e)=1$. We further assume that $k=\bC$ and $\beta(\frakg)=0$, and consider some consequences of Theorem \ref{special:main}.
By taking the solution sheaves on both sides in Corollary \ref{surjective}, we get an injective map
\begin{equation}\label{special:cycl-to-period}
H_n(X-V(f_a))\simeq \Hom(H^0\pi^\vee_+\calO_U,\calO_{V^\vee,a})\to \Hom(\tau,\calO_{V^\vee,a}),
\end{equation}
where the first isomorphism follows from the same argument as in Corollary \ref{special:rank formula} and the Poincare duality.
This gives an explicit lower bound for the solution rank of $\tau$ at any point $a$. For applications, we need to give a more geometric and explicit description of this map.

Note that we can interpret $1/f$ as a family (parametrized by $V^\vee$) of meromorphic top forms on $X$, whose fiber over $a\in V^\vee$ has poles along $V(f_a)$. We denote this family of top forms on $X$ by $\Omega_a$. These forms can also be given as follows.

Consider the principal $\bG_m$-bundle $\pi^\vee:\mathring{\bL}^\vee\to X$ (with right action). Then there is a natural one-to-one correspondence between sections of $\calL$ and $\bG_m$-equivariant morphism $f:\mathring{\bL}^\vee\ra k$, i.e. $f(m\cdot h^{-1})=h f(m)$.  We shall write $f_a$ the function that represents the section $a$. Let $w=(w_1,..,w_n)$ be local coordinates on $X$, and $z_w$ be the coordinate induced on the fibers of $\bL^\vee$. Put $\omega=dz_w\wedge dw_1\wedge\cdots\wedge  dw_n$. Then it can be shown that $\omega$ defines a global non-vanishing form on $\bL^\vee$. (See \cite[Prop. 6.1]{LY}.) Let $x_0$ be the vector field generated by $1\in k=Lie(\bG_m)$. Then $\Omega:=i_{x_0}\omega$ is a $G$-invariant $\bG_m$-horizontal form of degree $\dim X$ on $\mathring{\bL}^\vee$. Moreover, since
\[\Omega_a:={\Omega\over f_a}\]
is $G\times\G_m$-invariant, it defines a family of meromorphic top form on $X$ with pole along $V(f_a)$ \cite[Thm. 6.3]{LY}.
Then the isomorphism in Lemma \ref{special:isom} sends the generator ``$1$" of $\tau$ to $\Omega_a$. Consider the ``cycle-to-period'' map defined in \cite{LY}
\[H_n(X-V(f_a))\to \Hom(\tau,\calO_{V^\vee,a}),\quad \gamma\mapsto \int_\gamma\Omega_a.\]

\quash{
Note that we can interpret $1/f$ as a family (parametrized by $V^\vee$) of meromorphic top forms on $X$, whose fiber over $a\in V^\vee$ has poles along $V(f_a)$. We denote this family of top forms on $X$ by $\Omega_a$. Then the isomorphism in Lemma \ref{special:isom} sends the generator ``$1$" of $\tau$ to $\Omega_a$. Therefore, the map \eqref{special:cycl-to-period} can be written more explicitly as
\[H_n(X-V(f_a))\to \Hom(\tau,\calO_{V^\vee,a}),\quad \gamma\mapsto \int_\gamma\Omega_a.\]
Let us rewrite the form $\Omega_a$.

There is a canonical $G$-invariant $\bG_m$-horizontal form $\Omega$ of degree $\dim X$ on $\mathring{\bL}^\vee$ (\cite[\S4]{LY}) 
Regard $f_a$ as a function on the total space $\bL^\vee$. Then ${\Omega\over f_a}$ is a $\bG_m$-invariant meromorphic form on the principal $\bG_m$-bundle $\pi^\vee:\mathring{\bL}^\vee\to X$, and thus can be viewed as a meromorphic form on the base $X$ with pole along $V(f_a)$. It is easy to check that $\Omega_a={\Omega\over f_a}$. Therefore, the map \eqref{special:cycl-to-period} is exactly the ``cycle-to-period'' map constructed in \cite{LY}. }

\quash{
The latter can be described in differential geometric terms as follows.  For $\calL=\omega_X^{-1}$, the $D_{V^\vee}\times\frakg$-module isomorphism \eqref{id} is given, at each point $a$, by
$${1\over f_a}\mapsto{\Omega\over f_a},$$
where $\Omega$ is the canonical $G$-invariant $\bG_m$-horizontal form of degree $\dim X$ on $\mathring{\bL}^\vee$ (\cite[\S4]{LY}). Here ${\Omega\over f_a}$ is a $\bG_m$-invariant meromorphic form on the principal $\bG_m$-bundle $\pi^\vee:\mathring{\bL}^\vee\to X$, and thus can be viewed as a meromorphic form on the base $X$ with pole along $V(f_a)$. It follows that
$$
\tau=\calO_Uf^{-1}\otimes_{\frakg}k\simeq\omega_{U/V^\vee}\otimes_{\frakg}k,\quad 1\mapsto{\Omega\over f_a}
$$
as $D_{V^\vee}$-modules. But $\tau$ is generated by $1$, so $\omega_{U/V^\vee}\otimes_{\frakg}k$ is generated by ${\Omega\over f}$ under the Gauss-Manin connection.}

\begin{cor}\label{cycle-to-period}
The cycle-to-period map
$H_n(X-V(f_a))\ra \Hom(\tau,\calO_{V^\vee,a})$,
$$\gamma\mapsto\bra\gamma,{\Omega\over f_a}\ket=\int_\gamma{\Omega\over f_a},$$
is injective.
\end{cor}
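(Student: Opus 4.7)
The plan is to dualize the canonical surjection from Corollary \ref{surjective} and then interpret both sides geometrically via the Riemann--Hilbert correspondence, tracing the generator of $\tau$ explicitly. First, I would apply the left-exact contravariant functor $\Hom_{D_{V^\vee}}(-,\calO_{V^\vee,a})$ to the surjection $\tau \twoheadrightarrow H^0\pi^\vee_+\calO_U$, which immediately produces an injection
\[
\Hom_{D_{V^\vee}}(H^0\pi^\vee_+\calO_U,\calO_{V^\vee,a}) \hookrightarrow \Hom_{D_{V^\vee}}(\tau,\calO_{V^\vee,a}).
\]

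Second, I would identify the source with $H_n(X - V(f_a),\mathbb{C})$ by repeating the Riemann--Hilbert argument from the proof of Corollary \ref{special:rank formula}. Namely, $\on{Sol}(\pi^\vee_+\calO_U)$ equals $R\pi^\vee_!\mathbb{C}_U$ up to the standard shifts, so proper base change identifies the stalk at $a$ of the non-derived solution sheaf of $H^0\pi^\vee_+\calO_U$ with $H^n_c(U_a,\mathbb{C})$, which Poincar\'e duality on the smooth $n$-dimensional complex manifold $U_a$ identifies with $H_n(U_a,\mathbb{C}) = H_n(X - V(f_a),\mathbb{C})$.

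Third, I would check that the resulting injection is exactly $\gamma \mapsto \int_\gamma \Omega/f_a$. Combining Lemma \ref{special:isom} with the identification \eqref{id} and the discussion preceding the statement, the canonical generator $1\in\tau$ is sent, under $\tau \twoheadrightarrow H^0\pi^\vee_+\calO_U$, to the class of $\Omega/f$ viewed as a relative top-degree form on $U/V^\vee$; its restriction to the fiber $U_a$ is the class of $\Omega/f_a$ in $\omega_{U_a}/d\Omega^{n-1}_{U_a}$. Pairing a flat section $\gamma$ of the Gauss--Manin connection against this class via the de Rham/Poincar\'e duality pairing is, by definition, the period integral $\int_\gamma \Omega/f_a$, which gives the claimed identification of the abstract injection with the cycle-to-period map.

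The main obstacle I anticipate is this last compatibility check. The first two steps are essentially bookkeeping with functors already deployed in the paper, but the subtle part is to verify that, after threading together Riemann--Hilbert, proper base change, Poincar\'e duality, and the explicit isomorphism of Lemma \ref{special:isom}, the resulting pairing agrees literally with integration of $\Omega/f_a$ against topological cycles on $U_a$, rather than with some twist, shift, or sign thereof. This amounts to matching the concrete relative de Rham computation of $H^0\pi^\vee_+\calO_U$ with the abstract D-module duality pairing on solutions, which is classical in spirit but requires careful unwinding of the perverse and homological shifts.
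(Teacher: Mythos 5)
Your proposal matches the paper's proof in structure and substance: both dualize the surjection $\tau\twoheadrightarrow H^0\pi^\vee_+\calO_U$ from Corollary \ref{surjective} to get an injection on $\Hom(-,\calO_{V^\vee,a})$, identify $\Hom(H^0\pi^\vee_+\calO_U,\calO_{V^\vee,a})$ with $H_n(X-V(f_a))$, and trace the generator $1\mapsto\Omega/f$ under $\tau\simeq\omega_{U/V^\vee}\otimes_{\frakg}k$ to match the abstract injection with the cycle-to-period map. The paper's proof is terser, packaging the compatibility check you flag as the main obstacle into the single observation that $\tau_a$ is $D$-cyclic on $\Omega/f_a$, so a $D_{V^\vee}$-linear functional vanishing on $\Omega/f_a$ vanishes on all of $\tau_a$, hence comes from the zero element of $\Hom(H^0\pi^\vee_+\calO_U,\calO_{V^\vee,a})$.
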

\quash{
\begin{proof}
For $\gamma\in\Hom(H^0\pi_+^\vee\cO_U,\calO_{V^\vee,a})=H_n(X-V(f_a))$, its image is a $D_{V^\vee}$-linear function $\tau\simeq\omega_{U/V^\vee}\otimes_{\frakg}k
\to\calO_{V^\vee,a}$. Since $\tau_a$ is generated by ${\Omega\over f_a}$, if $\bra\gamma,{\Omega\over f_a}\ket=0$ then $\bra\gamma,\tau_a\ket=0$, hence $\gamma=0$.
\end{proof}
}

The rest of the section will not be used in the sequel. We note that the argument of Corollary \ref{special:rank formula} has the following interesting topological consequence, which answers a question S. Bloch communicated to us. Let $X\subset \bP^N$ be an $n$-dimensional smooth projective variety. Let $V(f)\to V^\vee=\Gamma(X,\calL)$ be the universal family of hyperplane sections of $X$.

\begin{cor}
Let $a\in V^\vee$. Then for $a'$ close to $a$, the map $H_n(X-V(f_a))\to H_n(X-V(f_{a'}))$ induced by parallel transport is injective.
\end{cor}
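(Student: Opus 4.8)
The plan is to deduce this from the injectivity of the cycle-to-period map (Corollary~\ref{cycle-to-period}) together with the fact that the solution sheaf $\Hom(\tau,\calO_{V^\vee})$ is a \emph{locally constant sheaf} near a smooth point, so parallel transport on it is an isomorphism. First I would recall that the cycle-to-period map is natural in $a$: the identification $\tau \simeq \omega_{U/V^\vee}\otimes_\frakg k$ is a global isomorphism of $D_{V^\vee}$-modules on $U$ (not just pointwise), so for $a'$ in a small contractible neighborhood $W$ of $a$ the diagram
\begin{equation*}
\begin{array}{ccc}
H_n(X-V(f_a)) & \longrightarrow & \Hom(\tau,\calO_{V^\vee,a})\\
\downarrow & & \downarrow\\
H_n(X-V(f_{a'})) & \longrightarrow & \Hom(\tau,\calO_{V^\vee,a'})
\end{array}
\end{equation*}
commutes, where the left vertical arrow is parallel transport of homology (Gauss--Manin) and the right vertical arrow is parallel transport for the $D$-module solution sheaf. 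Here one should be a little careful about which map on homology is meant: the fibration $\pi^\vee: U \to V^\vee$ restricted over $W$ is topologically locally trivial away from the discriminant, but $a$ itself may lie on the discriminant; so ``parallel transport'' $H_n(X-V(f_a))\to H_n(X-V(f_{a'}))$ should be read as the specialization/restriction map coming from the inclusion of fibers, which is exactly the map induced on $H_n$ by the map of pairs, and it is dual to the pullback $H^0\pi^\vee_+\calO_U$ restriction.

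The key point is then that the right vertical arrow is an isomorphism. Since $H^0\pi^\vee_+\calO_U$ is a holonomic (indeed, by Corollary~\ref{surjective} a quotient of $\tau$, but in the homogeneous/geometric case a sub-quotient of a Gauss--Manin system) $D_{V^\vee}$-module, its restriction to the smooth locus of $\pi^\vee$ is an integrable connection, so its sheaf of solutions there is locally constant; hence over the contractible $W$ it is a constant sheaf and $\Hom(\tau,\calO_{V^\vee,a})\to\Hom(\tau,\calO_{V^\vee,a'})$ is bijective. Actually for the statement as phrased we only need this for $H^0\pi^\vee_+\calO_U$, whose solution sheaf at $a$ is $H_n(X-V(f_a))$ by Theorem~\ref{special:main} (via Riemann--Hilbert, as in the proof of Corollary~\ref{special:rank formula}). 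So the right-hand vertical map of the diagram, restricted to the image of the cycle-to-period maps, is already identified with $H_n(X-V(f_a))\to H_n(X-V(f_{a'}))$ composed with the injection into $\Hom(\tau,-)$.

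Putting this together: in the commuting square, the top map is injective by Corollary~\ref{cycle-to-period}, and the right vertical map is injective (an isomorphism on a contractible neighborhood, or at least injective by local constancy). Therefore the composite $H_n(X-V(f_a)) \to \Hom(\tau,\calO_{V^\vee,a'})$ is injective, which forces the left vertical map $H_n(X-V(f_a)) \to H_n(X-V(f_{a'}))$ to be injective. I expect the main obstacle to be bookkeeping around the case where $a$ lies on the singular locus of the family: one must make precise that ``parallel transport'' means the canonical specialization map from the fiber over $a$ to a nearby generic fiber, and check that the identification of solution sheaves is compatible with it — i.e. that the isomorphism $\tau\simeq\omega_{U/V^\vee}\otimes_\frakg k$ from Theorem~\ref{special:main} really is an isomorphism of $D_{V^\vee}$-modules on all of $U$ and not merely a fiberwise statement, so that the induced map on solution sheaves is the naive restriction-of-cycles map. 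Once that naturality is in hand, the argument is formal.
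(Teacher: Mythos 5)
Your overall strategy coincides with the paper's: combine the injectivity of the cycle-to-period map (Corollary~\ref{cycle-to-period}) with the injectivity of the analytic-continuation map $\Hom(\tau,\calO_{V^\vee,a})\to\Hom(\tau,\calO_{V^\vee,a'})$, and chase the resulting commutative square. The diagram you draw is exactly what the paper uses implicitly, and your conclusion is drawn correctly from those two inputs.

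The gap is in your justification of injectivity of the right-hand vertical arrow. You argue that the solution sheaf of $H^0\pi^\vee_+\calO_U$ is a local system on the smooth locus of $\pi^\vee$, hence constant on a small contractible $W$; but the corollary is stated for arbitrary $a\in V^\vee$, so $a$ may lie on the discriminant and no contractible $W\ni a$ lies inside the smooth locus. In that situation the solution sheaf is \emph{not} locally constant near $a$, and ``injective by local constancy'' is not a valid inference --- for a general constructible sheaf the specialization map from a stalk on a closed stratum to one on the open stratum need not be injective. You flag this worry yourself (``the main obstacle to be bookkeeping around the case where $a$ lies on the singular locus''), but the local-constancy argument does not resolve it. The paper's reasoning is both simpler and works at singular $a$: any $D_{V^\vee}$-linear map $\tau\to\calO_{V^\vee,a}$ is given by germs of analytic functions at $a$, which extend to a connected neighborhood; by the identity theorem the restriction of that extension to $\calO_{V^\vee,a'}$ is injective. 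No appeal to the stratification of $V^\vee$ or to the $D$-module being an integrable connection over a stratum is needed. Replacing your local-constancy step with this extension argument closes the gap and recovers the paper's proof.
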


\begin{proof}
As argued in Corollary \ref{special:rank formula}, $H_n(X-V(f_a))$ can be identified with the stalk of the classical solutions of some regular holonomic system on $V^\vee$. Since any analytic solution to a regular holonomic system at $a$ extends to some neighborhood of $a$, the map between stalks of the classical solution sheaf of this regular holonomic system given by analytic continuation is injective.
\end{proof}

Our result sheds new light on the well-studied toric case, i.e. the original GKZ A-hypergeometric differential equations.
We assume that $X$ is a toric variety, with the action of the torus $G=T$. Then $\hat G=T\times \bG_m$. Then Theorem \ref{special:main} takes a particular easy form in the following situation.
\begin{cor}\cite{HLY1996}
If $\beta=\beta_0$, and $X$ is smooth toric variety, $G=T$ is the algebraic torus of $X$, and $Y_a$ is the anticanonical divisor of $X$ given by the union of $G$-invariant toric divisors in $X$, then $a$ is a rank 1 point.
\end{cor}
\begin{proof}Note that in this case $D_{X,\beta}|_{X-Y_a}\simeq \calO_{X-Y_a}$. Therefore, $\Hom(\tau,\calO_{V^\vee,a})\simeq H_n(T^n)$, which is one-dimensional.
\end{proof}

\section{$!$-fibers of $\tau$}\label{!fiber}

In the following three sections, we consider $\tau$ when $\beta(e)$ is not necessarily 1.
Here we will give a formula of the $!$-fibers of $\tau$
at $a\in V^\vee$. For $a\in V^\vee$, let $i_a:\{a\}\to V^\vee$ be
the inclusion and for simplicity, let us write
\[\tau_a^!=i_a^!\tau.\]
This is a complex of vector spaces and our goal is to give an
expression of this complex.

By \eqref{special:exp of tau} we have
\[\tau_a^!=k_a\otimes^L_{\calO_{V^\vee}}((\calR \otimes \beta)\otimes_{\hat{\frakg}}k)[-\dim V],\]
where $k_a=\calO_{V^\vee}/\frakm_a$ is the residual field at $a$,
and $\frakm_a$ is the maximal ideal of $\calO_{V^\vee}$
corresponding to $a$.

The advantage of this expression of $\tau_a^!$ is that we can first
calculate $k_a\otimes^L_{\calO_{V^\vee}}\calR$ as a (complex of)
right $\hat{\frakg}$-modules, and then taking the Lie algebra
coinvariants. Namely, we have the Koszul resolution of $k_a$, which gives the complex that calculates $\tau_a^!$
\begin{equation}\label{shrik stalk}
\tau_a^!=(\bigwedge V\otimes \calO_{V^\vee}\otimes
S)\otimes_{\hat{\frakg}}(-\beta).
\end{equation}
where $V\otimes \calO_{V^\vee}\to \calO_{V^\vee}$ is given by $v\otimes 1\mapsto v-v(a)$.
In general, this complex is difficult to compute. However, when $a=0$, this is more tractable, as we shall see.

First, for a general  point $a\in V^\vee$ we can express the degree $r$-term as
\begin{equation}\label{shrik r}
H^r\tau_a^!\simeq H_0(\hat{\frakg},S\otimes\beta),
\end{equation}
where the action of $\hat{\frakg}$ on $S$ will be the sum of two actions (induced by the actions $\alpha_1$ and $\alpha_2$ of $\frakg$ on $\calO_{V^\vee}\otimes S$, as described in \eqref{special:al1} and \eqref{special:al2}). Concretely, the first action is  via $Z^\vee:\hat{\frakg}\to \End V^\vee\to \End S$, and the second is via the $\xi(b)=-\sum \xi_{ij}a_i(a)ba_j^*$ for $b\in S$. If $a\neq 0$, $S$ is not a finite dimensional $\hat{\frakg}$-module and this Lie algebra coinvariant is difficult to compute. On the other hand, if $a=0$, the second action vanishes and $S$ decomposes as finite dimensional representations of $\hat{\frakg}$.

\begin{lem}\label{vanishing r}
Assume that $\beta(e)\not\in\bZ_{\leq 0}$. Then $H^r\tau_0^!=0$.
\end{lem}
\begin{proof}
The homothety $\bG_m$ acts on $S$ by nonnegative weights.
Therefore, if  $\beta(e)\not\in\bZ_{\leq 0}$, the coinvariant of $S\otimes\beta$ with respect to this $\bG_m$ is zero.
\end{proof}

From now on, we assume that $\beta(e)\not\in\bZ_{\leq 0}$.

Let us calculate $H^{r-1}\tau_0^!$. We have
\[\begin{CD}
(V\wedge V)\otimes (\calR\otimes \beta)@>m_2>>V\otimes (\calR\otimes \beta)@>m_1>>\calR\otimes \beta\\
@VVV@VVV@VVV\\
(V\wedge V)\otimes (\calR\otimes \beta)\otimes_{\hat{\frakg}}k@>d_2>> V\otimes (\calR\otimes \beta)\otimes_{\hat{\frakg}}k@>d_1>> (\calR\otimes \beta)\otimes_{\hat{\frakg}}k.
\end{CD}\]
Then
\[H^{r-1}\tau_0^!= m_1^{-1}((\calR\otimes\beta)\hat{\frakg})/( \on{Im}m_2+V\otimes(\calR\otimes\beta)\hat{\frakg})\]
As the Koszul complex is acyclic away from degree zero, we can rewrite the above as
\[H^{r-1}\tau_0^!=(\calR\otimes \beta)\hat{\frakg}\cap\on{Im}m_1/(\on{Im}m_1)\hat{\frakg}.\]
Consider
\[0\to (\calR\otimes \beta)\hat{\frakg}\cap\on{Im}m_1/(\on{Im}m_1)\hat{\frakg}\to (\calR\otimes\beta)\hat{\frakg}/(\on{Im}m_1)\hat{\frakg}\to (\calR\otimes\beta)\hat{\frakg}/ (\calR\otimes \beta)\hat{\frakg}\cap\on{Im}m_1\to 0. \]

Note that $0=H^r\tau_0^!$ implies that $(\calR\otimes\beta)\hat{\frakg}+\on{Im}m_1=\calR\otimes\beta$. Therefore,
\[(\calR\otimes\beta)\hat{\frakg}/(\calR\otimes \beta)\hat{\frakg}\cap\on{Im}m_1= \calR\otimes\beta/\on{Im}m_1.\]
We therefore can write
\[H^{r-1}\tau_0^!=\ker((\calR\otimes \beta)\hat{\frakg}/(\on{Im}m_1)\hat{\frakg}\to \calR\otimes\beta/\on{Im}m_1).\]
Therefore, there is a surjective map
\[H_1(\hat{\frakg},S\otimes\beta)\to H^{r-1}\tau_0^!,\]
where $\hat{\frakg}$ acts on $S$ via $Z$. (So $S$ are direct sums of finite dimensional representations of $\hat{\frakg}$.)

\begin{lem}\label{d vanishing}
For $\beta(e)\not\in\bZ_{\leq 0}$, we have $H_1(\hat{\frakg},S\otimes\beta)=0$. Therefore, $H^{r-1}\tau_0^!=0$.
\end{lem}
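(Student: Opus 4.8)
The plan is to reduce $H_1(\hat\frakg, S\otimes\beta)$ to Lie algebra homology for $\frakg$ by means of a Hochschild--Serre type argument, and then to exploit the action of the central element $e$. First I would observe that $\hat\frakg = \frakg\oplus ke$ with $e$ central, so $S\otimes\beta$ decomposes as a $\hat\frakg$-module according to the weight of the $\bG_m$ generated by $e$: since $\bG_m$ acts on $S=\calO_V/I(\hat X)$ with nonnegative weights and $e$ acts on $k_\beta$ by the scalar $\beta(e)$, the element $e$ acts on the degree-$k$ graded piece of $S\otimes\beta$ by the scalar $k+\beta(e)$, which by hypothesis $\beta(e)\notin\bZ_{\leq0}$ is nonzero for every $k\geq0$. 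Thus $e$ acts invertibly on $S\otimes\beta$.

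The key step is then a standard homological lemma: if a central element $e$ of a Lie algebra $\hat\frakg$ acts invertibly on a module $M$, then $H_i(\hat\frakg, M)=0$ for all $i$. The cleanest way I would present this is via the Chevalley--Eilenberg complex $\bigwedge^\bullet\hat\frakg\otimes M$: the contraction against $e\in\hat\frakg$ together with the fact that $e$ is central and acts invertibly on $M$ produces a chain homotopy between the identity and $0$. Concretely, writing $h = $ (insertion of $e$) followed by dividing by the invertible operator $e$ acting on $M$, one checks $dh + hd = \mathrm{Lie}_e = $ (the action of $e$ on $M$), which is invertible; since this operator is also a chain map, invertibility of $dh+hd$ forces the complex to be acyclic. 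Applying this with $M = S\otimes\beta$ gives $H_1(\hat\frakg, S\otimes\beta)=0$ immediately, and in fact $H_i=0$ for all $i$ — consistent with Corollary \ref{vanishing r}, which is the $i=0$ case of the same vanishing.

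I do not expect a serious obstacle here; the only point requiring care is making sure the weight bookkeeping is right, i.e. that $e$ really does act on all of $S\otimes\beta$ by a scalar that is bounded away from $0$ on each graded piece (it acts by $k+\beta(e)$ on degree $k$, and $k+\beta(e)=0$ would force $\beta(e)=-k\in\bZ_{\leq0}$, excluded by hypothesis). One should also note that $S\otimes\beta$ is an infinite-dimensional $\hat\frakg$-module, but it is a direct sum (over $k\geq0$) of finite-dimensional $\hat\frakg$-submodules (the graded pieces are $\frakg$-stable since $Z(\frakg)$ preserves the grading, and $e$ acts by a scalar), so Lie algebra homology commutes with the direct sum and the homotopy argument applies graded-piece by graded-piece. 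An alternative, if one prefers to avoid the explicit homotopy: use the Hochschild--Serre spectral sequence for the central extension $0\to ke\to\hat\frakg\to\frakg\to0$, whose $E_2$ page is $H_p(\frakg, H_q(ke, S\otimes\beta))$; since $e$ acts invertibly, $H_q(ke, S\otimes\beta)=0$ for $q=0,1$ (the Koszul complex $k[e]$-side is $S\otimes\beta \xrightarrow{e} S\otimes\beta$, an isomorphism), so the whole spectral sequence vanishes and $H_1(\hat\frakg,S\otimes\beta)=0$. Either route is short; I would write up the homotopy version as it is self-contained.
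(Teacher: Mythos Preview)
Your proof is correct. Both routes you describe are valid: the key observation in each is that the central element $e$ acts on the degree-$k$ piece of $S\otimes\beta$ by the nonzero scalar $k+\beta(e)$, and this forces the Lie algebra homology to vanish.

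The paper's own proof is essentially your alternative (spectral-sequence) argument, but run in the opposite order: it factors the $\hat\frakg$-coinvariants as $\bC$-coinvariants composed with $\frakg$-coinvariants, and looks at the Grothendieck spectral sequence with $E_2$-terms $H_1(\bC, H_0(\frakg, S\otimes\beta))$ and $H_0(\bC, H_1(\frakg, S\otimes\beta))$. One then has to observe that the $\frakg$-homology groups remain graded by weight (since $Z(\frakg)$ preserves the grading), so $e$ still acts by $k+\beta(e)$ on each piece and both terms vanish. Your ordering, taking $ke$-homology first, is a little cleaner: $H_q(ke,S\otimes\beta)=0$ for all $q$ immediately, so the whole $E_2$-page is zero and there is nothing further to check. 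Your primary homotopy argument is more direct still and yields vanishing of \emph{all} $H_i(\hat\frakg,S\otimes\beta)$ in one stroke, which subsumes Corollary \ref{vanishing r} as you note. One small expository point: as written, if $h$ already includes division by $e$ then $dh+hd=\id$, not $\Lie_e$; if $h$ is bare insertion then $dh+hd=\Lie_e=1\otimes e$ is invertible and induces zero on homology. Either formulation gives the conclusion, but it would be worth stating one of them cleanly.
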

\begin{proof}
Consider the $\hat{\frakg}$ coinvariants functor as the composition of $\frakg$ coinvariants functor, and the $\bC$ coinvariants functor. The $E_2$ terms of the Grothendieck spectral sequence contributing to $H_1(\hat{\frakg},S\otimes\beta)$ are $H_1(\bC,H_0(\frakg,S\otimes\beta))$ and $H_0(\bC,H_1(\frakg,S\otimes\beta))$. As $S\otimes\beta$ breaks as direct sums according to weights as a $\frakg$-module, and $\bC$ acts on each given weight piece as the weight plus $\beta(e)$, it is clear that under the above assumption on $\beta(e)$, both $H_1(\bC,H_0(\frakg,S\otimes\beta))$ and $H_0(\bC,H_1(\frakg,S\otimes\beta))$ are zero.
\end{proof}

\section{The geometry}\label{geometry}
Let $X$ be a smooth projective variety and $\calL$ a very ample line
bundle which gives $X\to \bP(V)$, where $V^\vee=\Gamma(X,\calL)$.
Let $\hat{\imath}:\hat{X}\to V$ be the closed embedding of the cone of
$X$ into $V$. Let $\bL$ be the totally space of $\calL^\vee$. Then
$$i_\bL:\bL\to X\times V$$
is a rank one subbundle of the trivial vector bundle over $X$ with
fiber $V$. The following diagram is commutative
\[\begin{CD}
\bL@>i_\bL>>X\times V\\
@V\pi VV@VV\pi V\\
\hat{X}@>\hat{\imath}>>V
\end{CD}\]
and the left vertical arrow realizes $\bL$ as the blow-up of
$\hat{X}$ at the origin. We denote the open immersion
$$
j_{\mathring{\bL}}: \mathring{\bL}=\bL-X\to \bL,
$$
where $X$ is regarded as the zero section of $\bL$.

Let $\bL^\vee$ be the dual of $\bL$, i.e., the total space of $\calL$, and $j_{\mathring{\bL}^\vee}: \mathring{\bL}^\vee\to\bL^\vee$ be the open subset away from the zero section. The the dual of $i_\bL$ is the evaluation map
\[ev: X\times V^\vee\to \bL^\vee\]
which sends $(x,a)$ to $a(x)\in \bL^\vee$.

Let $i_{\bL^\perp}:\bL^\perp\to X\times V^\vee$ be the orthogonal
complement of $\bL$ in $X\times V^\vee$, i.e. the kernel of $ev$. The projection
\[\bL^\perp\stackrel{i_{\bL^\perp}}{\to} X\times V^\vee\stackrel{\pi^\vee}{\to} V^\vee\]
realizes $\bL^\perp$ as the universal family of hyperplane sections
of $X$. We still denote this projection by $\pi^\vee$. Let
$j_U:U=X\times V^\vee-\bL^\perp\to X\times V^\vee$ be the
complement. For $a\in V^\vee$, the fiber $U_a$ of $U\to V^\vee$ over
$a$ is $X-V(f_a)$, where $f_a$ is the section of $\calL$ given by
$a$ and $V(f_a)$ is its divisor. Note that the following diagram is Cartesian.
\begin{equation}\label{geom:cart}
\begin{CD}
U@>j_U>>X\times V^\vee\\
@V ev VV @VV ev V\\
\mathring{\bL}^\vee@>j_{\mathring{\bL}^\vee}>> \bL^\vee.
\end{CD}
\end{equation}

\quash{

Then the family version of Example
\ref{mix} is

\begin{lem}\label{obv}
$$\four_X((i_{\bL}j_{\mathring{\bL}})_+\calO_{\mathring{\bL}})\simeq
(j_U)_!\calO_U, \ \ \ \
\four_X((i_{\bL}j_{\mathring{\bL}})_!\calO_{\mathring{\bL}})\simeq
(j_U)_+\calO_U.$$
\end{lem}
}

\section{A formula for $\tau$: CY case}\label{exact seq}

We will complete the proof of Theorem \ref{intro:general beta CY case} in this section.

Let $i_0:\{0\}\to V$ be the inclusion of the origin, and
$j_0:\mathring{V}\to V$ be the open embedding of the complement. Let
$\mathring{X}=\hat{X}-\{0\}$. The open inclusion $\mathring{X}\to
\hat{X}$ is still denoted by $j_0$ and the closed inclusion
$\mathring{X}\to \mathring{V}$ is denoted by $\mathring{\imath}$.
By specializing \eqref{triangleII}, we have the following important sequence for $\hat\tau=\four(\tau)$
\begin{equation}\label{4-term}
0\to i_{0,+} H^{-1}i_0^+\hat{\tau}\to  H^0j_{0,!}(\hat{\tau}|_{\mathring{V}})\to
\hat{\tau}\to i_{0,+} H^0i_0^+\hat{\tau}\to 0.
\end{equation}

First we make a simplification of this sequence.

\begin{lem}\label{vanishing l}
For $\beta(e)\not\in\bZ_{\leq 0}$,
$i_{0,+} H^0i_0^+\hat{\tau}=0$.
\end{lem}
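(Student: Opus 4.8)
The plan is to identify $H^0 i_0^+ \hat\tau$ with a Lie algebra homology group and then apply the same weight argument used in Corollary \ref{vanishing r} and Lemma \ref{d vanishing}. Recall that $\hat\tau = \four(\tau)$ is, explicitly, $D_V/D_V I(\hat X) + D_V(Z^\vee(\xi)+\beta'(\xi),\ \xi\in\hat\frakg)$ with $\beta' = \beta - r\beta_0$; in particular $\beta'(e) = \beta(e) - r$. Since $i_0^+$ is the (shifted) derived pullback along $\{0\}\hookrightarrow V$, and $\hat\tau$ is a quotient of $\calR' := D_V/D_V I(\hat X) \simeq \calO_V \otimes S$ (with $S$ the homogeneous coordinate ring of $\hat X$) by the $\hat\frakg$-action, the computation is dual to the one in \S\ref{!fiber}. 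First I would write $i_0^+\hat\tau = k_0 \otimes^L_{\calO_V}\hat\tau$ up to shift, and then use that $\hat\tau = (\calR'\otimes\beta')\otimes_{\hat\frakg}k$ to commute the two operations: the top (zeroth) cohomology $H^0 i_0^+\hat\tau$ is the zeroth-degree piece, which is $k_0\otimes_{\calO_V}\hat\tau$, i.e. the fiber of $\hat\tau$ at the origin. Concretely this should come out to $H_0(\hat\frakg, S\otimes\beta')$ where $\hat\frakg$ acts on $S$ through $Z^\vee:\hat\frakg\to\End V\to \End S$ — exactly the structure appearing in \eqref{shrik r}, with $\beta'$ in place of $-\beta$.

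The key step is then the weight argument. The homothety $\bG_m \subset \hat G$ acts on $S$ with nonnegative weights (since $S$ is a quotient of the polynomial ring $\calO_V = \Sym(V^\vee)$), and on the character line $\beta'$ by the scalar $\beta'(e) = \beta(e) - r$. Hmm — one must be careful here: unlike in Corollary \ref{vanishing r}, the relevant scalar is shifted by $r = \dim V$, so "nonnegative weight $+\ \beta'(e)$" need not avoid zero just because $\beta(e)\notin\bZ_{\leq 0}$. The resolution is that the Fourier transform dualizes $V$ to $V^\vee$ and shifts degrees accordingly, so the correct statement is that the weights of $\bG_m$ on the graded piece of $S$ of degree $d$ contributing to $H^0 i_0^+\hat\tau$ together with the $\beta'$-twist recombine to $d + \beta(e)$ (the $-r$ being absorbed by the shift $[1-r]$ / the top-cohomology normalization, or equivalently by passing from $i_0^+$ to $i_0^!$ on the dual side). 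Thus the $\bG_m$-coinvariants of $S\otimes\beta'$ in the relevant degree are zero precisely when $\beta(e)\notin\bZ_{\leq 0}$, killing $H_0(\hat\frakg, S\otimes\beta')$ and hence $H^0 i_0^+\hat\tau$.

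I expect the main obstacle to be bookkeeping the Fourier-transform degree shift correctly: pinning down exactly which graded piece of $S$ contributes to $H^0 i_0^+\hat\tau$ (as opposed to lower cohomologies), and verifying that the combined weight of that piece tensored with the $\beta'$-line is $\geq \beta(e)$ rather than $\geq \beta(e) - r$. The cleanest way to do this is probably to not pass through $\hat\tau$'s explicit presentation at all, but instead use the Cartesian diagram \eqref{geom:cart} together with Theorem \ref{intro:general} (or its ingredients) to realize $i_0^+\hat\tau$ via $ev^!$ of a $\bG_m$-monodromic object on $\bL^\vee$ restricted over $0\in X$... wait, that is circular since the lemma feeds into that proof. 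So I would instead argue directly: use the Koszul resolution of $k_0$ over $\calO_V$ exactly as in \eqref{shrik stalk}, observe that taking $\hat\frakg$-coinvariants commutes with taking the top cohomology of the Koszul complex (because, as in \S\ref{!fiber}, the Koszul complex is acyclic in positive degrees and $H^0$ is a right-exact operation compatible with $\otimes_{\hat\frakg}k$), conclude $H^0 i_0^+\hat\tau \simeq H_0(\hat\frakg, S\otimes\beta')$ where now the grading on $S$ is the one coming from $\calO_V$ (polynomial functions, nonnegative degree), and finish with the $\bG_m$-coinvariant vanishing since every weight is $\geq \beta'(e)+0$ — and here I claim $\beta'(e)$ should be read as $\beta(e)$ after the standard identification, because the "$-r$" in $\beta' = \beta - r\beta_0$ is exactly compensated by the fact that $H^0 i_0^+$ on the Fourier side corresponds to the fiber functor normalized so that the trivial $D$-module $\calO_V$ has $H^0 i_0^+\calO_V = k$ sitting in $\bG_m$-weight $0$ rather than weight $r$. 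Making that last identification airtight is the crux.
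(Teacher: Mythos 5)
The paper's proof is quite different from your plan, and the difference is essential. The paper does \emph{not} compute $H^0 i_0^+\hat\tau$ via a coinvariants/Koszul formula. Instead it observes that $i_{0,+}H^0 i_0^+\hat\tau \simeq \delta_0^\ell$ (with $\ell=\dim H^0 i_0^+\hat\tau$) is the maximal quotient of $\hat\tau$ supported at the origin, so there is a surjection $\hat\tau\onto\delta_0^\ell$. Applying the exact functor $\four$ turns this into a surjection $\tau\onto\calO_{V^\vee}^\ell$, and then applying the \emph{right-exact} functor $H^r i_0^!$ at $0\in V^\vee$ (where $H^r i_0^!\calO_{V^\vee}=k$) gives $H^r\tau_0^!\onto k^\ell$. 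Corollary~\ref{vanishing r} then forces $\ell=0$. The whole point of this detour is to land back on the $V^\vee$ side, where the character appearing in eq.~\eqref{shrik r} really is $\beta$, not $\beta'$.

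Your plan has a genuine gap that you yourself partially identify but do not resolve. First, the Koszul-resolution-and-$\hat\frakg$-coinvariants device of \S\ref{!fiber} (cf.\ eq.~\eqref{shrik stalk}, \eqref{shrik r}) computes the $!$-fiber $i_0^!M=Li_0^*M[-r]$, \emph{not} the $+$-fiber $i_0^+M=\bD_{pt}\,i_0^!\,\bD_V M$; you write ``$i_0^+\hat\tau = k_0\otimes^L_{\calO_V}\hat\tau$ up to shift,'' but that is the formula for $i_0^!$, and the two genuinely differ (e.g.\ $H^0i_0^+\calO_V=0$ while $H^ri_0^!\calO_V=k$). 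There is no comparably simple coinvariants presentation for $i_0^+\hat\tau$, because $\bD_V\hat\tau$ is not of the form $(\calR'\otimes\chi)\otimes_{\hat\frakg}k$. Second, even if one settles for computing $H^r i_0^!\hat\tau=H_0(\hat\frakg,S\otimes\beta')$, the weight argument gives vanishing only when $\beta'(e)=\beta(e)-r\notin\bZ_{\leq 0}$, a strictly stronger hypothesis than the one in the lemma. The ``$-r$ gets absorbed by the shift'' idea cannot be made to work, precisely because a degree shift changes cohomological degree but does not change the $\bG_m$-weight of a fixed graded piece of $S$; the discrepancy is not a bookkeeping artifact but the reason the paper routes the argument through Fourier transform and the already-established Corollary~\ref{vanishing r} rather than attempting a direct fiber computation on the $V$ side.
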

\begin{proof}Assume that $H^0i_0^+\hat{\tau}=k^\ell$, so that $i_{0,+} H^0i_0^+\hat{\tau}=\delta_0^\ell$. I.e. there is a surjective map of D-modules $\hat{\tau}\to \delta_0^\ell$ on $V$. Taking the Fourier transform, we therefore have a surjective map $\tau\to \calO_{V^\vee}^\ell$. Taking the right exact functor $H^ri_0^!$, i.e., the $r$th cohomology of the $!$-fibers at $0\in V^\vee$, we have a surjective map $H^r\tau^!_0\to k^\ell$. By Lemma \ref{vanishing r}, $\ell=0$.
\end{proof}
As a result, under our assumption
\begin{equation}\label{3-term}
0\to  i_{0,+} H^{-1}i_0^+\hat{\tau}\to  H^0j_{0,!}(\hat{\tau}|_{\mathring{V}})\to
\hat{\tau}\to 0.
\end{equation}
Let $d=\dim_k H^{-1}i_0^+\hat{\tau}$.
Then $i_{0,+}H^{-1}i_0^+\hat{\tau}=\delta_0^d$. Taking the Fourier transform of this sequence, we therefore obtain
\begin{equation}
0\to \calO_{V^\vee}^d\to \four(H^0j_{0,!}(\hat{\tau}|_{\mathring{V}}))\to \tau \to 0.
\end{equation}

We next understand $\four(H^0j_{0,!}(\hat{\tau}|_{\mathring{V}}))$. Clearly, $\hat{\tau}$ is set-theoretically supported on $\hat{X}$. Note that the Fourier transform of $Z(\xi)+\beta(\xi)$ is $Z^\vee(\xi)+\beta'(\xi)$, where
$$\beta'(\xi)=\beta(\xi)-\tr Z(\xi).$$ 
We have the following lemma.
\begin{lem} \label{restriction} 
For $\calL=\omega_X^{-1}$, we have
$\hat{\tau}|_{\mathring{X}}
 = D_{\mathring{X},\beta'}$, where $D_{\mathring{X},\beta'}$ is the D-module on $\mathring{X}$ as introduced in Lemma \ref{mono:eq}.
\end{lem}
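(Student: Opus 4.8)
The plan is to unwind both sides from the explicit presentation of $\hat\tau=\four(\tau)$. Recall that presentation reads $\hat\tau=D_V/\bigl(D_VI(\hat X)+D_V(Z^\vee(\xi)+\beta'(\xi),\ \xi\in\hat\frakg)\bigr)$ with $\beta'=\beta-r\beta_0$, and that $\hat\tau$ is set-theoretically supported on the cone $\hat X$. Since $X$ is smooth, the smooth locus of $\hat X$ is precisely $\mathring X=\hat X-\{0\}$, which we may identify with $\mathring{\bL}$, the complement of the zero section in the total space of $\calL^\vee=\omega_X$; under this identification the Euler field on $V$ restricts to the nowhere-vanishing generator of the $\bG_m$-scaling, while $\frakg$ acts through $\frakg\to\Gamma(\mathring X,T_{\mathring X})$, so that altogether $\hat\frakg\to\Gamma(\mathring X,T_{\mathring X})$. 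Restricting $\hat\tau$ to the open set $\mathring V\subset V$ is exact and yields a $D_{\mathring V}$-module supported on the smooth closed subvariety $\mathring X$; by Kashiwara's equivalence it corresponds to a unique D-module on $\mathring X$, and that module is by definition $\hat\tau|_{\mathring X}$.

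The next step is to restrict the presentation term by term. Because the ideal $I(\hat X)$ generates the ideal sheaf of the (reduced, smooth) subvariety $\mathring X$ in $\mathring V$, the restriction $\bigl(D_V/D_VI(\hat X)\bigr)|_{\mathring V}$ corresponds under Kashiwara to $D_{\mathring X}$ as a left module over itself; this is a local statement, immediate in coordinates in which $\mathring X$ is a linear subspace. The linear vector fields $Z^\vee(\xi)$, $\xi\in\hat\frakg$, are tangent to $\hat X$ since $\hat G=G\times\bG_m$ preserves $\hat X$, so on $\mathring X$ they become the vector fields attached to the $\hat G$-action described above. Kashiwara's functor being exact, $\hat\tau|_{\mathring X}$ is therefore the quotient of $D_{\mathring X}$ by the relations $\xi-\beta'(\xi)$, $\xi\in\hat\frakg$, and this is exactly $D_{\mathring X,\beta'}$ in the sense of Lemma \ref{mono:eq}. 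The hypothesis $\calL=\omega_X^{-1}$ enters only here, to make $\omega_{\mathring X}$ canonically trivial (via the form $\Omega$ of \cite[\S4]{LY}), so that $D_{\mathring X,\beta'}$ is defined with no extra twist.

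I expect the single delicate point to be the compatibility of Kashiwara restriction with the explicit presentation --- that restricting $D_V/D_VI(\hat X)$ to the smooth locus of $\hat X$ produces $D_{\mathring X}$, and that passing to the further quotient by the vector-field relations commutes with this restriction. Both are formal given the exactness and locality of Kashiwara's functor together with the tangency of the $Z^\vee(\xi)$ to $\hat X$; the only substantive bookkeeping is that the Fourier transform sends the Euler field $E_{V^\vee}$ to $-E_V-r$ and $\beta$ to $\beta'=\beta-r\beta_0$, which is what forces the character $\beta'$ rather than $\beta$ to appear on the restriction. Should one prefer to avoid Kashiwara's equivalence altogether, the same identification can be obtained by comparing the two sides directly as $\hat G$-monodromic D-modules on the principal $\bG_m$-bundle $\mathring{\bL}\to X$.
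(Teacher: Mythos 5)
The paper itself gives no proof of this lemma; it is introduced with the single sentence ``The following lemma is clear,'' so your task was to supply the implicit argument, and you have done so correctly. The essential mechanism is exactly the one you describe: the Fourier-transformed presentation $\hat\tau=D_V/\bigl(D_VI(\hat X)+D_V(Z^\vee(\xi)+\beta'(\xi))\bigr)$ with $\beta'=\beta-r\beta_0$, restriction to $\mathring V$, Kashiwara's equivalence over the smooth locally-linear $\mathring X$, and the observation that the $Z^\vee(\xi)$ are tangent to $\hat X$ so the vector-field relations descend. Your bookkeeping of the Euler-shift (that $\widehat{E_{V^\vee}}=-E_V-r$, which is what produces $\beta'$ rather than $\beta$) and of the role of the hypothesis $\calL=\omega_X^{-1}$ (making $\omega_{\mathring X}\cong\pi^*\omega_X\otimes\pi^*\calL$ canonically trivial, so the Kashiwara twist disappears and one really obtains $D_{\mathring X,\beta'}$ rather than a line-bundle twist of it) are both correct and are the two places where a careless reader could go wrong. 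Your proposed alternative --- matching the two sides directly as $\hat G$-monodromic D-modules on the $\bG_m$-bundle $\mathring{\bL}\to X$ --- would also work and is closer in spirit to how the paper uses this lemma downstream (it immediately feeds into the monodromic Fourier transform in \S\ref{exact seq}). No gaps.
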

\begin{proof}
Recall that $\hat\tau$ is defined as
\[\hat\tau=D_V/ D_VI(\hat X)+D_V (Z^\vee(\xi)+\beta'(\xi), \xi \in\hat\frakg).\]
Let $\calR':= D_V/ D_V I(\hat X)$. Then similar to \eqref{special:exp of tau}, $\hat\tau=(\calR'\otimes k_{\beta'})\otimes_{\hat\frakg} k$.
Consider the closed embedding $\mathring{X}\to\mathring{V}$. Then as explained in \S \ref{Appendix}, there is a $D_{\mathring{V}}\times D_{\mathring{X}}$-bimodule,
$D_{\mathring{V}\leftarrow \mathring{X}}=D_{\mathring{V}}/D_{\mathring{V}}\calI(\mathring{X})\otimes \omega_{\mathring{X}/\mathring{V}}$. As the relative canonical sheaf $\omega_{\mathring{X}/\mathring{V}}$ is trivial, $\calR'|_{\mathring{V}}=D_{\mathring{V}\leftarrow \mathring{X}}$. In particular, $\calR'$ also admits a right $D_{\mathring{X}}$ structure, and it is clear that the right action of $\hat{\frakg}$ on $\calR'$ is induced from the map $\hat{\frakg}\to D_{\mathring{X}}$.
Therefore, $\calR'|_{\mathring{X}}=D_{\mathring{X}}$ as $D_{\mathring{X}}$-bimodules, and $\hat{\tau}|_{\mathring{X}}=D_{\mathring{X},\beta'}$, as claimed.
\end{proof}

Note that $\mathring{\bL}\simeq \mathring{X}$ and therefore $\hat{\tau}|_{\mathring{X}}\simeq D_{\mathring{X},\beta'}$ can be regarded as a D-module on $\mathring{\bL}$, which is naturally $(\bG_m,\beta'(e))$-equivariant. Then
\begin{equation}\label{aux1}
H^0j_{0,!}(\hat{\tau}|_{\mathring{V}})=H^0\pi_!(i_{\bL,!}j_{\mathring{\bL},!}D_{\mathring{X},\beta'}).
\end{equation}
According to Lemma \ref{key},
\begin{equation}\label{aux2}
\four(H^0\pi_!(i_{\bL,!}j_{\mathring{\bL},!}D_{\mathring{X},\beta'}))=H^0\pi^\vee_!\four_X(i_{\bL,!}j_{\mathring{\bL},!}D_{\mathring{X},\beta'}).
\end{equation}
By \eqref{fcs},
\begin{equation}\label{aux3}
\four_X(i_{\bL,!}j_{\mathring{\bL},!}D_{\mathring{X},\beta'})=ev^!\four_X(j_{\mathring{\bL},!}D_{\mathring{X},\beta'})[1-r].
\end{equation}

\begin{lem}\label{aux4}
There is a canonical isomorphism
\[\four_X(j_{\mathring{\bL},!}D_{\mathring{X},\beta'})\simeq j_{\mathring{\bL}^{\vee},+}D_{\mathring{\bL}^\vee,\beta}.\]
\end{lem}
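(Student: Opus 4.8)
This lemma is the relative (over $X$), twisted version of the classical computation that the one-variable Fourier transform interchanges the $!$- and $+$-extensions of a Kummer $D$-module across the origin: writing $j\colon\bG_m\hookrightarrow\bA^1$ and $\cK_\lambda=D_{\bG_m}/D_{\bG_m}(t\partial_t-\lambda)$, one has $\four_{\bA^1}(j_!\cK_\lambda)\simeq j_+\cK_\mu$, with $\mu$ an explicit affine function of $\lambda$ determined by the normalization of $\four$ in \S\ref{Appendix}. The plan is to reduce Lemma \ref{aux4} to this statement fiberwise over $X$.

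First I would localize on the base. Over a Zariski open $W\subset X$ trivializing $\calL$ — hence simultaneously $\bL$, $\bL^\vee$ and the punctured bundles $\mathring{\bL}$, $\mathring{\bL}^\vee$ — we have $\bL|_W\cong W\times\bA^1_t$ and $\bL^\vee|_W\cong W\times\bA^1_s$, and the relative transform $\four_X$ restricts to the one-variable transform $t\mapsto s$ with $W$ as parameter. Since $\four_X$ commutes with restriction to opens of $X$, it suffices to build a canonical isomorphism over each such $W$ and observe they glue. Over $W$ the $D$-module $D_{\mathring{X},-\beta'}$ — which, as in Lemma \ref{mono:eq}, is the cyclic quotient of $D_{\mathring{\bL}}$ by the left ideal generated by $Z(\xi)+(-\beta')(\xi)$, $\xi\in\hat{\frakg}$ — is pinned down as follows: the relation for $\xi=e$ forces the fiber-scaling field $Z(e)=t\partial_t$ to act on the generator by $\beta'(e)$, and once $t\partial_t$ is so normalized each remaining relation ($\xi\in\frakg$) becomes an operator in the $W$-variables alone, the vertical part of $Z(\xi)$ being $t\partial_t$ times the cocycle $\tfrac{Z(\xi)(v)}{v}$ of Lemma \ref{Lie der}. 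Hence $D_{\mathring{X},-\beta'}|_{W\times\bG_m}\cong\calM_W\boxtimes\cK_{\beta'(e)}$ for a twisted $G$-equivariant $D$-module $\calM_W$ on $W$, and likewise $D_{\mathring{\bL}^\vee,-\beta}|_{W\times\bG_m}\cong\calM'_W\boxtimes\cK_{\beta(e)}$.

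Now apply the one-variable computation. As Fourier transform in $t$ leaves the $W$-factor untouched, $j_{\mathring{\bL},!}D_{\mathring{X},-\beta'}|_{W\times\bA^1}\cong\calM_W\boxtimes j_!\cK_{\beta'(e)}$, so $\four_X(j_{\mathring{\bL},!}D_{\mathring{X},-\beta'})|_{W\times\bA^1_s}\cong\calM_W\boxtimes j_+\cK_\mu$. It then remains to identify $\calM_W$ with $\calM'_W$ and $\mu$ with $\beta(e)$. For the first, the duality of $\bL$ and $\bL^\vee$ enters: the vector field of $\xi$ on $X$ is the same on both sides, its vertical cocycle term changes sign, and after substituting the (opposite) scalars for $t\partial_t$ and $s\partial_s$ the presentations of $\calM_W$ and $\calM'_W$ over $D_W$ coincide. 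For the second, one combines the transformation law of the Kummer parameter under $\four$ with the definition of $\beta'$ in Lemma \ref{mono:eq}; both reductions are formal once the conventions of \S\ref{Appendix} and Lemma \ref{mono:eq} are inserted.

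The only real work is this last compatibility: one must check that fiberwise Fourier transform carries the whole $\hat{G}$-equivariant structure of $D_{\mathring{X},-\beta'}$ on $\mathring{\bL}$ to that of $D_{\mathring{\bL}^\vee,-\beta}$ on the opposite torsor $\mathring{\bL}^\vee$, with parameters matching exactly, and that the local decompositions above are natural enough to glue. A cleaner, trivialization-free route phrases this through twisted differential operators: $\bG_m$-monodromic $D$-modules on $\mathring{\bL}$ (resp. $\mathring{\bL}^\vee$) are modules over a twisted differential operator sheaf (TDO) on $X$ with twist $\beta'(e)$ (resp. $\beta(e)$), fiberwise $\four$ is an equivalence between these categories interchanging $j_!$ and $j_+$, and one reads off the image of $D_{\mathring{X},-\beta'}$ directly from its $\hat{\frakg}$-presentation; the twist bookkeeping is the same but the gluing problem disappears.
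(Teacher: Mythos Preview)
The paper does not actually prove Lemma \ref{aux4}; it is stated bare. The only hint is in the appendix, where immediately after Example \ref{four:mono} (the one-variable identity $\four(j_+\calL_\la)=j_!\calL_{-\la+1}$) the authors begin to set up ``the family version of Example \ref{four:mono}'' for a line bundle $\bL$ with its punctured $\bG_m$-torsors $\mathring{\bL},\mathring{\bL}^\vee$ --- and then stop without stating or proving anything. So your strategy --- Zariski-locally trivialize $\bL$, split $D_{\mathring{X},-\beta'}$ as an external product of a $D_W$-module with a Kummer module on $\bG_m$, apply Example \ref{four:mono} in the fiber, and glue --- is exactly the argument the paper is gesturing at, and is correct in outline. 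The TDO repackaging you mention at the end is the cleanest way to make the gluing automatic and is the standard way this is done.

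Two caveats worth flagging. First, you defer the parameter matching to ``formal once the conventions of \S\ref{Appendix} and Lemma \ref{mono:eq} are inserted.'' In this paper that is risky: $\beta'$ is never defined in the visible text (its definition $\beta'=\beta-r\beta_0$ sits in a commented-out block), and the displays surrounding Lemma \ref{aux4} oscillate between $D_{\mathring{X},\beta'}$ and $D_{\mathring{X},-\beta'}$ without comment. You should actually run the computation: track the sign of the $e$-action on $\bL$ versus $\bL^\vee$, use Example \ref{four:mono} in the dual form $\four(j_!\calL_\mu)\simeq j_+\calL_{1-\mu}$, and verify the output character on the nose rather than appealing to formality. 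Second, your external-product step is right but the sentence ``once $t\partial_t$ is so normalized each remaining relation becomes an operator in the $W$-variables alone'' deserves one line of justification: for $\xi\in\frakg$ the generator $Z(\xi)+\chi(\xi)$ has vertical part $c_\xi(w)\,t\partial_t$ with $c_\xi\in\calO_W$, so modulo $t\partial_t-\mathrm{const}$ it becomes a first-order operator on $W$, and the ideal is then visibly an external product. With those two points made explicit your argument is complete --- and more complete than what the paper supplies.
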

\begin{proof}
Instead of the original formula, we can prove $\four_X(j_{\mathring{\bL}^{\vee},+}D_{\mathring{\bL}^\vee,\beta})\simeq j_{\mathring{\bL},!}D_{\mathring{X},\beta'}$. First note that the $+$-restriction of
$\four_X(j_{\mathring{\bL}^{\vee},+}D_{\mathring{\bL}^\vee,\beta})$ along $X\to \bL$ is zero. In fact, we have the following more general fact. We keep the notations $\bL, \bL^\vee, j_{\mathring{\bL}}$ etc. We consider the $\bG_m$-action on $\bL$ by homothethies.

\begin{lem} Let $M$ be a $\bG_m$-monodromic holonomic D-module on $\mathring{\bL}$ (see \S \ref{Appendix} for the terminology.) Then the $+$-fiber of $\four_X(j_{\mathring{\bL},!}M)$ along $X\to \bL^\vee$ is zero.
\end{lem}
\begin{proof}One can check this pointwise on $X$ and by the base change of Fourier transform (Lemma \ref{key}), one can assume $X$ is a point. Then it follows from Example \ref{four:mono}.
\end{proof}

Therefore, by \eqref{triangleII}, it is enough to show $\four_X(j_{\mathring{\bL}^{\vee},+}D_{\mathring{\bL}^\vee,\beta})|_{\mathring{\bL}}=D_{\mathring{\bL},\beta'}$.
By definition, we can write
\[\four_X(j_{\mathring{\bL}^{\vee},+}D_{\mathring{\bL}^\vee,\beta})|_{\mathring{\bL}}= p_{\bL,+}(e^x\otimes D_{\mathring{\bL}\times_X\mathring{\bL}^\vee\to\mathring{\bL}^\vee}\otimes_{\frakg}(k_{-\beta}))[1].\]
In other words, $\four_X(j_{\mathring{\bL}^{\vee},+}D_{\mathring{\bL}^\vee,\beta})|_{\mathring{\bL}}$ is calculated as the cokernel of the map
\[e^x\otimes D_{\mathring{\bL}\times_X\mathring{\bL}^\vee\to\mathring{\bL}^\vee}\otimes_{\frakg}(k_{-\beta})\stackrel{\nabla}{\to}\Omega_{\mathring{\bL}^\vee/X}\otimes_{\calO_{\mathring{\bL}^\vee}} e^x\otimes D_{\mathring{\bL}\times_X\mathring{\bL}^\vee\to\mathring{\bL}^\vee}\otimes_{\frakg}(k_{-\beta}).\]
Note that $M= D_{\mathring{\bL}\times_X\mathring{\bL}^\vee\to\mathring{\bL}^\vee}\otimes_{\frakg}(k_{-\beta})$ is a cyclic $D_{\mathring{\bL}\times_X\mathring{\bL}^\vee}$-module, with a canonical generator ``$1$". For a local section $D\in D_{\mathring{\bL}\times_X\mathring{\bL}^\vee}$, let $[D]=D``1"$ denote the corresponding local section of $M$.
Note that $\Omega_{\mathring{\bL}/X}$ and $e^x$ are canonically trivialized as $\calO$-modules. Indeed, by definition, the underlying $\calO$-module of $e^x$ is the structure sheaf. On the other hand, if locally on $X$, we choose $s$ a section of $\calL$, regarded as a coordinate function on $\bL$,  and $t$ the dual coordinate on $\bL^\vee$. Then the 1-form $dt/t$ is independent of the choice and defines the trivialization of $\Omega_{\mathring{\bL}^\vee/X}$.
Therefore,  the underlying $\calO$-modules of  both terms in this complex are $M$. Then the D-module structure is given as follows: for $D\in D_{\mathring{\bL}\times_X\mathring{\bL}^\vee}$,
\[t\partial_t([D])=[t\partial_t D]+ [ts D], \quad s\partial_s([D])=[s\partial_s D]+[tsD].\]
As a result, $\four_X(j_{\mathring{\bL}^{\vee},+}D_{\mathring{\bL}^\vee,\beta})|_{\mathring{\bL}}=M/(t\partial_t+ts)M$.

\blue{ I tried to follow the arguments deriving the identity above, and I guess some more details may be helpful to the reader:}

More explicitly, as $\calO$-modules, $e^x:=m^!e^x$ is canonically trivialized, as was said above. Let $f\in\Gamma(\calO_{V\times V^\vee})$, then unravelling the definitions, we have the following action of $\partial_t$ on the element $f\otimes m^{-1}(1)\in e^x$:

\begin{equation}
\partial_t(f\otimes m^{-1}(1))=\partial_tf\otimes m^{-1}(1)+ f\partial_t(st)\otimes m^{-1}(\partial_{st}1)=(\partial_t+s)f\otimes m^{-1}(1)
\end{equation}
Note that $\partial_{st}=1$ in $e^x$. Therefore, for $1\otimes 1\otimes [D]\in e^x\otimes M$, we have
\begin{equation}
\nabla(1\otimes 1\otimes [D])=dt\otimes \partial_t(1\otimes [D])=dt\otimes s\otimes [D]+dt\otimes 1\otimes [\partial_tD]=\frac{dt}{t}\otimes 1\otimes [tsD+t\partial_tD]
\end{equation}
So one gets the above identity for the Fourier transform.

To proceed, we first consider $N= D_{\mathring{\bL}\times_X\mathring{\bL}^\vee\to\mathring{\bL}^\vee}/(t\partial_t+ts)D_{\mathring{\bL}\times_X\mathring{\bL}^\vee\to\mathring{\bL}^\vee}$, which is a D-module on $\mathring{\bL}$. We define a D-module homomorphism $D_{\mathring{\bL}}\to N, \quad D\mapsto D``1"$, which we claim is an isomorphism. Indeed, we can assume that $X$ is affine and the line bundle $\bL\to X$ is trivial. Then it is a direct calculation.

Finally,  note that both $N$ and $D_{\mathring{\bL}}$ are right $\hat{\frakg}$-modules. The $\hat{\frakg}$-module structure on $N$ comes from $\hat{\frakg}\to D_{\mathring{\bL}^\vee}$ acting on $D_{\mathring{\bL}\times_X\mathring{\bL}^\vee\to\mathring{\bL}^\vee}$ from the right, and the $\hat{\frakg}$-module structure on $D_{\mathring{\bL}}$ comes from $\hat{\frakg}\to D_{\mathring{\bL}^\vee}$ acting itself from the right. Under the above isomorphism, $D_{\mathring{\bL}}=N\otimes k_{\beta'-\beta}$. Now Lemma \ref{aux4} follows.
\end{proof}

\begin{lem}\label{for d} Assume that $\beta(e)\not\in\bZ_{\leq 0}$.
We have $d=\dim H^{r-1}\tau^!_0=0$.
\end{lem}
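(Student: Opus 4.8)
\textbf{Proof proposal for Lemma \ref{for d}.}

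The plan is to combine the two vanishing results already established for the shriek fiber at the origin with the surjection of Lie algebra homology constructed just above. Recall from \eqref{shrik r} and Corollary \ref{vanishing r} that under the hypothesis $\beta(e)\notin\bZ_{\leq 0}$ we have $H^r\tau_0^!=0$; this was already used to derive the expression
\[
H^{r-1}\tau_0^!=\ker\bigl((\calR\otimes\beta)\hat{\frakg}/(\on{Im}m_1)\hat{\frakg}\to \calR\otimes\beta/\on{Im}m_1\bigr),
\]
together with the resulting surjection $H_1(\hat{\frakg},S\otimes\beta)\onto H^{r-1}\tau_0^!$, where $\hat{\frakg}$ acts on $S$ through $Z$ (so $S$ decomposes as a direct sum of finite-dimensional $\hat{\frakg}$-modules, sorted by homothety weight). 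First I would invoke Lemma \ref{d vanishing}: for $\beta(e)\notin\bZ_{\leq 0}$ we have $H_1(\hat{\frakg},S\otimes\beta)=0$. Since $H^{r-1}\tau_0^!$ is a quotient of this group, it too vanishes, i.e. $\dim H^{r-1}\tau_0^!=0$.

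It remains to identify $d=\dim_k H^{-1}i_0^+\hat\tau$ with $\dim H^{r-1}\tau_0^!$. The idea is exactly parallel to the proof of Lemma \ref{vanishing l}. By definition $i_{0,+}H^{-1}i_0^+\hat\tau=\delta_0^{\,d}$, so the exact sequence \eqref{3-term} exhibits a monomorphism $\delta_0^{\,d}\hookrightarrow H^0 j_{0,!}(\hat\tau|_{\mathring V})$, and in particular $H^{-1}i_0^+\hat\tau$ is a direct summand sitting inside $\hat\tau$ as the maximal sub-D-module supported at the origin. Taking Fourier transform turns \eqref{3-term} into
\[
0\to \calO_{V^\vee}^{\,d}\to \four\bigl(H^0 j_{0,!}(\hat\tau|_{\mathring V})\bigr)\to \tau\to 0,
\]
so we have an injection $\calO_{V^\vee}^{\,d}\hookrightarrow \four(H^0 j_{0,!}(\hat\tau|_{\mathring V}))$ with cokernel $\tau$. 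Now apply the right exact functor $H^r i_0^!$ (the top cohomology of the shriek fiber at $0\in V^\vee$) to the surjection onto $\tau$; right exactness gives a surjection $H^r\!\bigl(i_0^!\four(H^0 j_{0,!}(\hat\tau|_{\mathring V}))\bigr)\onto H^r\tau_0^!$. But by Corollary \ref{vanishing r} the target is $0$, which is not yet what we want — so instead I would run the argument one cohomological degree down, using the long exact sequence of $i_0^!$ applied to the short exact sequence above together with $H^r i_0^!\calO_{V^\vee}^{\,d}=k^{\,d}$ (the shriek fiber of the structure sheaf on the $r$-dimensional $V^\vee$ is $k$ placed in top degree) and $H^{r}\tau_0^!=H^{r-1}\tau_0^!=0$. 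The relevant segment of the long exact sequence reads
\[
H^{r-1}\tau_0^!\to H^r i_0^!\calO_{V^\vee}^{\,d}\to H^r i_0^!\four(H^0 j_{0,!}(\hat\tau|_{\mathring V}))\to H^r\tau_0^!,
\]
i.e. $0\to k^{\,d}\to (\text{something})\to 0$ once we feed in $H^{r-1}\tau_0^!=0$; combined with the analogous control on $\four(H^0 j_{0,!}(\hat\tau|_{\mathring V}))$ coming from \eqref{aux1}--\eqref{aux3} and the fact that $\pi^\vee$ is affine of relative dimension $n=\dim X=r-1-\dim\bP(V)$... here one must be a little careful. The cleanest route, which I would actually take, is to bypass $\four(H^0 j_{0,!}(\cdot))$ entirely: from $0\to\delta_0^{\,d}\to H^0 j_{0,!}(\hat\tau|_{\mathring V})\to\hat\tau\to 0$ and the vanishing $i_{0,+}H^0 i_0^+\hat\tau=0$ (Lemma \ref{vanishing l}), one sees that $H^{-1}i_0^+$ of the middle term vanishes (shriek-extensions $j_{0,!}$ have no sub supported at $0$ in degree $-1$ once $H^0 i_0^+$ of the whole is controlled), whence the connecting map identifies $H^{-1}i_0^+\hat\tau\cong i_0^+\delta_0^{\,d}$ shifted appropriately; tracking shifts through Fourier transform and comparing with \eqref{shrik stalk}--\eqref{shrik r} gives $d=\dim H^{r-1}\tau_0^!$ directly.

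The main obstacle is the bookkeeping of cohomological shifts and the verification that the Fourier transform interchanges "$H^{-1}$ of the $+$-fiber of $\hat\tau$ at $0\in V$" with "$H^{r-1}$ of the $!$-fiber of $\tau$ at $0\in V^\vee$"; this is where the dimension count $\dim V=r$ enters and where a sign/degree error would be easy to make. Concretely I would pin it down by noting that Fourier transform exchanges $i_{0,+}$ on $V$ with $\calO_{V^\vee}$-tensoring, exchanges $(-)^+$-fiber at $0\in V$ with $(-)^!$-fiber at $0\in V^\vee$ up to the shift $[\,r\,]$ recorded in \eqref{shrik stalk}, and is exact; feeding the vanishing $H^r\tau_0^!=0$ (Corollary \ref{vanishing r}) and $H^{r-1}\tau_0^!=0$ (from Lemma \ref{d vanishing} via the surjection above) through \eqref{3-term} then forces $\delta_0^{\,d}=0$, i.e. $d=0$, and simultaneously exhibits the chain of equalities $d=\dim H^{-1}i_0^+\hat\tau=\dim H^{r-1}\tau_0^!=0$ asserted in the statement.
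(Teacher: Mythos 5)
Your handling of the second equality is correct and matches the paper: the surjection $H_1(\hat{\frakg},S\otimes\beta)\onto H^{r-1}\tau_0^!$ together with Lemma~\ref{d vanishing} gives $\dim H^{r-1}\tau_0^!=0$.

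The gap is in establishing $d=0$ (equivalently, the first equality). Your first attempt reaches the right point: from the long exact sequence of $i_0^!$ applied to $0\to\calO_{V^\vee}^d\to H^0\pi^\vee_+\calN\to\tau\to 0$, and the vanishing $H^{r-1}\tau_0^!=H^r\tau_0^!=0$ you already have, one gets $k^d\cong H^r i_0^!H^0\pi^\vee_+\calN$, so it suffices to prove $H^r i_0^!H^0\pi^\vee_+\calN=0$. But you never supply this, and the crucial geometric input is missing: $U$ does not meet $X\times\{0\}\subset X\times V^\vee$ (since $ev(x,0)=0$ for every $x$, i.e.\ $\{0\}\times X\subset\bL^\perp$, so $U_0=\emptyset$), hence by base change $i_0^!\pi^\vee_+\calN=0$. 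Feeding this into the truncation triangle $i_0^!H^{\leq -1}\pi^\vee_+\calN\to i_0^!\pi^\vee_+\calN\to i_0^!H^0\pi^\vee_+\calN\to$ and using that $i_0^!$ has cohomological amplitude $[0,r]$ while $H^{\leq -1}\pi^\vee_+\calN$ sits in degrees $\leq -1$ yields $H^r i_0^!H^0\pi^\vee_+\calN=0$ (and also $H^{r-1}i_0^!H^0\pi^\vee_+\calN=0$), which closes the argument. Your gesture at ``analogous control'' via \eqref{aux1}--\eqref{aux3} and the dimension formula $n=\dim X=r-1-\dim\bP(V)$ does not substitute for this: the formula is wrong ($\dim\bP(V)=r-1$, so the right-hand side is $0$), and, more to the point, the vanishing is a consequence of the emptiness of $U_0$, not of an amplitude bound on $\pi^\vee_+$ alone.

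Your ``cleanest route'' also does not go through. The recollement identity $i_0^+j_{0,!}=0$ holds for the full complex $j_{0,!}(\hat\tau|_{\mathring V})$, but not automatically for its truncation $H^0j_{0,!}(\hat\tau|_{\mathring V})$: since $j_{0,!}(\hat\tau|_{\mathring V})$ lives in degrees $[1-r,0]$ and $i_0^+$ has amplitude $[-r,0]$, the triangle $H^{\leq -1}j_{0,!}\to j_{0,!}\to H^0 j_{0,!}$ only gives $i_0^+H^0j_{0,!}\simeq i_0^+H^{\leq -1}j_{0,!}[1]$, which need not vanish in degree $-1$. And the final step --- ``tracking shifts through Fourier transform'' to identify $H^{-1}i_0^+\hat\tau$ with $H^{r-1}\tau_0^!$ --- is asserted without any computation; $\four$ does not formally exchange the $+$-fiber at $0\in V$ with the $!$-fiber at $0\in V^\vee$ (by base change the latter is rather a shift of the global de Rham pushforward of $\hat\tau$ to a point). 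So the actual identification of $d$ with $\dim H^{r-1}\tau_0^!$ is never established; what makes it work in the paper is precisely the observation $U_0=\emptyset$.
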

\begin{proof}
The second statement follows from Lemma \ref{d vanishing}. We need to establish the first equality.
For simplicity, let us denote $\calN:=ev^!(D_{\mathring{\bL}^\vee,-\beta})[1-r]$. This is a plain D-module on $U$.

Taking $i_0^!$ of \eqref{3-term}, it is enough to show that
$$H^ri^!_0H^0\pi^\vee_+\calN=H^{r-1}i_0^!H^0\pi^\vee_+\calN=0.$$
Consider the distinguished triangle
\[i_0^!H^{\leq -1}\pi^\vee_+\calN\to i_0^!\pi^\vee_+\calN\to i_0^!H^{0}\pi^\vee_+\calN\to.\]
The long exact sequence associated to this triangle is
\[H^{r-1}i_0^!\pi^\vee_+\calN\to H^{r-1}i_0^!H^{0}\pi^\vee_+\calN\to H^ri_0^!H^{\leq -1}\pi^\vee_+\calN\to H^ri_0^!\pi^\vee_+\calN\to H^ri_0^!H^{0}\pi^\vee_+\calN\to 0.\]
Note that $U$ does not intersect with $X\times\{0\}\subset X\times V^\vee$. Therefore, $i^!_0\pi^\vee_+\calN=0$. This implies that $H^ri^!_0H^0\pi^\vee_+\calN=0$, and $H^{r-1}i^!_0H^0\pi^\vee_+\calN=H^ri_0^!H^{\leq -1}\pi^\vee_+\calN$. But $H^{\leq -1}\pi^\vee_+\calN$ sits in cohomological degree $\leq -1$ and $i^!_0$ has cohomological amplitude $r$, $H^ri_0^!H^{\leq -1}\pi^\vee_+\calN=0$.
\end{proof}

We can now complete the proof of Theorem \ref{intro:general beta CY case}.
\begin{proof}
Combining \eqref{aux1}-\eqref{aux3} and Lemma \ref{aux4}, we can rewrite \eqref{3-term} as
\begin{equation}\label{3-term,2nd}
0\to \calO_{V^\vee}^d\to H^0\pi^\vee_+ev^!(D_{\mathring{\bL}^\vee,\beta})[1-r]\to \tau\to 0.
\end{equation}
Theorem \ref{intro:general beta CY case} follows immediately from Lemma \ref{for d} and the sequence \eqref{3-term,2nd}.
\end{proof}

\begin{rmk}
Note that explicitly,
$$ev^!(D_{\mathring{\bL}^\vee,\beta})[1-r]=D_{U}/ D_{U}T_{U/\mathring{\bL}^\vee}+ D_{U}(\xi+\beta(\xi), \xi \in\hat{\frakg}),$$
where $T_{U/\mathring{\bL}^\vee}$ is the relative tangent sheaf, and $\hat{\frakg}$ acts on $X\times V^\vee$ diagonally. In the special case $\beta(e)=1$, it reduces to $\calN=(\calO_{V^\vee}\boxtimes D_{X,\beta})|_U$ as in Theorem \ref{special:main}.
\end{rmk}

\section{General type hyperplane sections}\label{general type hyperplane}

Let $X$ be a projective $G$-variety, $\calL$ a very
ample $G$-linearized invertible sheaf over $X$, and
\[X\to \bP(V)\]
the associated $G$-equivariant embedding, where $V=\Gamma(X,\calL)^\vee$. Put $W=\Gamma(X,\calL\otimes\omega_X)^\vee$, $r=\dim V$, and $s=\dim W$. 

For simplicity, we assume that $\calL\otimes\omega_X$ is base point free. (That $W\neq0$ actually suffices for the following results.) Thus, we have a morphism $X\to \bP(V)\times\bP(W)$. Let 
$$\calI\subset k[V\times W]$$ 
be the bihomogeneous ideal defining the image, and let $\calI_d$ be the subspace of $\calI$ consisting of the $\deg_W=d$ elements.

Let $\bG_m^2$ be the multiplicative group acting on $V\times W$ by homotheties. Let $\hat{G}=G\times\bG_m^2$, whose Lie algebra is $\hat{\frakg}=\frakg\oplus ke^V\oplus ke^W$, where $e^V,e^W$ act respectively on $V,W$ by their identities.
 We denote by $Z^V:\hat{G}\to \GL(V)$ and $Z^W:\hat{G}\to\GL(W)$ the corresponding group representations, and $Z^V:\hat{\frakg}\to \End(V)$, $Z^W:\hat{\frakg}\to \End(W)$ the corresponding Lie algebra representations. In particular, $Z^V(e^V),Z^W(e^W)$ are the respective Euler vector fields on $V,W$. As before, we denote the Fourier transform by $~~\widehat{}:D_{V^\vee\times W^\vee}\ra D_{V\times W}$.

Let $\hat{\imath}:\hat{X}\subset V$ be the cone of $X$, defined by
the ideal $I(\hat{X})$. Let $\beta:\hat{\frakg}\to k$ be a Lie algebra
homomorphism. We extend the definition of a {\it tautological system} given in \S\ref{Intro} as follows \cite{LY}.
\begin{dfn}\label{enhanced tau}
Let $\tau_{VW}=\tau_{VW}(G,X,\calL,\beta)$ be the cyclic $D$-module  on $V^\vee\times W^\vee$ given by
\begin{align*}
D_{V^\vee\times W^\vee}/D_{V^\vee\times W^\vee}\calJ+D_{V^\vee\times W^\vee}J^W
+D_{V^\vee\times W^\vee}(Z^V(\xi)+Z^W(\xi)+\beta(\xi), \xi\in \hat{\frakg})
\end{align*}
where
$$\calJ=\widehat{\calI},\hskip.2in
J^W=\widehat{\Sym^2 W^\vee}.
$$
\end{dfn}

Note that when $\beta(e^W)=0$, we have $\tau_{VW}=\tau\boxtimes\cO_{W^\vee}$ where $\tau=\tau(G,X,\calL,\beta)$ is as defined in \S\ref{Intro}. (See first paragraph of \S\ref{general beta general type}.)


To apply Definition \ref{enhanced tau} to the geometric problem at hand, we first prove

\begin{prop}\label{general type classical solutions}
Let $\Pi$ be the sheaf generated by the period integrals $\Pi_\gamma$ of the universal family of hyperplane sections for $\calL$. Then we have an injective map $\Pi\ra {^{cl}}\on{Sol}(\tau_{VW})$, with $\beta(\frakg)=0$, $\beta(e^V)=1$ and $\beta(e^W)=-1$.
\end{prop} 


\begin{proof}
By construction \cite{LY}, $\Pi_\gamma=\int_\gamma {f_b\Omega\over f_a}$, where $b\in W^\vee$, $a\in V^\vee$, and $\Omega$ is a $G$-invariant $\G_m^2$-horizontal form of degree $\dim X$ on $\mathring{\bL}^\vee\oplus\mathring{\bK}^\vee$ \cite{CL2014}. Here $\bL^\vee, \bK^\vee$ are the respective total spaces of $\calL$, $\omega_X$. Note that the ${f_b\Omega\over f_a}$ define a family of meromorphic forms on $X$. Observe that $\calI_0$ is nothing but $I(\hat X)\subset k[V]$, the defining ideal of $X$ in $\P(V)$. Thus by \cite[Theorem 8.9]{LY}, $\Pi_\gamma$ is annihilated by the Fourier transform $\widehat{\calI_0}$. Since $\Pi_\gamma$ is linear along the component $W^\vee$, the period integral is automatically annihilated by $\widehat{\calI_d}$ for any $d>1$. Likewise, $J^W\Pi_\gamma=0$. As shown in \cite[\S8]{LY}, for a given homogeneous function $p\in\calI_1$, we have $\hat p {f_b\Omega\over f_a}=(-1)^l p {f_b\Omega\over f_a}$ where $l=\deg_V p$. But since $p\in\calI$, this form vanishes when it is restricted to $X$. It follows that $\widehat{\calI_1}\Pi_\gamma=0$.
Finally, by \cite[Theorem 8.9]{LY} again
$$(Z^V(\xi)+Z^W(\xi)+\beta(\xi))\Pi_\gamma=\Pi_\gamma,\hskip.2in\xi\in\frakg\oplus k e^V$$
where $\beta(\frakg)=0$ and $\beta(e^V)=1$. But since $\Pi_\gamma$ is linear along $W^\vee$, this condition is equivalent to
$$
(Z^V(\xi)+Z^W(\xi)+\beta(\xi))\Pi_\gamma=0, \hskip.2in \xi\in\hat\frakg\equiv\frakg\oplus k e^V\oplus k e^W
$$
with $\beta(e^W)=-1$. Therefore, the period integrals $\Pi_\gamma$ are analytic solutions to the differential systems associated to $\tau_{VW}(G,X,\calL,\beta)$, as desired.
\end{proof}

Returning to the general case of $\tau_{VW}\equiv\tau_{VW}(G,X,\calL,\beta)$,  we proceed to analyze it in a way parallel to \S\ref{CY case}. We shall follow most of the notations introduced there, but with a general line bundle $\calL$ now playing the role of $\omega_X^{-1}$ there. We will spell out the changes that need to be made to incorporate new structures associated to $W^\vee$ and $\hat\frakg=\frakg\oplus ke^V\oplus ke^W$. Put
$$
\calN:=(\cO_{V^\vee\times W^\vee}\boxtimes D_{X,\beta})|_{U\times W^\vee}.
$$
The following is a generalization of Theorem \ref{special:main}.

\begin{thm}\label{general:main} 
Assume that $\beta(e^V)=1$ and $\beta(e^W)=-1$. Then
there is a canonical isomorphism $\tau_{VW}\simeq H^0(\pi^\vee\times\id_{W^\vee})_+\calN$.
\end{thm}


For simplicity, we assume that $\beta(\frakg)=0$. The key step of the proof is finding an appropriate analogue of Lemma \ref{special:isom}, which we now formulate. Put
$$
\calR^V=D_{V^\vee}/D_{V^\vee}\calI_0,\hskip.5in
\calR^W=D_{W^\vee}/D_{W^\vee}J^W.
$$
Then $\calR^V,\calR^W$ have {\it right} $\hat\frakg$-module structures by right multiplications via $Z^V,Z^W$ respectively. Put
$$\calR^{VW}=R/R \widehat{\calI_1},\hskip.2in R:=\calR^V\boxtimes\calR^W.$$ 
For the same reason as $\widehat{\calI_1}$ also affords an action of $G$, $R/R \widehat{\calI_1}$ has a {\it right} $\hat\frakg$-module structures by right multiplications. (Note that $\widehat{\calI_1}$ a priori lives in a bigger space, whereas we used the same notation to denote its image in the quotient $R$.) By definition we have
$$
\tau_{VW}=(\calR^{VW}\otimes k_{\beta})\otimes_{\hat\frakg}k.
$$

Fix bases $a_1,..,a_r$ of $V$, and $b_1,..,b_s$ of $W$ respectively. As in \S\ref{CY case}, we have as $\cO_{V^\vee}$-modules
$$
\calR^V\simeq\cO_{V^\vee}\otimes S^V
$$
where $S^V=\cO_V/\calI_0$, which is $\bZ_{\geq0}$-graded. The $D_{V^\vee}$-structure on $\calR^V$ is then given by $\partial_{a_i}\mapsto\partial_{a_i}\otimes 1+1\otimes a_i^*$. We can also convert the right $\hat\frakg$-action on $\calR^V$ to a left action $\alpha$ as before. Similarly, we have as $\cO_{W^\vee}$-modules
$$
\calR^W\simeq\cO_{W^\vee}\otimes S^W
$$
where $S^W=\cO_W/\cO_W\Sym^2 W^\vee$, which is $\bZ/2\bZ$-graded. The $D_{W^\vee}$-structure on $\calR^W$ is then given by $\partial_{b_i}\mapsto\partial_{b_i}\otimes 1+1\otimes b_i^*$. Put
$$
f^V=\sum a_i\otimes a_i^*, \hskip.3in f^W=\sum b_i\otimes b_i^*
$$
which are the universal sections of the line bundles $\cO_{V^\vee}\boxtimes\calL$ and
$\cO_{W^\vee}\boxtimes(\calL\otimes\omega_X)$ respectively. By pulling them back to $V^\vee\times W^\vee\times X$, we shall view $f^V,f^W$ as sections of $\cO_{V^\vee\times W^\vee}\boxtimes\calL$ and
$\cO_{V^\vee\times W^\vee}\boxtimes(\calL\otimes\omega_X)$ respectively.

Recall that $U:=V^\vee\times X-\bL^\perp$ where $\bL^\vee$ is the total space of $\calL$, and let $\mathring{\bL}^\vee$ the complement of the zero section. 
As in the proof of Proposition \ref{general type classical solutions}, for given $b\in W^\vee$, $a\in V^\vee$, we can regard $f^W_b\Omega/f^V_a$
as a meromorphic form on $X$ with pole along $V(f_a)$.   As in \S\ref{CY case}, we have
$$\omega_{U\times W^\vee/V^\vee\times W^\vee}=(\cO_{V^\vee\times W^\vee}\boxtimes\omega_X)|_{U\times W^\vee}$$
as $D_{V^\vee\times W^\vee}\times\hat\frakg$-modules.

\begin{lem}\label{phi}
Define $\phi:(\calR^V\boxtimes\calR^W)\otimes k_{\beta_0}\to(\cO_{V^\vee\times W^\vee}\boxtimes\omega_X)|_{U\times W^\vee}$ by
$$
(a\otimes p)\boxtimes(b\otimes q)\mapsto{(-1)^l l! (f^W)^{1+(-1)^m\over2}\over (f^V)^{l+1} }(ab)\boxtimes(pq\Omega)
$$
where $l=\deg p$ and $m=\deg q\in\bZ/2\bZ$. Then $\phi$ is a $D_{V^\vee\times W^\vee}\times\hat\frakg$-module homomorphism, and it induces an isomorphisms of $D_{V^\vee\times W^\vee}$-modules
$$
\tau_{VW}\to \omega_{U\times W^\vee/V^\vee\times W^\vee}\otimes_{\hat\frakg}k.
$$
\end{lem}
\begin{proof}
It is a verbatim argument as in Lemma \ref{special:isom} and eqn. \eqref{id}.
\end{proof}

To complete our proof of Theorem \ref{general:main}, we observe that the proof  of Theorem \ref{special:main} carries over with just two changes: $V^\vee\times W^\vee$ and $\pi^\vee\times\id_{W^\vee}$ to replace $V^\vee$ and $\pi^\vee$ respectively.

As a consequence, for $\beta(e^V)=1$ and $\beta(e^W)=-1$ we have

\begin{cor}\label{rank formula}
Let $k=\C$, and $(a,b)\in V^\vee\times W^\vee$. Then the solution rank of $\tau_{VW}$ at $(a,b)$ is given by $\dim H_c^{0}(U_a, \calF|_{U_a})$, where $\calF=\on{Sol}(D_{X,\beta})$.
\end{cor}

\begin{proof}
This follows from a verbatim argument as in Corollary \ref{special:rank formula}.
\end{proof}

\section{A formula for $\tau$: general type case}\label{general beta general type}

We now return to the tautological system $\tau_{VW}=\tau_{VW}(G,X,\calL,\beta)$ introduced in Definition \ref{enhanced tau}.  We continue to use the notations introduced in \S\ref{general type hyperplane}. Let $\beta:\hat\frakg\equiv\frakg\oplus ke^V\oplus ke^W\ra k$ be a Lie algebra homomorphism. 
If $\beta(e^W)\neq0,-1$, then $\hat\tau_{VW}$ are zero. To see this, let $b_1^*,...,b_s^*$ denote a dual basis of $W^\vee$. 
Then in $\hat\tau_{VW}$, we have $b_i^*b_j^*\equiv 0$, hence
$$
0\equiv b_j^*(\sum_i -\partial_{b_i^*}b_i^*+\beta(e^W))=(1+\beta(e^W))b_j^*
$$
implying that $b_j^*\equiv 0$ for all $j$. But this implies that $\beta(e^W)\equiv0$, hence $\hat\tau_{VW}\equiv0$.
Now consider the case $\beta(e^W)=0$. Then $b_j^*\equiv0$ in $\hat\tau_{VW}$ as before. It follows that $\hat\tau_{VW}$ is supported on $V\times \{0\}$, and its inverse Fourier transform becomes
$$
\tau_{VW}=D_{V^\vee\times W^\vee}/D_{V^\vee\times W^\vee}J^V(\hat X)+D_{V^\vee\times W^\vee}(Z^V(\xi)+\beta(\xi),\xi\in\hat\frakg)+D_{V^\vee\times W^\vee}\widehat{W^\vee}
$$ 
where $J^V(\hat X)$ is the Fourier transform of the ideal of $X$ in $\P(V)$. This yields $\tau_{VW}=\tau\boxtimes\cO_{W^\vee}$, hence reducing $\tau_{VW}$ to a the special case of $\tau=\tau(G,X,\calL,\beta)$ introduced in \S\ref{Intro}.

From now on, we assume that 
$$\beta(e^W)=-1,\hskip.2in\beta(e^V)\notin\Z_{\leq 0}.
$$

In this section, we prove the following general type analogue of Theorem \ref{intro:general beta CY case}.

\begin{thm}\label{general beta general type case}
For $\beta(e^V)\notin\bZ_{\leq0}$ and $\beta(e^W)=-1$, there is a canonical isomorphism
$$\tau_{VW}\simeq p^!H^0\pi^\vee_+ev^!(D_{\mathring{\bL}^\vee,\beta})[1-r-s]$$
where $p:V^\vee\times W^\vee\ra V^\vee$ is the projection, $r=\dim V^\vee$ and $s=\dim W^\vee$.
\end{thm}

As in \S\ref{general type hyperplane}, the Fourier transform $\hat{\tau}_{VW}$ is a $D$-module on $V\times W$. 
Consider the open embedding 
\[\xymatrix{
j:V\times\mathring{W}\ar@{^(->}[r]&V\times W
}\]
and closed embedding 
\[i:V\rightarrow V\times W.\]
Then we have the following distinguished triangle 
\eqref{triangle}
\begin{equation}
i_{+}i^!\hat{\tau}_{VW}\rightarrow\hat{\tau}_{VW}\rightarrow j_{+}j^!\hat{\tau}_{VW}\rightarrow .
\end{equation}

Since $b_i^*b_j^*\equiv 0$ in $\hat{\tau}_{VW}$ for any $i,j$, it follows that on $\mathring{W}$, we have $b_i^*\equiv0$ for any $i$. Hence $\beta(e^W)\equiv\sum_{i=1}^s\partial_{b_i^*}b_i^*\equiv0$ in $\hat{\tau}_{VW}$.  But since $\beta(e^W)=-1$, we have $j^!\hat{\tau}_{VW}=0$, 
hence 
\begin{equation}\label{tau VW}
\hat{\tau}_{VW}\simeq i_+H^0i^!\hat{\tau}_{VW}.
\end{equation} 

Our main observation here is that we can compute the $D$-module $H^0i^!\hat{\tau}_{VW}$ in a way that is parallel to our computation in the CY case of $\hat\tau$ in \S\ref{exact seq}. To proceed, first we have the following analogue of Lemma \ref{restriction} for general types.

\begin{lem}\label{restriction:general type}
Let $\beta'(\xi)=\beta(\xi)-\tr_W Z^W(\xi)-\tr_V Z^V(\xi)$, $\xi\in\hat\frakg$. Then
\begin{equation}
(H^0i^!\hat{\tau}_{VW})|_{\mathring{V}}\simeq \mathring{\imath}_+D_{\mathring{X},\beta'}
\end{equation}
\end{lem}
\begin{proof}
$H^0i^!\hat{\tau_{VW}}$ consists of elements of $\hat{\tau_{VW}}$ annihilated by all $b_i^*$. One finds that they are precisely the elements that can be written in the form $\sum s_j\otimes b_j^*$, where $s_j\in D_V$. 

\blue{To check this, a straightforward yet not entirely trivial calculation is needed. The element $-\sum_{i=1}^s\partial_{b_i^*}b_i^*-1$ is killed by all the $b_i^*$, however, it is the Fourier transform of the Euler operator on $W$, which is in the kernel of $\hat{\tau}$ as we assumed $\beta(e^W)=-1$.}

On the other hand, we have
\begin{equation}\label{ix}
\mathring{\imath}_+D_{\mathring{X},\beta'}=(D_{\mathring{V}}/D_{\mathring{V}}I(X)\otimes_{\calO_{\mathring{V}}}\omega_{\mathring{X}/\mathring{V}})\otimes_{D_{\mathring{X}}}D_{\mathring{X},\beta'}
\end{equation}
where $\omega_{\mathring{X}/\mathring{V}}$ as a left $\calO_{\mathring{V}}$-module is generated by global sections $b_1^*,...,b_s^*$, (under the canonical identification $\omega\rightarrow\omega\wedge \frac{dt}{t}$) and relations among these generators as a left $\calO_{\mathring{V}}$-module are precisely given by $I^{VW}=\oplus_{d>0}\calI_d$ (see section \ref{general type hyperplane} for notations). Note that $\hat\frakg$ acts on $D_{\mathring{V}}\otimes_{\calO_{\mathring{V}}} \calO_{\mathring{V}}\left\langle b_1^*,...,b_s^*\right\rangle$ from the right via tensor product. The action descends to an action on 
\begin{equation}\label{ex}
D_{\mathring{V}}/D_{\mathring{V}}I(X)\otimes_{\calO_{\mathring{V}}}\calO_{\mathring{V}}\left\langle b_1^*,...,b_s^*\right\rangle/I^{VW}=D_{\mathring{V}}/D_{\mathring{V}}I(X)\otimes_{\calO_{\mathring{V}}}\omega_{\mathring{X}/\mathring{V}}. 
\end{equation}
One checks that this action coincides with the right $\hat{\frakg}$ action on $\omega_{\mathring{X}/\mathring{V}}$ through its right $D_{\mathring{X}}$-module structure given by negative Lie derivatives. Thus we have 
\begin{equation}\label{bridge}
(D_{\mathring{V}}/D_{\mathring{V}}I(X)\otimes_{\calO_{\mathring{V}}}\omega_{\mathring{X}/\mathring{V}})\otimes_{D_{\mathring{X}}}D_{\mathring{X},\beta'}\simeq (D_{\mathring{V}}/D_{\mathring{V}}I(X)\otimes_{\calO_{\mathring{V}}}\calO_{\mathring{V}}\left\langle b_1^*,...,b_s^*\right\rangle/I^{VW}\otimes k_{\beta'})\otimes_{\hat\frakg} k
\end{equation}
where $\hat\frakg$ acts on $(D_{\mathring{V}}/D_{\mathring{V}}I(X)\otimes_{\calO_{\mathring{V}}}\calO_{\mathring{V}}\left\langle b_1^*,...,b_s^*\right\rangle)/I^{VW}$ explicitly as explained.

Furthermore, from definition and the explanation in the beginning of the proof,
\begin{equation}\label{def}
(H^0i^!\hat{\tau}_{VW})|_{\mathring{V}}=(D_{\mathring{V}}/D_{\mathring{V}}I(X)\otimes_{\calO_{\mathring{V}}}\calO_{\mathring{V}}\left\langle b_1^*,...,b_s^*\right\rangle/I^{VW}\otimes k_{\beta'})\otimes_{\hat\frakg}k
\end{equation}

Combining \eqref{ix} \eqref{bridge}, and \eqref{def}, the lemma is proved.
\end{proof}

Next, by specializing \eqref{triangleII}, we have the following analogue of the sequence \eqref{4-term}:
\begin{equation}\label{general type:4-term}
0\rightarrow i_{0,+}H^{-1}i_0^+(H^0i^!\hat{\tau}_{VW})\rightarrow H^0j_{0,!}(H^0i^!\hat{\tau}_{VW})|_{\mathring{V}}
\rightarrow H^0i^!\hat{\tau}_{VW}\rightarrow i_{0,+}H^{0}i_0^+(H^0i^!\hat{\tau}_{VW})\rightarrow 0.
\end{equation}
With $\hat\tau$ in the CY case now replaced by $H^0i^!\hat{\tau}_{VW}$, Lemmas \ref{vanishing l} and \ref{for d} carry over readily to the general type case, with the following changes. The $!$-fiber of $\tau$ at $a\in V^\vee$ in the CY case is replaced by the $!$-fiber of $\tau_{VW}$ at $(a,b)\in V^\vee\times W^\vee$ in the general type case. The latter is now given in a parallel way by the Lie algebra homology of $\hat\frakg$ with coefficients in $S^{VW}\otimes\beta$, where
$$
S^{VW}:=S^V\otimes S^W/S^V\otimes S^W\calI_1.
$$
Here the $\hat\frakg$-action on $S^V=\calO_V/I(\hat X)$ is given verbatim as in \S\ref{shrik r} as the sum of two actions $\alpha_1,\alpha_2$ (see before Lemma \ref{vanishing r}.) The $\hat\frakg$-action on $S^W=\cO_W/\cO_W\Sym^2 W^\vee$ is given by $Z^{W^\vee}:\hat\frakg\ra\End W^\vee\ra\End S^W$, where $e^W$ acts trivially ($\beta(e^W)=-1$). This shows that the first and last terms of \eqref{general type:4-term} are both zero, hence
\begin{equation*}
H^0j_{0,!}(H^0i^!\hat{\tau}_{VW})|_{\mathring{V}}\simeq H^0i^!\hat{\tau}_{VW}.
\end{equation*}
Together with Lemma \ref{restriction:general type}, this implies that
\begin{equation}\label{1}
\four(H^0i^!\hat{\tau}_{VW})\simeq \four( H^0j_{0,!}(H^0i^!\hat{\tau}_{VW})|_{\mathring{V}})\simeq H^0
\four(j_{0,!}\mathring{\imath}_+D_{\mathring{X},\beta'}).
\end{equation}

Next, to compute the right hand side, observe that \eqref{aux1}-\eqref{aux3} and Lemma \ref{aux4} hold for an arbitrary very ample line bundle $\calL$.
This yields
\begin{equation}\label{2}
\four(j_{0,!}\mathring{\imath}_+D_{\mathring{X},\beta'})
\simeq\pi_+^\vee ev^!D_{\mathring{\bL}^\vee,\beta}\left[1-r\right].
\end{equation}
Finally, since $p$ is dual to the inclusion $i$, and combining \eqref{tau VW} and \eqref{1}-\eqref{2}, it follows that
\begin{equation*}
\tau_{VW}\simeq\four(i_+H^0i^!\hat{\tau_{VW}})\simeq p^!\four(H^0i^!\hat{\tau_{VW}})\left[-s\right]\simeq p^!H^0\pi_+^\vee ev^!D_{\mathring{\bL}^\vee,\beta}\left[1-r-s\right]
\end{equation*}
This completes the proof of Theorem \ref{general beta general type case}.

\section{Projective homogeneous spaces}\label{G/P}
To apply our results, we need to understand the D-module $ev^!(D_{\mathring{\bL}^\vee,\beta})[1-r]$ in various situations.
In this section, we assume that $G$ is semisimple and $X$ is a projective homogeneous $G$-variety, i.e. $X$ is a partial flag variety, and $\beta(e)=1$. Theorem \ref{special:main} takes a particularly easy form in this case. We first have

\begin{cor}\label{tau is GM connection}
If $\beta({\frakg})=0$ and $X$ is a homogeneous $G$-variety, then $\tau\simeq H^0\pi^\vee_+\calO_U$.
\end{cor}
\begin{proof} Recall Corollary \ref{surjective}. Then if $X$ is homogeneous, $\frakg\otimes\calO_X\to T_X$ is surjective. Therefore $D_{X,0}=D_X/D_XT_X=\calO_X$.
\end{proof}

Note that this corollary implies that a tautological system in this case, which is a priori defined as a D-module by generators and relations, is of geometric origin, i.e. itself is a Gauss-Mannin connection.

\begin{cor}\label{main conjecture}
Conjecture \ref{holo-rank} holds.
\end{cor}

For general types, Theorem \ref{general:main} also specializes in an analogous way, and we get the following analogues of both Corollaries \ref{tau is GM connection} and \ref{main conjecture}.
\begin{cor}
If $\beta({\frakg})=0$ and $X$ is a homogeneous $G$-variety, then\\ $\tau\simeq H^0(\pi^\vee\times \id_{W^\vee})_+\calO_{U\times W^\vee}$.
\end{cor}

\begin{cor}
Let $X$ be an $n$-dimensional projective homogeneous space of a semisimple group $G$. Assume $\beta(e^V)=1$ and $\beta(e^W)=-1$. Then the solution rank of $\tau_{VW}=\tau_{VW}(G,X,\calL,\beta)$ at $(a,b)\in V^\vee\times W^\vee$  is given by $\dim H_n(X-Y_a)$.
\end{cor}

We can also describe a rank 1 point for a general homogeneous variety $X$ in the case of $\calL=\omega_X^{-1}$, using the projected Richardson stratification of $X$ studied in \cite{Lusztig98}\cite{Reitsch05}\cite{Hague10}\cite{KLS10}.

We follow the notations in \cite{KLS10}. Let $G$ be a reductive algebraic group over an algebraically closed field $k$ of characteristics zero, $B$ a Borel subgroup and  $P\supset B$ a parabolic subgroup in $G$. Put $B^+=B$ and let $B^-$ be the opposite Borel subgroup. Let $Q(W,W_P)$ be the set of equivalence classes of $P$-Bruhat intervals \cite[\S2]{KLS10}. Each equivalence class is uniquely specified by a pair $(u,w)$ of elements in the Weyl group. For $(u,w)\in Q(W,W_P)$, put $X^w_u:=(B^-uB/B)\cap(B^+wB/B)$, an open Richardson variety in $G/B$.

\begin{prop} \cite[\S7]{Reitsch05}\cite[\S3]{KLS10}
There is a stratification of $X=G/P$ of the form $X=\coprod_{(u,w)\in Q(W,W_P)}\mathring{\Pi}^w_u$, where each stratum $\mathring{\Pi}^w_u$ is the isomorphic image of $X_u^w$ under the natural projection $G/B\ra G/P$.
\end{prop}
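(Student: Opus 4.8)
\emph{Proof plan.} The plan is to build the stratification by transporting the Richardson stratification of $G/B$ through the smooth proper morphism $\pi\colon G/B\to G/P$, following \cite[\S7]{Reitsch05} and \cite[\S3]{KLS10}. Write $W^P\subset W$ for the set of minimal-length representatives of the cosets $W/W_P$, and for $u,w\in W$ let $X^w_u:=(B^-uB/B)\cap(B^+wB/B)$, so that $G/B=\coprod_{u\le w}X^w_u$, each $X^w_u$ being smooth, irreducible and locally closed of dimension $\ell(w)-\ell(u)$, with $\overline{X^w_u}=\coprod_{u\le u'\le w'\le w}X^{w'}_{u'}$ (Richardson). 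The single external input about $\pi$ that I would use is the classical fact that for $w\in W^P$ the morphism $\pi$ restricts to an \emph{isomorphism} $B^+wB/B\xrightarrow{\,\sim\,}B^+wP/P=:\mathring C_w$ onto the Schubert cell of $G/P$; this is a standard root-subgroup computation, resting on the fact that $w\in W^P$ exactly when the inversion set $\{\gamma>0:w\gamma<0\}$ of $w$ is disjoint from the roots of $W_P$.

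\emph{Step 1: the strata and the isomorphism.} For $w\in W^P$ and $u\le w$ put $\mathring\Pi^w_u:=\pi(X^w_u)$. Since $X^w_u\subseteq B^+wB/B$ and $\pi$ is an isomorphism there, $\pi$ restricts to an isomorphism $X^w_u\xrightarrow{\,\sim\,}\mathring\Pi^w_u$ onto a locally closed, irreducible subvariety of $\mathring C_w$ of dimension $\ell(w)-\ell(u)$; this is the ``isomorphic image'' assertion. The opposite cells with $u\le w$ partition $B^+wB/B$, and applying the bijection $\pi|_{B^+wB/B}$ gives $\mathring C_w=\coprod_{u\le w}\mathring\Pi^w_u$. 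Using the Bruhat decomposition $G/P=\coprod_{w\in W^P}\mathring C_w$, one obtains the disjoint decomposition $G/P=\coprod_{w\in W^P,\,u\le w}\mathring\Pi^w_u$.

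\emph{Step 2: frontier condition and identification of the index set.} Since $\pi$ is proper, $\overline{\mathring\Pi^w_u}=\pi(\overline{X^w_u})=\bigcup_{u\le u'\le w'\le w}\pi(X^{w'}_{u'})$, so the closure of a stratum is a union of sets of the form $\pi(X^{w'}_{u'})$; once each of these is identified with one of the $\mathring\Pi$'s, this yields a genuine stratification. It remains to identify $\{(u,w):w\in W^P,\ u\le w\}$ with $Q(W,W_P)$: call two Bruhat intervals equivalent when their images under $\pi$ agree. Uniqueness of the $W^P$-representative is formal: if $w,w'\in W^P$ and $\pi(X^w_u)=\pi(X^{w'}_{u'})$ then $\mathring C_w\cap\mathring C_{w'}\neq\varnothing$ forces $w=w'$, whence $u=u'$ because $\pi|_{B^+wB/B}$ is injective on the disjoint $X^w_u$. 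For existence, given arbitrary $u\le w$ one has $\pi(X^w_u)\subseteq\pi(B^+wB/B)=\mathring C_{w^P}$, and pulling back along the isomorphism $\pi|_{B^+w^PB/B}\colon B^+w^PB/B\xrightarrow{\,\sim\,}\mathring C_{w^P}$ and tracking how the opposite-cell condition behaves through the (trivial) $P/B$-bundle structure over the affine cell $\mathring C_{w^P}$, one shows $\pi^{-1}(\pi(X^w_u))\cap(B^+w^PB/B)$ is again an open Richardson variety $X^{w^P}_{\widetilde u}$ for a unique $\widetilde u\le w^P$, so that $\pi(X^w_u)=\mathring\Pi^{w^P}_{\widetilde u}$. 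This recovers exactly the $P$-Bruhat-interval reduction of \cite[\S2]{KLS10}.

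\emph{Main obstacle.} All the real work is in the existence part of Step 2: showing that the $\pi$-image of an arbitrary Richardson variety of $G/B$ is a projected Richardson variety attached to a pair $(\widetilde u,w^P)$ with $w^P\in W^P$, and pinning down $\widetilde u$. This requires a concrete analysis of how the opposite Bruhat cells sit inside the $P/B$-bundle over a Schubert cell (or, as in \cite{Reitsch05}, a Deodhar/Marsh--Rietsch parametrisation of the cells); the remaining steps are formal once Richardson's theorem and the isomorphism $\pi|_{B^+wB/B}\simeq\mathring C_w$ for $w\in W^P$ are in place.
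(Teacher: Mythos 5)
The paper cites this proposition to \cite{Reitsch05} and \cite{KLS10} without reproducing a proof, so there is no internal argument to compare against; I assess your reconstruction on its own terms.

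Step 1 is sound and essentially complete: the isomorphism $\pi\colon B^+wB/B\xrightarrow{\sim}\mathring C_w$ for $w\in W^P$, combined with Richardson's theorem ($X^w_u\neq\varnothing$ iff $u\le w$) and the two Bruhat decompositions, gives the disjoint set-theoretic cover $G/P=\coprod_{w\in W^P,\,u\le w}\mathring\Pi^w_u$ with each stratum carried isomorphically from the corresponding open Richardson variety. That is the ``isomorphic image'' part of the statement.

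The gap is in Step 2, and it carries essentially all the content of the cited theorems — as you yourself flag in the ``Main obstacle'' paragraph. Two things are missing. First, the assertion that $\pi^{-1}(\pi(X^{w'}_{u'}))\cap(B^+w^PB/B)$ is a \emph{single} open Richardson variety $X^{w^P}_{\widetilde u}$, for an arbitrary pair $u'\le w'$ occurring in $\overline{X^w_u}$ with $w'\notin W^P$, is precisely the hard lemma: a priori $\pi(X^{w'}_{u'})$ could meet several of the $\mathring\Pi^{w^P}_{v}$, and the opposite Schubert cells $B^-u'B/B$ do not interact transparently with the $P/B$-bundle structure of $\pi^{-1}(\mathring C_{w^P})$. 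This is exactly what \cite[\S3]{KLS10} proves by a combinatorial analysis of the $P$-Bruhat order (reducing an arbitrary interval $[u,w]$ to a canonical one $[\widetilde u,w^P]$ with $w^P\in W^P$), and what \cite{Reitsch05} proves via explicit Deodhar--Marsh--Rietsch cell parametrisations; your ``one shows'' is a placeholder for both. Second, you define the equivalence on intervals as ``images under $\pi$ agree,'' whereas $Q(W,W_P)$ in \cite[\S2]{KLS10} is defined combinatorially, so one must still verify the two notions coincide. Without the first lemma you have a partition of $G/P$ but not a stratification satisfying the frontier condition; without the second you have not matched your index set with $Q(W,W_P)$. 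So the outline is a faithful skeleton of the cited proofs, but the load-bearing lemma is left unproved.
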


The next result and proof are communicated to us by T. Lam.
\begin{prop}
Let $\Pi_1,..,\Pi_s$ be the closures of the codimension 1 strata in $X$. Then $\cup_i\Pi_i$ is an anticanonical divisor in $X$, and its complement in $X$ has one dimensional middle cohomology.
\end{prop}
\begin{proof}
The first assertion follows from Lemma 5.4 \cite{KLS10}. Since $X- \cup_i\Pi_i$ is the largest stratum, it is isomorphic to
an open Richardson variety $X_u^w$ in $G/B$. 
It is well-known that (see for example \cite{ravi})
\begin{equation}
H^N_c(X_u^w) = \Hom(M_u,M_w)
\end{equation}
where $N=\dim X_u^w$ and $M_w$ denotes the Verma module of the Lie algebra of $G$ of highest weight $-w(\rho)-\rho$.
By combining Theorems 1-4 \cite{BGG}, or by the Kazhdan-Lusztig conjecture, one has
\begin{equation}\label{mult}
\dim \Hom(M_u,M_w) = 1
\end{equation}
\end{proof}

Therefore, by Theorem \ref{holo-rank-thm} we have
\begin{cor}
Let $a\in\Gamma(X,\omega_X^{-1})$ be the defining section of the anticanonical divisor $\cup_i\Pi_i$. Then $a$ is a rank 1 point of $X$.
\end{cor}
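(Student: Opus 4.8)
The plan is to assemble the ingredients already in place: the projected Richardson stratification of $X = G/P$, the anticanonical nature of the codimension-one boundary strata, the identification of the open stratum with an open Richardson variety $X_u^w$ in $G/B$, and the holonomic rank formula of Theorem~\ref{holo-rank-thm}. First I would invoke the stratification $X = \coprod_{(u,w)} \mathring\Pi^w_u$ and isolate the unique open stratum; call it $\mathring\Pi = X - \bigcup_i \Pi_i$, which is isomorphic to some open Richardson variety $X_u^w \subset G/B$. By Lemma~5.4 of~\cite{KLS10}, $D := \bigcup_i \Pi_i$ is an anticanonical divisor, so there is a section $a \in \Gamma(X, \omega_X^{-1}) = V^\vee$ with $V(f_a) = D$, hence $U_a = X - Y_a = X - D = \mathring\Pi \cong X_u^w$.

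Next I would apply Theorem~\ref{holo-rank-thm} (Conjecture~\ref{holo-rank}): the solution rank of $\tau$ at $a$ equals $\dim H_n(X - Y_a) = \dim H_n(X_u^w)$, where $n = \dim X$. Here one must be slightly careful with the indexing: $X$ is homogeneous so $U_a = \mathring\Pi$ is smooth of dimension $n = \dim X_u^w = N$, and for a smooth affine (indeed, a smooth, even affine, open Richardson) variety of dimension $N$ the relevant Borel--Moore homology $H_N(X_u^w)$ is dual to the compactly supported cohomology $H^N_c(X_u^w)$. So the rank in question is $\dim H^N_c(X_u^w)$.

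Then I would quote the two displayed facts from the excerpt: $H^N_c(X_u^w) = \Hom(M_u, M_w)$ (see~\cite{ravi}), where $M_w$ is the Verma module of highest weight $-w(\rho) - \rho$; and $\dim \Hom(M_u, M_w) = 1$, which follows from the structure theory of the BGG category $\mathcal O$ (Theorems~1--4 of~\cite{BGG}), or alternatively from the Kazhdan--Lusztig conjecture (now theorem). Chaining these equalities gives solution rank of $\tau$ at $a$ equal to $1$, which is exactly the assertion that $a$ is a rank~1 point in the sense of Definition~\ref{intro:def}.

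The only genuine subtlety — and the step I would treat most carefully — is the passage from $H_n(X - Y_a)$ as it appears in Conjecture~\ref{holo-rank} to $H^N_c(X_u^w)$: one needs that $X_u^w$ is a smooth variety (true, as an open Richardson variety), that Borel--Moore homology in the top dimension matches compactly supported cohomology in complementary degree via Poincaré duality, and that the isomorphism $\mathring\Pi \cong X_u^w$ of the proposition is compatible with these identifications. All the hard analytic and D-module input has already been absorbed into Theorem~\ref{holo-rank-thm}, so modulo this bookkeeping the corollary is immediate; no new estimates or constructions are required.
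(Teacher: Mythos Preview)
Your proposal is correct and follows essentially the same route as the paper: identify the complement of the anticanonical divisor with an open Richardson variety $X_u^w$, invoke Theorem \ref{holo-rank-thm} to equate the solution rank with the top homology of that complement, then use $H^N_c(X_u^w)\cong\Hom(M_u,M_w)$ and $\dim\Hom(M_u,M_w)=1$ from BGG/Kazhdan--Lusztig. The paper presents this argument in the paragraphs immediately preceding the corollary and simply says ``Therefore, by Theorem \ref{holo-rank-thm} we have\ldots''; your extra care with the Poincar\'e-duality bookkeeping between $H_n$ and $H^N_c$ is a fair elaboration, not a deviation.
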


\begin{rmk}
This section is torus invariant. Due to a theorem of Kostant that later generalized by Luna, If a point $f_a$ in $V^{\vee}$ is fixed by a reductive subgroup $H$, then $Gv$ is closed if and only if $C_G(H)v$ is closed. If $H$ is the maximal torus, then $C_G(H)=H$, so the orbit is closed, therefore this section is GIT semistable w.r.t. the action of $G$ on $V^\vee$.
\end{rmk}

\begin{ex}
Consider the Grassmannian $X=G(d,n)$. According to \cite{KLS11}, $\cup_i\Pi_i$ is defined by the section $a=x_{1,2,..,d}x_{2,3,...,d+1}...x_{n,1,..,d-1}$, where the $x_{i_1,..,i_d}$ are the Pl\"ucker coordinates of $X$. This generalizes a construction in \cite{BHLSY} for $d=2$.
\end{ex}

\quash{
and $\beta$ is nonresonant. Let us explain its meaning.

Let $X$ be a smooth projective toric variety. It admits an action by a torus $T$ and there is an open dense subset $O$ of $X$ on which $T$ acts simply-transitively. Assume that $\calL$ is a$T$-linearized very ample line bundle. Then $T$ acts on $\bL$. Let $\bG_m$ act on $\bL$ by dilatations along fibers.  Then the group $G=T\times \bG_m$ act on $\bL$. Let us write $\frakg=\frakt\oplus \bC e$. Observe that $G$ acts on $\tilde{O}:=\mathring{\bL}|_{O}$ simply transitively.

We say that $\beta$ is nonresonant if $\beta(\la)\not\in\bZ$ for any $\la\in\xcoch(T\times\bG_m)\simeq \bZ^n\oplus\bZ\subset \frakt\oplus \bC$. \red{ Seems my understanding of the terminology is not correct. However, the following argument might work for the correct definition.} In this case, $d=0$ by Corollary \ref{vanishing of d}.

Let $\calL_{\beta'}$ be the rank one character D-module on $G$ as defined in \S \ref{}. This can be regarded as a D-module on $\tilde{O}$, still denoted by $\calL_{\beta'}$. This is a D-module on $\tilde{O}$ that is $G$-equivariant against $\beta'$.

Let us denote this unique irreducible D-module on $\hat{X}$, $G$-equivaraint against $\beta'$, by $\on{IC}(\tilde{O},\calL_{\beta'})$.

\begin{cor}
We have $\hat{\tau}=\hat{\iota}_{+}\on{IC}(\tilde{O},\calL_{\beta'})$. In particular, it is irreducible. Therefore, $\tau=\four(\hat{\tau})$ is irreducible on $V^\vee$.
\end{cor}
\begin{proof}
As explained, $\hat{\tau}$ is $G$-equivaraint against $\beta'$. It is clear that when restricted to $\tilde{O}$, $\hat{\iota}^!\hat{\tau}|_{\tilde{O}}=\calL_{\beta'}$. The claim follows from Lemma \ref{clean}.
\end{proof}

\section{Toric varieties}

\red{ This section is still under construction. To get the solution rank at a generic point, one can try to use Corollary \ref{special:rank formula} to prove the volume formula of GKZ for solution rank of GKZ systems. An's calculations in the $\P^n$ case indicates that for general toric variety $X$, this should involve a detailed analysis of the hypercohomology with coefficients in the constant sheaf supported on the various unions and intersections of the $T$-invariant divisors of $X$. An also suggests looking at the second spectral sequence for the hypercohomology.

Xinwen suggests there may be a more direct topological argument that shows that the solution rank at generic point is the degree of $X$ in $\P V$ (which is known to be equal to the volume of Newton polytope.) As shown by Kapranov, one direction is easy: the solution rank is bounded above by the degree of $X$. So, one needs to show the other direction.}

\begin{cor}
If $\beta({\frakg})=0$, $X$ is smooth toric variety, $G$ is the algebraic torus of $X$, and $Y_a$ is the anticanonical divisor of $X$ given by the union of $G$-invariant toric divisors in $X$, then $a$ is a rank 1 point.
\end{cor}

\red{ Maybe sketch a proof?

Also, I wonder if the whole set up can be generalized to allow $X$ singular. There is an important reason for this. It turns out that for the toric varieties arising in mirror symmetry, $\omega_X^{-1}$ is typically not ample but only base point free. The natural map $X\ra\P(V)$ given by the sections is not an embedding but only a birational morphism. Its image is often singular, and so the resulting Fourier transform of the tautological system is now supported on a singular toric variety. The standard example is the mirror $\P^n$.}

\section{Wonderful compactification}
}

\section{Rank 1 points of 1-step flags}\label{partial1}
\red{ Maybe emerge the following two sections in Sect. \ref{G/P}} \green{ I think logically those 3 sections do share the same background. But one concern I have is that this may make \ref{G/P} far too long - almost half the paper - and so reader may find it difficult to sort out what's in it. One possibility is to make 3 subsections within one section. But not sure how you and An feel about this.}
{\it Notation.} If $m$ is an $p\times q$ matrix, and $J\subset(1,2,..,p)$ is an ordered index set, then $m_J$ denotes the submatrix of $m$ given by the rows labelled by $J$, and we also call $m_J$ the $J$-block of $m$.
We denote by $x_J$, $J\subset(1,2,..,n)$, the Pl\"ucker coordinates of the $d$-plane Grassmannian $F(d,n)$. Let $M$ be the space of rank $d$ matrices of size $n\times d$. Then $GL_d$ acts freely and properly on $M$ by right multiplication and $M/GL_d\simeq F(d,n)$. Under this identification, we denote the projection map of the Stiefel bundle $M\ra X$ by $m\mapsto[m]:=m\cdot GL_d$. Then $x_J$ can be viewed as the function $x_J:M\ra\C$, $m\mapsto\det(m_J)$. Given a section $f$ of any line bundle on $X$, we denote by $X(f)$ the complement of $f=0$ in $X$, and by $M(f)$ the preimage of $X(f)$ under $M\ra X$.

\begin{prop}\label{1step}
The 1-step flag variety $X=F(d,n)$ admits a rank 1 point $f\in\Gamma(X,\omega_X^{-1})$ such that $(x_J)^k\vert f$ for some $J\subset(1,2,..,n)$ with $|J|=d$ and $k=\min(d,n-d)$. If $n=2d$, then $f=(x_{1,..,d})^d (x_{d+1,..,n})^d$ is a rank 1 point.
\end{prop}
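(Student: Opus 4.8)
The plan is to write down an explicit anticanonical section $f$ for which the complement $X(f)$, read in one affine chart of $X=F(d,n)$, is a product of general linear groups; its top homology is then automatically one--dimensional, and Theorem~\ref{holo-rank-thm} converts this into the assertion that $f$ is a rank $1$ point. Since $F(d,n)\cong F(n-d,n)$ compatibly with Plücker coordinates (index sets pass to their complements in $\{1,\dots,n\}$), I may assume $d\le n-d$, so that $k=\min(d,n-d)=d$; moreover $\omega_X^{-1}=\cO_X(n)$, so a section of $\omega_X^{-1}$ is just a degree--$n$ element of the Plücker coordinate ring.

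First I would fix coordinates on the big cell $\{x_{(1,\dots,d)}\neq0\}$: represent a point of $M$ by the $n\times d$ matrix $\binom{I_d}{A}$ with $A$ an $(n-d)\times d$ matrix, identifying the cell with $\bA^{d(n-d)}$. A cofactor expansion along the rows of $I_d$ shows that for any $a\times a$ subgrid $C\times R$ (with $C\subseteq\{1,\dots,n-d\}$, $R\subseteq\{1,\dots,d\}$, $|C|=|R|=a$) the minor $\det(A_{C,R})$ equals, up to sign, the Plücker coordinate $x_{J(C,R)}$, where $J(C,R)=(\{1,\dots,d\}\setminus R)\cup(d+C)$. Next I tile the $(n-d)\times d$ grid $\{1,\dots,n-d\}\times\{1,\dots,d\}$ by squares, i.e. partition its unit cells into square sub--arrays $B_1,\dots,B_N$, with $B_\ell$ of size $a_\ell\times a_\ell$; the one combinatorial point is the bound $N\le n-d$, proved by induction on $p+q$ (a $p\times q$ grid tiles into at most $\max(p,q)$ squares: peel off one $\min(p,q)\times\min(p,q)$ square along the long side and recurse), specialized via $\max(n-d,d)=n-d$. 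Choosing multiplicities $m_\ell\ge1$ with $\sum_\ell m_\ell=n-d$, I set
$$f=(x_{(1,\dots,d)})^{d}\prod_{\ell=1}^N\bigl(x_{J(B_\ell)}\bigr)^{m_\ell},$$
a nonzero section of Plücker degree $d+(n-d)=n$, hence of $\omega_X^{-1}$, visibly divisible by $(x_{(1,\dots,d)})^{k}$ with $|\{1,\dots,d\}|=d$ --- this is the required $J$.

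It remains to analyze $X(f)$. As $x_{(1,\dots,d)}\mid f$, $X(f)$ lies in the big cell, where it is the locus $\{\,\det(A_{B_\ell})\neq0\ \text{for all }\ell\,\}$; since the blocks $B_\ell$ partition the entries of $A$, this locus is the product $\prod_{\ell=1}^N\GL_{a_\ell}(\C)$. By the Künneth formula $H_*(X(f);\C)=\bigotimes_\ell H_*(\GL_{a_\ell}(\C);\C)$, and each $\GL_a(\C)$ is connected and homotopy equivalent to the compact orientable manifold $U(a)$ of real dimension $a^2$, so $H_j(\GL_a;\C)$ vanishes for $j>a^2$ and is one--dimensional for $j=a^2$. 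Since $\sum_\ell a_\ell^2=d(n-d)=\dim X$, this forces $H_{\dim X}(X(f);\C)\cong\C$, and then Theorem~\ref{holo-rank-thm} identifies the solution rank of $\tau$ at $f$ with $\dim H_{\dim X}(X(f);\C)=1$. For $n=2d$ the grid is a single $d\times d$ square with $J(B_1)=\{d+1,\dots,n\}$, and taking $m_1=d$ recovers $f=(x_{(1,\dots,d)})^{d}(x_{(d+1,\dots,n)})^{d}$ with $X(f)\cong\GL_d(\C)$, the asserted special case. The proof has no genuine obstacle once Theorem~\ref{holo-rank-thm} is available: all the content lies in choosing $f$ so that $X(f)$ becomes a product of general linear groups --- which makes the top homology one--dimensional for free --- together with the elementary tiling estimate $N\le n-d$, which is precisely what lets such an $f$ fit into the anticanonical linear system.
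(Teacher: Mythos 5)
Your proof is correct, but it takes a genuinely different route from the paper's. The paper proceeds recursively: it embeds $X_1=F(d,n-l)$ and $X_2=F(d,l)$ into $X=F(d,n)$ via a splitting $\C^n=\C^{n-l}\oplus\C^l$, assembles $f$ from rank $1$ points $f_1,f_2$ of the smaller Grassmannians, and proves an isomorphism $X(f)\cong X_1(f_1)\times X_2(f_2)\times GL_d$ by trivializing Stiefel bundles; the top homology is then computed by induction, with base cases $F(1,2)$ and $F(d,2d)$. You instead give a single, non-recursive construction: after reducing to $d\le n-d$, you work in the big Schubert cell $\{x_{(1,\dots,d)}\neq0\}\cong\bA^{d(n-d)}$, tile the underlying $(n-d)\times d$ grid by $N\le n-d$ squares (the combinatorial bound you prove by induction on $p+q$), and write $f$ as $(x_{(1,\dots,d)})^{d}$ times Plücker coordinates attached to the squares, with multiplicities chosen to hit degree $n$. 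The complement $X(f)$ then becomes \emph{in one stroke} a product $\prod_\ell GL_{a_\ell}(\C)$, and the top homology is visibly one-dimensional because $\sum a_\ell^2=d(n-d)$. Both methods ultimately exhibit $X(f)$ as a product of general linear groups, and both invoke Theorem~\ref{holo-rank-thm} to convert the topology into a rank statement. Your direct tiling argument is arguably cleaner and more explicit for the Grassmannian case; what the paper's recursive framing buys is that it meshes with, and is reused by, the subsequent inductive machinery for $r$-step flag varieties (\S\ref{partial2}), where a one-shot description is less available. One small caveat worth adding to your writeup: in the big cell one should fix the normalization $x_{(1,\dots,d)}=1$ (equivalently, work with the ratio $f/(x_{(1,\dots,d)})^{n}$) so that the identification of $X(f)$ with the vanishing locus of the block determinants $\det A_{B_\ell}$ is unambiguous; and one should note that the squares produced by the peel-off procedure are contiguous subgrids $C_\ell\times R_\ell$, which is what makes the locus split as a genuine direct product.
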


\begin{proof}

(a) Consider the case $n\geq l+d\geq 2d$. We have
\begin{align}\label{embedding-1step}
&X_1:=F(d,n-l)\into X, ~~~E\mapsto E\oplus 0_l\cr
&X_2:=F(d,l)\into X,~~~E\mapsto 0_{n-l}\oplus E.
\end{align}
Here we view $\C^n=\C^{n-l}\oplus\C^l$.
Let $f_1$ be a given rank 1 point of $X_1$ such that $(x_{1,..,d})^{k_1}| f_1$, $k_1=\min(d,n-l-d)$, and $f_2$ a rank 1 point of $X_2$ such that $(x_{n-d+1,..n})^{k_2}\vert f_2$, $k_2=\min(d,l-d)$. (In case $l=d$,  $X_2=\text{pt}$, we simply take $f_2=(x_{n-d+1,..,n})^d$; in case $n-l=d$, $X_1=\text{pt}$, we take $f_1=(x_{1,..,d})^d$). We can view $f_1,f_2$ as sections of $\cO_X(n-l)$ and $\cO_X(l)$ respectively on $X=F(d,n)$. Then the restriction of $f_1$ to $X_1$ under \eqref{embedding-1step} becomes a section of $\cO_{X_1}(n-l)$. Likewise the restriction of $f_2$ to $X_2$ becomes a section of $\cO_{X_2}(l)$. We claim that $f=f_1f_2\in\Gamma(X,\omega_X^{-1})$ is a rank 1 point of $X$. We will first construct an explicit isomorphism
$$X_1(f_1)\times X_2(f_2)\times GL_d\ra X(f).$$
Let $M_1,M_2,M$ be the Stiefel bundles over $X_1,X_2,X$ respectively. Since $x_{J}\vert f_2$, $J=(n-d+1,..n)$, each $m_2'\in M_2(f_2)$ has a nonsingular $J$-block $D$. Define
$$
M_1(f_1)\times M_2(f_2)\ra M(f),~~~
m_1',m_2'\mapsto m=\left[\begin{matrix}m_1'D\cr m_2'\end{matrix}\right].
$$
This is well-defined since
$$f(m)=f_1(m_1'D)f_2(m_2')=(\det D)^{n-l}f_1(m_1')f_2(m_2').$$
The map is a bijection with inverse $m=\left[\begin{matrix}m_1\cr m_2\end{matrix}\right]\mapsto m_1(m_2)_J^{-1},m_2$. Now let $h\in GL_d$ act on $M_1(f_1)\times M_2(f_2)$ by the formula $(m_1',m_2'h^{-1})$. Then the map is equivariant. It follows that we have an isomorphism
$$
M_1(f_1)\times X_2(f_2)\ra X(f).
$$
Finally,  since $x_{1,..,d}\vert f_1$ each $m_1'\in M_1(f_1)$ has a nonsingular top $d\times d$ block. It follows that the principal bundle $GL_d-M_1(f_1)\ra X_1(f_1)$ is trivial. In fact, it has a (unique) section of the form $[m_1]\mapsto m_1'$ where $m_1'$ is the unique representative in $[m_1]$ whose top $d\times d$ block is the identity matrix $I_d$. This proves that
$$
X(f)\simeq X_1(f_1)\times X_2(f_2)\times GL_d.
$$
Since the $X_i(f_i)$ are affine varieties, all de Rham cohomology of degree $>\dim X_i$, vanishes. Since $f_1,f_2$ are rank 1 points of $X_1,X_2$ respectively, we have $H^{\dim X_i}(X_i(f_i))=\C$ by Theorem \ref{holo-rank-thm}. It follows that
$$
H^{\dim X}(X(f))\simeq H^{\dim X_1}(X_1(f_1))\otimes H^{\dim X_2}( X_2(f_2))\otimes H^{d^2}(GL_{d})\simeq\C.
$$
So $f$ is a rank 1 point of $X$ such that $(x_{1,..,d})^{k_1}(x_{n-d+1,..n})^{k_2}|f$.

(b) To complete the proof of the proposition, we proceed by induction. For $X=F(1,2)=\P^1$, paragraph (a) with $n=2$ and $l=d=1$ shows that $x_1x_2$ is a rank 1 point of $X$, and the proposition holds. Assume that it holds for up to $F(d,n-1)$, and consider the case $X=F(d,n)$.  For $n<2d$ we have $F(d,n)\simeq F(n-d,n)$, in which case paragraph (a) with $l,d$ playing the role of $d,n-d$, yields a rank 1 point $f$ of $F(n-d,n)$ with $(x_J)^{n-d}|f$ and $|J|=n-d$. This in turn yields a rank 1 point of $F(d,n)$ divisible by $(x_{J^c})^{n-d}$ where $J^c=(1,..,n)-J$. For $n=2d$, paragraph (a) with $n=l+d=2d$ shows that $(x_{1,..,d})^d (x_{d+1,..,n})^d$ is a rank 1 point of $X$. For $n>2d$, paragraph (a) with $l=d$ and our inductive hypothesis shows that $X$ has a rank 1 point $f=f_1\cdot(x_{n-d+1,..,n})^d$, where $f_1$ is a rank 1 point of $F(d,n-d)$. This completes the proof.
\end{proof}

\begin{cor}
Let $n=l_1+\cdots+l_s$ be a partition of $n$ with $l_p\geq d$. Let $f_p$ be a rank 1 point of $F(d,l_p)\into F(d,n)$, viewed as a degree $l_p$ polynomial in the Pl\"ucker coordinates $x_J$ of $X=F(d,n)$ with $J\subset(l_1+\cdots+l_{p-1}+1,..,l_1+\cdots+l_{p})$ and $\vert J\vert=d$, such that $(x_{l_1+\cdots+l_{p-1}+1,..,l_1+\cdots+l_{p-1}+d})\vert f_p$. Then $f=f_1\cdots f_s$ is a rank 1 point of $X$. In fact, we have an isomorphism
$$
X(f)\simeq X_1(f_1)\times\cdots\times X_s(f_s)\times (GL_d)^{s-1}
$$
where $X_p:=F(d,l_p)$.
\end{cor}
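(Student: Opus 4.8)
The plan is to reduce the general statement to the two-block case already handled in paragraph (a) of Proposition \ref{1step}, by iterating that construction $s-1$ times. First I would set up notation: write $\C^n=\C^{l_1}\oplus\cdots\oplus\C^{l_s}$, so that for each $p$ the subspace Grassmannian $X_p=F(d,l_p)$ embeds in $X=F(d,n)$ via $E\mapsto 0\oplus\cdots\oplus E\oplus\cdots\oplus0$ (the $E$ in the $p$-th slot), and $f_p$, originally a section of $\omega_{X_p}^{-1}=\cO_{X_p}(l_p)$, is viewed as a degree $l_p$ polynomial in the Pl\"ucker coordinates $x_J$ of $X$ with index sets $J$ contained in the $p$-th block of coordinates. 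The hypothesis that $(x_{l_1+\cdots+l_{p-1}+1,\dots,l_1+\cdots+l_{p-1}+d})\mid f_p$ will be the Pl\"ucker divisibility that makes the Stiefel-bundle trivialization go through, exactly as in the proposition.

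The core step is an induction on $s$. For $s=1$ there is nothing to prove. For the inductive step, group the first $s-1$ blocks together: set $l=l_1+\cdots+l_{s-1}$, so $n=l+l_s$, and let $X_1'=F(d,l)$ with $X_2'=F(d,l_s)$, fitting the setup of paragraph (a) with "$l$" there equal to $l_s$ here (note $n=l+l_s\geq l+d$ and $l\geq (s-1)d\geq d$, so the hypotheses of (a) are met). By the inductive hypothesis applied inside $F(d,l)$, the product $f_1\cdots f_{s-1}$ is a rank 1 point $f'$ of $X_1'=F(d,l)$, with an isomorphism $X_1'(f')\cong X_1(f_1)\times\cdots\times X_{s-1}(f_{s-1})\times(GL_d)^{s-2}$, and moreover $(x_{1,\dots,d})\mid f'$ since $(x_{1,\dots,d})\mid f_1$. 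Now apply the two-block construction of paragraph (a) verbatim — using the nonsingular $J$-block of elements of $M_2'(f_s)$ for $J=(n-d+1,\dots,n)$ to build the Stiefel map $M_1'(f')\times M_2'(f_s)\to M(f)$, checking it is a $GL_d$-equivariant bijection, and trivializing $GL_d\to M_1'(f')\to X_1'(f')$ using $(x_{1,\dots,d})\mid f'$ — to get
\[
X(f)\cong X_1'(f')\times X_2'(f_s)\times GL_d.
\]
Substituting the inductive description of $X_1'(f')$ and $X_2'(f_s)=X_s(f_s)$ yields $X(f)\cong X_1(f_1)\times\cdots\times X_s(f_s)\times(GL_d)^{s-1}$.

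Finally, to conclude that $f=f_1\cdots f_s$ is a rank 1 point, I would run the K\"unneth argument from the end of the proposition's proof: each $X_p(f_p)$ is affine of dimension $\dim X_p$, so its de Rham cohomology vanishes above degree $\dim X_p$ and equals $\C$ in degree $\dim X_p$ by Theorem \ref{holo-rank-thm} (since $f_p$ is a rank 1 point of $X_p$); $(GL_d)^{s-1}$ has top cohomology $\C$ in its top degree $(s-1)d^2$; and $\dim X=\dim X_1+\cdots+\dim X_s+(s-1)d^2$ because $\dim F(d,m)=d(m-d)$ and $\sum_p d(l_p-d) = d(n-sd)$, while $\dim F(d,n)=d(n-d)=d(n-sd)+(s-1)d^2$. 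Hence $H^{\dim X}(X(f))\cong\bigotimes_p H^{\dim X_p}(X_p(f_p))\otimes H^{(s-1)d^2}((GL_d)^{s-1})\cong\C$, so $f$ is a rank 1 point of $X$. The only mildly delicate point is verifying at each stage that the required Pl\"ucker monomial still divides the accumulated section $f'$ so that the bundle triviality in paragraph (a) applies — but this is immediate since $(x_{1,\dots,d})\mid f_1$ is preserved through the induction; everything else is a routine transcription of the two-block computation already carried out.
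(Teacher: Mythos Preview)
Your approach is essentially the paper's: both proofs induct on $s$ by invoking paragraph (a) of Proposition \ref{1step} to split off one block at a time, and both finish with the same K\"unneth computation (which the paper leaves implicit). The only structural difference is that the paper peels off the \emph{first} block (taking $l=l_2+\cdots+l_s$, so $X_1=F(d,l_1)$ and $X_2'=F(d,n-l_1)$, then inducting on $X_2'$), whereas you peel off the \emph{last} block. Either direction works.

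There is, however, one slip. You write that the $J$-block used in paragraph (a) is $J=(n-d+1,\dots,n)$, i.e.\ the bottom $d$ rows of the last block. But the hypothesis of the corollary gives divisibility by $x_{l+1,\dots,l+d}$ (with $l=l_1+\cdots+l_{s-1}$), i.e.\ the \emph{top} $d$ rows of the last block; nothing guarantees $(x_{n-d+1,\dots,n})\mid f_s$ unless $l_s=d$. So the construction cannot be applied ``verbatim'': you must take $J=(l+1,\dots,l+d)$ instead. The map $(m_1',m_2')\mapsto \left[\begin{smallmatrix} m_1'D\\ m_2'\end{smallmatrix}\right]$ with $D=(m_2')_J$ and its inverse work for any index set $J\subset(l+1,\dots,n)$ with $x_J\mid f_s$, so this correction is harmless and the rest of your argument goes through unchanged. (The paper's terse proof implicitly relies on the same flexibility in $J$, since when it peels off the first block the relevant divisor of $f_2\cdots f_s$ is $x_{l_1+1,\dots,l_1+d}$, not $x_{n-d+1,\dots,n}$.)
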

\begin{proof}
Start with $l=l_2+\cdots+l_s$. Then paragraph (a) in the preceding proof gives
$$
X(f)\simeq X_1(f_1)\times X_2'(f_2\cdots f_s)\times GL_d
$$
where $X_2':=F(d,n-l_1)$. Now the result follows by induction on $s$.
\end{proof}

\section{Rank 1 points of $r$-step flags}\label{partial2}

{\it Throughout this section, let $X=F(d_1,..,d_r,n)$ be the $r$-step flag variety with $r\geq2$.} We will give a recursive procedure that produces a rank 1 point of $X$, by assembling rank 1 points of lower step flag varieties. We begin with some notations and terminology.

Let $\cO_i(1)$ be the standard hyperplane bundle on $F(d_i,n)$. The space of its sections is an irreducible $G=SL_n$ module of highest weight $\lambda_{d_i}$, the $d_i$th fundamental weight of $G$. We shall denote by $\lambda_{d_i}$ the pullback of $\cO_i(1)$ via the composition map
$$
X\into F(d_1,n)\times\cdots\times F(d_r,n)\onto F(d_i,n)
$$
where the first is the incidence embedding and the second is the $i$th projection. Then $\Pic(X)$ is the free abelian group generated by $\lambda_{d_1},...,\lambda_{d_r}$. We also have (see \cite{LSY})
\begin{equation}\label{KX}
-K_X=\omega_X^{-1}=(n-d_{r-1})\lambda_{d_r}+(d_r-d_{r-2})\lambda_{d_{r-1}}+\cdots+(d_3-d_1)\lambda_{d_2}+d_2\lambda_{d_1}.
\end{equation}
By the Borel-Weil theorem, the restriction map
$$
\Gamma(F(d_1,n),\cO_1(k_1))\otimes\cdots\otimes\Gamma(F(d_r,n),\cO_r(k_r))\ra\Gamma(X,\sum_i k_i\lambda_{d_i})
$$
is a $G$-equivariant surjective map for any $k_1,..,k_r\in\Z$ (and both spaces are zero unless $k_i\geq0$ for all $i$). Thus any homogeneous polynomial in  the Pl\"ucker coordinates $x_{J_i}$ with $\vert J_i\vert=d_i$, of multi-degree $(k_1,..,k_r)\in\Z^r_{\geq0}$, can be viewed as a section of the line bundle $\sum_i k_i\lambda_{d_i}$ on $X$. Conversely, any section of this line bundle on $X$ can be expressed as such a polynomial (not necessarily unique).

Let $k<n-d_r$ and consider the embeddings
\begin{align}\label{embeddings}
&X_1:=F(d_1,..,d_r,n-k)\into X,~~~E^\bullet\mapsto E^\bullet\oplus0_k\cr
&X_2:=F(d_1-k,..,d_r-k,n-k)\into X,~~~E^\bullet\mapsto E^\bullet\oplus\C^k.
\end{align}
Here we view $\C^n=\C^{n-k}\oplus\C^k$, and $X_1,X_2$ are viewed as spaces consisting of $r$-step flags in the factor $\C^{n-k}$. For each Pl\"ucker coordinate $x_{J'}$ on $X_1$ with $J'\subset(1,2,...,n-k)$, is the restriction of $x_{J'}$, regarded as a Pl\"ucker coordinate on $X$. Likewise, any homogeneous polynomial $f_1$ in the $x_{J'}$, can be viewed as the restriction of a section $\bar f_1$ on $X$ involving only the same Pl\"ucker coordinates. We shall often impose certain divisibility conditions (called the hyperplane property -- see below) on $\bar f_1$, but will state them in terms of $f_1$. Similarly each Pl\"ucker coordinate $x_{J'}$ on $X_2$ is the restriction of $x_{J'\cup(n-k+1,..,n)}$ on $X$; any given homogeneous polynomial $f_2$ in the $x_{J'}$, is the restriction of a section $\tilde f_2$ on $X$ involving only the $x_{J'\cup(n-k+1,..,n)}$. Again, divisibility conditions imposed on $\tilde f_2$ will be stated in terms of $f_2$.

As in the case of 1-step flags, we can view $X=M/H$, where
$$H:=GL_{d_r}\times\cdots\times GL_{d_1}$$
and $M$ is the space of $r$-tuple of matrices $m=(m_r,..,m_1)$, $m_i$ a $d_{i+1}\times d_i$ matrix of rank $d_i$ ($d_{r+1}\equiv n$), where $h=(h_r,..,h_1)\in H$ acts on $M$ by the formula
\begin{equation}\label{mh}
m\cdot h^{-1}:=(m_rh_r^{-1},h_rm_{r-1}h_{r-1}^{-1},..,h_2m_1h_1^{-1}).
\end{equation}
Under the identification $X=M/H$, we denote the projection map $M\ra X$ by  $m\mapsto[m]:=m\cdot H$, and call $M$ {\it the Stiefel bundle} over $X$. We can view a Pl\"ucker coordinate $x_J$, $|J|=d_i$, on $X$ as the function $x_J:M\ra\C$, $x_J(m)=\det(m_r\cdots m_i)_J$. In particular, $f_1$ is a section on $X_1$ and $\bar f_1$ a section on $X$ restricting to it as described above, then for $J=(1,..,n-k)$ we have
$$\bar f_1(m_r,..,m_1)=f_1((m_r)_J,m_{r-1},..,m_1)$$
whenever $\rk (m_r)_J=d_r$.  Let $m=(m_r,..,m_1)\in M$ where the $m_i$ have the form
$$
m_i=\left[\begin{matrix}m_i'& *\cr O& I_k\end{matrix}\right]
$$
where $I_k$ is the $k\times k$ identity matrix and $O$ a zero block. Then $x_{J'\cup(n-k+1,..,n)}(m)=\det(m_r'\cdots m_i')_{J'}$ for any $J'\subset(1,..,n-k)$ with $|J'|=d_i-k$. So, if $f_2$ is a section on $X_2$ and $\tilde f_2$ a section on $X$ restricting to it as described above, then
$$\tilde f_2(m_r,..,m_1)=f_2(m_r',...,m_1').$$

Let $f$ be a nonzero section of a line bundle on $X$, and let $X(f)$ be the complement of $f=0$, and $M(f)$ the preimage of $X(f)$ under $M\ra X$.

\begin{defn} (Hyperplane property)
We say that $f$ has the hyperplane property if for some $J_i\subset(1,2,..,n)$ with $|J_i|=d_i$, $i=1,..,r$, we have $(x_{J_1}\cdots x_{J_r})|f$. In other words, the hypersurface $f=0$ contains the union of hyperplanes $x_{J_i}=0$.
\end{defn}
Note that if $f$ has the hyperplane property, we can always find a suitable permutation matrix $g\in GL_n$ such that the $g$-translate of $f$ has the hyperplane property where $J_r=(n-d_r+1,..,n)$. In the construction that follows, we will often arrange our section $f$ so that this occurs. Next, we have the following elementary lemma.

\begin{lem}\label{special-section}
Assume $f$ has the hyperplane property $(x_{J_1}\cdots x_{J_r})| f$. Then the principal $H$-bundle $M(f)\ra X(f)$, has a unique section $m=(m_r,..,m_1)$, where the $m_r,..,m_1$ are matrix valued functions on $X(f)$ such that
$$
(m_r\cdots m_i)_{J_i}=I_{d_i}.
$$
\end{lem}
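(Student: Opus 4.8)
The plan is to reduce the statement to a concrete linear-algebra fact about the Stiefel bundle $M \to X$ and the explicit action \eqref{mh} of $H = GL_{d_r}\times\cdots\times GL_{d_1}$. Recall $X(f) = X - \{f=0\}$, and since $(x_{J_1}\cdots x_{J_r})\mid f$, every point of $X(f)$ satisfies $x_{J_i}\neq 0$ for all $i$; equivalently, for any $m = (m_r,\dots,m_1)\in M(f)$ the block $(m_r\cdots m_i)_{J_i}$ is a nonsingular $d_i\times d_i$ matrix for each $i=1,\dots,r$. The assertion is that there is exactly one $H$-orbit representative in each fiber of $M(f)\to X(f)$ for which all these blocks equal the identity; since $M(f)\to X(f)$ is a principal $H$-bundle, this is the statement that the section exists and is unique, hence the bundle is trivial over $X(f)$.

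First I would prove uniqueness within a single fiber by a descending induction on $i$, from $i=r$ down to $i=1$, that pins down the gauge group element. Given $m=(m_r,\dots,m_1)\in M(f)$, I want $h=(h_r,\dots,h_1)\in H$ with $m\cdot h^{-1}$ satisfying $(m_r\cdots m_i)_{J_i}$-condition for all $i$. Under \eqref{mh}, $m\cdot h^{-1} = (m_r h_r^{-1},\, h_r m_{r-1}h_{r-1}^{-1},\,\dots,\, h_2 m_1 h_1^{-1})$, so the product $m_r\cdots m_i$ transforms to $m_r\cdots m_i h_i^{-1}$ (the intermediate $h_j$'s telescope). Hence the $J_i$-block becomes $(m_r\cdots m_i)_{J_i} h_i^{-1}$, and requiring this to be $I_{d_i}$ forces $h_i = (m_r\cdots m_i)_{J_i}$ uniquely — this is well-defined precisely because that block is invertible on $M(f)$, and it is independent of the other $h_j$'s. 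This simultaneously establishes existence and uniqueness of $h$, hence of the representative $m\cdot h^{-1}$ in the fiber.

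The remaining point is that the resulting $m_r,\dots,m_1$, regarded as functions of the point $[m]\in X(f)$, are \emph{regular} (algebraic) on $X(f)$ — i.e. they assemble into an honest morphism $X(f)\to M(f)$ splitting the projection, not merely a set-theoretic one. This follows because the formula $h_i = (m_r\cdots m_i)_{J_i}$ is polynomial in the entries of any local lift $m$ of $[m]$ and in $1/\det(m_r\cdots m_i)_{J_i} = 1/x_{J_i}$, which is regular on $X(f)$; so $m\mapsto m\cdot h^{-1}$ is a regular $H$-invariant map $M(f)\to M(f)$ landing in the locus where all the $J_i$-blocks are $I_{d_i}$, and it therefore descends to the desired regular section $X(f)\to M(f)$. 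Composing with the natural identification of that locus with $X(f)$ shows the section is as claimed.

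I do not expect a serious obstacle here: the only thing to be careful about is the bookkeeping of how the product $m_r\cdots m_i$ transforms under \eqref{mh} — one must check that the $h_j$ for $j<i$ cancel in telescoping fashion and only $h_i^{-1}$ survives on the right, so that the $r$ normalization conditions decouple and can be solved one at a time from $i=r$ downward. Once that telescoping is verified, existence, uniqueness, and regularity all drop out of the explicit formula $h_i = (m_r\cdots m_i)_{J_i}$, and the lemma is proved.
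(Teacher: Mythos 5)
Your argument is correct and complete. The paper calls this an ``elementary lemma'' and does not supply a proof, so there is nothing in the text to compare against; your proof supplies exactly the expected verification. The key observation — that under the action~\eqref{mh} the $r-i+1$ interior gauge factors telescope so that $m_r\cdots m_i$ transforms simply to $(m_r\cdots m_i)\,h_i^{-1}$ — is the crux, and you check it correctly. From this, taking the $J_i$-block commutes with the right multiplication by $h_i^{-1}$, so the normalization condition $(m_r\cdots m_i)_{J_i}=I_{d_i}$ decouples in $i$ and forces $h_i = (m_r\cdots m_i)_{J_i}$, which is invertible on $M(f)$ precisely because $x_{J_i}\mid f$. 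The regularity discussion is also right: $h_i$ is polynomial in the lift and $h_i^{-1}$ is regular where $\det h_i = x_{J_i}\neq 0$, and one can verify directly from~\eqref{mh} that $m\mapsto m\cdot h(m)^{-1}$ is $H$-invariant (since $h_i(m\cdot g^{-1})=h_i(m)\,g_i^{-1}$), so it descends to a regular section $X(f)\to M(f)$.
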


\begin{defn} (Special section)
We call the section given in Lemma \ref{special-section}, the special section of $M(f)$ (which depends on the index sets $J_1,..,J_r$).
\end{defn}

We now describe our recursive procedure that produces a rank 1 point of $X$ with the hyperplane property.

\noi{\bf Case 1.} Assume $d_{r-1}+d_r<n$.
Consider (cf. \eqref{embeddings})
\begin{align*}
&X_1:=F(d_1,..,d_{r-1},d_r)\into F(d_1,..,d_{r-1},n),~~~E^\bullet\mapsto 0_{n-d_r}\oplus E^\bullet\cr
&X_2:=F(d_r,n).
\end{align*}
Let $M_1,M_2,M$ be the Stiefel bundles over $X_1,X_2,X$ respectively. Let $f_1,f_2$ be rank 1 points of $X_1,X_2$ respectively with the hyperplane properties
\begin{equation}\label{f1f2-rstepcase1}
(x_{J_1}\cdots x_{J_{r-1}})\vert f_1,~~~(x_{J_r})^k\vert f_2
\end{equation}
for some $J_i$ with $\vert J_i\vert=d_i$, $i=1,...,r$, and $J_1=(1,..,d_1)$, $J_r=(n-d_r+1,..,n)$, $k=\min(d_r,n-d_r)>d_{r-1}$. Such an $f_2$ exists by Proposition \ref{1step}. Put
\begin{equation}\label{f-rstepcase1}
f=\bar f_1\cdot\bar f_2\cdot(x_{J_r})^{-d_{r-1}}.
\end{equation}
Then we have
\begin{equation}\label{hyperplane-rstepcase1}
(x_{J_1}\cdots x_{J_r})\vert f
\end{equation}
It follows easily from \eqref{KX} that $f$ is a section of $\omega_X^{-1}$.

\begin{lem}
We have an $H=GL_{d_r}\times\cdots\times GL_{d_1}$ equivariant isomorphism
\begin{align*}
&M_1(f_1)\times M_2(f_2)\ra M(f)\cr
&(m_{r-1}',..,m_1'),m_r'\mapsto m=(m_r',D^{-1}m_{r-1}',m_{r-2}',..,m_1')
\end{align*}
where $D$ is the $J_r$-block of $m_r'$. Therefore the map descends to an isomorphism $X_1(f_1)\times X_2(f_2)\ra X(f)$.
\end{lem}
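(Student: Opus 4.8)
The plan is to verify directly that the stated map $M_1(f_1)\times M_2(f_2)\to M(f)$ is a well-defined $H$-equivariant isomorphism, and then descend it to the quotients. This mirrors paragraph (a) of the proof of Proposition \ref{1step} almost verbatim, with $H=GL_{d_r}\times\cdots\times GL_{d_1}$ in place of $GL_d$, so the main work is bookkeeping with the blocked matrices rather than any new idea.

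\medskip

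\textbf{Step 1: well-definedness of the map.} Given $(m_{r-1}',\dots,m_1')\in M_1(f_1)$ and $m_r'\in M_2(f_2)$, I must check that $m=(m_r',D^{-1}m_{r-1}',m_{r-2}',\dots,m_1')$ actually lands in $M(f)$, where $D=(m_r')_{J_r}$. Since $x_{J_r}^k\mid f_2$ with $k\geq1$, the $J_r$-block $D$ of $m_r'$ is nonsingular, so $D^{-1}m_{r-1}'$ makes sense and still has full rank $d_{r-1}$; the other $m_i'$ are unchanged, so $m\in M$. To see $m\in M(f)$, I compute $f(m)$ using \eqref{f-rstepcase1}. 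For the factor $\bar f_1$: because $J_1=(1,\dots,d_1)\subset(1,\dots,d_r)$ and the embedding $X_1\to F(d_1,\dots,d_{r-1},n)$ is $E^\bullet\mapsto 0_{n-d_r}\oplus E^\bullet$ (so that the relevant flags sit in the last $d_r$ coordinates), one finds $\bar f_1(m)=(\det D)^{?}\, f_1(m_{r-1}',\dots,m_1')$ up to a nonzero power of $\det D$ coming from the product $m_r'(D^{-1}m_{r-1}')=$ essentially the last block; more precisely $\bar f_1$ is a polynomial in the $x_J$ with $|J|=d_i$, $i\leq r-1$, and each such $x_J(m)=\det(m_r'\cdots m_i')_J=\det((m_r')(D^{-1}m_{r-1}')m_{r-2}'\cdots m_i')_J$. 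Here I use that $J\subset J_r$-range and $(m_r')_{J_r}=D$ to cancel. For the factor $\bar f_2$: $\bar f_2$ involves only $x_{J_r'\cup(\dots)}$ type coordinates with $|J|=d_r$, and $x_J(m)=\det(m_r')_J$, so $\bar f_2(m)=f_2(m_r')$. Finally $(x_{J_r})^{-d_{r-1}}(m)=(\det D)^{-d_{r-1}}$, and the powers of $\det D$ must cancel exactly by the same degree computation that shows $f$ is anticanonical (via \eqref{KX} and \eqref{f-rstepcase1}); hence $f(m)=f_1(m_{r-1}',\dots,m_1')\,f_2(m_r')\neq0$.

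\medskip

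\textbf{Step 2: bijectivity and equivariance.} The inverse sends $m=(m_r,\dots,m_1)\in M(f)$ to $\bigl((D^{-1}m_{r-1}\text{-corrected tuple}),m_r\bigr)$; concretely, since $f(m)\neq0$ forces $x_{J_r}(m)=\det(m_r)_{J_r}\neq0$, set $D=(m_r)_{J_r}$ and send $m\mapsto\bigl((Dm_{r-1},m_{r-2},\dots,m_1),\,m_r\bigr)$, which visibly lies in $M_1(f_1)\times M_2(f_2)$ by reversing Step 1. That these are mutually inverse is immediate. For $H$-equivariance I let $h=(h_r,\dots,h_1)\in H$ act by \eqref{mh} on both sides; the action on $M_1(f_1)$ uses $(h_{r-1},\dots,h_1)$ and on $M_2(f_2)$ uses $h_r$ (with $M_2=$ Stiefel bundle of $F(d_r,n)$, so only $GL_{d_r}$ acts there), and one checks the formula intertwines them: replacing $m_r'$ by $m_r'h_r^{-1}$ changes $D$ to $Dh_r^{-1}$, hence $D^{-1}m_{r-1}'$ to $h_rD^{-1}m_{r-1}'$, and then $h_{r-1}m_{r-2}h_{r-1}^{-1}$ etc. match the action \eqref{mh} on $m$. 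Passing to $H$-quotients (the actions are free and proper, as for all Stiefel bundles) gives the isomorphism $X_1(f_1)\times X_2(f_2)\xrightarrow{\ \sim\ }X(f)$.

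\medskip

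\textbf{Main obstacle.} The only delicate point is the exact bookkeeping of powers of $\det D$ in Step 1: one must confirm that the exponent $-d_{r-1}$ in \eqref{f-rstepcase1} is precisely what is needed so that $\bar f_1(m)\bar f_2(m)(x_{J_r})^{-d_{r-1}}(m)$ has no residual $\det D$ and equals $f_1\cdot f_2$ on the factors. This is forced by a degree count: $\bar f_1$ has degree $d_{r-1}$ in the coordinates $x_J$ with $J$ in the $J_r$-range (coming from the hyperplane factor $x_{J_1}\cdots x_{J_{r-1}}$ having $\cdots$ in those slots — more carefully, from how the incidence embedding expresses the bundle $\lambda_{d_{r-1}}$ on $X_1$ in terms of $\lambda_{d_r}$ on $X$), matching the rewriting $m_r'(D^{-1}m_{r-1}')$ which introduces exactly $(\det D)^{-1}$ per such factor. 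I would spell this out by tracking the multidegrees through \eqref{embeddings} and \eqref{KX}, exactly as in Proposition \ref{1step}(a) where the analogous cancellation $f(m)=(\det D)^{n-l}f_1(m_1')f_2(m_2')$ was verified; everything else is formal.
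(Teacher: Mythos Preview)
Your overall strategy matches the paper's exactly: verify well-definedness by computing $f(m)$, write down the explicit inverse $(m_r,\dots,m_1)\mapsto\bigl(((m_r)_{J_r}m_{r-1},m_{r-2},\dots,m_1),\,m_r\bigr)$, and check $H$-equivariance. Steps 2 is fine.

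There is, however, a genuine confusion in Step 1 (and in your ``Main obstacle'' paragraph). You expect $\bar f_1(m)=(\det D)^{?}\,f_1(m_{r-1}',\dots,m_1')$ for some positive power and then try to argue by degree-counting that this power is exactly $d_{r-1}$ so that everything cancels to $f(m)=f_1\cdot f_2$. Neither the mechanism nor the conclusion is correct. The actual computation is a one-line matrix identity: by construction $\bar f_1$ is a polynomial in the Pl\"ucker coordinates $x_{J'}$ with $J'\subset J_r$, so as a function on $M$ one has $\bar f_1(m_r,\dots,m_1)=f_1\bigl((m_rm_{r-1})_{J_r},m_{r-2},\dots,m_1\bigr)$. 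Since $(m_r')_{J_r}=D$, it follows that $(m_r'\cdot D^{-1}m_{r-1}')_{J_r}=D\cdot D^{-1}m_{r-1}'=m_{r-1}'$ \emph{exactly}, hence $\bar f_1(m)=f_1(m_{r-1}',\dots,m_1')$ with no $\det D$ factor whatsoever. Combining with $\bar f_2(m)=f_2(m_r')$ and $(x_{J_r})^{-d_{r-1}}(m)=(\det D)^{-d_{r-1}}$ gives
\[
f(m)=f_1(m_{r-1}',\dots,m_1')\,f_2(m_r')\,(\det D)^{-d_{r-1}},
\]
which is the paper's formula. The residual factor $(\det D)^{-d_{r-1}}$ does \emph{not} cancel; it does not need to, since $D\in GL_{d_r}$ already forces $f(m)\neq 0$. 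The exponent $-d_{r-1}$ in \eqref{f-rstepcase1} is there only so that $f\in\Gamma(X,\omega_X^{-1})$ via \eqref{KX}, not to engineer any cancellation. Once you replace your degree-counting paragraph with this direct block computation, the proof is complete and identical to the paper's.
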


\begin{proof}
For $m_r'\in M_2(f_2)$, its $J_r$-block $D$ is a nonsingular matrix in $GL_{d_2}$ since  $(x_{J_r})^k\vert f_2$.  Suppose $f_1(m_{r-1}',..,m_1')f_2(m_r')\neq 0$. Then
$$
f(m)=f_1((m_r'D^{-1}m_{r-1}')_{J_r},m_{r-2}',..,m_1')f_2(m_r')(\det(m_r')_{J_r})^{-d_{r-1}}.
$$
Since $(m_r')_{J_r}=D$, it follows that $(m_r'D^{-1}m_{r-1}')_{J_r}=m_{r-1}'$ and we have
$$f(m)=f_1(m_{r-1}',..,m_1')f_2(m_r')(\det D)^{-d_{r-1}}\neq 0.$$
So, the map is well-defined. Now, $h=(h_r,..,h_1)\in H$ acts on $M(f)$ by \eqref{mh}, and on $M_1(f_1)\times M_2(f_2)$ by the formula
$$m_rh_r^{-1},~(m_{r-1}h_{r-1}^{-1},h_{r-1}m_{r-2}h_{r-2}^{-1},.., h_2m_1h_1^{-1}).$$
Therefore our map is $H$-equivariant. Moreover, the map
$$
M(f)\ra M_1(f_1)\times M_2(f_2),~~
(m_r,..,m_1)\mapsto ((m_r)_{J_r}m_{r-1},m_{r-2},..,m_1),m_r
$$
is well-defined and is the inverse of the map above.
\end{proof}

The lemma and Theorem \ref{holo-rank-thm} imply

\begin{prop}\label{rstepcase1}
For $d_{r-1}+d_r<n$,  if any $s$-step flag variety for $s<r$ admits a rank 1 point with the hyperplane property, then $X=F(d_1,..,d_r,n)$ admits one as well.
\end{prop}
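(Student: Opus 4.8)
The plan is to run the recursion exactly as it has been set up: use the inductive hypothesis and Proposition~\ref{1step} to manufacture the building blocks $f_1$ and $f_2$, assemble them into the anticanonical section $f$ of \eqref{f-rstepcase1}, and then transport the rank $1$ property across the isomorphism $X(f)\cong X_1(f_1)\times X_2(f_2)$ furnished by the preceding Lemma, using Theorem~\ref{holo-rank-thm} in both directions. Concretely, since $X_1=F(d_1,\dots,d_{r-1},d_r)$ is an $(r-1)$-step flag variety and $r-1<r$, the hypothesis supplies a rank $1$ point $f_1$ of $X_1$ with the hyperplane property; after a permutation we may normalize this as in \eqref{f1f2-rstepcase1}, in particular with $J_1=(1,\dots,d_1)$. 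Since $X_2=F(d_r,n)$ is a Grassmannian, Proposition~\ref{1step} supplies a rank $1$ point $f_2$ with $(x_{J_r})^{k}\mid f_2$ for $k=\min(d_r,n-d_r)$, and a further permutation arranges $J_r=(n-d_r+1,\dots,n)$. The inequality $k>d_{r-1}$ used in \eqref{f1f2-rstepcase1} is forced by the standing hypotheses: $d_{r-1}<d_r$ because the steps are strictly increasing, and $d_{r-1}<n-d_r$ because $d_{r-1}+d_r<n$, hence $d_{r-1}<\min(d_r,n-d_r)=k$. Finally one forms $f=\bar f_1\cdot\bar f_2\cdot(x_{J_r})^{-d_{r-1}}$; that this is a regular section of $\omega_X^{-1}$ is a multidegree check against \eqref{KX}, and since $k-d_{r-1}\ge 1$ the divisor $(x_{J_1}\cdots x_{J_{r-1}})(x_{J_r})^{k-d_{r-1}}$ still divides $f$, which gives the hyperplane property \eqref{hyperplane-rstepcase1}.

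With these in hand the argument is essentially an assembly of results already available. The Lemma preceding the proposition provides an $H$-equivariant isomorphism $M_1(f_1)\times M_2(f_2)\cong M(f)$ of Stiefel bundles, hence, on passing to $H$-quotients, an isomorphism $X_1(f_1)\times X_2(f_2)\cong X(f)$. The varieties $X_i(f_i)$ are smooth affine (complements of ample divisors in projective varieties), so $H^{j}(X_i(f_i))=0$ for $j>\dim X_i$; moreover $\dim X_1+\dim X_2=\dim X$, both sides being $\sum_{i=1}^{r}d_i(d_{i+1}-d_i)$ with $d_{r+1}=n$. The Künneth formula therefore collapses to $H^{\dim X}(X(f))\cong H^{\dim X_1}(X_1(f_1))\otimes H^{\dim X_2}(X_2(f_2))$. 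By Theorem~\ref{holo-rank-thm} the assumption that $f_i$ is a rank $1$ point of $X_i$ says precisely that $\dim H_{\dim X_i}(X_i(f_i))=1$, equivalently $\dim H^{\dim X_i}(X_i(f_i))=1$ by universal coefficients. Hence $\dim H^{\dim X}(X(f))=1$, and a final appeal to Theorem~\ref{holo-rank-thm} shows that $f$ is a rank $1$ point of $X$; it has the hyperplane property by construction.

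I do not expect a serious obstacle here, since the nontrivial geometric input (the explicit Stiefel-bundle isomorphism) and the nontrivial analytic input (Theorem~\ref{holo-rank-thm}) are already in place; the only step that genuinely requires attention is the bookkeeping of Plücker index sets under the Case~1 embeddings. One must track how $x_{J'}$ on $X_1$ and on $X_2$ pull back to $X$, and check that the permutations normalizing $J_1$ (for $f_1$) and $J_r$ (for $f_2$) can be taken consistently, so that after forming $f=\bar f_1\bar f_2(x_{J_r})^{-d_{r-1}}$ one obtains a single coherent tuple $J_1,\dots,J_r$ witnessing \eqref{hyperplane-rstepcase1}. This is routine, but it is the place where an index slip is most likely.
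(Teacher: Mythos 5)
Your proof is correct and follows the same route the paper takes: construct $f_1$ from the inductive hypothesis applied to the $(r-1)$-step variety $X_1=F(d_1,\dots,d_{r-1},d_r)$ and $f_2$ from Proposition~\ref{1step} applied to the Grassmannian $X_2=F(d_r,n)$, assemble them into the anticanonical section $f=\bar f_1\cdot\bar f_2\cdot(x_{J_r})^{-d_{r-1}}$, pass through the preceding Lemma's $H$-equivariant isomorphism $M_1(f_1)\times M_2(f_2)\cong M(f)$ to obtain $X_1(f_1)\times X_2(f_2)\cong X(f)$, and conclude by K\"unneth plus Theorem~\ref{holo-rank-thm}. You also usefully make explicit two points the paper leaves tacit: the verification $k=\min(d_r,n-d_r)>d_{r-1}$ (from $d_{r-1}<d_r$ and $d_{r-1}+d_r<n$), and the cohomology computation $H^{\dim X}(X(f))\cong H^{\dim X_1}(X_1(f_1))\otimes H^{\dim X_2}(X_2(f_2))$ via affineness and $\dim X_1+\dim X_2=\dim X$, which the paper spells out only in the analogous $1$-step Proposition~\ref{1step}.
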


By Proposition \ref{1step}, for $d_1+d_2<n$ it follows that $F(d_1,d_2,n)$ admits a rank 1 point with the hyperplane property.
This also implies that for $d_1+d_2>n$, then $F(d_1,d_2,n)\simeq F(n-d_2,n-d_1,n)$ admits one as well.

\noi{\bf Case 2.} Assume $d_{r-1}+d_r=n$ and $r=2$. Consider the following section of $\omega_X^{-1}$:
$$
f=(x_{1,..,d_1})^{d_2}(x_{d_1+1,..,n})^{d_2}.
$$
Then by Lemma \ref{special-section}, the special section of $M(f)\ra X(f)$ has the form
$$
m=(m_2,m_1)=(\left[\begin{matrix}A_2 \cr I_{d_2}\end{matrix}\right], m_1)~~~\text{such that}~A_2m_1=I_{d_1}.
$$
Since $m_1(o)$ has rank $d_1$ at each point $o\in X(f)$, the second equation shows that the function
$$
m_1:X(f)\ra M_1, ~~~o\mapsto m_1(o)
$$
is onto. Here $M_1$ be the Stiefel bundle over $F(d_1,d_2)$. Moreover, the level set of this function at each point is
an affine space of dimension $d_1d_2-d_1^2$. It follows that $X(f)$ is homotopy equivalent to $M_1$. Finally, the principal $GL_{d_1}$-bundle $M_1\ra F(d_1,d_2)$ is over a simply connected base. Thus by the Serre spectral sequence, the highest degree nonzero cohomology group of $M_1$ is one dimensional at degree $2d_1d_2-d_1^2=\dim X$. By Theorem \ref{holo-rank-thm}, we have

\begin{prop}\label{2stepcase2}
For $d_1+d_2=n$, $X=F(d_1,d_2,n)$ admits the rank 1 point $f=(x_{1,..,d_1})^{d_2}(x_{d_1+1,..,n})^{d_2}$.
\end{prop}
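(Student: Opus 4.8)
The plan is to deduce the statement from Theorem~\ref{holo-rank-thm}. Since $X = F(d_1, d_2, n)$ is a homogeneous $SL_n$-variety, that theorem identifies the solution rank of $\tau$ at $a = f$ with $\dim_{\C} H_n(X - V(f))$, where $n = \dim X = 2d_1 d_2 - d_1^2$; so it suffices to show $\dim H_n(X(f);\C) = 1$. First I would record that $f = (x_{1,\dots,d_1})^{d_2}(x_{d_1+1,\dots,n})^{d_2}$ is a section of $\omega_X^{-1}$: specializing \eqref{KX} to $r = 2$, $n = d_1 + d_2$ gives $\omega_X^{-1} = d_2\lambda_{d_1} + d_2\lambda_{d_2}$, and $x_{1,\dots,d_1}$, $x_{d_1+1,\dots,n}$ are sections of $\lambda_{d_1}$, $\lambda_{d_2}$. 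The point is that $f$ has the hyperplane property for $J_1 = (1,\dots,d_1)$, $J_2 = (d_1+1,\dots,n)$, and that the hypothesis $d_1 + d_2 = n$ is exactly what makes $J_1 \sqcup J_2 = (1,\dots,n)$.

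Next I would feed this into Lemma~\ref{special-section}: the principal $H$-bundle $M(f)\to X(f)$ has a unique section $m = (m_2, m_1)$ with $(m_2)_{J_2} = I_{d_2}$ and $(m_2 m_1)_{J_1} = I_{d_1}$. Writing $A_2$ for the $J_1$-block of $m_2$ (a $d_1\times d_2$ matrix), so that $m_2 = \left[\begin{smallmatrix} A_2 \\ I_{d_2}\end{smallmatrix}\right]$, these two conditions collapse to the single equation $A_2 m_1 = I_{d_1}$, which automatically forces $m_1$ to have rank $d_1$ and $f(m) = 1 \neq 0$. Thus the special section yields an isomorphism
$$
X(f) \;\xrightarrow{\ \sim\ }\; \bigl\{(A_2, m_1)\in \on{Mat}_{d_1\times d_2}\times\on{Mat}_{d_2\times d_1}\ :\ A_2 m_1 = I_{d_1}\bigr\}.
$$
Projecting to $m_1$ maps this onto the Stiefel bundle $M_1$ over $F(d_1,d_2)$ (the full-rank $d_2\times d_1$ matrices), with fibre over $m_1$ the affine space $\{A_2 : A_2 m_1 = I_{d_1}\}$, a torsor under $\Hom(\on{coker} m_1,\C^{d_1})$ of dimension $d_1(d_2-d_1)$. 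Hence $X(f)\to M_1$ is a locally trivial affine bundle, so a fibration with contractible fibres, and $X(f)\simeq M_1$.

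It then remains to compute the top cohomology of $M_1$, for which I would run the Serre spectral sequence of the principal $\GL_{d_1}(\C)$-bundle $M_1\to F(d_1,d_2)$. The base is simply connected, so $E_2^{p,q} = H^p(F(d_1,d_2))\otimes H^q(\GL_{d_1}(\C))$; here $H^*(F(d_1,d_2))$ lives in degrees $0\le p\le 2d_1(d_2-d_1)$ with $H^{2d_1(d_2-d_1)}\cong\C$, while $\GL_{d_1}(\C)\simeq U(d_1)$ has rational cohomology an exterior algebra on generators of degrees $1,3,\dots,2d_1-1$, hence lives in degrees $0\le q\le d_1^2$ with $H^{d_1^2}\cong\C$. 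The corner entry $E_2^{2d_1(d_2-d_1),\,d_1^2}\cong\C$ admits no differential in or out for degree reasons and is the only entry in the maximal total degree $2d_1(d_2-d_1)+d_1^2 = 2d_1d_2 - d_1^2 = n$. Therefore $H^j(M_1;\C) = 0$ for $j > n$ and $H^n(M_1;\C)\cong\C$. Since $X(f)\simeq M_1$ the same holds for $X(f)$, so $\dim H_n(X(f);\C) = \dim H^n(X(f);\C) = 1$ by universal coefficients over $\C$, and Theorem~\ref{holo-rank-thm} gives the result.

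I do not expect any step to be a real obstacle; the only place needing genuine care is the linear algebra of the second paragraph — extracting the precise shape of the special section from Lemma~\ref{special-section}, and verifying that the projection $X(f)\to M_1$ is an honest (locally trivial) affine bundle and not merely a surjection with affine fibres — after which the topological computation is entirely routine.
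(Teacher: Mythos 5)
Your proof is correct and follows the same path as the paper: use Lemma~\ref{special-section} to identify $X(f)$ with $\{(A_2,m_1): A_2m_1=I_{d_1}\}$, project to $m_1$ to see $X(f)$ as an affine bundle over the Stiefel manifold $M_1\to F(d_1,d_2)$, then run the Serre spectral sequence for the principal $\GL_{d_1}$-bundle over the simply connected base to get $H^{\dim X}(X(f))\cong\C$, and invoke Theorem~\ref{holo-rank-thm}. The only differences are cosmetic: you explicitly check $f\in\Gamma(X,\omega_X^{-1})$ via \eqref{KX} and spell out the torsor structure of the affine fibres, both of which the paper leaves implicit.
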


\begin{rem}
The propositions in Cases 1-2 ($r=2$) now imply that any 2-step flag variety $F(d_1,d_2,n)$ admits a rank 1 point with the hyperplane property.
\end{rem}

\noi{\bf Case 3.} Assume $d_{r-1}+d_r=n$ and $r\geq3$.
Consider (cf. \eqref{embeddings})
\begin{align*}
&X_1:=F(d_1,..,d_{r-2},d_{r-1})\into F(d_1,..,d_{r-2},n),~~~E^\bullet\mapsto E^\bullet\oplus 0_{n-d_{r-1}}\cr
&X_2:=F(d_{r-1},d_r,n).
\end{align*}
Let $M_1,M_2,M$ be the Stiefel bundles over $X_1,X_2,X$ respectively. Let $f_1,f_2$ be rank 1 points of $X_1,X_2$ respectively with the hyperplane properties
\begin{equation}\label{f1f2-rstepcase3}
(x_{J_1}\cdots x_{J_{r-2}})\vert f_1,~~~~~f_2=(x_{J_{r-1}})^{d_r}(x_{J_r})^{d_r}
\end{equation}
for some $J_i$ with $\vert J_i\vert=d_i$, $i=1,...,r$, and $J_1=(1,..,d_{r-1})$, $J_r=(n-d_r+1,..,n)$. Note that $f_2$ is given by Proposition \ref{2stepcase2}. Put
\begin{equation}\label{f-rstepcase1}
f=\bar f_1\cdot\bar f_2\cdot(x_{J_{r-1}})^{-d_{r-2}}\in\Gamma(X,\omega^{-1}).
\end{equation}
Then we have
\begin{equation}\label{hyperplane-rstepcase3}
(x_{J_1}\cdots x_{J_{r-1}} (x_{J_r})^{d_r})\vert f
\end{equation}

Since $x_{J_{r-1}}\vert f_2$,  the $J_{r-1}=(1,..,d_{r-1})$-block $D$ of $m_r' m_{r-1}'$ for $(m_r',m_{r-1}')\in M_2(f_2)$ is nonsingular.

\begin{lem}
We have an $H=GL_{d_r}\times\cdots\times GL_{d_1}$ equivariant isomorphism
\begin{align*}
&M_1(f_1)\times M_2(f_2)\ra M(f)\cr
&(m_{r-2}',..,m_1'),(m_r',m_{r-1}')\mapsto m=(m_r',m_{r-1}',D^{-1}m_{r-2}',m_{r-3}',..,m_1')
\end{align*}
where $D$ is the $J_{r-1}=(1,..,d_{r-1})$-block of $m_r' m_{r-1}'$. Therefore the map descends to an isomorphism $X_1(f_1)\times X_2(f_2)\ra X(f)$.
\end{lem}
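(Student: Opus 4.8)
The plan is to follow the lemma in Case~1 above (and Proposition~\ref{1step}) essentially verbatim, so I will only lay out the structure. Write $m=(m_r',m_{r-1}',D^{-1}m_{r-2}',m_{r-3}',\dots,m_1')$ for the proposed image of $\big((m_{r-2}',\dots,m_1'),(m_r',m_{r-1}')\big)$, where $D=(m_r'm_{r-1}')_{J_{r-1}}$, and first check that the map is well defined. Since $(x_{J_{r-1}})^{d_r}\mid f_2$, every $(m_r',m_{r-1}')\in M_2(f_2)$ has $D\in GL_{d_{r-1}}$, so $D^{-1}m_{r-2}'$ is a genuine $d_{r-1}\times d_{r-2}$ matrix of rank $d_{r-2}$ and $m\in M$. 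To see that $m\in M(f)$ I would evaluate the three factors of $f=\bar f_1\cdot\bar f_2\cdot(x_{J_{r-1}})^{-d_{r-2}}$ on $m$. The one computation that matters is this: because $J_{r-1}=(1,\dots,d_{r-1})$, the $n\times d_{r-1}$ matrix $A:=m_r'm_{r-1}'D^{-1}$ satisfies $A_{J_{r-1}}=I_{d_{r-1}}$, hence $(AB)_{J'}=B_{J'}$ for every $J'\subset(1,\dots,d_{r-1})$ with $|J'|=d_i$, $i\le r-2$, and every $d_{r-1}\times d_i$ matrix $B$; taking $B=m_{r-2}'\cdots m_i'$ gives $x_{J'}(m)=x_{J'}(m_{r-2}',\dots,m_1')$, whence $\bar f_1(m)=f_1(m_{r-2}',\dots,m_1')$ since $\bar f_1$ involves only such Pl\"ucker coordinates. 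Together with $x_{J_{r-1}}(m)=\det D$ and $\bar f_2(m)=f_2(m_r',m_{r-1}')=(\det D)^{d_r}(\det(m_r')_{J_r})^{d_r}$ (as $\bar f_2$ is pulled back along $X\to F(d_{r-1},d_r,n)$ and $M\to M_2$ is $(m_r,\dots,m_1)\mapsto(m_r,m_{r-1})$), this yields $f(m)=f_1(m_{r-2}',\dots,m_1')\,f_2(m_r',m_{r-1}')\,(\det D)^{-d_{r-2}}\neq0$.

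Next I would exhibit the inverse, $(m_r,\dots,m_1)\mapsto\big((E m_{r-2},m_{r-3},\dots,m_1),(m_r,m_{r-1})\big)$ with $E:=(m_r m_{r-1})_{J_{r-1}}$. The hyperplane property \eqref{hyperplane-rstepcase3} forces $x_{J_{r-1}}$ and $x_{J_r}$ to be nonvanishing on $M(f)$, so $E\in GL_{d_{r-1}}$, and reading the identities of the previous paragraph backwards gives $f_1(Em_{r-2},m_{r-3},\dots,m_1)=\bar f_1(m)\neq0$ and $f_2(m_r,m_{r-1})=(\det E)^{d_r}(\det(m_r)_{J_r})^{d_r}\neq0$, so this map does land in $M_1(f_1)\times M_2(f_2)$. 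A one-line substitution---noting that the $D$ produced by the forward map applied to this output is exactly $E$, and conversely---shows the two maps are mutually inverse; both are manifestly morphisms ($D^{-1}$ is regular on $M_2(f_2)$, and $E$ is polynomial), so one obtains an isomorphism of varieties $M_1(f_1)\times M_2(f_2)\to M(f)$.

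It then remains to verify $H$-equivariance and descend. I would organize $H=GL_{d_r}\times GL_{d_{r-1}}\times\big(GL_{d_{r-2}}\times\cdots\times GL_{d_1}\big)$ so that the last $r-2$ factors form the Stiefel group of $X_1$ acting on $M_1$, the first two form the Stiefel group of $X_2$ acting on $M_2$ (each via the analogue of \eqref{mh}), while $H$ acts on $M$ by \eqref{mh}. Equivariance is checked one factor at a time: $GL_{d_r}$ leaves $D$ fixed; an element $h_{r-1}\in GL_{d_{r-1}}$ sends $D$ to $Dh_{r-1}^{-1}$, which is precisely what produces the left multiplication $h_{r-1}(D^{-1}m_{r-2}')$ prescribed by \eqref{mh}; and each $GL_{d_j}$ with $j\le r-2$ acts only on the $M_1$-factor, matching \eqref{mh} directly. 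Since $H$ acts freely and properly on $M_1(f_1)\times M_2(f_2)$ with quotient $X_1(f_1)\times X_2(f_2)$ and on $M(f)$ with quotient $X(f)$ (because $f$ is pulled back from $X$), the isomorphism descends to the asserted $X_1(f_1)\times X_2(f_2)\to X(f)$; that $f\in\Gamma(X,\omega_X^{-1})$ is the bookkeeping with \eqref{KX} already recorded before the lemma.

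I do not expect a real obstacle: the mathematical content is entirely the two Pl\"ucker transformation identities in the first step---which is also where the normalization by $D^{-1}$ is forced---and everything else is formal. The only place needing genuine care is bookkeeping the $H$-action, in particular keeping track that in this decomposition $GL_{d_{r-1}}$ sits on the $X_2$-side (not the $X_1$-side) and interacts with the normalizing matrix $D$; once that is pinned down the argument is line-for-line the Case~1 lemma.
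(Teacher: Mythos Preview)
Your proposal is correct and follows exactly the approach the paper intends: the paper omits this proof entirely, writing only ``The proof is closely analogous to the lemma in Case~1, and will be omitted,'' and your argument is precisely the Case~1 argument adapted to the present setting, with the key Pl\"ucker identity $(m_r'm_{r-1}'D^{-1})_{J_{r-1}}=I_{d_{r-1}}$ replacing $(m_r'D^{-1})_{J_r}=I_{d_r}$ and the $H$-equivariance bookkeeping correctly shifted so that $GL_{d_r}\times GL_{d_{r-1}}$ acts on the $M_2$-side.
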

The proof is closely analogous to the lemma in Case 1, and will be omitted. The lemma and Theorem \ref{holo-rank-thm} imply

\begin{prop}\label{rstepcase3}
For $d_{r-1}+d_r=n$, if any $s$-step flag variety for $s<r$ admits a rank 1 point with the hyperplane property, then $X=F(d_1,..,d_r,n)$ admits one such $f$ that satisfies $(x_{J_r})^{d_r}\vert f$ where $J_r=(n-d_r+1,..,n)$.
\end{prop}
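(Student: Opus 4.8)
The plan is to mirror the strategy of Case 1 (Proposition~\ref{rstepcase1}) with the roles adjusted so that the ``small'' factor being split off is a $2$-step flag variety rather than a Grassmannian. Concretely, under the hypothesis $d_{r-1}+d_r=n$, I would take the embeddings \eqref{embeddings}-style decomposition $X_1=F(d_1,\dots,d_{r-2},d_{r-1})\hookrightarrow F(d_1,\dots,d_{r-2},n)$, $E^\bullet\mapsto E^\bullet\oplus 0_{n-d_{r-1}}$, and $X_2=F(d_{r-1},d_r,n)$, with Stiefel bundles $M_1,M_2,M$. I would choose rank~$1$ points $f_1$ of $X_1$ with the hyperplane property $(x_{J_1}\cdots x_{J_{r-2}})\mid f_1$ (which exists by the inductive hypothesis, since $X_1$ is an $(r-2)$-step flag variety), and $f_2=(x_{J_{r-1}})^{d_r}(x_{J_r})^{d_r}$ on $X_2$, which is a rank~$1$ point by Proposition~\ref{2stepcase2}. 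I would then define $f=\bar f_1\cdot\bar f_2\cdot(x_{J_{r-1}})^{-d_{r-2}}$, verify via \eqref{KX} that $f\in\Gamma(X,\omega_X^{-1})$, and record the hyperplane property $(x_{J_1}\cdots x_{J_{r-1}}(x_{J_r})^{d_r})\mid f$, noting in particular that $(x_{J_r})^{d_r}\mid f$ with $J_r=(n-d_r+1,\dots,n)$.

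The key geometric step is the lemma already stated before this proposition: the $H$-equivariant isomorphism $M_1(f_1)\times M_2(f_2)\to M(f)$, $\bigl((m_{r-2}',\dots,m_1'),(m_r',m_{r-1}')\bigr)\mapsto(m_r',m_{r-1}',D^{-1}m_{r-2}',m_{r-3}',\dots,m_1')$ where $D$ is the nonsingular $J_{r-1}$-block of $m_r'm_{r-1}'$. One checks that $f$ does not vanish on the image exactly as in Case~1 (using $(m_r'm_{r-1}')_{J_{r-1}}=D$, so $(m_r'm_{r-1}'D^{-1}m_{r-2}')_{J_{r-1}}=m_{r-2}'$, and the factor $(\det D)^{-d_{r-2}}$ cancels the twist), $H$-equivariance is a bookkeeping check against \eqref{mh}, and the inverse $(m_r,\dots,m_1)\mapsto\bigl((m_rm_{r-1})_{J_{r-1}}m_{r-2},m_{r-3},\dots,m_1\bigr),(m_r,m_{r-1})$ is well-defined on $M(f)$. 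Passing to $H$-quotients yields $X(f)\cong X_1(f_1)\times X_2(f_2)$. Since $X_1(f_1)$ is affine with $H^{\dim X_1}(X_1(f_1))=\C$ and $H^{>\dim X_1}=0$ by Theorem~\ref{holo-rank-thm}, and similarly $X_2(f_2)$ is affine with $H^{\dim X_2}(X_2(f_2))=\C$ by Theorem~\ref{holo-rank-thm} applied to Proposition~\ref{2stepcase2}, the Künneth formula gives $H^{\dim X}(X(f))\cong H^{\dim X_1}(X_1(f_1))\otimes H^{\dim X_2}(X_2(f_2))\cong\C$ (and dimensions match since $\dim X=\dim X_1+\dim X_2$). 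Hence $f$ is a rank~$1$ point of $X$ with the asserted divisibility.

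The main obstacle I expect is purely organizational rather than conceptual: getting the index sets $J_1,\dots,J_r$ to line up consistently under the two embeddings so that the hyperplane-property divisibilities on $f_1$ and $f_2$ genuinely restrict to divisibilities of $f$ on $X$, and confirming that after an initial permutation one may assume $J_{r-1}=(1,\dots,d_{r-1})$ and $J_r=(n-d_r+1,\dots,n)$ as in the statement. The verification that $f$ is anticanonical from \eqref{KX} also requires care with the coefficients of the twist $(x_{J_{r-1}})^{-d_{r-2}}$, but this is a direct computation analogous to \eqref{f-rstepcase1} in Case~1. Since the lemma preceding the proposition has already done the hard geometric work and its proof is stated to be ``closely analogous to the lemma in Case~1,'' the proof of the proposition itself is just the short deduction: invoke the isomorphism, apply Künneth, and apply Theorem~\ref{holo-rank-thm} twice.
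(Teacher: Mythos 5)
Your proposal reproduces the paper's argument for Proposition~\ref{rstepcase3} essentially verbatim: the same splitting into $X_1=F(d_1,\ldots,d_{r-2},d_{r-1})$ (handled by the inductive hypothesis) and $X_2=F(d_{r-1},d_r,n)$ (handled by Proposition~\ref{2stepcase2}), the same section $f=\bar f_1\cdot\bar f_2\cdot(x_{J_{r-1}})^{-d_{r-2}}$, the same $H$-equivariant isomorphism of Stiefel bundles descending to $X(f)\cong X_1(f_1)\times X_2(f_2)$, and the same K\"unneth/Theorem~\ref{holo-rank-thm} conclusion. You also silently correct the paper's typo ``$J_1=(1,\ldots,d_{r-1})$'' to ``$J_{r-1}=(1,\ldots,d_{r-1})$'', which is the intended reading.
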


\noi{\bf Case 4.} Assume $d_1+d_2=n$. Then $X\simeq F(n-d_r,..,n-d_2,n-d_1,n)$, which belongs in Case 3, and the analogue of Proposition \ref{rstepcase3} is

\begin{prop}\label{rstepcase4}
For $d_1+d_2=n$,  if any $s$-step flag variety for $s<r$ admits a rank 1 point with the hyperplane property, then $X=F(d_1,..,d_r,n)$ admits one such $f$ that satisfies $(x_{J_1})^{d_1}\vert f$ where $J_1=(1,..,d_1)$.
\end{prop}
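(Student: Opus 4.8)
The plan is to deduce this from the earlier cases by projective duality of flag varieties. Fix a nondegenerate bilinear form on $k^n$ and write $E^\perp$ for orthogonal complement. Sending a flag $(E_1\subset\cdots\subset E_r)$ to $(E_r^\perp\subset\cdots\subset E_1^\perp)$ defines an isomorphism of varieties $\delta:X\to X'$, where $X=F(d_1,\ldots,d_r,n)$ and $X'=F(n-d_r,\ldots,n-d_1,n)$, intertwining the $SL_n$-action on $X$ with the action on $X'$ precomposed with the outer automorphism $g\mapsto(g^{-1})^T$ of $SL_n$. Put $e_i:=n-d_{r+1-i}$, so that $X'=F(e_1,\ldots,e_r,n)$. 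The hypothesis $d_1+d_2=n$ gives $e_{r-1}=d_1$ and $e_r=d_2$, hence $e_{r-1}+e_r=n$; thus $X'$ falls under Case 2 (if $r=2$) or Case 3 (if $r\geq3$). Since duality takes $s$-step flag varieties to $s$-step flag varieties, the standing hypothesis --- that every $s$-step flag variety with $s<r$ has a rank $1$ point with the hyperplane property --- holds for $X'$ as well, so Proposition \ref{rstepcase3} (respectively Proposition \ref{2stepcase2}) applies to $X'$.

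I would then run the argument as follows. Proposition \ref{rstepcase3} (for $r\geq3$), or the explicit section $f'=(x_{1,\ldots,e_1})^{e_2}(x_{e_1+1,\ldots,n})^{e_2}$ of Proposition \ref{2stepcase2} (for $r=2$), produces a rank $1$ point $f'$ of $X'$ with the hyperplane property such that $(x_K)^{e_r}\vert f'$, where $K=(n-e_r+1,\ldots,n)=(d_1+1,\ldots,n)$ and $|K|=e_r=n-d_1$. Now transport $f'$ back along $\delta$. Under $\delta$, a Pl\"ucker coordinate $x_J$ on $X$ with $|J|=d_i$ is, up to a nonzero scalar, the pullback of the complementary coordinate $x_{J^c}$ on $X'$ with $|J^c|=n-d_i=e_{r+1-i}$; in particular $x_{J_1}$ with $J_1=(1,\ldots,d_1)$ is the pullback of $x_{J_1^c}$ with $J_1^c=(d_1+1,\ldots,n)=K$. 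Hence $f:=\delta^*f'$ is a section of $\omega_X^{-1}$ (an isomorphism of varieties identifies $\omega_{X'}$ with $\omega_X$), has the hyperplane property, and satisfies $(x_{J_1})^{e_r}\vert f$. From $d_1<d_2$ and $d_1+d_2=n$ we get $d_1<n-d_1=d_2=e_r$, so $(x_{J_1})^{d_1}\vert f$. Finally, $\delta$ restricts to an isomorphism $X-V(f)\cong X'-V(f')$, so Theorem \ref{holo-rank-thm}, applied to $X'$ and then to $X$, yields $\dim H_n(X-V(f))=\dim H_n(X'-V(f'))=1$; that is, $f$ is a rank $1$ point of $X$.

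There is no genuinely hard step here: the content is entirely the bookkeeping of how index sets, divisibility exponents, and the anticanonical class transform under $\delta$. The one point worth a sentence of care is that ``rank $1$ point'' is an intrinsic notion --- by Theorem \ref{holo-rank-thm} it amounts to $\dim H_n(X-V(f))=1$, manifestly preserved by any isomorphism carrying $V(f')$ to $V(f)$ --- so that no compatibility with the outer-automorphism twist of the $SL_n$-action is actually needed for the conclusion. The inequality $e_r>d_1$, which upgrades $(x_{J_1})^{e_r}\vert f$ to the asserted $(x_{J_1})^{d_1}\vert f$, uses only the strictness $d_1<d_2$ in the flag conventions.
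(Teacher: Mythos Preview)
Your proof is correct and follows the same route as the paper: the paper's argument is the single sentence ``$X\cong F(n-d_r,\ldots,n-d_1,n)$, which belongs in Case~3,'' and you have carried out this duality reduction in full detail, including the bookkeeping on Pl\"ucker coordinates, the exponent $e_r=d_2>d_1$, and the intrinsic nature of the rank~1 condition via Theorem~\ref{holo-rank-thm}. Your separate handling of the $r=2$ case (where the dual lands in Case~2 rather than Case~3) is in fact more careful than the paper's one-line remark.
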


\noi{\bf Case 5.} Assume $d_{r-1}+d_r>n$. There exists a unique $a$ with $r>a>1$ such that $d_a+d_{a+1}>n\geq d_{a-1}+d_a$. Assume $n>2d_a$ first. We will consider $n=2d_a$ and $n=d_{a-1}+d_a$ in Cases 6-7 below separately. Consider
\begin{align*}
&X_1:=F(d_1,..,d_a,n-d_a)\into F(d_1,..,d_a,n),~~~(E_1^i)\mapsto(E_1^i\oplus 0_{d_a})\cr
&X_2:=F(d_{a+1}-d_a,..,d_r-d_a,n-d_a)\into F(d_{a+1},,..,d_r,n),~~~(E_2^j)\mapsto(E_2^j\oplus\C^{d_a}).
\end{align*}
Here we view $\C^n=\C^{n-d_a}\oplus\C^{d_a}$. Let $M_1,M_2,M$ be the Stiefel bundles over $X_1,X_2,X$ respectively. Let $f_1,f_2$ be rank 1 points of $X_1,X_2$ respectively with the hyperplane properties
\begin{equation}\label{f1f2-rstepcase5}
(x_{J_1}\cdots x_{J_a})\vert f_1,~~~(x_{J_{a+1}'}\cdots x_{J_r'})\vert f_2
\end{equation}
for some $J_i\subset(1,2,..,n-d_a)$ with $\vert J_i\vert=d_i$ ($i=1,...,a$) and $J_a=(n-2d_a+1,..,n-d_a)$, and for some $J_i'\subset(1,2,..,n-d_a)$ with $\vert J_i'\vert=d_i-d_a$ ($i=a+1,..,r$) and $J_r'=(n-d_r+1,..,n-d_a)$. Put $J:=(n-d_a+1,..,n)$, $J_i:=J_i'\cup J$, $i=a+1,..,n$, and
\begin{equation}\label{f-rstepcase5}
f:=\bar f_1\cdot\tilde f_2\cdot(x_J)^{d_{a+1}+d_a-n}.
\end{equation}
Then $f$ has the hyperplane property
\begin{equation}\label{hyperplane-rstepcase5}
(x_{J_1}\cdots\widehat{x_{J_a}}\cdots x_{J_r}x_J)\vert f.
\end{equation}

\begin{lem}
The special section $m=(m_r,..,m_1)$ (cf. Lemma \ref{special-section}) of $M(f)\ra X(f)$ has the following form:
\begin{align}\label{relations-rstepcase5}
m_i&=\left[\begin{matrix}m_i'& A_i\cr O& I_{d_a}\end{matrix}\right], ~~~i=a+1,..,r\cr
m_a&=\left[\begin{matrix}A_a\cr I_{d_a}\end{matrix}\right]\cr
m_r\cdots m_a&=\left[\begin{matrix}m_a'D\cr I_{d_a}\end{matrix}\right]\cr
m_{a-1}&=D^{-1}m_{a-1}'\cr
m_i&=m_i',~~~i=1,..,a-2
\end{align}
where $D$ is a $GL_{d_a}$-valued function, $A_a,..,A_r$ are matrix valued functions, and $(m_a',..,m_1')$, $(m_r',..,m_{a+1}')$ are matrix valued functions taking values in the special sections of the $M_1(f_1)\ra X_1(f_1)$, $M_2(f_2)\ra X_2(f_2)$ respectively.
\end{lem}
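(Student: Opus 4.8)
The plan is to derive the stated block form of the special section of $M(f)\to X(f)$ from its characterization in Lemma~\ref{special-section}, feeding in the geometry of the zero locus of $f$ relative to the fixed splitting $\C^n=\C^{n-d_a}\oplus\C^{d_a}$, where $J=(n-d_a+1,\dots,n)$ indexes the second summand. Since $f$ has the hyperplane property \eqref{hyperplane-rstepcase5}, Lemma~\ref{special-section} identifies $m=(m_r,\dots,m_1)$ as the unique $H$-equivariant section with $(m_r\cdots m_i)_{J_i}=I_{d_i}$ for $i\ne a$ and $(m_r\cdots m_a)_J=I_{d_a}$. Writing $E^i\subset\C^n$ for the column span of $m_r\cdots m_i$, I would split the work into a geometric step, showing the flag $E^1\subset\cdots\subset E^r$ is compatible with $\C^n=\C^{n-d_a}\oplus\C^{d_a}$ in a precise sense, and a bookkeeping step, turning this compatibility and the normalizations into the asserted matrix identities.

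For the geometric step: the hyperplane property forces $x_J(m)\ne0$ (in Case~5 one has $d_{a+1}+d_a-n>0$, so $x_J\mid f$), and then $(m_r\cdots m_a)_J=I_{d_a}$ displays $E^a$ as the graph $\{(\psi w,w):w\in\C^{d_a}\}$ of a linear map $\psi\colon\C^{d_a}\to\C^{n-d_a}$. Since $x_{J_a}\mid\bar f_1\mid f$ we also get $x_{J_a}(m)\ne0$, forcing $\psi$ injective, so $\psi(\C^{d_a})$ is a $d_a$-plane in $\C^{n-d_a}$ (this uses $n>2d_a$). A dimension count using $E^a\cap\C^{n-d_a}=0$ shows, for $i>a$, that $V_i:=E^i\cap\C^{n-d_a}$ has dimension $d_i-d_a$ and $E^i=V_i\oplus E^a$; the flag $V_{a+1}\subset\cdots\subset V_r\subset\C^{n-d_a}$ is then a point of $X_2$ lying in $X_2(f_2)$, since $x_{J_i'}\mid f_2$ becomes $x_{J_i}\mid\tilde f_2\mid f$ after $J_i=J_i'\cup J$. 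Dually, for $i<a$ the subspace $E^i\subset E^a$ corresponds under $\C^{d_a}\cong E^a$ to a subspace $W_i\subset\C^{d_a}$, and $\psi(W_1)\subset\cdots\subset\psi(W_{a-1})\subset\psi(\C^{d_a})\subset\C^{n-d_a}$ is a point of $X_1$ lying in $X_1(f_1)$, since $x_{J_1}\cdots x_{J_a}\mid\bar f_1\mid f$. This produces canonical maps $X(f)\to X_1(f_1)$ and $X(f)\to X_2(f_2)$ whose composites with the two special sections are the candidates for $(m_a',\dots,m_1')$ and $(m_r',\dots,m_{a+1}')$.

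For the bookkeeping step: the normalizations mean precisely that, for the special section, the column basis of each $E^i$ ($i\ge a$) is compatible with the split structure (its last $d_a$ columns spanning a complement to $V_i$ projecting isomorphically onto the $\psi(\C^{d_a})$-directions, its first $d_i-d_a$ columns tracking $V_i$), whence the containments $E^a\subset E^{a+1}\subset\cdots\subset E^r\subset\C^n$ and the inclusions $V_{a+1}\subset\cdots\subset V_r\subset\C^{n-d_a}$ force $m_i=\left(\begin{smallmatrix}m_i'&A_i\\ O&I_{d_a}\end{smallmatrix}\right)$ for $i>a$ and $m_a=\left(\begin{smallmatrix}A_a\\ I_{d_a}\end{smallmatrix}\right)$. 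Multiplying out and restricting $(m_r\cdots m_i)_{J_i}=I_{d_i}$ to the $\C^{n-d_a}$-block gives $(m_r'\cdots m_i')_{J_i'}=I_{d_i-d_a}$ for $i>a$, identifying $(m_r',\dots,m_{a+1}')$ with the special section of $M_2(f_2)$; next, $(m_r\cdots m_a)_J=I_{d_a}$ and injectivity of $\psi$ give $m_r\cdots m_a=\left(\begin{smallmatrix}m_a'D\\ I_{d_a}\end{smallmatrix}\right)$ with $(m_a')_{J_a}=I_{d_a}$ and $D:=(m_r\cdots m_a)_{J_a}\in GL_{d_a}$, exhibiting $\psi(\C^{d_a})$ as the plane carried by the special section of $M_1(f_1)$. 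Finally, propagating $D$ downward through steps $a-1,a-2,\dots$, using both the containments $E^i\subset E^a$ ($i<a$) and the normalizations $(m_r\cdots m_i)_{J_i}=I_{d_i}$, yields $m_{a-1}=D^{-1}m_{a-1}'$ and $m_i=m_i'$ for $i\le a-2$, with $(m_a',\dots,m_1')$ the special section of $M_1(f_1)$; the lemma then follows from the uniqueness clause of Lemma~\ref{special-section}. I expect this bookkeeping to be the main obstacle — the delicate point is keeping the several index sets straight and checking that the mismatch between the normalization defining the special section of $M(f)$ (which uses $J$ at step $a$) and those defining the special sections of $M_1(f_1)$ and $M_2(f_2)$ (which use $J_a$ and the $J_i'$) is concentrated entirely in the single $GL_{d_a}$-twist $D$ sitting between $m_r\cdots m_a$ and $m_{a-1}$.
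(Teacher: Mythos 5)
Your proposal is correct and follows essentially the same route as the paper's proof: starting from the normalization conditions $(m_r\cdots m_i)_{J_i}=I_{d_i}$ of Lemma~\ref{special-section} and the hyperplane divisibility \eqref{hyperplane-rstepcase5}, you derive the block structure of $m_r,\ldots,m_{a+1}$ by the same downward induction (the paper does this in its step (a), you phrase it as compatibility of the column bases of $E^i$ with the splitting $\C^n=\C^{n-d_a}\oplus\C^{d_a}$), and you handle $m_a$, $D$, and the twist of $m_{a-1}$ by $D$ exactly as in the paper's step (b), including correctly getting $m_{a-1}=D^{-1}m'_{a-1}$, which the lemma asserts (the paper's prose at that point has a small sign-slip, writing $D^{-1}m_{a-1}$ where the $GL_{d_a}$-equivariance of $f_1$ actually produces $Dm_{a-1}$). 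The added subspace-flag language ($E^a$ as the graph of $\psi$, $V_i=E^i\cap\C^{n-d_a}$) is a helpful gloss on the matrix bookkeeping but does not change the underlying argument.
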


\begin{proof}
For $o\in X(f)$, we will write $m_i\equiv m_i(o)$, $m_i'\equiv m_i'(o)$, $D\equiv D(o)$, etc. Then $m=m(o)\in M(f)$ means that
$$
0\neq f(m)=\bar f_1(m_r\cdots m_a,m_{a-1},..,m_1)\tilde f_2(m_r,..,m_{a+1}) \det(m_r)_{J_r}.
$$

{(a)} Since $x_{J_r'}\vert f_2$, we have $x_{J_r}\vert\tilde f_2$, and so our $m_r$ has the correct form, i.e. $(m_r)_{J_r}=I_{d_r}$ (hence $\det(m_r)_{J_r}=1$), and $(m_r')_{J_r'}=I_{d_r-d_a}$. Since $(x_{J_{a+1}'}\cdots x_{J_r'})\vert f_2$,  we have $(x_{J_{a+1}}\cdots x_{J_r})\vert\tilde f_2$, hence $(m_r\cdots m_i)_{J_i}=I_{d_i}$. By induction on $i$, it is easy to see that our $m_r,..,m_i$ above have the correct form, so that
\begin{equation}\label{mr to mi}
m_r\cdots m_i=\left[\begin{matrix}m_r'\cdots m_i'& *\cr O& I_{d_a}\end{matrix}\right]
\end{equation}
and that $(m_r'\cdots m_i')_{J_i'}=I_{d_i-d_a}$ for $i=a+1..,r$. This shows that $(m_r',..,m_{a+1}')$ actually lies in the special section of $M_2(f_2)\ra X_2(f_2)$, as asserted.


{(b)} Since $x_J\vert f$, we have $(m_r\cdots m_a)_J=I_{d_a}$. From \eqref{mr to mi}, it follows that $(m_a)_J=I_{d_a}$. Since $x_{J_a}|f_1$, hence $x_{J_a}|f$, it follows that $(m_r\cdots m_a)_{J_a}$ is a nonsingular matrix $D\in GL_{d_a}$. Thus $m_a$ has the correct form as asserted, and $(m_a')_{J_a}=I_{d_a}$. This also shows that $(m_r\cdots m_a)_{1,2,..,n-d_a}=m_a'D$ has rank $d_a$, hence
$$
0\neq\bar f_1(m_r\cdots m_a,m_{a-1},..,m_1)=f_1(m_a'D,m_{a-1},..,m_1).
$$
Since $f_1$ is $GL_{d_a}$-equivariant, this is equivalent to
$$0\neq f_1(m_a',D^{-1}m_{a-1},m_{a-2},..,m_1).$$
This implies that
$$(m_a',m_{a-1}',..,m_1')=(m_a',D^{-1}m_{a-1},m_{a-2},..,m_1)$$
lies in the special section of $M_1(f_1)\ra X_1(f_1)$, as asserted.

This completes the proof.
\end{proof}

We now use the special section $m:X(f)\ra M(f)$ described in the preceding lemma to define a map
\begin{align}\label{isom-rstepcase5}
&X(f)\ra X_1(f_1)\times X_2(f_2)\times GL_{d_a}\cr
&o\mapsto [m_a'(o),..,m_1'(o)],[m_r'(o),..,m_{a+1}'(o)],D(o).
\end{align}
We will prove that this is an isomorphism. We will need the following elementary lemma.

\begin{lem}\label{matrixlemma}
Let $m_2'$ be a $(n-d_1)\times(d-d_1)$ matrix, and $A_1,A_2$ be $(d-d_1)\times d_1$ and $(n-d_1)\times d_1$ matrices. Put
$$
m_2=\left[\begin{matrix}m_2'&A_2\cr O& I_{d_1}\end{matrix}\right],~~m_1=\left[\begin{matrix}A_1\cr I_{d_1}\end{matrix}\right]
$$
and assume that $J'\subset(1,..,n-d_1)$, $|J'|=d_2-a_1$, and that the $J=J'\cup(n-d_1+1,..,n)$-block of $m_2$ is $I_d$ (which is equivalent to that $(A_2)_{J'}=O$ and $(m_2')_{J'}=I_{d-d_1}$). Then $A_1,A_2$ can be uniquely expressed as polynomial functions in terms of $m_2'$ and $m_2m_1$.
\end{lem}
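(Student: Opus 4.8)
The plan is to read off $A_1$ and $A_2$ by multiplying out the block matrices and then using the normalization built into the hypothesis, so the whole argument is a short computation. As the lemma statement already records, the condition that the $J$-block of $m_2$ equal $I_d$ (with $J=J'\cup(n-d_1+1,\dots,n)$) is equivalent to $(m_2')_{J'}=I_{d-d_1}$ together with $(A_2)_{J'}=O$; I would take this reduction as the starting point. The only care it demands is to list the elements of $J$ so that those of $J'$ precede the tail $(n-d_1+1,\dots,n)$, after which the $d_1$ rows of $m_2$ indexed by that tail are automatically $[\,O\ |\ I_{d_1}\,]$ and the equivalence is immediate.

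Next I would compute the product in block form:
\[
m_2 m_1=\begin{bmatrix} m_2' & A_2\\ O & I_{d_1}\end{bmatrix}\begin{bmatrix} A_1\\ I_{d_1}\end{bmatrix}=\begin{bmatrix} m_2'A_1+A_2\\ I_{d_1}\end{bmatrix},
\]
so that the top $(n-d_1)\times d_1$ block of $m_2 m_1$, which I will denote $(m_2 m_1)_{\mathrm{top}}$, equals $m_2'A_1+A_2$. Restricting this identity to the rows indexed by $J'\subset(1,\dots,n-d_1)$ (so $|J'|=d-d_1$) and using $(m_2')_{J'}=I_{d-d_1}$ and $(A_2)_{J'}=O$ yields $(m_2 m_1)_{J'}=A_1$. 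Hence $A_1$ is simply the submatrix of $m_2 m_1$ on the rows $J'$, a linear (in particular polynomial) function of $m_2 m_1$ alone. Substituting back gives $A_2=(m_2 m_1)_{\mathrm{top}}-m_2'\,(m_2 m_1)_{J'}$, which is a polynomial function of $m_2'$ and $m_2 m_1$.

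For uniqueness, any pair $(A_1,A_2)$ of matrices of the prescribed sizes that fits into the displayed block shapes and produces the given $m_2'$ and product $m_2 m_1$ must satisfy the two formulas just derived, hence coincides with the constructed pair. I do not expect a genuine obstacle in this argument: it is bookkeeping with block multiplication and index sets, and the single point that deserves attention is the translation of "the $J$-block of $m_2$ is $I_d$" into the two normalizations $(m_2')_{J'}=I_{d-d_1}$ and $(A_2)_{J'}=O$ — and that translation is already supplied in the lemma's statement.
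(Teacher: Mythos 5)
Your computation is correct and is the natural argument: the paper states Lemma \ref{matrixlemma} without proof (introducing it only as an ``elementary lemma''), so there is no proof to compare against. The block multiplication $m_2m_1=\left[\begin{smallmatrix}m_2'A_1+A_2\\ I_{d_1}\end{smallmatrix}\right]$, the extraction $A_1=(m_2m_1)_{J'}$ from the normalizations $(m_2')_{J'}=I_{d-d_1}$, $(A_2)_{J'}=O$, and the back-substitution $A_2=(m_2m_1)_{\mathrm{top}}-m_2'(m_2m_1)_{J'}$ give exactly the polynomial expressions and uniqueness the lemma asserts.
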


\begin{lem}
The map \eqref{isom-rstepcase5} is an isomorphism.
\end{lem}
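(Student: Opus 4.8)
The plan is to exhibit an explicit inverse to the map \eqref{isom-rstepcase5}, using the structure revealed in the previous lemma and Lemma \ref{matrixlemma}. The map $X(f)\to X_1(f_1)\times X_2(f_2)\times GL_{d_a}$ sends $o$ to the triple $([m_a',..,m_1'],[m_r',..,m_{a+1}'],D)$ extracted from the special section $m(o)$. To build the inverse, start from a triple $((n_a,..,n_1),(n_r,..,n_{a+1}),E)$ of matrices lying in the special sections of $M_1(f_1)$, $M_2(f_2)$ and a matrix $E\in GL_{d_a}$. Reconstruct an element $m\in M(f)$ by the formulas \eqref{relations-rstepcase5}: the blocks $m_i$ for $i=1,\dots,a-2$ are just $n_i$; set $m_{a-1}=E^{-1}$ followed by $\cdots$ — more precisely, one must run the recursion in \eqref{relations-rstepcase5} backwards. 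The block $m_a=\left[\begin{smallmatrix}n_a'\\I_{d_a}\end{smallmatrix}\right]$ (where $n_a'$ is the top part of $n_a$, noting $(n_a)_{J_a}=I$) needs to be modified; actually since $(m_r\cdots m_a)_{J_a}=D$ and $(m_a')_{J_a}=I$, reading off from the third equation of \eqref{relations-rstepcase5} one gets $m_a'D=$ the top $(n-d_a)$-rows of $m_r\cdots m_a$. The remaining data — the off-diagonal blocks $A_{a+1},\dots,A_r$ of the $m_i$ ($i>a$) — is not free: it is determined by $m_r'\cdots m_i'$ and the products $m_r\cdots m_i$ via Lemma \ref{matrixlemma}, applied inductively down from $i=r$ to $i=a+1$.

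**The key steps**, in order: (1) Given the triple, first reassemble $m_r,\dots,m_{a+1}$. We know $(m_i')$ from the second component and we know the special-section normalization $(m_r\cdots m_i)_{J_i}=I_{d_i}$. Apply Lemma \ref{matrixlemma} with the roles $d_1\rightsquigarrow d_a$, $d\rightsquigarrow d_{i}$, etc., at each $i$ from $r$ down to $a+1$ to solve uniquely for the off-diagonal blocks $A_i$, hence for $m_i$; simultaneously this produces the matrix $m_r'\cdots m_{a+1}'$ sitting in the upper-left of $m_r\cdots m_{a+1}$, as in \eqref{mr to mi}. (2) Use the matrix $E\in GL_{d_a}$ to be the value of $D$, and recover $m_a$: from $(m_r\cdots m_a)_J=I_{d_a}$ and the shape of $m_r\cdots m_{a+1}$ we get $(m_a)_J=I_{d_a}$; from $(m_r\cdots m_a)_{J_a}=D=E$ together with $(m_a')_{J_a}=I$, we recover the top block $m_a'$ from the first component $n_a$ (it is literally $n_a$, since $n_a$ lies in the special section of $M_1(f_1)$, for which $(m_a')_{J_a}=I$). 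Thus $m_a=\left[\begin{smallmatrix}n_a'\\I_{d_a}\end{smallmatrix}\right]$. (3) Set $m_{a-1}=E^{-1}n_{a-1}$ — wait, comparing with \eqref{relations-rstepcase5} which says $m_{a-1}=D^{-1}m_{a-1}'$, we set $m_{a-1}=E^{-1}n_{a-1}$; and $m_i=n_i$ for $i\le a-2$. (4) Verify $m=(m_r,\dots,m_1)\in M(f)$, i.e. $f(m)\ne0$: this is the computation $f(m)=f_1(n_a,\dots,n_1)f_2(n_r,\dots,n_{a+1})(\det E)^{\,\cdot}$ times a unit, which is nonzero because the inputs lie in $X_1(f_1)$, $X_2(f_2)$. (5) Check that $m$, up to the $H$-action, is independent of choices and that $[m]\mapsto$ (its special section's triple) returns the original triple, and conversely that applying the construction to the triple of a special section of $M(f)$ returns that special section — both follow from the uniqueness clauses in Lemma \ref{special-section} and Lemma \ref{matrixlemma}. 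Finally, both maps are algebraic (polynomial, with denominators only involving $\det E$ and the already-invertible Plücker minors), hence this is an isomorphism of varieties, and it is $H$-equivariant by construction (or simply note we are working with the $H$-slices, so equivariance is automatic).

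**The main obstacle** will be step (1): organizing the downward induction that inverts the block-triangular products $m_r\cdots m_i$ cleanly, so that Lemma \ref{matrixlemma} applies at each stage with the correct normalization hypotheses $(A_i)_{J_i'}=O$, $(m_i')_{J_i'}=I$. One must be careful that the Plücker-minor nonvanishing conditions coming from $(x_{J_{a+1}'}\cdots x_{J_r'})\mid f_2$ are exactly what is needed to guarantee the relevant blocks are invertible at each step, and that the resulting $m_i$ genuinely have the shape displayed in \eqref{relations-rstepcase5} (in particular that the lower-right block is $I_{d_a}$ and the lower-left is $O$). The bookkeeping with the index sets $J_i$ versus $J_i'$ and the shift by $d_a$ is the delicate part; the rest is routine linear algebra and the standard "slice of a principal bundle" argument. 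Once the inverse is written down, checking it is a two-sided inverse is immediate from uniqueness, and the isomorphism of $X(f)$ with $X_1(f_1)\times X_2(f_2)\times GL_{d_a}$ — and hence, by Theorem \ref{holo-rank-thm} and the Künneth formula together with $H^{\mathrm{top}}(GL_{d_a})\cong\C$, the rank $1$ conclusion for $f$ — follows as in Cases 1--3.
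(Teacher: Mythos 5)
Your overall strategy — exhibit an explicit inverse by reconstructing a special section of $M(f)$ from the triple, using Lemma~\ref{matrixlemma} iteratively to extract the off-diagonal blocks $A_i$ — is the right one, and matches the paper's approach. But the execution contains a genuine gap, and in fact the recursion is run in the wrong direction.

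In your step (1) you claim to ``reassemble $m_r,\dots,m_{a+1}$'' from the second component $(m_r',\dots,m_{a+1}')$ together with the normalization $(m_r\cdots m_i)_{J_i}=I_{d_i}$, applying Lemma~\ref{matrixlemma} ``from $i=r$ down to $a+1$.'' This cannot work: the normalization only forces $(A_i)_{J_i'}=O$ and fixes the upper-left blocks $m_r'\cdots m_i'$; the remaining entries of $A_{a+1},\dots,A_r$ are \emph{not} determined by the second component and the normalization alone. A dimension count makes this concrete: the $A_i$ for $i>a$ carry $\sum_{i>a}(d_{i+1}-d_a)d_a$ free parameters, while the normalization conditions $(R_i)_{J_i'}=O$ impose only $\sum_{i>a}(d_i-d_a)d_a$ linear constraints — a deficit of $(n-d_{a+1})d_a>0$. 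Moreover, Lemma~\ref{matrixlemma} needs the full product $m_2m_1$ as input, which you have no access to at the start of a downward induction. The extra information that pins down the $A_i$'s is exactly the known product $m_r\cdots m_a=\left[\begin{smallmatrix}n_aE\\I_{d_a}\end{smallmatrix}\right]$, computable from the \emph{first} component and $E$; the correct induction starts there and peels off $A_a$, then $A_{a+1}$, \dots, up to $A_r$, with Lemma~\ref{matrixlemma} applied at each stage to the pair (right-hand column block so far, known $m_r'\cdots m_{i+1}'$). Relatedly, your step (2) asserts $m_a=\left[\begin{smallmatrix}n_a'\\I_{d_a}\end{smallmatrix}\right]$ with $n_a'$ the top part of $n_a$; this is wrong even on size grounds ($n_a$ is $(n-d_a)\times d_a$ while $m_a$ is $d_{a+1}\times d_a$, and $d_{a+1}>n-d_a$ in Case~5). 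The block $A_a$ is determined only after the first application of Lemma~\ref{matrixlemma}, not read off directly from $n_a$.

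Once the order of the recursion is corrected — start from $m_r\cdots m_a=\left[\begin{smallmatrix}n_aE\\I_{d_a}\end{smallmatrix}\right]$ and use Lemma~\ref{matrixlemma} to solve successively for $A_a,A_{a+1},\dots,A_r$, each time producing polynomial expressions and the right-hand block needed for the next step — the rest of your argument (setting $m_{a-1}=E^{-1}n_{a-1}$, $m_i=n_i$ for $i\le a-2$, checking $f(m)\neq0$, and invoking uniqueness of special sections to see the two maps are inverse) goes through as you describe.
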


\begin{proof}
We will explicitly construct the inverse of \eqref{isom-rstepcase5}. It is enough to show that given a point
$m':=((m_a',..,m_1'),(m_r',..,m_{a+1}'),D)$ in the special section of the bundle $M_1(f_1)\times M_2(f_2)\times GL_{d_a}\ra X_1(f_1)\times X_2(f_2)\times GL_{d_a}$, the relations \eqref{relations-rstepcase5} {\it uniquely} determine a point $m=(m_r,..,m_1)\in M$, expressible polynomially in terms of $m'$. In fact, it is enough to show that the $A_a,..,A_r$ can be so-expressed. Note that the relations \eqref{relations-rstepcase5} ensures that $m$ lies in the special section of the bundle $M(f)\ra X(f)$.

By \eqref{relations-rstepcase5}, we have for $i=1,..,a+1$,
\begin{equation*}
m_r\cdots m_i=\left[\begin{matrix}m_r'\cdots m_i' & m_r'\cdots m_{i+1}'A_i+\cdots+m_r'A_{r-1}+A_r\cr
O& I_{d_a}\end{matrix}\right].
\end{equation*}
Since $m_a=\left[\begin{matrix}A_a\cr I_{d_a}\end{matrix}\right]$, Lemma \ref{matrixlemma} implies that $A_a$ and $m_r'\cdots m_{a+2}'A_{a+1}+\cdots+m_r'A_{r-1}+A_r$ can be uniquely expressed polynomially in terms of $m'$. It follows that the right hand block of $m_r\cdots m_{a+1}$:
\begin{align*}
&(m_r\cdots m_{a+1})_R=\left[\begin{matrix}m_r'\cdots m_{a+2}'A_{a+1}+\cdots+m_r'A_{r-1}+A_r\cr I_{d_a}\end{matrix}\right]\cr
&=\left[\begin{matrix}m_r'\cdots m_{a+2}' & m_r'\cdots m_{a+1}'A_{a+2}+\cdots+m_r'A_{r-1}+A_r\cr
O& I_{d_a}\end{matrix}\right]\left[\begin{matrix}A_{a+1}\cr I_{d_a}\end{matrix}\right]\cr
&=m_r\cdots m_{a+2}\left[\begin{matrix}A_{a+1}\cr I_{d_a}\end{matrix}\right]
\end{align*}
can be so-expressed. By Lemma \ref{matrixlemma} again, the right hand block of $m_r\cdots m_{a+2}$ and $A_{a+1}$ can also be so-expressed. Continuing this way, we see that $A_a,..,A_r$ all can be so-expressed. This completes the proof.
\end{proof}

The lemma and Theorem \ref{holo-rank-thm} imply

\begin{prop}\label{rstepcase5}
For $d_a+d_{a+1}>n>2d_a$ with $r>a>1$, if any $s$-step flag variety for $s<r$ admits a rank 1 point, then $X=F(d_1,..,d_r,n)$ admits one as well.
\end{prop}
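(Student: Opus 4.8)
The plan is to combine the isomorphism of the preceding lemma with Theorem~\ref{holo-rank-thm} and a K\"unneth computation. First, since both $a$ and $r-a$ lie strictly between $0$ and $r$, the inductive hypothesis supplies rank $1$ points $f_1$ of $X_1=F(d_1,\dots,d_a,n-d_a)$ and $f_2$ of $X_2=F(d_{a+1}-d_a,\dots,d_r-d_a,n-d_a)$; after translating by suitable permutation matrices, as in the discussion following the definition of the hyperplane property, we may assume $f_1,f_2$ satisfy the divisibility conditions \eqref{f1f2-rstepcase5}. Here the hypotheses $d_a+d_{a+1}>n>2d_a$ guarantee that $n-d_a>d_a$ and that the flag steps of $X_1$ and $X_2$ are genuinely increasing, so these lower-step flag varieties make sense. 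One then forms $f$ as in \eqref{f-rstepcase5} and checks, using \eqref{KX}, that $f\in\Gamma(X,\omega_X^{-1})$ and that it has the hyperplane property \eqref{hyperplane-rstepcase5} --- a routine bookkeeping verification with the fundamental weights.

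Next I would invoke the preceding lemma to obtain $X(f)\cong X_1(f_1)\times X_2(f_2)\times \GL_{d_a}$. Since $\omega_{X_i}^{-1}$ is ample on the flag variety $X_i$, the divisor $V(f_i)$ is ample and its complement $X_i(f_i)$ is a smooth affine variety of dimension $\dim X_i$; together with $\GL_{d_a}$ being affine of dimension $d_a^2$, this exhibits $X(f)$ as a smooth affine variety of dimension $\dim X_1+\dim X_2+d_a^2=\dim X=n$. A smooth affine variety has no homology above its complex dimension, so in the K\"unneth decomposition of $H_n(X(f))$ only the top-degree term of each factor survives:
\[
H_n(X(f))\;\cong\;H_{\dim X_1}(X_1(f_1))\otimes H_{\dim X_2}(X_2(f_2))\otimes H_{d_a^2}(\GL_{d_a}).
\]

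Finally, applying Theorem~\ref{holo-rank-thm} (that is, Conjecture~\ref{holo-rank}) to $X_1$ and to $X_2$ identifies $\dim H_{\dim X_i}(X_i(f_i))$ with the solution rank of the corresponding tautological system at $f_i$, which is $1$ because $f_i$ is a rank $1$ point; and $H_{d_a^2}(\GL_{d_a})\cong\C$ since $\GL_{d_a}(\C)$ deformation retracts onto the compact connected orientable manifold $U(d_a)$ of real dimension $d_a^2$. Hence $H_n(X(f))\cong\C$, and one further application of Theorem~\ref{holo-rank-thm}, this time to $X$ itself with the section $f$, shows that the solution rank of $\tau$ at $f$ equals $\dim H_n(X-V(f))=1$; so $f$ is a rank $1$ point of $X$, and it retains the hyperplane property \eqref{hyperplane-rstepcase5}. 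The one point requiring care --- and the reason the whole induction of this section is phrased with the hyperplane property built in --- is that the construction of $f$ genuinely needs the lower-step rank $1$ points to come \emph{with} the hyperplane property, so the inductive statement must carry that condition rather than bare rank $1$-ness; beyond this, the argument is just the dimension/degree bookkeeping together with the two cited inputs.
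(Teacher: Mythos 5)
Your proof is correct and takes the same route as the paper: it invokes the preceding isomorphism $X(f)\cong X_1(f_1)\times X_2(f_2)\times\GL_{d_a}$ and then, exactly as the paper spells out in Proposition~\ref{1step}, uses affineness, K\"unneth and $H^{d_a^2}(\GL_{d_a})\cong\C$ to conclude $H_{\dim X}(X(f))\cong\C$ and applies Theorem~\ref{holo-rank-thm}. One small slip: you write $\dim X_1+\dim X_2+d_a^2=\dim X=n$, but $\dim X\neq n$ in general (here $n$ is the ambient dimension of $F(d_1,\dots,d_r,n)$, whereas the ``$n$'' of Conjecture~\ref{holo-rank} means $\dim X$); the identity $\dim X_1+\dim X_2+d_a^2=\dim X$ is what is actually used and is correct. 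Your observation that the inductive hypothesis must really carry the hyperplane property (which the proposition's statement omits but Cases~1, 3, 4, 6, 7 make explicit) is accurate and worth flagging.
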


\noi{\bf Case 6.} Assume $n=2d_a$ with $r>a>1$. Consider
\begin{align*}
X_1&=F(d_1,..,d_a)\equiv F(d_1,..,d_a,d_a)\into F(d_1,..,d_a,n),~~~(E^\bullet)\mapsto(0_{n-d_a}\oplus E^\bullet)\cr
X_2&=F(d_{a+1}-d_a,..,d_r-d_a,n-d_a)\into F(d_{a+1},..,d_r,n),~~~(E^\bullet)\mapsto(E^\bullet\oplus\C^{d_a})
\end{align*}
Here we view $\C^n=\C^{n-d_a}\oplus\C^{d_a}$. Let $M_1,M_2,M$ be the Stiefel bundles over $X_1,X_2,X$ respectively. Let $f_1,f_2$ be rank 1 points of $X_1,X_2$ respectively with the hyperplane properties
\begin{equation}\label{f1f2-rstepcase6}
(x_{J_1}\cdots x_{J_a})\vert f_1,~~~(x_{J_{a+1}'}\cdots x_{J_r'})\vert f_2
\end{equation}
for some $J_i\subset(1,2,..,n-d_a)$ with $\vert J_i\vert=d_i$ ($i=1,...,a-1$) and $J_{a-1}=(d_a-d_{a-1}+1,..,d_a)$, and for some $J_i'\subset(1,2,..,n-d_a)$ with $\vert J_i'\vert=d_i-d_a$ ($i=a+1,..,r$) and $J_r'=(n-d_r+1,..,n-d_a)$. Put $J:=(n-d_a+1,..,n)$, $J_i:=J_i'\cup J$, $i=a+1,..,n$, and
\begin{equation}\label{f-rstepcase6}
f:=\bar f_1\cdot\tilde f_2\cdot(x_J)^{d_{a+1}-d_{a-1}-1}(x_{1,..,d_a}).
\end{equation}
Then $f$ has the hyperplane property
\begin{equation}\label{hyperplane-rstepcase6}
(x_{J_1}\cdots\widehat{x_{J_a}}\cdots x_{J_r}x_J)\vert f.
\end{equation}

\begin{lem}
The special section $m=(m_r,..,m_1)$ (cf. Lemma \ref{special-section}) of $M(f)\ra X(f)$ has the following form:
\begin{align}\label{relations-rstepcase6}
m_i&=\left[\begin{matrix}m_i'& A_i\cr O& I_{d_a}\end{matrix}\right], ~~~i=a+1,..,r\cr
m_a&=\left[\begin{matrix}A_a\cr I_{d_a}\end{matrix}\right]\cr
m_r\cdots m_a&=\left[\begin{matrix}D\cr I_{d_a}\end{matrix}\right]\cr
m_{a-1}&=D^{-1}m_{a-1}'\cr
m_i&=m_i',~~~i=1,..,a-2
\end{align}
where $D$ is a $GL_{d_a}$-valued function, $A_a,..,A_r$ are matrix valued functions, and $(m_{a-1}',..,m_1')$, $(m_r',..,m_{a+1}')$ are matrix valued functions taking values in the special sections of the $M_1(f_1)\ra X_1(f_1)$, $M_2(f_2)\ra X_2(f_2)$ respectively.
\end{lem}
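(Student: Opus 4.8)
The plan is to follow the proof of the lemma in Case~5 almost verbatim; the one genuinely new feature is that here the top step of $X_1=F(d_1,\dots,d_a)$ is the whole ambient space $\C^{d_a}$, so that the Stiefel bundle $M_1$ is parametrized by the shorter chain $(m_{a-1}',\dots,m_1')$ (there is no $m_a'$), and the role played in Case~5 by the $(n-d_a)\times d_a$ block $m_a'D$ is played here by a single invertible matrix $D\in\GL_{d_a}$, whose invertibility is supplied by the extra factor $x_{1,\dots,d_a}$ of $f$ rather than by a Pl\"ucker coordinate inherited from $f_1$. Before starting, I would record the two bookkeeping facts implicit in the statement: that $f$ of \eqref{f-rstepcase6} is a section of $\omega_X^{-1}$ and that it satisfies \eqref{hyperplane-rstepcase6}. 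Both follow from \eqref{KX}, the degrees of $f_1,f_2$, and their hyperplane properties \eqref{f1f2-rstepcase6}, keeping track of the two embeddings of $X_1,X_2$ into $X$: along $E^\bullet\mapsto0_{n-d_a}\oplus E^\bullet$ the Pl\"ucker coordinates of $X_1$ land in one block of $\C^n=\C^{n-d_a}\oplus\C^{d_a}$ (here $n=2d_a$), while along $E^\bullet\mapsto E^\bullet\oplus\C^{d_a}$ one has $x_{J_i'}\mapsto x_{J_i'\cup J}$ with $J=(n-d_a+1,\dots,n)$.

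Part~(a), concerning $m_r,\dots,m_{a+1}$, is then literally the argument of Case~5. Since $x_{J_r'}\mid f_2$ we get $x_{J_r}\mid\tilde f_2\mid f$, and more generally $(x_{J_{a+1}}\cdots x_{J_r})\mid f$; a descending induction on $i$ from $r$ down to $a+1$ shows that the special section of $M(f)\to X(f)$ has $m_i=\left[\begin{smallmatrix}m_i'&A_i\\O&I_{d_a}\end{smallmatrix}\right]$ for $i\ge a+1$, hence $m_r\cdots m_i=\left[\begin{smallmatrix}m_r'\cdots m_i'&*\\O&I_{d_a}\end{smallmatrix}\right]$ with $(m_r'\cdots m_i')_{J_i'}=I_{d_i-d_a}$, so that $(m_r',\dots,m_{a+1}')$ lies in the special section of $M_2(f_2)\to X_2(f_2)$. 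This yields the first three displayed rows of \eqref{relations-rstepcase6} together with the assertion about $(m_r',\dots,m_{a+1}')$.

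Part~(b) treats $m_a$ and below. From $x_J\mid f$ and the fact that $J=(n-d_a+1,\dots,n)$ indexes the last $d_a$ rows, we get $(m_r\cdots m_a)_J=I_{d_a}$; comparing with the block shape of $m_r\cdots m_{a+1}$ from Part~(a) forces the last $d_a$ rows of $m_a$ to be $I_{d_a}$, i.e. $m_a=\left[\begin{smallmatrix}A_a\\I_{d_a}\end{smallmatrix}\right]$, and therefore $m_r\cdots m_a=\left[\begin{smallmatrix}D\\I_{d_a}\end{smallmatrix}\right]$ with $D:=(m_r\cdots m_a)_{1,\dots,d_a}$. Since $x_{1,\dots,d_a}\mid f$, the matrix $D$ is nonsingular on $X(f)$, so $D\in\GL_{d_a}$. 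For $i<a$ one then has $m_r\cdots m_a\,m_{a-1}\cdots m_i=\left[\begin{smallmatrix}D\,m_{a-1}\cdots m_i\\m_{a-1}\cdots m_i\end{smallmatrix}\right]$; as in Case~5, the Pl\"ucker coordinates of $X$ occurring in $\bar f_1$ compute, up to the left action of $D$, the corresponding coordinates of $X_1$ on the chain $m_{a-1},m_{a-2},\dots,m_1$, so that $\bar f_1(m)$ is, up to a nowhere-zero factor, $f_1$ evaluated on the $\GL_{d_a}$-adjusted data. Non-vanishing of $f$ then forces that adjusted data to lie in $X_1(f_1)$, and the normalizations $(m_r\cdots m_i)_{J_i}=I_{d_i}$ ($i\le a-1$) coming from \eqref{hyperplane-rstepcase6} match exactly the normalizations cutting out the special section of $M_1(f_1)\to X_1(f_1)$; reading this off yields $m_{a-1}=D^{-1}m_{a-1}'$ and $m_i=m_i'$ for $i\le a-2$, with $(m_{a-1}',\dots,m_1')$ the special section of $M_1(f_1)$, completing \eqref{relations-rstepcase6}.

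The step I expect to be the main obstacle is the identification in Part~(b) of $\bar f_1(m)$ with $f_1$ evaluated on the $\GL_{d_a}$-adjusted Stiefel data, together with the matching of normalizations. Unlike Case~5, the block of $m_r\cdots m_a$ carrying the $X_1$-information is now a full-size matrix $D$ which is not itself normalized by the special section of $M(f)$ (only $(m_r\cdots m_a)_J$ is), so one must argue carefully that it is $D\,m_{a-1}$ --- not $m_{a-1}$ --- that is the $\GL_{d_a}$-invariant quantity seen by $\bar f_1$, and that feeding $D\,m_{a-1}$ as the top matrix of a point of $M_1$ and imposing the normalizations from \eqref{hyperplane-rstepcase6} reproduces precisely the special section of $M_1(f_1)$. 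Everything else is a transcription of the Case~5 computation, and the degree/divisibility check in the first paragraph is routine.
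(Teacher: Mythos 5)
Your proposal is correct and takes essentially the same route the paper indicates: the paper's own ``proof'' is the single sentence that Case~6 is a degenerate version of Case~5 with $m_a'$ absent and $(1,\dots,d_a)$ playing the role of $J_a$, and you have correctly unpacked that. You rightly identify that Part~(a) is verbatim Case~5, that $(m_r\cdots m_a)_J=I_{d_a}$ forces $m_a=\left[\begin{smallmatrix}A_a\\ I_{d_a}\end{smallmatrix}\right]$ and $m_r\cdots m_a=\left[\begin{smallmatrix}D\\ I_{d_a}\end{smallmatrix}\right]$, and that the invertibility of $D$ is now supplied by the explicit factor $x_{1,\dots,d_a}$ in \eqref{f-rstepcase6} rather than by a Pl\"ucker coordinate inherited from $f_1$.

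One small sharpening of your Part~(b): the ``nowhere-zero factor'' can be dispensed with. For $i\le a-1$ and $J\subset(1,\dots,d_a)$, the top $d_a$ rows of $m_r\cdots m_i$ are $Dm_{a-1}m_{a-2}\cdots m_i$, so
$x_J(m)=\det\bigl((Dm_{a-1}m_{a-2}\cdots m_i)_J\bigr)$.
Setting $m_{a-1}'=Dm_{a-1}$ and $m_i'=m_i$ for $i\le a-2$, this is exactly the $J$-th Pl\"ucker coordinate of the Stiefel point $(m_{a-1}',\dots,m_1')$ of $X_1$, hence $\bar f_1(m)=f_1(m_{a-1}',\dots,m_1')$ on the nose; and the normalizations $(m_r\cdots m_i)_{J_i}=I_{d_i}$ from Lemma~\ref{special-section} translate literally into $(m_{a-1}'\cdots m_i')_{J_i}=I_{d_i}$, which is precisely the special-section condition for $M_1(f_1)\to X_1(f_1)$. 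So no $GL_{d_a}$-equivariance of $f_1$ is even needed here, in contrast to Case~5 where one really does need to move the $D$ from the $m_a'$-slot to the $m_{a-1}$-slot.
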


The proof is a degenerate version of the lemma in Case 5 (with $m_a'$ missing but with $(1,..,d_a)$ play the role of $J_a$), and will be omitted. The lemma and Theorem \ref{holo-rank-thm} imply

\begin{prop} \label{rstepcase6}
For $n=2d_a$ with $r>a>1$, if any $s$-step flag variety for $s<r$  admits a rank 1 point $f$ with the hyperplane property, then $X=F(d_1,..,d_r,n)$ admits one as well.
\end{prop}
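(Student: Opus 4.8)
The plan is to mimic the argument of Case 5 (Proposition \ref{rstepcase5}) essentially verbatim, the only difference being that here $n=2d_a$, so the factor $X_1=F(d_1,\dots,d_a,d_a)=F(d_1,\dots,d_a)$ has collapsed its top step (there is no $m_a'$-block any more), and the index set $J_a$ is replaced by the literal block $(1,\dots,d_a)$. First I would take the structural lemma on the special section $m=(m_r,\dots,m_1)$ of $M(f)\to X(f)$ as given — its proof is the stated ``degenerate version of the lemma in Case 5'', so I will not reproduce it. The content of that lemma is exactly the relations \eqref{relations-rstepcase6}: the top $r-a$ matrices $m_{a+1},\dots,m_r$ have the block-triangular shape $\bigl[\begin{smallmatrix} m_i' & A_i\\ O & I_{d_a}\end{smallmatrix}\bigr]$ with $(m_r'\cdots m_i')$ lying in the special section of $M_2(f_2)\to X_2(f_2)$, the block $m_a=\bigl[\begin{smallmatrix}A_a\\ I_{d_a}\end{smallmatrix}\bigr]$, the product $m_r\cdots m_a=\bigl[\begin{smallmatrix}D\\ I_{d_a}\end{smallmatrix}\bigr]$ for a $GL_{d_a}$-valued function $D$, and then $m_{a-1}=D^{-1}m_{a-1}'$, $m_i=m_i'$ for $i\le a-2$, with $(m_{a-1}',\dots,m_1')$ in the special section of $M_1(f_1)\to X_1(f_1)$.

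Next I would define, exactly as in \eqref{isom-rstepcase5}, the morphism
\[
X(f)\to X_1(f_1)\times X_2(f_2)\times GL_{d_a},\qquad
o\mapsto \bigl([m_{a-1}'(o),\dots,m_1'(o)],\,[m_r'(o),\dots,m_{a+1}'(o)],\,D(o)\bigr),
\]
using the special section; the relations \eqref{relations-rstepcase6} show it is well defined. To produce the inverse I would run the same downward recursion as in the proof of the isomorphism in Case 5: starting from a point $((m_{a-1}',\dots,m_1'),(m_r',\dots,m_{a+1}'),D)$ in the product of special sections, I reconstruct the $A_i$ one at a time. The shape $m_a=\bigl[\begin{smallmatrix}A_a\\ I_{d_a}\end{smallmatrix}\bigr]$ together with the known right-hand block of $m_r\cdots m_{a+1}$ and Lemma \ref{matrixlemma} determine $A_a$ and the combination $m_r'\cdots m_{a+2}'A_{a+1}+\cdots+m_r'A_{r-1}+A_r$ as polynomials in the data; peeling off $m_r\cdots m_{a+2}$ via Lemma \ref{matrixlemma} again gives $A_{a+1}$; continuing, all of $A_a,\dots,A_r$ are recovered polynomially, hence $m=(m_r,\dots,m_1)\in M$ is uniquely and polynomially determined and lies in the special section of $M(f)\to X(f)$. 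This gives the inverse morphism, so \eqref{isom-rstepcase6}, i.e. the map above, is an isomorphism of varieties $X(f)\cong X_1(f_1)\times X_2(f_2)\times GL_{d_a}$.

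Finally I would conclude exactly as before. By the inductive hypothesis $f_1$ is a rank $1$ point of $X_1$ and $f_2$ a rank $1$ point of $X_2$, both with the hyperplane property, and $f_2$ of the explicit form $(x_{J_{a+1}'}\cdots x_{J_r'})\mid f_2$; since $X_i(f_i)$ is affine with $H^{\dim X_i}(X_i(f_i))=\C$ by Theorem \ref{holo-rank-thm} and vanishing in degrees $>\dim X_i$, the Künneth formula together with $H^{d_a^2}(GL_{d_a})=\C$ in top degree gives $H^{\dim X}(X(f))\cong\C$, using $\dim X=\dim X_1+\dim X_2+d_a^2$. One more appeal to Theorem \ref{holo-rank-thm} then shows $f$ is a rank $1$ point of $X$, and \eqref{hyperplane-rstepcase6} records that it has the hyperplane property. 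The only genuinely delicate point — the one I would be most careful about — is verifying that the degenerate special-section lemma really does go through with $m_a'$ absent and $(1,\dots,d_a)$ in place of $J_a$, in particular that the extra Plücker factors $(x_J)^{d_{a+1}-d_{a-1}-1}(x_{1,\dots,d_a})$ in \eqref{f-rstepcase6} are exactly what is needed to force the blocks into the stated normal forms and to make $f$ a section of $\omega_X^{-1}$; but this is precisely what the omitted ``degenerate version'' lemma asserts, so modulo that input the proof is a routine transcription of Case 5.
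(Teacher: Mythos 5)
Your argument is a faithful transcription of the paper's proof: the same $X_1\times X_2\times GL_{d_a}$ decomposition, the same degenerate special-section lemma (which the paper also states without proof), the same downward recursion via Lemma~\ref{matrixlemma} to produce the inverse, and the same K\"unneth plus Theorem~\ref{holo-rank-thm} conclusion. The one point you flag but do not verify --- that the Pl\"ucker factors in \eqref{f-rstepcase6} both land $f$ in $\omega_X^{-1}$ and force the blocks into the stated normal forms --- is exactly what the paper also leaves to the reader, so the two proofs sit at the same level of detail.
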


\noi{\bf Case 7.} Assume $n=d_{a-1}+d_a$ with $r>a>1$.  If $a=2$ then it is Case 4, so we can assume $a\geq3$ (and $r\geq4$). Consider
\begin{align*}
X_1&:=F(d_1,..,d_{a-2},d_{a-1})\into F(d_1,..,d_{a-2},n),~~~E^\bullet\mapsto E^\bullet\oplus 0_{n-d_{a-1}}\cr
X_2&:=F(d_{a-1},..,d_r,n).
\end{align*}
Here we view $\C^n=\C^{d_{a-1}}\oplus\C^{n-d_{a-1}}$. Let $M_1,M_2,M$ be the Stiefel bundles over $X_1,X_2,X$ respectively. Let $f_1,f_2$ be rank 1 points of $X_1,X_2$ respectively with the hyperplane properties
\begin{equation}\label{f1f2-rstepcase7}
(x_{J_1}\cdots x_{J_{a-2}})\vert f_1,~~~((x_{J_{a-1}})^{d_{a-1}}x_{J_a}\cdots x_{J_r})\vert f_2
\end{equation}
for some $J_i\subset(1,2,..,d_{a-1})$ with $\vert J_i\vert=d_i$ ($i=1,...,a-2$), and for some $J_i\subset(1,2,..,n)$ with $\vert J_i\vert=d_i$ ($i=a-1,..,r$) and $J_{a-1}=(1,..,d_{a-1})$. Note that such an $f_2$ exists by Proposition \ref{rstepcase4} in Case 4, if any $s$-step flag variety for $s<r$ admits a rank 1 point with the hyperplane property.

Put
\begin{equation}\label{f-rstepcase7}
f=\bar f_1\cdot\bar f_2\cdot(x_{J_{a-1}})^{-d_{a-2}}\in\Gamma(X,\omega_X^{-1}).
\end{equation}
Then $f$ has the hyperplane property
\begin{equation}\label{hyperplane-rstepcase6}
(x_{J_1}\cdots x_{J_r})\vert f.
\end{equation}

\begin{lem}
We have an $H=GL_{d_r}\times\cdots\times GL_{d_1}$ equivariant isomorphism
\begin{align*}
&M_1(f_1)\times M_2(f_2)\ra M(f)\cr
&(m_{a-2}',..,m_1'),(m_r',..,m_{a-1}')\mapsto m=(m_r',..,m_{a-1}',D^{-1}m_{a-2}',m_{a-3}',..,m_1')
\end{align*}
where $D$ is the $J_{a-1}$-block of $m_r'\cdots m_{a-1}'$. Hence the map descends to an isomorphism
$$
X_1(f_1)\times X_2(f_2)\ra X(f).
$$
\end{lem}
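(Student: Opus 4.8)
The plan is to follow the argument of the lemma in Case 1 almost verbatim; the only new feature here is that the two Stiefel bundles are glued at the interior index $a-1$ instead of at the top. Write $H=H_1\times H_2$, where $H_2=GL_{d_r}\times\cdots\times GL_{d_{a-1}}$ is the structure group of $M_2$ (the Stiefel bundle of $X_2=F(d_{a-1},\dots,d_r,n)$) and $H_1=GL_{d_{a-2}}\times\cdots\times GL_{d_1}$ that of $M_1$ (the Stiefel bundle of $X_1=F(d_1,\dots,d_{a-2},d_{a-1})$), so that $H_1\times H_2=GL_{d_r}\times\cdots\times GL_{d_1}=H$.

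First I would check that the map $\Phi$ of the statement is well-defined. Since $(x_{J_{a-1}})^{d_{a-1}}\mid f_2$ with $J_{a-1}=(1,\dots,d_{a-1})$, for $(m_r',\dots,m_{a-1}')\in M_2(f_2)$ the block $D=(m_r'\cdots m_{a-1}')_{J_{a-1}}$ lies in $GL_{d_{a-1}}$, so $m=(m_r',\dots,m_{a-1}',D^{-1}m_{a-2}',m_{a-3}',\dots,m_1')$ is a well-defined element of $M$ varying morphically with its arguments. To see $m\in M(f)$ I would evaluate $f=\bar f_1\cdot\bar f_2\cdot(x_{J_{a-1}})^{-d_{a-2}}$ on $m$. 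Since $\bar f_2$ only involves Pl\"ucker coordinates $x_J$ with $|J|\in\{d_{a-1},\dots,d_r\}$, for which $m_r\cdots m_i=m_r'\cdots m_i'$ when $i\geq a-1$, one gets $\bar f_2(m)=f_2(m_r',\dots,m_{a-1}')$ and $x_{J_{a-1}}(m)=\det D$. Since $\bar f_1$ only involves $x_{J'}$ with $J'\subset(1,\dots,d_{a-1})$, and the first $d_{a-1}$ rows of $m_r\cdots m_{a-2}=(m_r'\cdots m_{a-1}')(D^{-1}m_{a-2}')$ equal $D\cdot D^{-1}m_{a-2}'=m_{a-2}'$, one gets $(m_r\cdots m_i)_{J'}=(m_{a-2}'\cdots m_i')_{J'}$ for $i\leq a-2$, hence $\bar f_1(m)=f_1(m_{a-2}',\dots,m_1')$. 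Therefore $f(m)=f_1(m_{a-2}',\dots,m_1')\,f_2(m_r',\dots,m_{a-1}')\,(\det D)^{-d_{a-2}}$, which is nonzero on $M_1(f_1)\times M_2(f_2)$, so $\Phi$ lands in $M(f)$.

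Next I would verify $H$-equivariance by a slot-by-slot comparison: under $h=(h_r,\dots,h_1)$ acting by \eqref{mh} on $M_2$ one has $m_r'\cdots m_{a-1}'\mapsto m_r'\cdots m_{a-1}'h_{a-1}^{-1}$, hence $D\mapsto Dh_{a-1}^{-1}$ and $D^{-1}m_{a-2}'\mapsto h_{a-1}(D^{-1}m_{a-2}')h_{a-2}^{-1}$, matching the action of $h$ on the $(a-2)$-nd slot of $M$; the remaining slots match directly. For the inverse I would take $\Psi(m_r,\dots,m_1)=\big((Dm_{a-2},m_{a-3},\dots,m_1),(m_r,\dots,m_{a-1})\big)$ with $D=(m_r\cdots m_{a-1})_{J_{a-1}}$; this is legitimate because $(x_{J_{a-1}})^{d_{a-1}-d_{a-2}}\mid f$ (note $d_{a-1}>d_{a-2}$ since $a\geq3$), so $D\in GL_{d_{a-1}}$ on $M(f)$, and the image lies in $M_1(f_1)\times M_2(f_2)$ by the same factorization of $f(m)$ as above. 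One then reads $\Psi\circ\Phi=\mathrm{id}$ and $\Phi\circ\Psi=\mathrm{id}$ off the formulas, so $\Phi$ is an isomorphism of varieties. Finally, as $\Phi$ is $H$-equivariant and $H=H_1\times H_2$ acts freely and properly with $(M_1(f_1)\times M_2(f_2))/H=X_1(f_1)\times X_2(f_2)$ and $M(f)/H=X(f)$, it descends to the claimed isomorphism.

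I expect the only nontrivial point to be the bookkeeping in the second paragraph: keeping track of which blocks of the products $m_r\cdots m_i$ are altered by the substitution $m_{a-2}\rightsquigarrow D^{-1}m_{a-2}'$, and confirming that the Pl\"ucker coordinates entering $\bar f_1$ and $\bar f_2$ restrict as stated. This is, however, exactly the shifted analogue of the computation already carried out for Case 1, so I do not anticipate a genuine obstacle.
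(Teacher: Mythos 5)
Your proof is correct and follows the same route the paper itself indicates: the paper omits this proof, stating it is ``almost identical to the lemmas in Cases 1 and 3,'' and your argument is precisely that Case-1 argument transported to the interior index $a-1$, with the same four-step structure (nonsingularity of $D$ from the hyperplane property of $f_2$, the block computation of $f(m)$, slot-by-slot $H$-equivariance, and an explicit inverse $m\mapsto\bigl((Dm_{a-2},m_{a-3},\dots,m_1),(m_r,\dots,m_{a-1})\bigr)$), so the two proofs essentially coincide.
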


The proof is almost identical to the lemmas in Cases 1 and 3, and will be omitted. The lemma and Theorem \ref{holo-rank-thm} imply

\begin{prop}\label{rstepcase7}
For $d_{a-1}+d_a=n$ with $r>a>1$,  if any $s$-step flag variety for $s<r$ admits a rank 1 point with the hyperplane property, then $X=F(d_1,..,d_r,n)$ admits one as well.
\end{prop}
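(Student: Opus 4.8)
The plan is to reproduce the argument of Propositions~\ref{rstepcase1} and~\ref{rstepcase3}: feed the explicit isomorphism of the Lemma just above into a Künneth computation and Theorem~\ref{holo-rank-thm}. First I would set up the induction — assume every $s$-step flag variety with $s<r$ admits a rank~$1$ point with the hyperplane property — and observe that, since $a=2$ is already Case~4, we may take $a\geq 3$ (hence $r\geq 4$). Then $X_1=F(d_1,\dots,d_{a-2},d_{a-1})$ is an $(a-2)$-step flag variety and $X_2=F(d_{a-1},\dots,d_r,n)$ is an $(r-a+2)$-step flag variety, both with step number strictly below $r$. So $X_1$ admits a rank~$1$ point $f_1$ with $(x_{J_1}\cdots x_{J_{a-2}})\mid f_1$, and — because $d_{a-1}+d_a=n$ places $X_2$ in Case~4 — Proposition~\ref{rstepcase4} supplies a rank~$1$ point $f_2$ of $X_2$ satisfying the stronger divisibility $((x_{J_{a-1}})^{d_{a-1}}x_{J_a}\cdots x_{J_r})\mid f_2$, with $J_{a-1}=(1,\dots,d_{a-1})$ after a permutation of coordinates; this is the data recorded in~\eqref{f1f2-rstepcase7}.

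Next I would form $f=\bar f_1\,\bar f_2\,(x_{J_{a-1}})^{-d_{a-2}}$ as in~\eqref{f-rstepcase7}. The negative power is harmless because $d_{a-1}-d_{a-2}\geq 1$, so $f$ is a bona fide polynomial in the Plücker coordinates, and its multidegree is that of $-K_X$ by~\eqref{KX}, so $f\in\Gamma(X,\omega_X^{-1})$. Collecting the divisibilities, $x_{J_1}\cdots x_{J_{a-2}}\,(x_{J_{a-1}})^{d_{a-1}-d_{a-2}}\,x_{J_a}\cdots x_{J_r}$ divides $f$, hence so does $x_{J_1}\cdots x_{J_r}$, and $f$ has the hyperplane property. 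The Lemma then provides a variety isomorphism $X(f)\cong X_1(f_1)\times X_2(f_2)$.

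Finally I would run the cohomological count. Since $\omega_X^{-1},\omega_{X_1}^{-1},\omega_{X_2}^{-1}$ are very ample, $X(f),X_1(f_1),X_2(f_2)$ are smooth affine varieties of dimensions $\dim X,\dim X_1,\dim X_2$, and the product decomposition forces $\dim X=\dim X_1+\dim X_2$. By Artin vanishing $H^k(X_i(f_i);\C)=0$ for $k>\dim X_i$, so in the Künneth decomposition of $H^{\dim X}(X(f);\C)$ only the summand $H^{\dim X_1}(X_1(f_1);\C)\otimes H^{\dim X_2}(X_2(f_2);\C)$ can survive. Since $f_i$ is a rank~$1$ point, Theorem~\ref{holo-rank-thm} yields $\dim H_{\dim X_i}(X_i-V(f_i))=1$, which over $\C$ equals $\dim H^{\dim X_i}(X_i(f_i);\C)$ by universal coefficients; hence $\dim H^{\dim X}(X(f);\C)=1$, i.e.\ $\dim H_{\dim X}(X-V(f))=1$, and Theorem~\ref{holo-rank-thm} once more identifies $f$ as a rank~$1$ point of $X$ — one with the hyperplane property. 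This closes the induction.

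Given the Lemma, everything above is routine bookkeeping; the genuine obstacle — were one to prove all details — is the Lemma itself. There one checks that $M_1(f_1)\times M_2(f_2)\to M(f)$ indeed lands in $M(f)$ (using $x_{J_{a-1}}\mid f_2$ so that the $J_{a-1}$-block $D$ of $m_r'\cdots m_{a-1}'$ lies in $GL_{d_{a-1}}$, together with the remaining divisibilities to see $f$ does not vanish on the image), that it is $H$-equivariant for the action~\eqref{mh}, and that it is inverted by the map sending $(m_r,\dots,m_1)$ back to the two Stiefel bundles after restoring $m_{a-2}$ via the relevant block of $m_r\cdots m_{a-1}$. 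As the text notes, this is essentially the calculation of Cases~1 and~3; the only new ingredient is the conjugation $m_{a-2}=D^{-1}m_{a-2}'$ gluing $M_1$ to $M_2$, which is all I would spell out.
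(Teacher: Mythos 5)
Your proposal mirrors the paper's argument exactly: same $X_1,X_2$, same $f$ built from $\bar f_1\bar f_2(x_{J_{a-1}})^{-d_{a-2}}$, same appeal to the adjacent Lemma giving $X(f)\cong X_1(f_1)\times X_2(f_2)$, and the same Künneth/Artin-vanishing count (which the paper leaves implicit after it was carried out once in the proof of Proposition~\ref{1step}) combined with Theorem~\ref{holo-rank-thm}. You also, like the paper, defer the Lemma's verification to the analogous Cases~1 and~3, so there is no substantive difference in the route taken.
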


Now combining the propositions in all Cases 1-7 yields a complete recursive procedure for constructing a rank 1 point with the hyperplane property for any $r$-step flag variety, proving Corollary \ref{rank1point}.

\begin{ex}
Consider $X=F(1,2,3,5)$, which belongs in Case 3. Let $X_1=F(1,2)$ and take $f_1=x_1x_2$. Let $X_2=F(2,3,5)$, which belongs in Case 2, and we can take $f_2=(x_{12})^3(x_{345})^3$ as a rank 1 point of $X_2$, by Proposition \ref{2stepcase2}. Therefore,
$$
f=x_1x_2(x_{12})^3(x_{345})^3(x_{12})^{-1}
$$
is rank 1 point of $X$ according to the construction in Case 3.
\end{ex}

\begin{ex}
Consider the flag variety of $SL_5$, $X=F(1,2,3,4,5)$, which belongs in Case 7 with $a=3$. Let $X_1=F(1,2)$ and take $f_1=x_1x_2$. Let $X_2=F(2,3,4,5)\simeq F(1,2,3,5)$, which is the preceding example. Applying this isomorphism to the rank 1 point there, we get $f_2=x_{2345}x_{1345}(x_{345})^2(x_{12})^3$ as a rank 1 point of $X_2$. Therefore,
$$
f=x_1x_2x_{2345}x_{1345}(x_{345})^2(x_{12})^3(x_{12})^{-1}
$$
is a rank 1 point of $X$ according to the construction in Case 7.
\end{ex}

\appendix

\section{Theory of $D$-modules}\label{Appendix}
We recall the theory of \emph{algebraic} $D$-modules. A standard
reference is \cite{Borel}. 

Let $X$ be an algebraic variety over $k$ of characteristics zero. Let
$\Hol(D_X)$ be the category of holonomic (left) $D$-modules on $X$. Its
bounded derived category is denoted by $D_h^b(X)$.

Let $f:X\to Y$ be a morphism, there are the following pairs of
adjoint (derived) functors (following the notation of Borel's book)
\[f^+:D^b_h(Y)\rightleftharpoons D_h^b(X): f_+, \quad\quad f_!:D_h^b(X)\rightleftharpoons D_h^b(Y): f^!.\]
Recall the definition of $f_+$ in the following cases (assuming $X$
and $Y$ are smooth): in the case, there is an $f^{-1}D_Y\times
D_X$-bimodule $D_{Y\leftarrow X}$ on $X$, and
\[f_+(M)=Rf_*(D_{Y\leftarrow X}\otimes^L M).\]
Without mentioning the exact definition of this bimodule
$D_{Y\leftarrow X}$, we concentrate on the following special cases. Let $d_{X,Y}=\dim X-\dim Y$.

(i) $f:X\to Y$ is smooth. Then $f_+$ (up to shift) is the usual
construction of the Gauss-Manin connection. I.e.
\[f_+(M)=Rf_*(M\otimes \Omega_{X/Y}^\bullet[d_{X,Y}]).\]
In particular, $H^i f_+\calO_X$ is the $D$-module on $Y$ formed by
the $(i+d_{X,Y})$th relative De Rham cohomology. In
particular, if $f$ is an open embedding, then $f_+(M)=Rf_*M$ as
quasi-coherent sheaves on $Y$. Observe that under the this normalization of the cohomological degrees, $H^0f_+\calO_X$ is the usual ``middle dimension" cohomology of the family $f:X\to Y$.

\begin{ex}\label{star extension}
A particular example: $j: \bG_m=\Spec k[x,x^{-1}]\to \bA^1=\Spec
k[x]$ the open embedding. Then $j_+\calO_{\bG_m}$ as a $D$-module on
$\bA^1$ is isomorphic to $k[x,\partial_x]/(x\partial_x+1)$.
\end{ex}

(ii) $f:X\to Y$ is a closed embedding given by the ideal $\calI$.
Then
\[f_+(M)=f_*(D_Y/D_Y \calI\otimes \omega_{X/Y}\otimes M)\]
where $\omega_{X/Y}$ is the relative canonical sheaf
$\omega_{X/Y}=\omega_X\otimes(\omega_Y^{-1}|_X)$.

\begin{ex}\label{delta sheaf}
A particular example: let $Y$ be $\bA^n=\Spec k[x_1,\ldots,x_n]$ and
$i:X\to Y$ be the inclusion of the vector space given by
$x_1=\cdots=x_r=0$. Then $x_{r+1},\ldots,x_n$ form a coordinate
system on $X$. Let
$M=\calO_X=D_X/D_X(\partial_{r+1},\ldots,\partial_n)$. Then
\[i_+M= D_Y/D_Y(x_1,\ldots,x_r,\partial_{r+1},\ldots,\partial_n),\]
called the delta sheaf supported on $X$, denoted by $\delta_X$.
\end{ex}

Observe that there is the following exact sequence of
$D_{\bA^1}$-modules
\begin{equation}\label{fundamental}
0\to\calO_{\bA^1}\to j_+\calO_{\bG_m}\to \delta_{\{0\}}\to 0.
\end{equation}

Next, we recall the definition of $f^!$. There is a $D_X\times
f^{-1}D_Y$-bimodule $D_{X\to Y}$ on $X$, and by definition
\[f^!(M)= D_{X\to Y}\otimes^L_{f^{-1}D_Y}f^{-1}M[d_{X,Y}].\]
As quasi-coherent $\calO_X$-modules,
$$
f^!(M)=Lf^*M[d_{X,Y}].
$$
Again, let us mention the following special cases.

(i) $f:X\to Y$ is smooth. In this case, $f^![-d_{X,Y}]$ is
exact, and as quasi-coherent sheaves, $f^![-d_{X,Y}](M)=f^*M$.
In particular, if $f$ is an open embedding, then $f^!M=M|_X$.

(ii) $f:X\to Y$ is a closed embedding, given by the ideal sheaf
$\calI$. In this case
\[H^0f^!(M)\otimes \omega_{X/Y}=\{m\in M\mid xm=0 \mbox{ for any } x\in \calI\}.\]

The following distinguished triangle generalizes
\eqref{fundamental}: Let $i:X\to Y$ be a closed embedding and
$j:U\to Y$ be the complement.
\begin{equation}\label{triangle}
i_+i^!M\to M\to j_+j^!M\to.
\end{equation}

Indeed, in the case $Y=\bA^1$ and $X=\bG_m$, $M=\calO_{\bA^1}$, we
recover \eqref{fundamental}.

The following theorem (Kashiwara's lemma) is of fundamental
importance,
\begin{thm}Let $i:X\to Y$ be a closed embedding.

(i) If $M$ is a $D_Y$-module, set-theoretically supported on $X$.
Then $H^ii^!M=0$ for $i>0$.

(ii) Let $D_Y\Mod_X$ be the category of $D_Y$-modules,
set-theoretically supported on $X$, and $D_X\Mod$ be the category of
$D_X$-modules. Then there is an equivalence of categories
\[i_+: D_X\Mod\rightleftharpoons D_Y\Mod_X:  H^0i^!.\]
\end{thm}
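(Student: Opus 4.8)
The plan is to reduce the theorem to one explicit local computation. Since both assertions are Zariski-local on $Y$, and since $i_+$ and $i^!$ are compatible with composition of morphisms and with smooth (in particular \'etale) base change, I would first choose, locally on $Y$, regular functions $t_1,\dots,t_r$ cutting out $X$ whose differentials are everywhere linearly independent; this factors $i$ as a chain of codimension-one closed embeddings of smooth varieties $X=Y_r\hookrightarrow\cdots\hookrightarrow Y_0=Y$, and an \'etale chart on each $Y_{j-1}$ splits off the normal direction. So it suffices to treat the model case $i\colon X\hookrightarrow X\times\bA^1$, $x\mapsto(x,0)$, with $t$ the coordinate on $\bA^1$ and $\partial=\partial_t$. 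Here $\omega_{X/(X\times\bA^1)}$ is canonically trivial, so by the formula for $H^0f^!$ recalled in the Appendix we have $H^0i^!M=\ker(t\colon M\to M)$, while $i^!M$ is computed by the two-term complex $\bigl[M\xrightarrow{\,t\,}M\bigr]$ in degrees $0,1$; in particular $H^{<0}i^!M=0$ automatically and $H^1i^!M=M/tM$.

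The core is the \textbf{structure theorem} in the model case: for $M$ a $D_{X\times\bA^1}$-module every element of which is annihilated by some power of $t$, the natural map
\[
\bigoplus_{k\ge 0}\partial^k\,\ker(t|_M)\ \longrightarrow\ M
\]
is an isomorphism of $D_{X\times\bA^1}$-modules, each summand being mapped isomorphically onto its image by $\partial^k$. I would prove this by analysing the Euler operator $\theta=t\partial$. From $\partial t=t\partial+1$ one gets $\theta=-1$ on $\ker(t|_M)$; in particular $\partial$ is injective there, since $\partial m=tm=0$ forces $m=-\theta m=0$. Iterating the relation $[t,\partial^k]=-k\partial^{k-1}$ gives, for $m\in\ker(t|_M)$, the identities $\theta(\partial^k m)=-(k+1)\partial^k m$ and $t(\partial^k m)=-k\,\partial^{k-1}m$: the first shows $\partial^k\ker(t|_M)$ is the $\theta$-eigenspace of eigenvalue $-(k+1)$, hence the sum is direct and $\partial^k$ is injective on $\ker(t|_M)$; the second shows $t$ carries $\partial^k\ker(t|_M)$ isomorphically onto $\partial^{k-1}\ker(t|_M)$ for $k\ge1$. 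Surjectivity follows by induction on the least $n$ with $t^{n+1}m=0$: the element $t^n m$ lies in $\ker(t|_M)$, and since $t^n\partial^n=(-1)^n n!$ on $\ker(t|_M)$, subtracting $\tfrac{(-1)^n}{n!}\partial^n(t^n m)$ from $m$ produces an element killed by $t^n$. Finally $D_X\subset D_{X\times\bA^1}$ commutes with $t$ and $\partial$, so it preserves $\ker(t|_M)$ and the decomposition is $D_X$-equivariant; comparing with the description of $i_+$ in case (ii) of the Appendix, the displayed map is exactly the counit $i_+\bigl(H^0i^!M\bigr)\to M$.

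Everything then follows. Applying the structure theorem to $M=i_+N$ identifies $\ker(t|_{i_+N})$ with the $k=0$ summand $N$, so the unit $N\to H^0i^!i_+N$ is an isomorphism; the structure theorem itself is the counit isomorphism; and the decomposition shows $tM=M$, whence $H^1i^!M=M/tM=0$. This proves (i) and (ii) in the model case $r=1$. For general $r$ one composes: writing $i=i_1\circ\cdots\circ i_r$ with each $i_j$ of codimension one and using $(g\circ f)_+=g_+f_+$, $(g\circ f)^!=f^!g^!$, together with the fact that a $D_Y$-module supported on $X$ is at each intermediate stage supported on the smaller smooth subvariety, the $r=1$ results propagate to give $H^{>0}i^!M=0$ and the equivalence $i_+\rightleftharpoons H^0i^!$.

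The main obstacle is the structure theorem, and within it the control of $\theta=t\partial$: one must show it acts locally finitely with all eigenvalues forced into $\bZ_{<0}$ and that no part of $M$ lies outside $\bigoplus_k\partial^k\ker(t|_M)$. This is exactly where the hypothesis that $M$ is $t$-power torsion enters essentially, and making the surjectivity induction and the behaviour of the twist $\omega_{X/Y}$ under the factorization into codimension-one steps line up cleanly is the part that needs care.
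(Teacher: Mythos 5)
The paper states this result as Kashiwara's lemma without proof, referring the reader to the standard reference \cite{Borel}; your argument is precisely the standard proof given there (and in Hotta--Takeuchi--Tanisaki), and it is correct. The local reduction to the model case $X\hookrightarrow X\times\bA^1$, the eigenvalue analysis of $\theta=t\partial$ on $t$-power-torsion modules (all the commutator identities $\theta\partial^k m=-(k+1)\partial^k m$, $t\partial^k m=-k\partial^{k-1}m$, $t^n\partial^n=(-1)^nn!$ on $\ker(t|_M)$ check out), the resulting decomposition $M=\bigoplus_k\partial^k\ker(t|_M)$, and the deduction of both the counit/unit isomorphisms and $H^1i^!M=M/tM=0$ are all sound, as is the propagation to higher codimension via $(g\circ f)_+=g_+f_+$, $(g\circ f)^!=f^!g^!$ and the observation that each intermediate $H^0i_j^!$ preserves support on $X$. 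The only points you rightly flag as needing care --- the $\omega_{X/Y}$-twist in the identification $H^0i^!M\otimes\omega_{X/Y}=\ker(t|_M)$ and the coordinate-independence needed to glue the local isomorphisms --- are routine once the unit and counit are recognized as globally defined natural transformations.
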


In the sequel, we will make use of the following notation: let $i: X\to Y$ be a locally closed embedding. If $M$ is a D-module on $Y$, set-theoretically supported on $\bar{X}$, then $H^0 i^!M$ will be denoted by $M|_{X}$.

This finishes the discussion of the functors $f_+, f^!$. Then $f_!$
is defined to be the left adjoint of $f^!$ and $f^+$ is defined to
be the left adjoint of $f_+$. Recall that there is the duality
functor $\bD_X: D_h^b(X)\to D_h^b(X)$. We can also express
$f^+=\bD_Xf^!\bD_Y$ and $f_!=\bD_Yf_+\bD_X$. It is known that

(i) If $f:X\to Y$ is a closed embedding (or more generally if $f$ is
proper), $f_!=f_+$.

(ii) If $f:X\to Y$ is an open embedding, $f^!=f^+$.

\begin{rmk}The definitions of $f_+,f^!$ do not require the
holonomicity, and therefore they are defined on the whole category
of (not necessarily holonomic) $D$-modules. However, as functors on
the whole category of $D$-modules, they do not admit adjoint
functors and therefore $f_!, f^+$ are not defined in general.
\end{rmk}


\begin{ex}Let $j:\bG_m\to \bA^1$ as before. One can show that
$j_!\calO_{\bG_m}\simeq k[x,\partial]/x\partial$.
\end{ex}

The dual version of \eqref{fundamental} is
\begin{equation}\label{shrik extension}0\to \delta_{\{0\}}\to j_!\calO_{\bG_m}\to
\calO_{\bA^1}\to 0,\end{equation} and the dual version of
\eqref{triangle} is
\begin{equation}\label{triangleII}
j_!j^!M\to M\to i_+i^+M\to .
\end{equation}

\medskip

Now let $k=\bC$. Let $D^b_{rh}(X)$ be the bounded derived category of
holonomic $D$-modules with regular singularities, and let
$D^b_c(X^{an})$ be the bounded derived category of constructible
sheaves on $X^{an}$ (we denote $X$ equipped with the classical
topology by $X^{an}$). Then Riemann-Hilbert correspondence is an
equivalence
\[\on{RH}: D^b_{rh}(X)\simeq D^b_c(X), \quad\quad \on{RH}(M)=\omega_{X^{an}}\otimes^LM^{an}=\Omega_{X^{an}}^\bullet\otimes M^{an}[\dim X],\]
where $\omega_{X^{an}}$ is the canonical sheaf on $X^{an}$, regarded
as a right $D$-module via Lie derivative, and the derived tensor
product is over $D_{X^{an}}$. This correspondence is compatible with
the six operation functors. In particular,
\[\on{RH}f_+\simeq f_*\on{RH},\ \ \on{RH}f_!\simeq f_!\on{RH},\ \ \on{RH}f^!\simeq f^!\on{RH},\ \ \on{RH}f^+\simeq f^*\on{RH}.\]
If $M$ is a plain $D$-module, then $\on{RH}(M)$ is a perverse sheaf
on $X^{an}$. While the above equivalence is covariant, sometimes one
also consider the contravariant version
\[\on{Sol}: D^b_{rh}(X)\simeq D^b_c(X)^{op}, \quad\quad \on{Sol}(M)=R\Hom_{D_{X^{an}}}(M^{an},\calO_{X^{an}}).\]
The relation between $\on{Sol}$ and $\on{RH}$ is
$\on{RH}=\on{Sol}\bD_X[\dim X]$.

\begin{rmk}\label{classical sol}
Let $M$ be a D-module on $X$. In the paper we also talk about the solution sheaf of $M$, by which we mean the classical (non-derived) solutions of $M$, and is defined as
\[{^{cl}}\on{Sol}(M)=\Hom_{D_{X^{an}}}(M^{an},\calO_{X^{an}}).\]
This is a plain sheaf on $X^{an}$.
\end{rmk}

\medskip

Next, we discuss background materials on equivariant D-modules, most which can be found in \cite{Borel}\cite{Hotta}.
Let $G$ be a connected algebraic group and $\frakg=\Lie G$. Let us regard $\frakg$ as right invariant vector fields on $G$, and
for a Lie algebra homomorphism $\chi:\frakg\to k$, we define a character D-module on $G$ by
\begin{equation}\label{char dmod}
\calL_\chi= D_G/D_G(\xi+\chi(\xi), \xi\in \frakg).
\end{equation}
This is a rank one local system on $G$. In particular, it is holonomic. It is called a character sheaf because if we denote by $\on{mult}:G\times G\to G$ the multiplication map of $G$, then there is a canonical isomorphism $\on{mult}^!\calL_\chi\simeq \calL_\chi\boxtimes\calL_\chi[\dim G]$ satisfying the cocycle condition under the further $!$-pullback to $G\times G\times G$.

Let $Z$ be a $G$-variety and $\on{act}:G\times Z\to Z$ be the action map. A $(G,\chi)$-equivariant, or a $G$-monodromic against $\chi$, D-module on $Z$ is a D-module on $Z$ together with an isomorphism
\[\theta: \on{act}^!M\simeq \calL_{\chi}\boxtimes M[\dim G],\]
satisfying the usual cocycle condition under the further $!$-pullback to $G\times G\times Z$.

The following lemma is well-known, which can be proved as in \cite[Theorem 12.11]{Borel}. See also \cite[\S II.5]{Hotta}.
\begin{lem}\label{mono:rh}
Assume that there are only finitely many orbits under the action of $G$ on $Z$, then any $(G,\chi)$-equivaraint D-module is holonomic. In addition, if $\calL_\chi$ is regular singular, then any $(G,\chi)$-equivariant D-module is regular singular.
\end{lem}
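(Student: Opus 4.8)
The plan is to prove the statement by induction on the number $r$ of $G$-orbits in $Z$: the localization triangle \eqref{triangle} is used to peel off an open orbit and reduce to a closed $G$-stable subvariety with strictly fewer orbits, while the base case $r=1$ treats a single homogeneous space directly. All the functorial stability properties needed (stability of holonomicity and of regular holonomicity under the six operations, their preservation of equivariance, and their descent along smooth surjections) are standard, so the argument is essentially the one of \cite[Theorem 12.11]{Borel} with $\calL_\chi$ carrying the twist.

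For the base case, let $Z=G/H$ be a single orbit and $q\colon G\to G/H$ the quotient map, a smooth surjective morphism. A $(G,\chi)$-equivariant D-module $M$ on $G/H$ is an integrable connection, namely the twisted local system attached to a finite-dimensional $\chi$-twisted representation of the stabilizer (cf.\ \cite[\S12]{Borel}); in particular it is holonomic. To see the regularity claim, restrict the equivariance isomorphism $\on{act}^!M\simeq\calL_\chi\boxtimes M[\dim G]$ along $\mathrm{id}_G\times i_{eH}\colon G\times\{eH\}\hookrightarrow G\times Z$; since $\on{act}\circ(\mathrm{id}_G\times i_{eH})=q$, and using the Künneth compatibility of $!$-restriction with external products, one gets
\[
q^!M\simeq\calL_\chi\otimes_k\bigl(i_{eH}^!M\bigr)[\dim G],
\]
a finite direct sum of shifted copies of the rank one local system $\calL_\chi$ (finiteness because $M$ is $\calO$-coherent). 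Hence $q^!M$ is regular holonomic whenever $\calL_\chi$ is, and since regular holonomicity descends along the smooth surjective morphism $q$, so is $M$ in that case.

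For the inductive step, assume $r>1$ and pick an orbit $O$ of maximal dimension. Since no orbit of maximal dimension can lie in the closure of another orbit, $Z\setminus O=\bigcup_{O'\neq O}\overline{O'}$ is closed; thus $j\colon O\hookrightarrow Z$ is an open $G$-stable embedding and $i\colon Y:=Z\setminus O\hookrightarrow Z$ a closed $G$-stable embedding, with $Y$ carrying $r-1$ orbits. The functors $j^!$ and $i^!$ carry $(G,\chi)$-equivariant objects to $(G,\chi)$-equivariant objects (the action maps on $O$ and on $Y$ are smooth, so $\on{act}^!$ is exact up to shift, commutes with the relevant base changes and with passing to cohomology sheaves). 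By the base case $j^!M$ is regular holonomic (resp.\ holonomic), and by the inductive hypothesis applied to each cohomology sheaf so is $i^!M\in D^b_{rh}(Y)$ (resp.\ $D_h^b(Y)$). Applying \eqref{triangle},
\[
i_+i^!M\to M\to j_+j^!M\xrightarrow{+1},
\]
and using that $i_+$ and $j_+$ preserve (regular) holonomicity while $D^b_{rh}(Z)$ and $D_h^b(Z)$ are triangulated subcategories of the derived category of $D_Z$-modules, we get $M\in D^b_{rh}(Z)$ (resp.\ $D_h^b(Z)$); as $M$ is concentrated in a single cohomological degree it is regular holonomic (resp.\ holonomic).

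The main obstacle is the base case: one must know that $(G,\chi)$-equivariance rigidifies $M$ on $G/H$ into a finite-dimensional twisted-local-system datum, and that regular holonomicity descends along $q\colon G\to G/H$. Both are classical (this is exactly \cite[Theorem 12.11]{Borel} in the untwisted case, and the twist changes nothing since it is captured by $\calL_\chi$), and once they are available the rest is bookkeeping with the localization triangle and the stability of (regular) holonomicity under the six operations.
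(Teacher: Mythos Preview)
Your proof is correct and follows precisely the approach the paper indicates: the paper gives no argument of its own but refers to \cite[Theorem 12.11]{Borel} and \cite[\S II.5]{Hotta}, and your induction on the number of orbits, with the base case handled by pulling back along $q\colon G\to G/H$ to identify $q^*M$ with a finite sum of copies of $\calL_\chi$, is exactly that argument carried out with the $\chi$-twist. One small point worth making explicit in the inductive step is why each $H^k(i^!M)$ is a \emph{coherent} $D_Y$-module (so the inductive hypothesis applies): this follows because the cone of $M\to j_+(M|_O)$ is coherent (both terms are, the second by the base case) and is supported on $Y$, whence Kashiwara's equivalence gives coherence of $i^!M$.
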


We will need the following lemma. Let $U\frakg$ be the universal enveloping algebra of $\frakg$. Then $\chi$ defines a one-dimensional $U\frakg$-module, denoted by $k_\chi$. Note that if $Z$ is a $G$-variety, we have the corresponding infinitesimal action $da:\frakg\to T_Z$, which extends to $U\frakg\to D_Z$.
\begin{lem}\label{mono:eq}
The D-module
\[D_{Z,\chi}=D_Z/D_Z(da(\xi)+\chi(\xi), \xi\in \frakg)=(D_Z\otimes k_{\chi})\otimes_{U\frakg}k\]
is a natural $(G,\chi)$-equivariant D-module on $Z$.

More generally, note that $D_Z$ is naturally $G$-equivariant as $\calO$-modules, i.e., there is an isomorphism of $\calO$-modules
$\theta:\on{act}^* D_Z\simeq p_Z^* D_Z$
satisfying the cocycle condition.
Let $I\subset D_Z$ be a $G$-invariant left ideal, then
$$D_Z/I+D_Z(da(\xi)+\chi(\xi), \xi\in \frakg)$$ is $(G,\chi)$-equivariant.
\end{lem}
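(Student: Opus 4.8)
The plan is to unwind the definition of $(G,\chi)$-equivariance, produce the required isomorphism $\theta:\on{act}^{!}D_{Z,\chi}\simeq\calL_\chi\boxtimes D_{Z,\chi}[\dim G]$ by comparing explicit cyclic presentations, check the cocycle condition over $G\times G\times Z$, and then observe that the same computation covers the general quotient. The one group-theoretic input is that, since $G$ is connected and $\chi:\frakg\to k$ is a Lie algebra homomorphism, $\chi$ kills $[\frakg,\frakg]$, hence $\chi\circ\Ad_{g}=\chi$ for all $g\in G$; in particular $\{da(\xi)+\chi(\xi):\xi\in\frakg\}\subset D_Z$ is stable under the natural $G$-action on $D_Z$ (which carries $da(\xi)$ to $da(\Ad_{g}\xi)$), so $D_Z(da(\xi)+\chi(\xi))$ is a $G$-invariant left ideal and, combined with the hypothesis on $I$, the general statement is the assertion for the $G$-invariant left ideal $J:=I+D_Z(da(\xi)+\chi(\xi))$. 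I treat $M:=D_{Z,\chi}$ first.

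The map $\on{act}:G\times Z\to Z$ factors as $p_Z\circ\sigma$ with $\sigma(g,z)=(g,gz)$ an automorphism of $G\times Z$, so it is smooth of relative dimension $\dim G$ and $\on{act}^{!}(-)[-\dim G]$ is exact. For a cyclic presentation one has
\[
\on{act}^{!}(D_Z/D_Z\cdot S)[-\dim G]\;\simeq\;D_{G\times Z}\big/\big(D_{G\times Z}\cdot\Theta_{G\times Z/Z}+D_{G\times Z}\cdot\widehat{S}\,\big),
\]
where $\Theta_{G\times Z/Z}=\ker\big(d\,\on{act}:T_{G\times Z}\to\on{act}^{*}T_Z\big)$ is the sheaf of relative vector fields and $\widehat{S}$ is any choice of lifts; for $S=\{da(\xi)+\chi(\xi)\}$ a convenient lift of $da(\xi)$ is $\xi^{R}\boxplus 0$ (right-invariant on the $G$-factor, zero on $Z$), since $d\,\on{act}(\xi^{R}_{g}\boxplus 0)=da(\xi)_{gz}=\on{act}^{*}(da(\xi))$. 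On the other side, using the presentation of $\calL_\chi$ recalled in the excerpt, $\calL_\chi\boxtimes M[\dim G]$ is the cyclic $D_{G\times Z}$-module whose relations are the family $\xi^{R}\boxplus 0+\chi(\xi)$ (from $\calL_\chi$) together with the family $0\boxplus da(\eta)+\chi(\eta)$ (from $M$). Since the lifts $\widehat{S}$ are exactly the first family, it remains to check that $\Theta_{G\times Z/Z}$ and $\{0\boxplus da(\eta)+\chi(\eta)\}$ generate the same left ideal modulo that family; this is elementary. At $(g,z)$ the relative tangent space of $\on{act}$ is spanned by $\xi^{R}_{g}\boxplus\big(-da(\Ad_{g^{-1}}\xi)_{z}\big)$, so modulo $\xi^{R}\boxplus 0+\chi(\xi)$ the vanishing of this relative vector field reads $0\boxplus da(\Ad_{g^{-1}}\xi)+\chi(\Ad_{g^{-1}}\xi)=0$ (here $\chi(\xi)=\chi(\Ad_{g}\Ad_{g^{-1}}\xi)=\chi(\Ad_{g^{-1}}\xi)$ is used); since $\Ad_{g^{-1}}$ is invertible, letting $\xi$ vary recovers precisely the family $0\boxplus da(\eta)+\chi(\eta)=0$, and the same computation run in reverse shows each relative vector field lies in the left ideal generated by the $\calL_\chi\boxtimes M$-relations. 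Hence $\bar 1\mapsto\bar 1$ is a well-defined isomorphism $\theta$. For the general $J$, the part of the relations coming from $D_Z(da(\xi)+\chi(\xi))$ is treated just as above, while the part coming from the $G$-invariant ideal $I$ is matched on the two sides using the $\calO$-linear equivariance isomorphism $\on{act}^{*}D_Z\simeq p_Z^{*}D_Z$ recorded in the statement.

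The cocycle condition is checked over $G\times G\times Z$: by associativity $(gh)z=g(hz)$ the composites $\on{act}\circ(\id_{G}\times\on{act})$ and $\on{act}\circ(\on{mult}\times\id_{Z})$ agree, and pulling $\theta$ back along the first and then applying $\theta$ again identifies the common $!$-pullback of $M$ with $\calL_\chi\boxtimes\calL_\chi\boxtimes M[2\dim G]$, while pulling $\theta$ back along the second and then applying the cocycle isomorphism $\on{mult}^{!}\calL_\chi\simeq\calL_\chi\boxtimes\calL_\chi[\dim G]$ of the excerpt does the same; since $\theta$ and the $\calL_\chi$-cocycle map are the identity on the relevant cyclic presentations, the two composites coincide. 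This establishes that $D_{Z,\chi}$, and more generally $D_Z/(I+D_Z(da(\xi)+\chi(\xi)))$ for any $G$-invariant left ideal $I$, is $(G,\chi)$-equivariant.

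The main obstacle I anticipate is bookkeeping rather than any new idea: one must keep apart the two lifts to $G\times Z$ of a vector field on $Z$ (the ``horizontal'' lift entering $\widehat{S}$ versus the $\Ad$-twisted vector spanning the relative tangent bundle), keep track of the resulting signs and adjoint twists, carry out the matching of the $I$-relations for the general case with care, and confirm that the comparison maps are $D_{G\times Z}$-linear on the nose. The only genuine content is the identity $\chi\circ\Ad_{g}=\chi$, which is exactly what forces the twisting sheaf on the right-hand side to be $\calL_\chi$ and nothing larger.
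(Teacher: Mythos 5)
Your proof is correct, and it supplies a complete, self-contained argument where the paper simply defers to \cite[\S II.3]{Hotta}. The route you take — compute $\on{act}^{!}$ of the cyclic presentation using the relative tangent sheaf $\Theta_{G\times Z/Z}$, match it against the cyclic presentation of $\calL_\chi\boxtimes D_{Z,\chi}$, and isolate the single Lie-theoretic input $\chi\circ\Ad_g=\chi$ (valid because $G$ is connected and $\chi$ kills $[\frakg,\frakg]$) — is the same global strategy that Hotta's treatment follows, but you make the adjoint twist fully explicit. The key pointwise computation, that $\Theta_{G\times Z/Z}$ is spanned at $(g,z)$ by $\xi^R_g\boxplus\big(-da(\Ad_{g^{-1}}\xi)_z\big)$, is correct: $d\on{act}(\xi^R_g,0)=da(\xi)_{gz}$ and applying $dg^{-1}$ gives $da(\Ad_{g^{-1}}\xi)_z$. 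Reducing modulo the $\calL_\chi$-relation $\xi^R+\chi(\xi)=0$ and using $\chi(\xi)=\chi(\Ad_{g^{-1}}\xi)$ then turns these generators, fiberwise and hence as $\calO_{G\times Z}$-linear combinations (the change-of-basis matrix $\Ad_{g^{-1}}$ being invertible over $\calO_G$), into exactly the family $da(\eta)+\chi(\eta)$, which matches the $D_{Z,\chi}$-relations on the other side. Two small points worth being a touch more careful about if this were to be written up in full: (1) the verification that the left ideal generated by $\{da(\Ad_{g^{-1}}\xi)+\chi(\Ad_{g^{-1}}\xi):\xi\in\frakg\}$ in $D_{G\times Z}$ coincides with that generated by $\{da(\eta)+\chi(\eta):\eta\in\frakg\}$ should be stated as the invertibility-of-coefficients argument just sketched (not merely a pointwise bijection of parameters); and (2) in the general case the matching of the $I$-relations needs that $\theta(\on{act}^{*}I)=p_Z^{*}I$ inside the respective cyclic presentations, which is exactly what $G$-invariance of $I$ gives you, so this is fine but should be said. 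Neither affects the correctness of the argument.
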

Note that in the above lemma, we do not need to assume that $G$ acts on $Z$ with finitely many orbits.  See \cite[\S II.3]{Hotta}.

\quash{
\begin{proof}
Observe that up to shift, the
underlying $\calO$-module for $\on{act}^!D_Z$ (resp. $p_X^!D_Z$) is
$\on{act}^*D_Z$ (rep. $p_Z^*D_Z$). However, $\theta$ is not an
isomorphism of $D$-modules. More precisely, let $D$ be a local
section of $D_Z$, which pulls back to a local section of
$\on{act}^*D_Z$, still denoted by $D$. Let $\xi\in \frakg$, regarded
as a right invariant vector field on $G$, and therefore a vector
field on $G\times Z$, then
\[\xi\theta(D)=\theta(da(\xi) D)+\theta(D da(\xi)).\]
The equivariance structure $\theta$ on $D_Z$ induces an
equivariance structure on $M$. Therefore, if $m\in M$ is a
local section, lifted to a local section $\tilde{m}$ of $D_Z$,
\[\xi\theta(m)=\theta(da(\xi)\tilde{m}-\tilde{m}da(\xi))=\theta((da(\xi)+\chi(\xi))m).\]
Or
\[(\xi-\chi(\xi))\theta(m)=\theta(da(\xi)m).\]
\end{proof}}

Note that if $i:H\to G$ is a connected closed subgroup, $i^!\calL_{\chi}[-\dim H]=D_{H}/D_{H}(\xi+\chi(\xi), \xi\in \frakh)=\calL_{\chi|_\frakh}$. We have the following simple observation.
\begin{lem}\label{mono:st}
Let $Z=G/H$ be a homogeneous $G$-variety. Let $\chi:\frakg\to k$ be a Lie algebra homomorphism and $\calL_\chi$ be the rank character D-module on $G$ as in \eqref{char dmod}.  Then if $\calL_{\chi|\frakh}\neq \calO_{H^\circ}$, where $H^\circ$ is the neutral connected component of $H$, there is no D-module on $Z$, equivariant with respect to $G$ against $\chi$.\end{lem}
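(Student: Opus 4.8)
The plan is to argue by contradiction. Suppose $M$ is a nonzero D-module on $Z=G/H$ equivariant with respect to $G$ against $\chi$; note $M$ is holonomic by Lemma \ref{mono:rh}, since $Z$ is a single orbit. The first step is to transport the situation to the group. Let $z_0=eH\in Z$ and let $p\colon G\to Z$, $g\mapsto g\cdot z_0$, be the quotient map; it is smooth, faithfully flat, and $G$-equivariant for the left translation action of $G$ on itself. Pulling $M$ back along $p$ as a D-module (exact, since $p$ is smooth) yields a nonzero holonomic D-module $\widetilde M$ on $G$ carrying a $(G,\chi)$-equivariant structure for left translation, so there is an isomorphism $\on{mult}^!\widetilde M\simeq\calL_\chi\boxtimes\widetilde M[\dim G]$ on $G\times G$. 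Restricting this along $G\times\{e\}\hookrightarrow G\times G$, using $\on{mult}|_{G\times\{e\}}=\id_G$ together with the compatibility of $!$-pullback with external products, I would obtain $\widetilde M\simeq\calL_\chi\otimes_k C$ where $C:=i_e^!\widetilde M[\dim G]$ is a bounded complex of finite-dimensional $k$-vector spaces. Since $\widetilde M$ is a genuine D-module and $\calL_\chi$ is a rank one local system, $C$ must be concentrated in degree $0$; thus $\widetilde M\simeq\calL_\chi\otimes_k V$ for a finite-dimensional vector space $V$, necessarily nonzero. In particular $\widetilde M$, and hence (by faithfully flat descent) $M$, is $\calO$-coherent, i.e.\ a local system, of rank $n=\dim_k V\geq 1$.

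The second step is to restrict $\widetilde M$ to the fibre of $p$ over $z_0$, that is, to $H^\circ$, which is a closed subgroup of $G$ (as $H$ is closed and $H^\circ$ is a component of $H$). I would compute this restriction in two ways. On the one hand, from $\widetilde M\simeq\calL_\chi\otimes_k V$ it equals $(\calL_\chi|_{H^\circ})\otimes_k V$, and $\calL_\chi|_{H^\circ}\simeq\calL_{\chi|_\frakh}$: the defining relations $\xi+\chi(\xi)$ ($\xi\in\frakg$) of $\calL_\chi$ restrict to the relations for $\xi\in\frakh$ along $H^\circ$, because the invariant vector fields in question are tangent to $H^\circ$ — this is exactly the computation recalled just before the statement. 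On the other hand, letting $j\colon H^\circ\hookrightarrow G$, the composite $p\circ j$ is the constant map $H^\circ\to\{z_0\}$, so the restriction of $\widetilde M=p^*M$ to $H^\circ$ is the pullback of the one-point local system $M_{z_0}$ (the fibre of $M$ at $z_0$) along $H^\circ\to\{z_0\}$, i.e.\ the constant local system $\calO_{H^\circ}\otimes_k M_{z_0}$, with $\dim_k M_{z_0}=n$. Comparing the two descriptions yields $\calL_{\chi|_\frakh}^{\oplus n}\simeq\calO_{H^\circ}^{\oplus n}$ as D-modules on $H^\circ$.

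The final step is the elementary observation that a rank one local system $L$ on the connected variety $H^\circ$ with $L^{\oplus n}\simeq\calO_{H^\circ}^{\oplus n}$ must itself be trivial: applying $\Hom_{D_{H^\circ}}(\calO_{H^\circ},-)$ gives $\Hom_{D_{H^\circ}}(\calO_{H^\circ},L)^{\oplus n}\simeq k^n\neq 0$, while $\Hom_{D_{H^\circ}}(\calO_{H^\circ},L)$ is the space of horizontal sections of $L$, which vanishes unless $L\simeq\calO_{H^\circ}$. Hence $\calL_{\chi|_\frakh}\simeq\calO_{H^\circ}$, contradicting the hypothesis; so no nonzero $(G,\chi)$-equivariant D-module on $Z$ can exist.

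I expect the only real friction to lie in the bookkeeping of the first two steps: checking that the pullback $\widetilde M$ genuinely inherits a $(G,\chi)$-equivariant structure for \emph{left} translation, and that the restriction of $\calL_\chi$ to the subgroup $H^\circ$ is exactly $\calL_{\chi|_\frakh}$ rather than some twist of it. Both are standard facts about (weakly) equivariant D-modules and are consistent with the computation recalled before the statement; once they are in place, the remainder of the argument is purely formal.
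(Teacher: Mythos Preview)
Your argument is correct and rests on the same key observation as the paper's proof: restricting the $(G,\chi)$-equivariance structure to the locus where $H^\circ$ acts trivially on the base point $eH$ forces $\calL_{\chi|_\frakh}\otimes(\text{fiber})\simeq\calO_{H^\circ}\otimes(\text{fiber})$. The paper does this in one step by pulling the equivariance isomorphism on $G\times Z$ back along $H^\circ\times\{eH\}\hookrightarrow G\times Z$, whereas you take a slight detour through $G$ (identifying $p^*M\simeq\calL_\chi^{\oplus n}$ first); your version is a bit longer but has the virtue of making explicit why the fiber is nonzero and why the direct-sum isomorphism forces $\calL_{\chi|_\frakh}\simeq\calO_{H^\circ}$, points the paper leaves implicit.
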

\begin{proof}Let $M$ be a non-zero $(G,\chi)$-equivariant D-modules on $Z$. Let $i:H^\circ\to G$ be the inclusion, and $i_e:eH\to Z$ be the inclusion of the identity coset.
Consider the diagram
\[\begin{CD}
H^\circ\times eH@>>>eH\\
@Vi\times i_eVV@VVi_eV\\
G\times Z@>>>Z.
\end{CD}\]
Then $i^!\calL_\chi\otimes i_{e}^!M=(i\times i_e)^!\on{act}^!M=\calO_H\otimes i_e^!M[\dim Z]$. Therefore, $\calL_{\chi|_\frakh}=\calO_H$.
\end{proof}

\begin{ex}\label{clean}
Let $\lambda\in k^\times$, and let $\calL_\lambda$ be the $D$-module
on $\bG_m$ given by $x\partial+\lambda$. I.e. $\calL_\la$ is the
local system on $\bG_m$ with monodromy $\exp(-2\pi\sqrt{-1}\la)$
(via the Riemann-Hilbert correspondence if $k=\bC$). This is a character D-module on $G$ with $\chi(x\partial)=\la$.
If $\la\in\bZ$, then
$\calL_\la\simeq\calO_{\bG_m}$.
Let
$j:\bG_m\to\bA^1$ be the open embedding. Then both $j_+\calL_\la$ and $j_!\calL_\la$ are $(G,\la)$-equivaraint D-modules on $\bG_m$.
If $\la$ is not an integer, then
$j_!\calL_{\la}\simeq j_+\calL_\la$. In this case, this $D$-module
is irreducible on $\bA^1$.
\end{ex}

\medskip

Our last topic is the Fourier transform.
Let $``e^x"$ be the character $D$-module on $\bA^1$ defined by $\partial-1$.
Let $V$ be a vector space and $V^\vee$ be its dual. We have the natural pairing
\[m:V\times V^\vee\to \bA^1.\]
The pullback of $e^x$ along $m$ is still denoted by $e^x$, regarded as a plain D-module on $V\times V^\vee$. Let $p_V,
p_{V^\vee}$ be the projections of $V\times V^\vee$ to the two
factors. The Fourier transform is defined as
\[\four(M)=p_{V^\vee,+}(p_V^!(M)\otimes e^x),\]

Fourier transform $\four$ is an exact
functor, and can be described in the following simple way. Let $M$
be a $D$-module on $V$, and therefore is identified with a module
over the Weyl algebra
$k[a_1,\ldots,a_n,\partial_{a_1},\ldots,\partial_{a_n}]$. Then
$\four(M)$ as a vector space is identified with $M$, and the
$D$-module structure is given by $a_i^*m=\partial_{a_i}m$ and
$\partial_{a_i^*}=-a_im$. In other words, if we denote the ring
homomorphism
\begin{equation}\label{appen:ring hom}
\widehat{}: D_V\to D_{V^\vee}, \quad \widehat{a_i}=-\partial_{a_i^*},\ \widehat{\partial_{a_i}}=a_i^*,
\end{equation}
then $\four(M)=D_{V^\vee}\otimes_{D_V}M$.  See \cite[p85]{Bry}.

\begin{ex}Let $W\subset V$ be a vector subspace, and $W^\perp$ be
the orthogonal complement of $W$ in $V^\vee$. Then
$\four(\delta_W)=\delta_{W^\perp}$.
\end{ex}

\begin{ex}\label{four:change space}
More generally, let $i:W\subset V$ be a vector subspace, and $0\to W^\perp\to V^\vee\stackrel{p}{\to} W^\vee\to 0$ be the dual sequence.
Let $M$ be a D-module on $W$. Then $$\four(i_+M)= p^!\four(M)[\dim W-\dim V].$$
\end{ex}

\begin{ex}\label{trans}
Let $V=\bA^1$ and we identify $V^\vee=\bA^1$ via the natural
multiplication $\bA^1\times\bA^1\to\bA^1$. Then under the Fourier
transform, the exact sequence \eqref{fundamental} becomes
\eqref{shrik extension}.
\end{ex}

\begin{ex}\label{four:mono}
Recall the character D-module $\calL_\la$ on $\bG_m$. Let $j:\bG_m\to \bA^1$ be the open immersion. Then
\[\four(j_+\calL_\la)=j_!\calL_{-\la+1}.\]
\end{ex}

\quash{
\begin{ex}\label{mix}
Let $i:L=\bA^1\subset V$, and $j_U:U=V^\vee\setminus L^\perp\to
V^\vee$. Let $j:\bG_m\to\bA^1$. Then
\[\four(i_+j_+\calO_{\bG_m})=j_{U,!}\calO_U.\]
Dually,
\[\four(i_!j_!\calO_{\bG_m})=j_{U,+}\calO_U.\]
More generally, let us write $L\subset V$ given by
$a_2=\cdots=a_n=0$, and $L^\perp\subset V^\vee$ given by $a_1^*=0$.
So that $U\simeq \bG_m\times\bA^{n-1}$. Let $\la\in k^\times$. Then
we can identify
\[\four(i_+j_+\calO_{-\la})=j_{U,!}(\calO_{\la+1}\boxtimes \calO_{\bA^{n-1}}).\]
\end{ex}}

Fourier transform preserves holonomicity. If $M$ is holonomic, the
we can also write
\[\four(M)=p_{V^\vee,!}(p_V^*(M)\otimes e^x).\]
However, Fourier transform does not necessarily preserves the
regular singularity. For example, the Fourier transform of the delta
sheaf on $\bA^1$ supported at $1\in\bA^1(k)$ is $e^x$. However,
under certain circumstance, one can show that $\four(M)$ is regular
singular. Let $\bG_m$ act on $V$ via homotheties, i.e.
$\on{mult}:\bG_m\times V\to V, \on{mult}(a,v)=av$. Let $\la: \Lie\bG_m\to k$ be a map. Recall the notion of $(\bG_m,\la)$-equivariant D-modules. We say a holonomic
$D$-module on $V$ to be $\bG_m$ monodromic if each of its
irreducible constitutes is $(\bG_m,\la)$-equivariant for some $\la$. Observe that $e^x$ is not $\bG_m$-monodromic.

Let $D_{rh,m}^b(V)$ be the full subcategory of $D_{rh}^b(V)$ whose
cohomology sheaves are regular holonomic and $\bG_m$-monodromic.

\begin{lem}\label{rh}
The Fourier transform restricts to an equivalence
\[\four: D_{rh,m}^b(V)\simeq D_{rh,m}^b(V^\vee).\]
\end{lem}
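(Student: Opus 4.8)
The plan is to bootstrap from the properties of $\four$ already recorded: it is an exact functor $D^b_h(V)\to D^b_h(V^\vee)$ which preserves holonomicity, and it is an equivalence of triangulated categories, a quasi-inverse being the Fourier transform in the opposite direction (composed with the harmless automorphism $[-1]^*$ of $V$ and a shift). Since $\four$ is exact and $D^b_{rh,m}$ is by definition the full subcategory of $D^b_{rh}$ cut out by imposing regular holonomicity and $\bG_m$-monodromicity on each cohomology $D$-module, it suffices to prove two statements about a single $D$-module: (i) if $M$ is $\bG_m$-monodromic on $V$, then $\four(M)$ is $\bG_m$-monodromic on $V^\vee$; (ii) if in addition $M$ is regular holonomic, then $\four(M)$ is regular holonomic. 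Indeed, granting (i), (ii) and their $V^\vee$-analogues, $\four$ and its quasi-inverse restrict to $D^b_{rh,m}$ and the unit and counit isomorphisms restrict with them; and by the symmetry of the construction in $V$ and $V^\vee$ it is enough to prove (i) and (ii) as stated.

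For (i), recall that $\four(M)$ is $M$ as a $k$-vector space, with $a_i^*$ acting as the operator $\partial_{a_i}$ and $\partial_{a_i^*}$ acting as the operator $-a_i\cdot$ (see \eqref{appen:ring hom}). Hence the Euler field $E^\vee=\sum_i a_i^*\partial_{a_i^*}$ of $V^\vee$ acts on $\four(M)$ by $-\sum_i\partial_{a_i}a_i=-(E+\dim V)$, where $E=\sum_i a_i\partial_{a_i}$ is the Euler field of $V$. A holonomic $D$-module is $\bG_m$-monodromic precisely when the Euler field acts on it locally finitely; since this holds for $E$ on $M$, it holds for $E^\vee$ on $\four(M)$, which gives (i). (Equivalently, (i) says that $\four$ interchanges the condition that the characteristic variety be stable under scaling of the base with its stability under scaling of the fibre, and these are swapped by the symplectomorphism $T^*V\simeq T^*V^\vee$ underlying $\four$.)

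For (ii) we run a dévissage. Regular holonomicity is stable under extensions, and by (i) a regular holonomic monodromic module admits a finite filtration with irreducible monodromic subquotients, so we may assume $M$ is irreducible and $(\bG_m,\la)$-equivariant for some $\la\colon\Lie\bG_m\to k$. Then $M$ is supported on a conical subvariety $Z\subseteq V$ (the closure of a single $\bG_m$-orbit). If $Z=\{0\}$ then $M=\delta_0$ and $\four(\delta_0)=\calO_{V^\vee}$ is regular holonomic. If $\dim Z\ge 1$, apply $\four$ to the triangle $j_{0,!}j_0^!M\to M\to i_{0,+}i_0^+M\to{}$ of \eqref{triangleII}, where $i_0\colon\{0\}\hookrightarrow V$ and $j_0\colon V\setminus\{0\}\hookrightarrow V$; since $i_0^+M$ is a complex of finite-dimensional vector spaces and $\four(i_{0,+}N^\bullet)=\calO_{V^\vee}\otimes_k N^\bullet$ (using $\{0\}^\perp=V^\vee$), the right-hand term becomes regular holonomic, so it remains to show $\four(j_{0,!}j_0^!M)\in D^b_{rh}(V^\vee)$. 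Now $V\setminus\{0\}$ is the total space of a $\bG_m$-bundle $q\colon V\setminus\{0\}\to\PP(V)$, and the $(\bG_m,\la)$-equivariant regular holonomic module $j_0^!M$ is controlled, via pullback along $q$ and a twist by the character $D$-module $\calL_\la$ along the fibres of $q$, by a regular holonomic module $\bar M$ on $\PP(V)$. One then identifies $\four(j_{0,!}j_0^!M)$, using the factorisation of the pairing $V\times V^\vee\to\bA^1$ through the incidence divisor $\{(x,a):a(x)=0\}\subseteq\PP(V)\times\PP(V^\vee)$ (this is the geometry of $ev$ and $\bL^\perp$ in \S\ref{geometry}), with the composite of an open pullback, the Radon transform between $\PP(V)$ and $\PP(V^\vee)$ (built from the closed embedding and the smooth projective projections of the incidence variety), a character twist along $\bG_m$-fibres, and an open pushforward — each of which preserves regular holonomicity. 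This recovers Brylinski's theorem that $\four$ preserves the regular holonomic monodromic category \cite{Bry}.

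The delicate step is the last identification: expressing $\four$ of a monodromic module, whose integral kernel is the \emph{irregular} exponential $e^x$ on $V\times V^\vee$, in terms of the \emph{regular} incidence correspondence on $\PP(V)\times\PP(V^\vee)$. Concretely this is a "Mellin transform along the $\bG_m$-direction": the $\bG_m$-equivariance decomposes source and target along the $\bG_m$-action and lets one replace the kernel $e^x$, restricted to each pencil $\bG_m\cdot(x,a)$, by an explicit regular connection, which then assemble into the Radon picture. This is where monodromicity is genuinely used, and once the dictionary is set up the stability of regular holonomicity under $f_+$, $f^!$, open pushforward and character twists makes everything else formal.
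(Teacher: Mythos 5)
The paper's own proof of this lemma is a bare citation to Brylinski \cite[Theorems 7.4, 7.24]{Bry}, so the natural comparison is against his argument. Your sketch reproduces its structure correctly: the Euler-field computation in step (i) is exactly right and is the standard mechanism by which $\four$ exchanges base and fibre scalings; step (ii) correctly reduces, using exactness of $\four$ and stability of $D_{rh}^b$ under extensions and subquotients, to a single irreducible $(\bG_m,\la)$-equivariant module, strips off $i_{0,+}i_0^+M$ (whose $\four$ is visibly a sum of shifts of $\calO_{V^\vee}$), and isolates the one genuinely hard point: rewriting $\four(j_{0,!}j_0^!M)$ in terms of the projective incidence correspondence between $\PP(V)$ and $\PP(V^\vee)$, so that only proper, smooth, and open operations — all of which preserve $D_{rh}^b$ — appear.

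Two remarks. First, a small slip: the support of an irreducible monodromic module is an irreducible \emph{conical} subvariety of $V$, not ``the closure of a single $\bG_m$-orbit''; the $\bG_m$-orbits on $V\setminus\{0\}$ are one-dimensional, whereas the support can have any dimension up to $\dim V$. What you need, and what is true, is only that the support is $\bG_m$-stable. Second, and more substantively, the step you yourself call ``the delicate step'' — building the dictionary that replaces the irregular kernel $e^x$ on $V\times V^\vee$ by the regular incidence divisor in $\PP(V)\times\PP(V^\vee)$ via a Mellin decomposition along the $\bG_m$-direction — is precisely the content of Brylinski's Theorem 7.24 (and of Laumon's homogeneous Fourier transform \cite{La}), and here it is narrated rather than proved. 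As an outline of the theorem the paper cites, your argument is faithful and well organised; as a self-contained proof it has a genuine gap exactly where you flag one — you point at the hard identification, but do not establish it.
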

\begin{proof}\cite[Theorems 7.4, 7.24]{Bry}.
\end{proof}

Fourier transform can be generalized to family versions. Let $X$ be
a base variety, and $\bV$ a vector bundle over $X$, $\bV^\vee$ the dual
bundle, so there is
\[m:\bV\times_X\bV^\vee\to \bA^1.\]
Then one can define
\[\four_X(M)=p_{\bV^\vee,+}(p_{\bV}^!(M)\otimes e^x).\]
Note that the family version of Example \ref{four:change space} still holds. More precisely, let $i:\bW\subset\bV$ be a subbundle on $p:\bV^\vee\to\bW^\vee$ be the dual map. Then
\begin{equation}\label{fcs}
\four_X(i_+M)=p^!\four_X(M)[\on{rk}\bW-\on{rk}\bV].
\end{equation}

Note that family version of Fourier transforms commute with base change. Namely, if $f:Y\to X$ is a map. Then $\four_Y(f^!M)=f^!\four_X(M)$. 

Let us consider the family version of Example \ref{four:mono}. So we assume that $\bV=\bL$ is a line bundle, on which $\bG_m$ acts by homotheties. Let $\mathring{\bL}=\bL-X$, where $X$ is regarded as the zero section of $\bL$. Let $\bL^\vee$ be the dual vector bundle of $\bL$ and $\mathring{\bL}^\vee$ is defined similarly.
Let $M$ be a $(\bG_m,\la)$-equivariant D-module on $\mathring{\bL}$.

The following
lemma is useful.
\begin{lem}\label{key}
Let $X$ be proper and $\bV=X\times V$ be the trivial bundle over
$X$. Let $\pi:X\times V\to V$ and $\pi^\vee:X\times V^\vee\to
V^\vee$ be the projections. Then
\[\four\circ \pi_!\simeq \pi^\vee_!\circ\four_X.\]
\end{lem}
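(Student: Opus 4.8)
The plan is to realise both sides as the same composite of standard $D$-module operations and then deduce the identity from base change and the projection formula, using crucially that $X$ is proper. Write $W:=\bV\times_X\bV^\vee=X\times V\times V^\vee$, keep the bundle projections $p_{\bV}\colon W\to\bV=X\times V$ and $p_{\bV^\vee}\colon W\to\bV^\vee=X\times V^\vee$ from the definition of $\four_X$, let $\rho\colon W\to V\times V^\vee$ be the map forgetting the $X$-factor, and keep the projections $p_V,p_{V^\vee}$ of $V\times V^\vee$ from the definition of $\four$. The preliminary observations I would record are: (a) $\pi^\vee\circ p_{\bV^\vee}=p_{V^\vee}\circ\rho$ and $\pi\circ p_{\bV}=p_V\circ\rho$; (b) the square with vertices $W,\ \bV,\ V\times V^\vee,\ V$ and edges $p_{\bV},\ \rho,\ \pi,\ p_V$ is Cartesian, i.e.\ $W=\bV\times_V(V\times V^\vee)$; (c) since $X$ is proper, the morphisms $\pi$, $\pi^\vee$ (and $\rho$) are proper, so $\pi_!=\pi_+$ and $\pi^\vee_!=\pi^\vee_+$; and (d) the fibrewise pairing $W\to\bA^1$ used to define the kernel of $\four_X$ is the composite of $\rho$ with the pairing $V\times V^\vee\to\bA^1$, so that the kernel $e^x$ on $W$ is the pullback $\rho^*(e^x)$, as a line bundle with integrable connection, of the kernel $e^x$ on $V\times V^\vee$.

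Granting these, the computation I would carry out is, for $M$ a (holonomic) $D$-module on $\bV$:
\[
\pi^\vee_+\four_X(M)=\pi^\vee_+p_{\bV^\vee,+}\bigl(p_{\bV}^!M\otimes\rho^*e^x\bigr)[\on{rk}\bV]
=p_{V^\vee,+}\,\rho_+\bigl(p_{\bV}^!M\otimes\rho^*e^x\bigr)[\on{rk}\bV],
\]
using functoriality of $\,{}_+$ together with (a) and (d). Because $e^x$ is an invertible $\mathcal O$-module with integrable connection, the projection formula for $\rho$ gives $\rho_+\bigl(p_{\bV}^!M\otimes\rho^*e^x\bigr)\simeq\rho_+(p_{\bV}^!M)\otimes e^x$, and base change applied to the Cartesian square in (b) gives $\rho_+\,p_{\bV}^!\simeq p_V^!\,\pi_+$ (the $!$-pullback form of base change, available here since $p_V,p_{\bV}$ are smooth, or by dualising proper base change since $\pi$ is proper). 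Combining these and using $\on{rk}\bV=\dim V$,
\[
\pi^\vee_+\four_X(M)\simeq p_{V^\vee,+}\bigl(p_V^!(\pi_+M)\otimes e^x\bigr)[\dim V]=\four(\pi_+M),
\]
which, after replacing $\pi_+,\pi^\vee_+$ by $\pi_!,\pi^\vee_!$ via (c), is the asserted isomorphism $\four\circ\pi_!\simeq\pi^\vee_!\circ\four_X$; naturality of each isomorphism used gives the claimed canonical identification.

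I do not expect a genuine obstacle: once the diagram is in place the statement is formal, being a mild elaboration of the base-change compatibility of $\four_X$ recorded just before the lemma. The points needing care are bookkeeping ones — checking that the square in (b) is Cartesian and that $\pi,\pi^\vee$ are proper (so that $_!$ and $_+$ may be interchanged and base change applied in the stated form); pinning down the projection formula $\rho_+(N\otimes\rho^*e^x)\simeq\rho_+N\otimes e^x$ for the rank-one connection $e^x$; and matching the cohomological shifts $[\on{rk}\bV]$ and $[\dim V]$ coming from the two definitions of the Fourier transform.
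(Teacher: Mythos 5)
Your proof is correct and follows essentially the same route as the paper's: both rest on the three-fold diagram over $X\times V\times V^\vee$, apply base change in the Cartesian square linking $\pi$ and $p_V$, use the projection formula to pull $e^x$ through, and then contract the other triangle by functoriality of pushforward (with properness of $X$ converting $\pi_!$ to $\pi_+$). The only difference is cosmetic — you run the chain of equalities from $\pi^\vee_+\four_X(M)$ to $\four(\pi_+M)$ rather than the reverse — and your explicit observations (a)–(d) spell out details the paper leaves implicit.
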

\begin{proof}
This follows from the base change theorem for D-modules (cf. \cite[VI, \S 8]{Borel}). Namely, as $X$ is proper, $\pi_+=\pi_!$, etc. We have the following commutative diagrams with both squares Cartesian.
\[\xymatrix{
&X\times V\times V^\vee\ar^{\pi^{V,V^\vee}}[d]\ar^{p_V}[dl]\ar^{p_{V^\vee}}[dr]& \\
X\times V\ar_{\pi}[d]& V\times V^\vee\ar^{p_V}[dl]\ar^{p_{V^\vee}}[dr] & X\times V^\vee\ar^{\pi^\vee}[d]\\
V&&V^\vee.
}
\]
Then
\[\begin{array}{lcl}
\four(\pi_!(M))& =& p_{V^\vee,+}(p_V^!\pi^\vee_+(M)\otimes e^x)\\
& = & p_{V^\vee,+}(\pi^{V,V^\vee}_+p_V^!(M)\otimes e^x) \\
 & =  & p_{V^\vee,+}\pi^{V,V^\vee}_+(p_V^!(M)\otimes e^x) \\
  & = &\pi^{\vee}_+\four_X(M)
\end{array}\]
\end{proof}

\vskip.5in

\noindent\address {\SMALL A. Huang, Department of Mathematics, Harvard University, Cambridge MA 02138. \\
\vskip-.45in  anhuang@math.harvard.edu.}
\vskip-.15in

\noindent\address {\SMALL B.H. Lian, Department of Mathematics, Brandeis University, Waltham MA 02454.\\
\vskip-.45in   lian@brandeis.edu.}
\vskip-.15in

\noindent\address  {\SMALL X. Zhu, Department of Mathematics, California Institute of Technology, Pasadena CA 91125 \\
\vskip-.45in xzhu@caltech.edu.}


\begin{thebibliography}{10}

\bibitem{Adolphson} A. Adolphson, \emph{Hypergeometric Functions and Rings Generated by Monomials}, Duke Math. J. Vol 73, No. 2 (1994) 269-290.

\bibitem{BGG} J. Bernstein, I. M. Gelfand, S. I. Gelfand, \emph{A Certain Category of g-modules}, Functional
Analysis and its Applications. {\bf 10} no. 2 (1976), 1-8.

\bibitem{BGS} A. Beilinson, V. Ginzburg, W. Soergel, \emph{Koszul Duality Patterns in Representation Theory}, Journ. AMS {\bf 9} (1996), 473-527.

\bibitem{BHLSY} S. Bloch, A. Huang, B.H. Lian, V. Srinivas, and S.-T. Yau, \emph{On the Holonomic Rank Problem}, arXiv:1302.4481v1.

\bibitem{Borel} A. Borel et al, \emph{Algebraic D-modules}, Academic Press 1987.








\bibitem{Bry} J.-L. Brylinski, \emph{Transformations Canoniques, Dualit\'e projective, Th\'eorie de Lefschetz, Trans-
formations de Fourier et Sommes Trigonom\'etriques}, Ast\'erisque 140-141 (1986), 3734.

\bibitem{CL2014} J. Chen and B.H. Lian, \emph{CY Principle Bundles on Compact K\"ahler Manifolds}, to appear.

\bibitem{GKZ1990} I. Gel'fand, M. Kapranov and A. Zelevinsky, \emph{Hypergeometric Functions and Toral Manifolds},
English translation, Functional Anal. Appl. {\bf 23} (1989), 94-106.

\bibitem{Hague10} C. Hague, \emph{On the B-canonical Splittings of Flag Varieties}, J. Algebra 323 (2010), no. 6, 1758?764. arXiv:0908.4354.



\bibitem{HLY1996} S. Hosono, B.H. Lian and S-T. Yau, \emph{Maximal Degeneracy Points of GKZ Systems,} Journ. AMS Vol. 10, No. 2 (1997) 427-443.

\bibitem{HLY1994} S. Hosono, B.H. Lian and S-T. Yau, \emph{GKZ-generalized Hypergeometric Systems in Mirror Symmetry of Calabi-Yau hypersurfaces}, Commun. Math. Phys. {\bf 182} (1996), 535-577.

\bibitem{Hotta} R. Hotta, \emph{Equivariant D-modules}, arXiv:math/9805021v1.

\bibitem{Kapranov1997} M. Kapranov, \emph{Hypergeometric Functions on Reductive Groups}, Integrable Systems and Algebraic Geometry (Kobe/Kyoto, 1997), 236-281, World Sci. Publ., River Edge, NJ, 1998.

\bibitem{KLS10} A. Knutson, T. Lam and D. Speyer, \emph{Projections of Richardson Varieties}, arXiv:1008.3939v2.

\bibitem{KLS11} A. Knutson, T. Lam and D. Speyer, \emph{Positroid Varieties: Juggling and Geometry}, arXiv:1111.3660v1.

\bibitem{La} G. Laumon, \emph{Transformation de Fourier Homog\`{e}ne} Bull. Soc. math. France 131 (4), 2003, p.527-551.


\bibitem{LSY} B.H. Lian, R. Song and S.-T. Yau, \emph{Period Integrals and Tautological Systems}, Journ. EMS Vol. 15, 4 (2013) 1457-1483. arXiv:1105.2984v3


\bibitem{LY} B.H. Lian and S.-T. Yau, \emph{Period Integrals of CY and General Type Complete Intersections}, Invent. Math. Vol 191, 1 (2013) 35-89. arXiv:1105.4872v3.

\bibitem{Lusztig98} G. Lusztig, \emph{Total Positivity in Partial Flag Manifolds}, Representation Theory, AMS, Vol. 2 (1998)  70?8.

\bibitem{ravi} R. Virk, \emph{Extensions of Verma Modules}, http://math.colorado.edu/~ravi1033/pub/verma.pdf.

\bibitem{Reitsch05} K. Reitsch, \emph{Closure Relations for Totally Nonnegative Cells in $G/P$}, arXiv:0509137v2.

\end{thebibliography}
\end{document}